\newtheorem{theorem}{Theorem}[section]
\newtheorem{lemma}[theorem]{Lemma}
\newtheorem{corollary}[theorem]{Corollary}
\newtheorem{conjecture}[theorem]{Conjecture}
\newtheorem{proposition}[theorem]{Proposition}
\newtheorem{example}[theorem]{Example}
\newtheorem{definition}[theorem]{Definition}
\newtheorem{definitions}[theorem]{Definitions}
\newtheorem{remark}[theorem]{Remark}
\newtheorem{hypothesis}[theorem]{Hypothesis}
\numberwithin{equation}{section}
\newcommand{\Q}{{\mathbb{Q}}}
\newcommand{\Z}{{\mathbb{Z}}}
\newcommand{\F}{{\mathbb{F}}}
\newcommand{\ds}{\displaystyle}
\newcommand{\ov}{\overline}
\newcommand{\wt}{\widetilde}
\newcommand{\ft}{\footnotesize}
\newcommand{\ns}{\normalsize}
\newcommand{\BH}{{\bf H}}
\newcommand{\CH}{{\mathcal H}}
\newcommand{\BZ}{{\bf Z}}
\newcommand{\BE}{{\bf E}}
\newcommand{\BF}{{\bf F}}
\newcommand{\CE}{{\mathcal E}}
\newcommand{\CK}{{\mathcal K}}
\newcommand{\CF}{{\mathcal F}}
\newcommand{\CI}{{\mathcal I}}
\newcommand{\CX}{{\mathcal X}}
\newcommand{\CY}{{\mathcal Y}}
\newcommand{\X}{{\rm X}}
\newcommand{\CT}{{\mathcal T}}
\newcommand{\T}{{\rm T}}
\newcommand{\CR}{{\mathcal R}}
\newcommand{\CW}{{\mathcal W}}
\newcommand{\CG}{{\mathcal G}}
\newcommand{\order}{\raise0.8pt \hbox{${\scriptstyle \#}$}}
\newcommand{\lien}{\mathrel{\mkern-4mu}}
\newcommand{\too}{\relbar\lien\rightarrow}
\newcommand{\plus}{\ds\mathop{\raise 0.5pt \hbox{$\bigoplus$}}\limits}
\newcommand{\prd}{\ds\mathop{\raise 1.0pt \hbox{$\prod$}}\limits}
\newcommand{\sm}{\ds\mathop{\raise 1.0pt \hbox{$\sum$}}\limits}
\newcommand{\ffrac}[2]{\hbox{\ft $\displaystyle\frac{#1}{#2}$}}
\newcommand{\bbinom}[2]{\hbox{$\binom{#1}{#2}$}}
\newcommand{\Gal}{{\rm Gal}}
\newcommand{\Norm}{{\bf N}}
\newcommand{\J}{{\bf J}}
\newcommand{\Nu}{\hbox{\Large $\nu$}}
\newcommand{\Lbda}{{\bf \Lambda}}
\newcommand{\Ker}{{\rm Ker}}
\newcommand{\ram}{{\rm ram}}
\newcommand{\nr}{{\rm nr}}
\newcommand{\pr}{{\rm pr}}
\newcommand{\gen}{{\rm gen}}
\newcommand{\ar}{{\rm ar}}
\newcommand{\alg}{{\rm alg}}
\newcommand{\lc}{{\rm lc}}
\newcommand{\bp}{{\rm bp}}
\newcommand{\cyc}{{\rm cy}}
\newcommand{\lgm}{{\rm lg}}
\newcommand{\tor}{{\rm tor}}
\newcommand{\rk}{{\rm rk}}
\newcommand{\Hom}{{\rm H}}
\newcommand{\N}{{\raise-1.8pt \hbox{\tiny $N$}}}
\newcommand{\Ll}{{\hbox{\ft $L$}}}
\newcommand{\Kk}{{\hbox{\ft $K$}}}
\newcommand{\limproj}{\mathop{\lim_{\longleftarrow}}}
\newcommand{\limind}{\mathop{\lim_{\longrightarrow}}}
\def\plus{\displaystyle\mathop{\raise 2.0pt \hbox{$\bigoplus $}}\limits}
\def\prd{\displaystyle\mathop{\raise 2.0pt \hbox{$\prod$}}\limits}
\def\sm{\displaystyle\mathop{\raise 2.0pt \hbox{$\sum$}}\limits}
\title[Algebraic norms and capitulation of $p$-class groups]
{Algebraic norms and capitulation of $p$-class \\ groups in ramified cyclic $p$-extensions}
\author[Georges Gras]{Georges Gras}
\address{Villa la Gardette, 4 chemin de Ch\^ateau Gagni\`ere, F-38520 Le 
Bourg d'Oisans}
\email{g.mn.gras@wanadoo.fr}
\urladdr{\url{http://orcid.org/0000-0002-1318-4414}}
\keywords{Capitulation of $p$-class groups, cyclotomic extensions of prime conductor, 
class field theory, Chevalley--Herbrand formula, PARI programs}
\subjclass{11R29, 11R18, 11R37, 12Y05}
\begin{document}

\date{February 5, 2023}

\begin{abstract} 
We examine the phenomenon of capitulation of the $p$-class group $\CH_K$ 
of a real number field $K$ in totally ramified cyclic $p$-extensions $L/K$ of degree 
$p^\N$. Using a property of the algebraic norm 
$\nu_{\!L/K}$, we show that the kernel of capitulation is in relation with the 
``complexity'' of $\CH_L$ measured via its exponent $p^{e(L)}$ and the length 
$m(L)$ of its Galois filtration $\{\CH_L^i\}_{i \geq 0}$. 
We prove that a {\it sufficient condition} of 
capitulation is given by $e(L) \in [1, N-s(L)]$ if $m(L) \in  [p^{s(L)}, p^{s(L)+1}-1]$ 
for $s(L) \in [0, N-1]$ (Theorem \ref{main1}); this improves the case of ``stability'' 
$\order \CH_L = \order \CH_K$ (i.e., $m(L) = 1$, $s(L)=0$, $e(L) = e(K)$) (Theorem 
\ref{main2}). Numerical examples (with PARI programs), showing most often  
capitulation of $\CH_K$ in $L $, are given, taking the simplest ones, $L \subset K(\mu_\ell^{})$, 
with primes $\ell \equiv 1 \pmod {2 p^\N}$, over cubic fields with $p=2$ and real 
quadratic fields with $p=3$. Some conjectures on the existence of non-zero 
densities of such $\ell$'s are proposed (Conjectures \ref{conjcap}, \ref{conjprobas}). 
Capitulation property of other arithmetic invariants is examined.
\end{abstract}

\maketitle

\tableofcontents

\section{Introduction} 

\subsection{Statement of the main results} 
Let $L/K$ be any cyclic $p$-extension of number fields, 
of degree $p^\N$, $N \geq 1$, of Galois group $G =: \langle \sigma \rangle$, 
and let $\CH_K$, $\CH_L$ be the $p$-class groups of $K$, $L$, respectively. 
For a finite group $A$, let $\rk_p (A)\! := \dim_{\F_p}(A/A^p)$ be the 
$p$-rank of $A$. 

We assume, in all the sequel, that $L/K$ is {\it totally ramified}.

Let $\J_{L/K} : \CH_K \to \CH_L^G$ be the transfer map (or extension of 
classes) and let $\Norm_{L/K}  : \CH_L \to \CH_K$ be the arithmetic norm
induced by $\Norm_{L/K}({\mathfrak P}) := {\mathfrak p}^f$, for all prime ideal 
${\mathfrak P}$ of $L$ and the prime ideal ${\mathfrak p}$ of $K$ under 
${\mathfrak P}$ with residue degree $f$.

We know that $\Norm_{L/K}(\CH_L)$ is the subgroup of $\CH_K$ which corresponds,
by the Artin map of class field theory, to $\Gal(H_K^\nr/L \cap H_K^\nr )$ where $H_K^\nr$ 
is the $p$-Hilbert class field of $K$; thus, by assumption of
total ramification, $\Norm_{L/K}(\CH_L) = \CH_K$.

Let $\Nu_{\!L/K} := \sm_{i=0}^{p^N-1} \sigma^i$, be the algebraic norm in $L/K$.
It is immediate to see that $\Nu_{\!L/K} = \J_{L/K} \circ \Norm_{L/K}$, whence
$\Nu_{\!L/K}(\CH_L) = \J_{L/K}(\CH_K)$ giving the key principle that
$\CH_K$ capitulates in $L$ if and only if $\Nu_{\!L/K}(\CH_L)=1$.

\smallskip
We can state, from using this criterion:

\begin{theorem}\label{main1}
Let $L/K$ be any totally ramified cyclic $p$-extension of number fields, 
of degree $p^\N$, $N \geq 1$, of Galois group $G =: \langle \sigma \rangle$.
Let $p^{e(\Ll)}$ be the exponent of $\CH_L$ and let $m(\Ll)$ be the minimal 
integer such that $(\sigma-1)^{m(\Ll)}$ annihilates $\CH_L$. 

(i) A sufficient condition for the capitulation of $\CH_K$ in $L$, is  
$e(\Ll) \in[1, N - s(\Ll)]$ if $m(\Ll) \in [p^{s(\Ll)}, p^{s(\Ll)+1}-1]$ for $s(\Ll) \in [0, N-1]$.

(ii) A class $h \ne 1$ of $\CH_K$ capitulates in $L$ if, in the writing $h =: \Norm_{L/K}(h')$, 
$h'$ is of order $p^e$ and annihilated by $(\sigma - 1)^m$ such that 
$e \in[1, N - s]$ if $m \in [p^s, p^{s+1}-1]$ for $s \in [0, N-1]$.
\end{theorem}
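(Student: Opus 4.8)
The plan is to run everything through the capitulation criterion recalled just above the statement: a class $h \in \CH_K$ capitulates in $L$ exactly when $\J_{L/K}(h) = 1$, and $\J_{L/K} \circ \Norm_{L/K} = \Nu_{\!L/K}$. For part (ii), writing $h = \Norm_{L/K}(h')$ gives $\J_{L/K}(h) = \Nu_{\!L/K}(h')$, so it suffices to prove $\Nu_{\!L/K}(h') = 1$. First I would rewrite the algebraic norm as a polynomial in $T := \sigma - 1$ inside $\Z[G]$. The equality $\sum_{i=0}^{n-1} X^i = \sum_{j=0}^{n-1}\binom{n}{j+1}(X-1)^j$ is a genuine identity in $\Z[X]$ — clear after multiplying both sides by $X-1$, since the right side becomes $\sum_{k=1}^{n}\binom{n}{k}(X-1)^k = ((X-1)+1)^n - 1 = X^n-1$, and $\Z[X]$ is a domain — so substituting $X = \sigma$ and $n = p^N$ yields
$$\Nu_{\!L/K} \;=\; \sum_{j=0}^{p^N-1}\binom{p^N}{j+1}\,T^{\,j} \quad\text{in } \Z[G].$$

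Next I would use the two hypotheses on $h'$. Since $(\sigma-1)^m$ annihilates $h'$, every operator $T^{\,j}$ with $j \ge m$ kills $h'$ (write $T^{\,j} = T^{\,j-m}T^{\,m}$), so only the terms $0 \le j \le m-1$ survive and
$$\Nu_{\!L/K}(h') \;=\; \prod_{k=1}^{m}\big(T^{\,k-1}(h')\big)^{\binom{p^N}{k}},$$
where the truncation stays in range because $m \le p^{s+1}-1 \le p^N-1$. As $h'$ has order $p^e$, each translate $T^{\,k-1}(h')$ has order dividing $p^e$, so every factor is trivial as soon as $p^e \mid \binom{p^N}{k}$ for all $1 \le k \le m$. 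The problem is thereby reduced to a purely arithmetic divisibility statement about binomial coefficients.

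Finally I would establish $v_p\!\big(\binom{p^N}{k}\big) = N - v_p(k)$ for $1 \le k \le p^N$, which follows from $\binom{p^N}{k} = \frac{p^N}{k}\binom{p^N-1}{k-1}$ together with $v_p(p^N - i) = v_p(i)$ for $1 \le i \le p^N-1$, making $\binom{p^N-1}{k-1}$ a $p$-adic unit. The minimum of $N - v_p(k)$ over $1 \le k \le m$ is reached where $v_p(k)$ is largest; when $m \in [p^s, p^{s+1}-1]$ the greatest power of $p$ dividing some $k \le m$ is exactly $p^s$ (realised at $k = p^s$), so $\min_{1\le k\le m} v_p\!\big(\binom{p^N}{k}\big) = N - s$. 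Hence $p^e \mid \binom{p^N}{k}$ for all $1 \le k \le m$ precisely when $e \le N - s$, which is the hypothesis $e \in [1,N-s]$; this proves (ii). Part (i) then follows by feeding (ii) every class: for $h \ne 1$, total ramification makes $\Norm_{L/K}$ surjective, so $h = \Norm_{L/K}(h')$ with $h'$ killed by $(\sigma-1)^{m(\Ll)}$ and of order $p^e$ satisfying $1 \le e \le e(\Ll) \le N - s(\Ll)$, whence (ii) with $m = m(\Ll)$, $s = s(\Ll)$ gives capitulation. The one point needing care is the sharpness of the valuation estimate — one must confirm that $k = p^s$ actually lies in $[1,m]$, so the bound $N-s$ is the true minimum and cannot be weakened — but beyond this bookkeeping no genuine obstacle arises.
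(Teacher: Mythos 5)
Your proof is correct, and its overall skeleton is the one the paper uses: expand $\Nu_{\!L/K}$ as a polynomial in $\sigma-1$ with binomial coefficients $\binom{p^N}{k}$, let the hypothesis $h'^{(\sigma-1)^m}=1$ kill all terms of degree $\geq m$, and let $p$-divisibility of the surviving coefficients $\binom{p^N}{k}$, $1 \le k \le m$, kill the rest; part (i) then follows from part (ii) by surjectivity of $\Norm_{L/K}$ under total ramification. Where you genuinely diverge is in the arithmetic core. The paper (Theorem \ref{mainthm} and its two lemmas) first computes $v\big(\binom{p^N}{p^s}\big) = N-s$ via Legendre's formula $v(m!)=(m-S(m))/(p-1)$, and then shows $v\big(\binom{p^N}{k}\big)\ge N-s$ for $k\in[p^s+1,p^{s+1}-1]$ by a term-by-term analysis of the ratio $\binom{p^N}{k}\binom{p^N}{p^s}^{-1}$; you instead obtain the exact formula $v_p\big(\binom{p^N}{k}\big)=N-v_p(k)$ in one stroke from the identity $\binom{p^N}{k}=\frac{p^N}{k}\binom{p^N-1}{k-1}$ together with the observation that $v_p(p^N-i)=v_p(i)$ makes $\binom{p^N-1}{k-1}$ a $p$-adic unit. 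Your lemma is stronger (exact valuation rather than a lower bound), shorter, and subsumes both of the paper's lemmas at once. What the paper's formulation buys in exchange is the explicit decomposition $\Nu_{\!L/K}=(\sigma-1)^k A_k + p^{f(k)} B_k$ with the cofactors $A_k, B_k \in \Z[\sigma-1,p]$ recorded, which it reuses later (notably to analyze what annihilation statement survives when $e(\Ll) > N-s(\Ll)$, and in the PARI program printing these decompositions); your argument, applied directly to the class $h'$, discards this extra information. One cosmetic point: the sharpness you worry about at the end is not actually needed for the theorem --- sufficiency only requires the lower bound $v_p\big(\binom{p^N}{k}\big)\ge N-s$ for all $k\le m$, which already follows from $k < p^{s+1}$.
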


\begin{theorem}\label{main2}
Let $L/K$ be any totally ramified cyclic $p$-extension of number fields, 
of degree $p^\N$, $N \geq 1$, of Galois group $G =: \langle \sigma \rangle$:

(i) Let $K_n$, $n \in [0, N]$, be the subfield of $L$ of degree $p^n$ over $K$ and set 
$G_n := \Gal(K_n/K)$. Then $\order \CH_{K_1} = \order \CH_K$ implies the following 
properties: 

\quad $\bullet$ $\CH_{K_n} = \CH_{K_n}^{G_n}
\ds \mathop{\simeq}^{\hbox{\tiny$\Norm_{K_n/K}$}} \CH_K$, for all $n \in [0, N]$.

\quad $\bullet$ For any $e \leq N$, the subgroup $\CH_K[p^e]$ of $\CH_K$, of 
classes annihilated by~$p^e$, capitulates in $K_e$; whence $\CH_K$
capitulates in $K_{e(\Kk)}$ if $e(\Kk) \leq N$.

(ii) If $\rk_p(\CH_{K_1}) = \rk_p(\CH_K)$, then $\rk_p(\CH_{K_n}) = \rk_p(\CH_K)$, 
for all $n \in [0, N]$.
\end{theorem}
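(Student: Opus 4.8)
The plan is to reduce everything to the first bullet of (i) — the chain of isomorphisms $\CH_{K_n}=\CH_{K_n}^{G_n}\simeq\CH_K$ (the last one induced by $\Norm_{K_n/K}$) — because the two capitulation assertions then drop out of Theorem \ref{main1}. Indeed, once $\CH_{K_n}=\CH_{K_n}^{G_n}$ for every $n$, the generator $\sigma$ acts trivially, so $(\sigma-1)$ already annihilates $\CH_{K_n}$; hence in the notation of Theorem \ref{main1} every layer has $m=1$ and $s=0$. Applying Theorem \ref{main1}(i) to the totally ramified cyclic extension $K_{e(\Kk)}/K$ of degree $p^{e(\Kk)}$ (so that ``$N$''$=e(\Kk)$), the sufficient condition reads $e(K_{e(\Kk)})\in[1,e(\Kk)]$, which holds since $e(K_{e(\Kk)})=e(\Kk)$; this gives capitulation of $\CH_K$ in $K_{e(\Kk)}$. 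For the $p^e$-torsion bullet I would apply Theorem \ref{main1}(ii) to $K_e/K$: a class $h\ne 1$ with $h=\Norm_{K_e/K}(h')\in\CH_K[p^e]$ lifts, through the isomorphism $\Norm_{K_e/K}$, to $h'$ of the same order $p^{e'}$ with $1\le e'\le e$ and annihilated by $(\sigma-1)$, so $e'\in[1,e]$ and $h$ capitulates in $K_e$.

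For the base case $n=1$, total ramification gives $\Norm_{K_1/K}(\CH_{K_1})=\CH_K$, so the hypothesis $\order\CH_{K_1}=\order\CH_K$ makes $\Norm_{K_1/K}$ an isomorphism; since $\Norm_{K_1/K}((\sigma-1)c)=1$ for all $c$, injectivity forces $(\sigma-1)\CH_{K_1}=1$, i.e. $\CH_{K_1}=\CH_{K_1}^{G_1}$. I then induct on $n$: assuming $\CH_{K_n}=\CH_{K_n}^{G_n}\simeq\CH_K$, consider the degree-$p$ layer $K_{n+1}/K_n$ with $\tau:=\sigma^{p^n}$ generating its group. The norm $\Norm_{K_{n+1}/K_n}$ is onto and kills $(\tau-1)$, so it factors as a surjection of the coinvariants $\CH_{K_{n+1}}/(\tau-1)\CH_{K_{n+1}}\twoheadrightarrow\CH_{K_n}$; from the four-term exact sequence attached to $\tau-1$ these coinvariants have the same order as $\CH_{K_{n+1}}^{\langle\tau\rangle}$, and the Chevalley--Herbrand formula for $K_{n+1}/K_n$ identifies that order with $\order\CH_{K_n}$ (the crux, discussed below). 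Hence the surjection is an isomorphism, and since $\sigma$ is trivial on $\CH_{K_n}$ it is trivial on the coinvariants: $(\sigma-1)\CH_{K_{n+1}}\subseteq(\tau-1)\CH_{K_{n+1}}$, the reverse inclusion being automatic as $\tau-1=(\sigma-1)\big(\sum_{i=0}^{p-1}\sigma^{ip^n}\big)$. Thus $(\sigma-1)\CH_{K_{n+1}}=(\sigma^{p^n}-1)\CH_{K_{n+1}}$.

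Now comes the mechanism that closes the induction. The ring $\Z_p[G_{n+1}]$ is local with maximal ideal $\mathfrak m=(p,\,\sigma-1)$. Writing $w:=\sum_{i=0}^{p-1}\sigma^{ip^n}$, so that $\sigma^{p^n}-1=(\sigma-1)\,w$ and $w\equiv p\pmod{(\sigma-1)}$, the identity of the previous step gives, for $M:=\CH_{K_{n+1}}$, $(\sigma-1)M=(\sigma-1)wM\subseteq p\,(\sigma-1)M+(\sigma-1)^2M=\mathfrak m\,(\sigma-1)M$. As $(\sigma-1)M$ is a finite, hence finitely generated, $\Z_p[G_{n+1}]$-module, Nakayama's lemma yields $(\sigma-1)M=0$, i.e. $\CH_{K_{n+1}}=\CH_{K_{n+1}}^{G_{n+1}}$; combined with the genus isomorphism this gives $\CH_{K_{n+1}}\simeq\CH_K$ and completes the induction, proving (i). Part (ii) follows by running the identical argument modulo $p$ (starting from $\rk_p\CH_{K_1}=\rk_p\CH_K$, which makes the reduction of $\Norm_{K_1/K}$ an isomorphism on $\CH/\CH^{\,p}$): over the local ring $\F_p[G_{n+1}]$, of maximal ideal $(\sigma-1)$, one has $\sigma^{p^n}-1\in(\sigma-1)^2\F_p[G_{n+1}]$, the mod-$p$ genus comparison gives $(\sigma-1)\overline{\CH}_{K_{n+1}}=(\sigma^{p^n}-1)\overline{\CH}_{K_{n+1}}$ with $\overline{\CH}:=\CH/\CH^{\,p}$, and Nakayama forces $(\sigma-1)\overline{\CH}_{K_{n+1}}=0$, whence $\rk_p\CH_{K_{n+1}}=\rk_p\CH_{K_n}$.

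The step I expect to be the real obstacle is the layerwise genus identity $\order(\CH_{K_{n+1}}/(\tau-1)\CH_{K_{n+1}})=\order\CH_{K_n}$ (and its mod-$p$ rank analogue), equivalently that the Chevalley--Herbrand ratio $\prod_v e_v\big/\big(p\,(E_{K_n}:E_{K_n}\cap\Norm K_{n+1}^\times)\big)$ equals $1$ at every layer. When a single prime is totally ramified the ramification product is exactly $p$ and the coinvariant surjection already forces the unit index to be $1$, so the ratio is automatically $1$ and the induction runs unconditionally above the base case. With several totally ramified primes the unit indices genuinely have to be propagated up the tower; here I would isolate a separate lemma showing that the norm conditions imposed on $E_{K_n}$ at the bottom by $\order\CH_{K_1}=\order\CH_K$ persist upward because $L/K$ lies inside one fixed totally ramified cyclic extension (a Fukuda-type stability input), and only then feed the resulting ratio-$1$ statement into the Nakayama mechanism above.
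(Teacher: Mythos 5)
Your derivation of the two capitulation bullets from the stability bullet is correct and coincides with the paper's own argument: the paper proves Theorem \ref{main2} by quoting the stability and rank statements from \cite[Theorem 3.1 \& \S\,6(b)]{Gras2022$^a$} and then observing that stability forces $m(K_n)=1$, $s(K_n)=0$, $e(K_n)=e(K)$, so that Theorem \ref{main1} (i.e.\ Corollary \ref{maincoro}) applies; your applications of Theorem \ref{main1}\,(i)--(ii) to $K_{e}/K$ are exactly that step. Your base case is also right, and your Nakayama closure works, up to an algebra slip: $\sigma^{p^n}-1=(\sigma-1)\sum_{j=0}^{p^n-1}\sigma^{j}$, whereas with your $w=\sum_{i=0}^{p-1}\sigma^{ip^n}$ one has $(\sigma^{p^n}-1)\,w=0$ in $\Z_p[G_{n+1}]$, not $(\sigma-1)w=\sigma^{p^n}-1$; since the correct cofactor is $\equiv p^n \pmod{(\sigma-1)}$, hence lies in the maximal ideal, your conclusion $(\sigma-1)\CH_{K_{n+1}}=0$ survives. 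In particular, for $r=1$ (a single, totally ramified, prime) your induction is a complete self-contained proof of the stability bullet, which is more than the paper writes out.

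The genuine gap is the step you flagged yourself, and it cannot be deferred: for $r>1$ your induction needs, at every layer, $\order\CH_{K_{n+1}}^{\langle\tau\rangle}=\order\CH_{K_n}$ (your $\tau=\sigma^{p^n}$), i.e.\ $(\BE_{K_n}:\BE_{K_n}\cap\Norm_{K_{n+1}/K_n}(K_{n+1}^\times))=p^{\,r-1}$. This is precisely the arithmetic content of the stability theorem the paper cites; describing it as ``a Fukuda-type stability input'' to be fed into your mechanism makes the argument circular, since Fukuda-type stability is what Theorem \ref{main2}\,(i) asserts. The difficulty is real: the hypothesis $\order\CH_{K_1}=\order\CH_K$ only gives information on $\BE_K$ relative to $K_1/K$ (namely $\omega_{K_1/K}(\BE_K)=\Omega_{K_1/K}$), while the needed statement concerns $\BE_{K_n}$ relative to $K_{n+1}/K_n$, and no soft monotonicity transports one into the other (Lemma \ref{increasing} goes the wrong way: the factors increase with $n$). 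The missing lemma is true and provable --- functoriality of Hasse symbols identifies the restriction $\Omega_{K_{n+1}/K}\to\Omega_{K_n/K}$ with reduction modulo $\Omega_{K_{n+1}/K}^{p}$ and carries $\omega_{K_{n+1}/K}(\BE_K)$ onto $\omega_{K_n/K}(\BE_K)$, so Nakayama in the finite group $\Omega_{K_{n+1}/K}$ propagates fullness up the absolute tower, and then $\big(\frac{\varepsilon,\,K_{n+1}/K_n}{w}\big)=\big(\frac{\varepsilon,\,K_{n+1}/K}{v}\big)^{p^n}$ for $\varepsilon\in\BE_K$ gives $\omega_{K_{n+1}/K_n}(\BE_{K_n})\supseteq\Omega_{K_{n+1}/K}^{p^n}=\Omega_{K_{n+1}/K_n}$ --- but none of this is in your write-up, and it is the heart of the theorem.

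Part (ii) has a second gap. ``Running the identical argument modulo $p$'' is not available, because the Chevalley--Herbrand formula computes $\order\CH_{K_{n+1}}^{\tau}$ and not $\order\big(\CH_{K_{n+1}}/\CH_{K_{n+1}}^{p}\big)^{\tau}$; taking invariants does not commute with the quotient $\CH\mapsto\CH/\CH^{p}$, so there is no ``mod-$p$ genus comparison'' to quote. The alternative --- reduce the integral coinvariant isomorphism modulo $p$ --- needs that integral isomorphism, i.e.\ the order hypothesis of (i), which (ii) does not grant: rank equality at the first layer does not force $\order\CH_{K_1}=\order\CH_K$. So your proof of (ii) does not go through as written; here too the paper relies on \cite[\S\,6(b)]{Gras2022$^a$} (after Fukuda and Bandini), where the rank statement receives a separate treatment.
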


\begin{remark}\label{nzero}
Properties (i), (ii) of stability in Theorem \ref{main2} may occur only from 
some layer $K_{n_0}$, giving $\order \CH_{K_{n_0+1}} = \order \CH_{K_{n_0}}$. 
Indeed, we will see that the minimal level $n_0$ (if any) may depend on the structure
of $\CH_K$ when the exponent and the $p$-rank of $\CH_K$ are large.
To get the statement of stability in the first layer, it suffices  to replace the base field $K$ by 
$K' = K_{n_0}$, $L/K$ by $L/K'$ and $\CH_K$ by $\CH_{K'}$; then, when $N':=N-n_0$ 
is large enough, the capitulation of $\CH_{K'}$ implies, a fortiori, that of $\CH_K$.
See a detailed example of this phenomenon in \S\,\ref{20887}.
\end{remark}

The proof of Theorem \ref{main1} will be given by Corollary \ref{maincoro} to 
Theorem \ref{mainthm}. Then Theorem \ref{main2} comes from our previous work
\cite[Theorem 3.1 \& Section 6, \S\,(b)]{Gras2022$^a$} 
generalizing \cite{Fuku1994,KrSch1995,Band2007,LOXZ2022,MiYa2021}; it
corresponds, in the statement of  Theorem \ref{main1},
to the case $m(\Kk_n) = 1$ (from $\CH_{K_n} = \CH_{K_n}^{G_n}$), 
whence $s(\Kk_n)=0$ and the condition $e(\Kk) \leq N$ (since $e(\Ll) = e(\Kk)$ from the 
isomorphisms $\CH_{K_n} \ds \mathop{\simeq}^{\hbox{\tiny$\Norm_{K_n/K}$}} \CH_K$ 
given by the arithmetic norms); this case is called the $p$-class 
groups stability in the tower $L = \bigcup_{n = 0}^\N K_n$. 

In practice, the knowledge of $m(\Ll)$ determines the unique $s(\Ll) \geq 0$ such
that $m(\Ll) \in [p^{s(\Ll)}, p^{s(\Ll)+1}-1]$ and one must check if $s(\Ll) \in [0, N-1]$;
once this holds, one must have 
$e(\Ll) \leq N-s(\Ll)$ to get the capitulation. See the definition of the filtration
in \S\,\ref{filtration} and its computation with PARI \cite{PARI} in 
\S\,\ref{progfiltr}.\footnote{\,PARI programs may be copy and past, 
from any pdf file. We only give excerpts of the very numerous 
numerical results which can be found again from running these programs. 
They excerpts give an overview suggesting the high frequency 
of capitulations in the simplest cyclic $p$-extensions $L/K$, $L \subset K(\mu_\ell^{})$,
$\ell \equiv 1 \pmod {2p^\N}$. Detailed examples may be found 
in Subsections \ref{firstexamples1}, \ref{mainexamples2}.}

The claim on the
$p$-ranks for the case of $\Z_p$-extensions was given by Fukuda \cite{Fuku1994}, 
then found again by Bandini \cite{Band2007}. It holds for any cyclic totally ramified 
$p$-extension as we have explained in \cite[\S\,6\,(b)]{Gras2022$^a$}.

Theorems \ref{main1}, \ref{main2} express that, if the ``complexity'' of $\CH_L$ is not too 
important, then, in a not very intuitive way, $\CH_K$ capitulates in $L$. Conversely, 
if one knows that capitulation is impossible (e.g., minus $p$-class groups in extensions 
of CM-fields or some invariants attached to abelian $p$-ramification theory), then the 
complexity of these invariants strictly increases with $n$ (see Theorem \ref{nocap}).

\subsection{The different aspects of capitulation and aims of the paper}\label{history}
The general problem of capitulation\,\footnote{\,I recently learned (from a Lemmermeyer 
text) that the word {\it capitulation} was coined by Arnold Scholz. It is possible that
this term may be received as incongruous; a solution is to consider that a non
principal ideal ${\mathfrak a}$ is a troublemaker with respect to elementary arithmetic,
in which case, the terminology is perfectly understandable.} of $\CH_K$ in~$L$  
has been studied in a very large number of publications, first in the purpose
of the factorization problem for Dedekind rings as exposed in Martin \cite{Mart2011}.
It is impossible to give a complete bibliography, but one may cite, among many 
other contributions, subsequent to the pioneering works of Hilbert--Scholz--Taussky
(see Kisilevsky \url{https://doi.org/10.2140/PJM.1997.181.219} for prehistory):
\cite{Kisil1970,Tera1971,Bond1981,HeSc1982,Schm1985,GrJa1985,Jaul1986,Jaul1988,
Iwas1989,Suzu1991,Maire1996,Gras1997,Maire1998,Kuri1999,Thie2000,GrWe2000,
KoMo2000,GrWe2003,JauMi2006,Gonz2006,Vali2008,Bosc2009,Bem2012,Mayer2014,
AZT2016,AZTM2016,Bisw2016,AZT2017,Jaul2019$^b$,Jaul2022}, in which the reader 
will find more history and references. 

Except some Iwasawa's theory results on capitulation \cite{Iwas1973,Iwas1989},
most of these papers are related to the Artin--Furtw\"angler theorem
and its generalizations on capitulation in the Hilbert class field $H_K^\nr$
(or to the Hilbert Theorem $94$ in cyclic sub-extensions of $H_K^\nr/K$ 
as given in Miyake \cite{Miya1989}, Suzuki \cite{Suzu1991}), 
which is not our purpose since, on the contrary, 
we will study {\it totally ramified cyclic $p$-extensions} $L/K$ and more precisely the 
simplest tamely ramified $p$-extensions $L \subset K(\mu_\ell^{})$, 
$\ell \equiv 1\! \pmod {2 p^\N}$ prime, $[L : K] = p^\N$, which, surprisingly, 
are often capitulation fields of $\CH_K$ when $K$ is totally real or for non totally 
imaginary base fields. 

Many classical articles give cohomological expressions of the capitulation in 
terms of global units as the fact that, in the non-ramified case, $\Ker(\J_{L/K})$ 
is isomorphic to a subgroup of $\Hom^1(G,\BE_L)$, where $\BE_L$ is the group 
of units of $L$ and $G = \Gal(L/K)$ (see, e.g., Jaulent \cite[Chap.\,III, \S\,1]{Jaul1986}, 
\cite{Jaul1988}, then Bembom \cite{Bem2012} for more comments and references). 

Similarly, using sets of places $S$, $T$ and tamely ramified Galois 
extensions, Maire \cite[Th\'eor\`eme 4.1]{Maire1996} describes injective maps
$\CH_{L,(\ell)}/\J_{L/K}(\CH_{K,(\ell)}) \hookrightarrow \Hom^2(G,\BE_{L,(\ell)})$,
in the context $L \subset K(\mu_\ell^{})$, where $\CH_{K,(\ell)}$, $\CH_{L,(\ell)}$ 
are the ray-class groups modulo $(\ell)$ and $\BE_{L,(\ell)}$ is the group of units 
congruent to $1$ modulo~$(\ell)$.

But the aspect ``global units'' 
is more difficult since the behavior of the unit groups in $L/K$ is less known compared
to that of $p$-class groups, even if there are some links; indeed, we have the following 
classical ``exact sequence of capitulation'' obtained from the map which associates, with 
the invariant class of the ideal ${\mathfrak a}$, a unit $\varepsilon := \Norm_{L/K}(\alpha)$ 
from the relation ${\mathfrak a}^{\sigma - 1} =: (\alpha)$, $\alpha \in L^\times$:
\begin{equation}\label{suite}
1 \to \J_{L/K}(\CH_K) \cdot \CH_L^\ram  \to \CH_L^{G} 
\to \BE_K\cap \Norm_{L/K}(L^\times)/\Norm_{L/K}(\BE_L) \to 1, 
\end{equation}

\noindent
where $\CH_{L}^\ram$ is generated by the classes of the 
ramified prime ideals of~$L$; in the right term, if $\BE_K\cap \Norm_{L/K}(L^\times)$
depends on easier local norm considerations, $\Norm_{L/K}(\BE_L)$ is in general unknown. 

On the contrary, $\J_{L/K}(\CH_K)$, $\CH_{L}^\ram$, are subgroups of $\CH_L^G$ 
and the order of this group is known from the Chevalley--Herbrand formula 
\cite[pp. 4002--405]{Chev1933}. For generalizations of this formula, see Gras 
\cite{Gras1978} for isotopic components of $\CH_L^G$ in the abelian semi-simple 
case, Jaulent \cite[Chapitre III, p.\,167]{Jaul1986} with ramification and decomposition, 
Lemmermeyer \cite{Lemm2013} in the spirit of the previous work of Jaulent and some 
papers of Gonz\'alez-Avil\'es \cite{Gonz2006}; then our general higher fixed points 
formulas \cite{Gras1973,Gras2017$^a$} or its idelic translation \cite{LiYu2020} by Li--Yu, allow 
the algorithmic computation of $\CH_L$ from its filtration (see Subsection \ref{CheHer}). 

In \cite{Gras2022$^a$}, giving extensive numerical PARI computations, we have 
proposed a conjecture, whose main consequence should be an obvious and immediate 
proof of the real abelian Main Conjecture 
``$\CH_{K,\varphi} = (\CE_{K,\varphi} : \CF_{\!K,\varphi} )$'' 
(where  $\CE := \BE \otimes \Z_p$ and $\CF := \BF \otimes \Z_p$)
in terms of indices of Leopoldt's cyclotomic units and $\varphi$-components using 
$p$-adic characters $\varphi$ of $K$, in the semi-simple case\,\footnote{The complete 
statement being the following \cite[Section 1.4, then Theorem 4.6]{Gras2022$^b$}: 
Assume that $K/\Q$ is a real cyclic extension, of prime-to-$p$ degree. 
Let $\ell \equiv 1 \pmod{2 p^\N}$ be a 
prime totally inert in $K/\Q$ and let $L \subset K(\mu_\ell^{})$ be the 
subfield of degree $p^\N$ over $K$. Then, if $\CH_K$ capitulates 
in $L$, the ``Main Conjecture'' on the $p$-adic components 
$\CH_{K,\varphi}$, of $\CH_K$, holds (i.e., 
$\order \CH_{K,\varphi} = (\CE_{K,\varphi} : \CF_{\!K,\varphi})$ for all 
irreducible $p$-adic character $\varphi$ of $K$).}.

We can improve the scope of this conjecture in various directions taking into 
account the new criterion of Theorem \ref{main1} using algebraic norms
(see Remark \ref{nzero} about the definition of the level $n_0$):

\begin{conjecture}\label{conjcap}
Let $K$ be any real number field and let $\CH_K$ be its $p$-class group,
of exponent $p^{e(\Kk)}$. For any prime number $\ell \equiv 1 \pmod {2 p^\N}$, $N \geq e(\Kk)$,
let $K_n$ be the sub-extension of $K(\mu_\ell^{})$, of degree $p^n$ over $K$, $n \in [0,N]$:

$\bullet$ There exist infinitely many $\ell$'s such that $\CH_K$ capitulates in $K(\mu_\ell^{})$.

$\bullet$ There exist infinitely many $\ell$'s such that the capitulation of $\CH_K$ in 
$K(\mu_\ell^{})$ is due, for some $n \in [1,N]$, to: $e(\Kk_n) \in[1, n - s(\Kk_n)]$ if 
$m(\Kk_n) \in [p^{s(\Kk_n)}, p^{s(\Kk_n)+1}-1]$ for $s(\Kk_n) \in [0, n-1]$. 

$\bullet$ For $N\gg 0$, the case 
$\order \CH_{K_{n_0+1}} = \order \CH_{K_{n_0}}$ of stability, for some $n_0 < N$,
occurs for infinitely many $\ell$'s.
\end{conjecture}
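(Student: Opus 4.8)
The plan is to convert the three density assertions into statements about the orders and the module structure of the groups $\CH_{K_n}$, to which the capitulation criterion $\Nu_{\!L/K}(\CH_L)=1$ and the sufficient conditions of Theorem~\ref{main1} apply, and then to read off the arithmetic of $\ell$ that forces those conditions. The governing feature of the family is that, for all but finitely many $\ell\equiv 1\pmod{2p^\N}$, the field $K_n\subset K(\mu_\ell^{})$ is tamely and totally ramified over $K$ at the primes above $\ell$; hence the Chevalley--Herbrand formula \cite{Chev1933} and the higher fixed-point formulas of \cite{Gras1973,Gras2017$^a$} express the orders $\order\CH_{K_n}^{G_n}$ and the successive quotients of the Galois filtration $\{\CH_{K_n}^i\}$ in terms of ramification data and unit-norm indices.

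First I would isolate what must be produced. By Theorem~\ref{main1}(i), each bullet reduces to exhibiting infinitely many $\ell$ for which some layer satisfies $e(\Kk_n)\in[1,\,n-s(\Kk_n)]$ with $m(\Kk_n)\in[p^{s(\Kk_n)},\,p^{s(\Kk_n)+1}-1]$; since $N\geq e(\Kk)$ and the transfer factors through every layer, it suffices to realise this at the single layer named in each bullet ($n=N$ for the first, an intermediate $n$ with $s(\Kk_n)\geq 1$ for the second, $n=1$ for the stability bullet, the latter then propagated up the tower and converted to capitulation by Theorem~\ref{main2}, using $e(\Kk)\leq N$). Thus the whole problem is to keep the exponent $p^{e(\Kk_n)}$ and the filtration length $m(\Kk_n)$ small for a large set of $\ell$. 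Next I would set up the Chebotarev/Kummer dictionary: because ramification is concentrated at $\ell$, the Hasse norm theorem for the cyclic extension $K_n/K$ turns the unit-norm index $(\BE_K:\BE_K\cap\Norm_{K_n/K}(K_n^\times))$ appearing in the Chevalley--Herbrand formula and in the exact sequence \eqref{suite} into a purely local condition at $\ell$: whether a chosen system of generators of $\BE_K$ lies in the local norm group, i.e.\ whether prescribed units are $p^n$-power residues modulo~$\ell$. This is exactly a splitting condition in the Kummer extension $K(\mu_{p^\N},\BE_K^{1/p^\N})/K$, so by Chebotarev any prescribed local-norm pattern is realised by a positive density of $\ell$; the splitting type of $\ell$ in $K/\Q$, which fixes the ramification factor, is likewise a Chebotarev condition. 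Selecting the pattern that minimises $\order\CH_{K_n}^{G_n}$ therefore pins the \emph{genus order} to its smallest value along a set of $\ell$ of positive density.

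The hard part---and the reason the statement remains conjectural---is to pass from the genus order $\order\CH_{K_n}^{G_n}$ to the full invariants $e(\Kk_n)$, $m(\Kk_n)$, and indeed to $\order\CH_{K_n}$ itself, since the stability hypothesis of Theorem~\ref{main2} is a condition on the whole group and not merely on its ambiguous part. The higher fixed-point formulas determine the filtration quotients only up to successive unit-norm indices in the layers $K_{n}/K_{n-1}$, and these are \emph{not} independent Kummer conditions on $\ell$; their joint distribution is precisely what Cohen--Lenstra--type heuristics for this family predict but which no unconditional method currently reaches. Consequently I would expect to prove the three assertions only conditionally on such heuristics---this being the content of the companion Conjecture~\ref{conjprobas}---retaining the unconditional Chebotarev computation of the genus orders (hence the \emph{expected} positive densities) as the rigorous core, while the step controlling $m(\Kk_n)$ and $e(\Kk_n)$, equivalently the full module structure of $\CH_{K_n}$ as a $\Z_p[G_n]$-module, is the genuine obstacle that keeps the ``infinitely many~$\ell$'' claims open.
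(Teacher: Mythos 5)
This statement is a conjecture in the paper, so there is no proof to compare against: the paper supports it only by extensive PARI computations and by the probabilistic Conjecture \ref{conjprobas} on the class and norm factors of the filtration. Your proposal correctly refrains from claiming a proof, and its framing coincides with the paper's own justification — the reduction to the sufficient criteria of Theorems \ref{main1} and \ref{main2} at a single layer, the observation that the ramification and unit-norm (genus) data are Chebotarev/Kummer-controllable (which is exactly the mechanism of the earlier abelian capitulation results of Gras, Kurihara, Bosca and Jaulent cited right after the conjecture), and the identification of the joint behaviour of $m(\Kk_n)$ and $e(\Kk_n)$, i.e.\ the higher filtration quotients $\CH_{K_n}^{i+1}/\CH_{K_n}^i$, as the genuine open obstacle, which is precisely what Conjecture \ref{conjprobas} axiomatises heuristically.
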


\begin{remark}
This restriction to the family of $p$-extensions $L/K$, $L \subset K(\mu_\ell^{})$,
is an\-other point of view with respect to the case of abelian capitulations obtained in Gras
\cite{Gras1997} (1997), Kurihara \cite{Kuri1999} (1999), Bosca \cite{Bosc2009} (2009), 
Jaulent \cite{Jaul2019$^b$,Jaul2022} (2019/22). Indeed, 
all techniques in these papers need to built a finite set of abelian $p$-extensions $L_k$ 
of $\Q$, ramified at various primes, requiring many local arithmetic conditions
existing from Chebotarev theorem, whose compositum with $K$ gives a capitulation 
field of $\CH_K$; the method must apply to any abelian field $K$ (of suitable signature), 
of arbitrary increasing degree, obtained in an iterative process giving, for instance, that 
the maximal real subfield of $\Q\big(\bigcup_{f>0} \mu_f^{} \big)$ is principal (see in
\cite{Bosc2009} the most general statements).
\end{remark}

\section{Complexity of \texorpdfstring{$\CH_L$}{Lg} versus capitulation of 
\texorpdfstring{$\CH_K$}{Lg}}\label{JrondN}

Let $L/K$ be a totally ramified cyclic $p$-extension of degree 
$p^\N$, $N \geq 1$, of Galois group $G =: \langle \sigma \rangle$. Let
$\Nu_{\!L/K} := \sum_{i=0}^{p^\N-1} \sigma^i$,  
be the algebraic norm in $L/K$. From the law of decomposition of
an unramified prime ideal ${\mathfrak q}$ of $K$, 
we get, for ${\mathfrak Q} \mid {\mathfrak q}$ in $L$ and the
decomposition group $D$ of ${\mathfrak Q}$ (of order $f$),
$({\mathfrak q})_L = \prod_{\ov \sigma \in G/D} {\mathfrak Q}^{\ov \sigma}$, thus
$\Nu_{\!L/K}({\mathfrak Q}) = \prod_{i=0}^{p^\N-1} {\mathfrak Q}^{\sigma^i}
= ({\mathfrak q})_L^f = ({\mathfrak q}^f)_L =( \Norm_{L/K}({\mathfrak Q}))_L$;  
whence the relation:
\begin{equation}\label{algebraicnorm}
\Nu_{\!L/K}(\CH_L) = \J_{L/K} \circ \Norm_{L/K}(\CH_L) = \J_{L/K}(\CH_K).
\end{equation} 

Thus, $\CH_K$ capitulates in $L$ if and only if $\Nu_{\!L/K}(\CH_L) = 1$. 
So, the action of the algebraic norm characterizes the capitulation (complete
or incomplete) and it is clear that the result mainly depends on the $\Z_p[G]$
structure of $\CH_L$ which is expressed by means of the canonical associated 
filtration $\{\CH_L^i\}_{i \geq 0}$ that we are going to recall, from \cite{Gras2017$^a$}, 
improved english translation of  \url{https://doi.org/10.2969/jmsj/04630467}.

\subsection{Filtration of \texorpdfstring{$\CH_L$}{Lg} in the totally ramified case}
Let $L/K$ be a cyclic $p$-extension  of degree $p^\N$, $N \geq 1$, and Galois group 
$G = \langle \sigma \rangle$.
To avoid technical writings, assume that any prime ideal ${\mathfrak l}$ of $K$, 
ramified in $L/K$, is totally ramified, and that there are $r \geq 1$ 
such prime ideals. We do not assume $L/\Q$ Galois.

\subsubsection{Filtration and higher Chevalley--Herbrand formulas} \label{CheHer}
The generalizations of the Chevalley--Herbrand formula is based on the corresponding 
filtration $\{\CH_L^i\}_{i \geq 0}$ defined as follows:
\begin{equation*}
\CH_L^0 =1,\ \ \ \  \CH_L^1  := \CH_L^G, \ \ \ \ 
\CH_L^{i+1}/\CH_L^i := (\CH_L/\CH_L^i)^G,\  i \geq 0,
\end{equation*}

\noindent
up to $i=m(\Ll) := \min \{m \geq 0,\ \CH_L^{ (\sigma - 1)^m} = 1\}$, 
for which $\CH_L^{m(\Ll)} = \CH_L$. 

Denote by $\CI_L^i$ a $\Z[G]$-module 
of ideals of $L$, of finite type, generating $\CH_L^i$, with $\CI_L^0 = 1$,
$\CI_L^{i+1} \supseteq \CI_L^i,\, \forall i \geq 0$; $\CI_L^i$ is defined 
up to the group of principal ideals of $L$, thus $\Norm_{L/K}(\CI_L^i)$ is defined
up to $\big(\Norm_{L/K}(L^\times) \big)$. Note that the above invariants are constant
for $i \geq m(\Ll)$ and that $m(\Kk) = 1$ for $\CH_K \ne 1$ (otherwise $m(\Kk) = 0$).

This filtration has, for all $i \geq 0$, the following properties \cite[Theorem 3.6]{Gras2017$^a$}:
\begin{equation}\label{filtration}
\left\{ \begin{aligned}
&(i) \ \ \ \order\CH_L^1 = \order \CH_K \times 
\frac{p^{N (r-1)}}{(\BE_K : \BE_K \cap \Norm_{L/K}(L^\times))},\\
&(ii) \ \  \order (\CH_L^{i+1}/\CH_L^i) =  \frac{\order \CH_K}{\order \Norm_{L/K}(\CH_L^i)}
\!\times \!\frac{p^{N (r-1)}}{(\Lbda_K^i : \Lbda_K^i \cap \Norm_{L/K}(L^\times))}, \\
&\hspace{3.5cm} \Lbda_K^i := \{x \in K^\times,\ (x) \in \Norm_{L/K}(\CI_L^i) \} ,  \\
&(iii) \  \CH_L^i = \{h \in \CH_L,\ h^{(\sigma-1)^i} = 1\}, \\ 
&(iv) \ \ \order (\CH_L^{i+1}/\CH_L^i) \leq \order \CH_L^1, \\
&(v) \ \ \  \order \CH_L = \hbox{$\prod_{i=0}^{m(\Ll)-1}$} \order (\CH_L^{i+1}/\CH_L^i)
\leq (\order \CH_L^1)^{m(\Ll)}.
\end{aligned} \right.
\end{equation}

\noindent
The $\Lbda_K^i$'s are subgroups of $K^\times$ containing $\BE_K$, with 
$\Lbda_K^0 = \BE_K$ in (i). In particular, any $x \in \Lbda_K^i$ is local norm
in $L/K$ at all the non-ramified places. So, for any $(x) = \Norm_{L/K}({\mathfrak A})$,
${\mathfrak A} \in \CI_L^i$, which is also local norm at the ramified places, then
$x = \Norm_{L/K}(y)$, $y \in L^\times$ (Hasse's norm theorem) and there 
exists an ideal ${\mathfrak B}$ of $L$ such that 
${\mathfrak A} = (y) {\mathfrak B}^{\sigma-1}$; this constitutes an algorithm by
addition of the ${\mathfrak B}$'s to $\CI_L^i$ to get $\CI_L^{i+1}$ then 
$\CH_L^{i+1}$.\,\footnote{\,For explicit class field theory, Hasse's norm theorem, 
norm residue symbols, product formula, see, e.g., \cite[Theorem II.6.2, 
Definition II.3.1.2, Theorems II.3.1.3, 3.4.1]{Gras2005}.}
Since $\Lbda_K^0 = \BE_K$ is a $\Z$-module of finite type, this algorithm allows
to construct $\Lbda_K^i$ of finite type for all $i$, with $\Lbda_K^i \subseteq \Lbda_K^{i+1}$
(indeed, $\Norm_{L/K}(\CI_L^i)$ is of finite type, there is a finite number of relations of
principality between the generators and $\Lbda_K^i/\Lbda_K^i \cap \Norm_{L/K}(L^\times)$
is annihilated by $p^\N$).

The $i$-sequence $\order (\CH_L^{i+1}/\CH_L^i)$ is 
decreasing, from $\order \CH_L^1$ up to $1$, because of the injective maps
$\CH_L^{i+1}/\CH_L^i \hookrightarrow \CH_L^i/\CH_L^{i-1}
\hookrightarrow \cdots \hookrightarrow \CH_L^1$,
due to the action of $\sigma - 1$, giving the inequality in (v).

The first (resp. second) factor in (ii) is called the class (resp. norm) factor.

\subsubsection{Properties of the class and norm factors}
Since ramified places $v$ of $K$ are assumed to be totally ramified 
in $L/K$, their inertia groups $I_v(L/K)$ in $L/K$ are isomorphic to $G$. Let $\omega_{L/K}$ 
be the map which associates with $x \in \Lbda_K^i$ the family of Hasse's norm symbols 
$\big( \frac{x \, ,\, L/K}{v}\big) \in I_v(L/K)$. Since $x$ is local norm at 
the unramified places, $\omega_{L/K}(\Lbda_K^i)$ is contained (product formula) in:
$$\Omega_{L/K} := \big \{ (\tau_v)_{v} \in \hbox{$\bigoplus_{v}$} I_v(L/K), 
\ \, \hbox{$\prod_{v}$} \tau_v= 1 \big\} \simeq G^{r-1}; $$ 
then $\order\omega_{L/K}(\Lbda_K^i) = 
(\Lbda_K^i : \Lbda_K^i \cap \Norm_{L/K}(L^\times))$ divides $p^{N (r-1)}$. 

Let $K_n \subseteq L$ be of degree $p^n$ over $K$ and set $G_n := \Gal(K_n/K) 
=: \langle \sigma_n \rangle$, $n \in [0, N]$.
All of the above applies to the $K_n$'s;
denote by $\Lbda_K^i(n) \subset K^\times$ the invariants corresponding 
to $K_n/K$ instead of $L/K$; so $\Lbda_K^i(n) =
\{x \in K^\times,\ \, (x) \in \Norm_{K_n/K}(\CI_{K_n}^i) \}$, where 
$\CI_{K_n}^i$ represents $\CH_{K_n}^i$; so $\Lbda_K^0(n) = \BE_K$.
For $n=0$ and $i \geq 1$, $\CI_{\!K}^i$ generates $\CH_K$
and $\Lbda_K^i(0) = \{x \in K^\times, \ (x) \in \CI_{K}^i \}$
contains $\Lbda_K^0(0) = \BE_K$ and is given by relations
between elements of $\CI_{\!K}^i$; we have $\CI_K^0 = 1$.

\begin{lemma} \label{inclusion} 
For any $i$ fixed we may assume  $\Lbda_K^i(n+1) \subseteq 
\Lbda_K^i(n),\, \forall n \in [0, N-1]$.
\end{lemma}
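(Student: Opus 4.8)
The plan is to reduce the inclusion $\Lbda_K^i(n+1) \subseteq \Lbda_K^i(n)$ to a single structural fact — that the relative arithmetic norm carries the $i$-th filtration step of $\CH_{K_{n+1}}$ into that of $\CH_{K_n}$ — and then to exploit the freedom in the choice of the ideal modules $\CI_{K_n}^i$ down the tower $K = K_0 \subset \cdots \subset K_N = L$.

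First I would record the Galois equivariance of the norm. Since $L/K$ is abelian, every $K_m/K$ is Galois and each $\tau \in G_{n+1}$ restricts to $\tau|_{K_n} \in G_n$, with $\sigma_{n+1}|_{K_n} = \sigma_n$. For a prime $\mathfrak P$ of $K_{n+1}$ above $\mathfrak p$ of $K_n$ with relative residue degree $f$, one checks that $\tau\mathfrak P$ lies above $(\tau|_{K_n})\mathfrak p$ with the same $f$, so $\Norm_{K_{n+1}/K_n}(\mathfrak A^\tau) = \Norm_{K_{n+1}/K_n}(\mathfrak A)^{\tau|_{K_n}}$; passing to classes this gives $\Norm_{K_{n+1}/K_n}(h^{\sigma_{n+1}-1}) = \Norm_{K_{n+1}/K_n}(h)^{\sigma_n - 1}$ for $h \in \CH_{K_{n+1}}$. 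Iterating and invoking the characterization \eqref{filtration}(iii), namely $\CH_{K_m}^i = \{h : h^{(\sigma_m-1)^i}=1\}$, I obtain the key inclusion: if $h \in \CH_{K_{n+1}}^i$ then $\Norm_{K_{n+1}/K_n}(h)^{(\sigma_n-1)^i}=1$, whence $\Norm_{K_{n+1}/K_n}(\CH_{K_{n+1}}^i) \subseteq \CH_{K_n}^i$.

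Then I would use the freedom, emphasised after \eqref{filtration}, that each $\CI_{K_n}^i$ is determined only up to principal ideals. Fixing $i$ and inducting downward on $n$ from $N$, I take $\CI_{K_n}^i$ to be the $\Z[G_n]$-module generated by $\Norm_{K_{n+1}/K_n}(\CI_{K_{n+1}}^i)$ together with finitely many ideals completing the generation of $\CH_{K_n}^i$; this is a legitimate representative precisely because, by the key inclusion, the classes of $\Norm_{K_{n+1}/K_n}(\CI_{K_{n+1}}^i)$ already lie in $\CH_{K_n}^i$. (The monotonicity $\CI_{K_n}^i \subseteq \CI_{K_n}^{i+1}$ survives, since the norm respects the chain at level $n+1$ and $\CH_{K_n}^i \subseteq \CH_{K_n}^{i+1}$.) Finally, transitivity of the norm, $\Norm_{K_{n+1}/K} = \Norm_{K_n/K}\circ\Norm_{K_{n+1}/K_n}$, yields $\Norm_{K_{n+1}/K}(\CI_{K_{n+1}}^i) = \Norm_{K_n/K}\big(\Norm_{K_{n+1}/K_n}(\CI_{K_{n+1}}^i)\big) \subseteq \Norm_{K_n/K}(\CI_{K_n}^i)$; unwinding the definition of $\Lbda_K^i(\,\cdot\,)$ then gives $\Lbda_K^i(n+1) \subseteq \Lbda_K^i(n)$, as desired.

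The main point of care — rather than a genuine obstacle — is the equivariance bookkeeping: identifying $\sigma_{n+1}|_{K_n}$ with $\sigma_n$ so that the powers of $(\sigma-1)$ transport correctly under the norm, and verifying that the downward-inductive choice of $\CI_{K_n}^i$ is at once a valid generating module for $\CH_{K_n}^i$ and a module containing the norm of the level above.
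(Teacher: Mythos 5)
Your proposal is correct and follows essentially the same route as the paper: establish $\Norm_{K_{n+1}/K_n}(\CH_{K_{n+1}}^i) \subseteq \CH_{K_n}^i$, exploit the freedom to modify $\CI_{K_n}^i$ modulo principal ideals of $K_n$ (working downward from the top layer) so that $\Norm_{K_{n+1}/K_n}(\CI_{K_{n+1}}^i) \subseteq \CI_{K_n}^i$, and then conclude by transitivity of the norm. The only difference is that you spell out, via Galois-equivariance of the arithmetic norm and the characterization \eqref{filtration}\,(iii), the key inclusion on the filtration steps, which the paper simply asserts as known.
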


\begin{proof}
We have $\Norm_{K_{n+1}/K_n} (\CH_{K_{n+1}}^i)  \subseteq \CH_{K_n}^i$; so, for
any ideal ${\mathfrak A}_{n+1} \in \CI_{\!K_{n+1}}^i$, one may write
$\Norm_{K_{n+1}/K_n}({\mathfrak A}_{n+1}) = (\alpha_n) \, {\mathfrak A}_n$, where
$\alpha_n \in K_n^\times$ and ${\mathfrak A}_n \in \CI_{\!K_n}^i$, in which case
{\it modifying} the definition of $\CI_{\!K_n}^i$ modulo principal ideals of $K_n$, 
one may assume $\Norm_{K_{n+1}/K_n} (\CI_{\!K_{n+1}}^i ) \subseteq  
\CI_{\!K_n}^i$ whence $\Norm_{K_{n+1}/K} (\CI_{\!K_{n+1}}^i ) 
\subseteq \Norm_{K_{n}/K} (\CI_{\!K_n}^i)$; this modifies $\Lbda_K^i(n)$ 
modulo $\Norm_{K_n/K}(K_n^\times)$ which does not modify
$\order \omega_{K_n/K}(\Lbda_K^i(n))$.
Using the process from the top, we obtain
$\Lbda_K^i(N) \subseteq \Lbda_K^i(N-1) \subseteq \cdots \subseteq 
\Lbda_K^i(1) \subseteq \Lbda_K^i(0)$. For $i=0$, $\Lbda_K^0(n) = \BE_K$,
for all $n \geq 0$
\end{proof}

\begin{lemma}\label{increasing}
For $i \geq 0$ fixed, the integers $\order  \big( \CH_{K_n}^{i+1} / \CH_{K_n}^i \big)$ 
define an increasing $n$-sequence from $\order \big( \CH_K^{i+1} / \CH_K^i \big)=1$ 
up to $\order \big( \CH_L^{i+1} / \CH_L^i \big)$; the $\order  \CH_{K_n}^i$'s define an 
increasing $n$-sequence from $\order \CH_K^i = \CH_K$ up to $\order  \CH_L^i$.
\end{lemma}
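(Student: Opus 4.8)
The plan is to deduce the second assertion from the first and to establish the first by means of the higher Chevalley--Herbrand formula \eqref{filtration}\,(ii). For the deduction, the telescoping $\order \CH_{K_n}^i = \prod_{j=0}^{i-1}\order(\CH_{K_n}^{j+1}/\CH_{K_n}^j)$ shows that once each quotient $n \mapsto \order(\CH_{K_n}^{j+1}/\CH_{K_n}^j)$ is known to be non-decreasing, the product $n \mapsto \order \CH_{K_n}^i$ is non-decreasing as well (a product of non-decreasing sequences of integers $\geq 1$ being non-decreasing). The boundary values then come from evaluating at the two ends: at $n=0$, \eqref{filtration}\,(iii) with $\sigma_0 = 1$ gives $\CH_K^0 = 1$ and $\CH_K^j = \CH_K$ for $j \geq 1$, so for $i \geq 1$ the starting values are $\order(\CH_K^{i+1}/\CH_K^i)=1$ and $\order \CH_K^i = \order \CH_K$; at $n=N$ we have $K_N = L$. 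Thus everything reduces to the monotonicity, for fixed $i$, of $n \mapsto \order(\CH_{K_n}^{i+1}/\CH_{K_n}^i)$.

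To treat this I would apply \eqref{filtration}\,(ii) to each layer $K_n/K$, which is again a totally ramified cyclic $p$-extension with the same $r$ totally ramified primes (a prime totally ramified in $L/K$ has inertia group $G$, hence surjects onto $G_n$ and is totally ramified in $K_n/K$). This writes $\order(\CH_{K_n}^{i+1}/\CH_{K_n}^i)$ as the product of a class factor $\frac{\order \CH_K}{\order \Norm_{K_n/K}(\CH_{K_n}^i)}$ and a norm factor $\frac{p^{n(r-1)}}{(\Lbda_K^i(n) : \Lbda_K^i(n) \cap \Norm_{K_n/K}(K_n^\times))}$, and it suffices to show each is non-decreasing in $n$, since their product is then non-decreasing.

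The class factor is the easier one. From the $G_n$-equivariance of the norm one gets $\Norm_{K_{n+1}/K_n}(h^{(\sigma_{n+1}-1)^i}) = \Norm_{K_{n+1}/K_n}(h)^{(\sigma_n-1)^i}$, whence, by \eqref{filtration}\,(iii), the inclusion $\Norm_{K_{n+1}/K_n}(\CH_{K_{n+1}}^i) \subseteq \CH_{K_n}^i$ used in the proof of Lemma \ref{inclusion}. Composing with $\Norm_{K_n/K}$ and using $\Norm_{K_{n+1}/K} = \Norm_{K_n/K} \circ \Norm_{K_{n+1}/K_n}$ yields $\Norm_{K_{n+1}/K}(\CH_{K_{n+1}}^i) \subseteq \Norm_{K_n/K}(\CH_{K_n}^i)$, so the denominator $\order \Norm_{K_n/K}(\CH_{K_n}^i)$ is non-increasing and the class factor is non-decreasing.

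The norm factor is where the work lies, and it is the step I expect to be the main obstacle. By the discussion preceding Lemma \ref{inclusion} its denominator equals $\order \omega_{K_n/K}(\Lbda_K^i(n))$, so the factor is the index $[\Omega_{K_n/K} : \omega_{K_n/K}(\Lbda_K^i(n))]$ inside $\Omega_{K_n/K} \simeq G_n^{r-1}$. The decisive input is the functoriality of the Hasse norm symbol: the projection $G_{n+1} \twoheadrightarrow G_n$ induces a surjection $\pi : \Omega_{K_{n+1}/K} \twoheadrightarrow \Omega_{K_n/K}$ sending $\omega_{K_{n+1}/K}(x)$ to $\omega_{K_n/K}(x)$ for all $x \in K^\times$ (surjectivity being vacuous for $r=1$, where $\Omega_{K_n/K}=1$, and obtained for $r \geq 2$ by correcting a lift in one free component). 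Together with the nesting $\Lbda_K^i(n+1) \subseteq \Lbda_K^i(n)$ furnished by Lemma \ref{inclusion}, this gives, for $V_n := \omega_{K_n/K}(\Lbda_K^i(n))$, that $\pi(V_{n+1}) \subseteq V_n$, and then the index chase $[\Omega_{K_{n+1}/K} : V_{n+1}] \geq [\Omega_{K_{n+1}/K} : \pi^{-1}(\pi(V_{n+1}))] = [\Omega_{K_n/K} : \pi(V_{n+1})] \geq [\Omega_{K_n/K} : V_n]$ completes the monotonicity. The delicate points to get right are precisely these functorialities --- the compatible choices of the $\CI_{K_n}^i$ modulo principal ideals that make the $\Lbda_K^i(n)$ nest (as in Lemma \ref{inclusion}), and the compatibility of the symbol maps and of the groups $\Omega_{K_n/K}$ under $\pi$; once they are in place the comparison of indices is elementary.
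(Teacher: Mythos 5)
Your proposal is correct and takes essentially the same route as the paper: you split $\order(\CH_{K_n}^{i+1}/\CH_{K_n}^i)$ via the higher Chevalley--Herbrand formula \eqref{filtration}\,(ii) into the class factor (monotone by the nesting of the norm images $\Norm_{K_n/K}(\CH_{K_n}^i)$) and the norm factor (monotone by Lemma \ref{inclusion} plus the compatibility of Hasse symbols with the restriction $\Omega_{K_{n+1}/K}\too\hspace{-0.5cm}\too\Omega_{K_n/K}$), and you deduce the second claim from the first by multiplicativity, exactly as the paper does by induction on $i$. Your index chase through $\pi^{-1}(\pi(V_{n+1}))$ is only a repackaging of the paper's ratio computation $p^{\rho_{K_{n+1}}^i-\rho_{K_n}^i}\geq 1$, which exploits the same kernel of order $p^{r-1}$ of that restriction map.
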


\begin{proof} Consider, for $i \geq 1$ fixed and $n \geq 0$, the two factors of the 
$n$-sequence:
$$\order  \big( \CH_{K_n}^{i+1} / \CH_{K_n}^i \big)=
\ffrac{\order  \CH_{K}}{\order \Norm_{K_n/K}( \CH_{K_n}^i) } \times \ffrac{p^{n (r -1)} }
{\order \omega_{K_n/K}(\Lbda_K^i(n))}. $$
As $\Norm_{K_{n+1}/K} (\CH_{K_{n+1}}^i) \subseteq \Norm_{K_{n}/K} (\CH_{K_n}^i)$,
$\ds p^{c_{K_n}^i} := \frac{\order \CH_{K}}{\order \Norm_{K_n/K}( \CH_{K_n}^i)}$ defines
an increasing $n$-sequence from $1$ up to $p^{c_L^i} \mid \order \CH_K$.
The factor
$p^{\rho_{K_n}^i} := \ffrac{p^{n (r -1)}}{\order \omega_{K_n/K} (\Lbda_K^i(n))}$
defines an increasing $n$-sequence from $1$ up to $p^{\rho_L^i}$ since,
from Lemma \ref{inclusion}:
$$p^{\rho_{K_{n+1}}^i \!\!- \rho_{K_n}^i}\! =\! p^{r-1} 
\frac{\order \omega_{K_n/K}(\Lbda_K^i(n))}
{\order \omega_{K_{n+1}/K}(\Lbda_K^i(n+1))} \geq p^{r-1} 
\frac{\order \omega_{K_n/K}(\Lbda_K^i (n))}
{\order \omega_{K_{n+1}/K}(\Lbda_K^i (n))} ; $$ 

\noindent
so, in the restriction 
$\Omega_{K_{n+1}/K} \too \hspace{-0.52cm} \too \Omega_{K_{n}/K}$
(whose kernel is of order $p^{r-1}$ due to the total ramification of each place),
the image of $\omega_{K_{n+1}/K}(\Lbda_K^i (n))$ is 
$\omega_{K_n/K}(\Lbda_K^i (n))$ because of the properties 
of Hasse's symbols, whence $p^{\rho_{K_{n+1}}^i \!\!- \rho_{K_n}^i} \geq 1$ 
and the result for the $n$-sequence 
$p^{\rho_{K_n}^i}$, with maximal value $p^{\rho_L^i}$. 
For $i=0$, $\CH_{K_n}^0=1$ for all $n \geq 0$.
The first claim of the lemma
holds for the $n$-sequence $\order \big( \CH_{K_n}^{i+1} / \CH_{K_n}^i \big)$; 
for $n=N$, one gets the formula $\order  \big( \CH_L^{i+1} / \CH_L^i \big) = 
p^{c_L^i} \!\cdot p^{\rho_L^i}$.

Assuming, by induction on  $i \geq 0$, that the $n$-sequence $\order \CH_{K_n}^i$ is
increasing, the property follows for the $n$-sequence $\order \CH_{K_n}^{i+1}$.
\end{proof}

\begin{remark}
The $n$-sequence $m(\Kk_n)$ is an 
increasing sequence from $1$ (if $\CH_K \ne 1$, $0$ otherwise) up to $m(\Ll)$.
The $\order \CH_{K_n}$'s
define an increasing $n$-sequence from $\order \CH_K$ up to $\order \CH_L$ since
$\order \CH_{K_{n+1}} \geq \order \CH_{K_{n+1}}^{\Gal(K_{n+1}/K_n)} =
\order \CH_{K_n} \, \ffrac{p^{r-1}}{\omega_{K_{n+1}/K_n}(\BE_{K_n})}
\geq \order \CH_{K_n}$. 
The integers $e(\Kk_n)$ and $\rk_p(\CH_{K_n})$ define increasing $n$-sequences.
\end{remark}

The interest of this filtration is that standard probabilities may apply at each 
level $n$ to the algorithm computing $\CH_{K_n}^{i+1}$ from $\CH_{K_n}^i$ by 
means of the factors $p^{c_{K_n}^i}$ and $p^{\rho_{K_n}^i}$, giving plausible 
heuristics in the spirit of the works of Koymans-Pagano \cite{KoPa2022}, Smith \cite{Smith2022},
leading to a considerable generalization of pioneering works as that of Morton \cite{Mort1982}, 
Gerth III \cite{Gerth1986} and many others, the theory over $\Q$ giving generalizations of
the well-known R\'edei matrices. 
Indeed, let $x \in \Lbda_K^i(n)$ be such that $(x) = \Norm_{K_n/K}({\mathfrak A})$ 
(when it holds for ${\mathfrak A} \in \CI_{K_n}^i$), and let $x = \Norm_{K_n/K}(y)$, when 
$\omega_{K_n/K}(x)=1$ (depen\-ding on Hasse's symbols), so that ${\mathfrak A} 
= (y) {\mathfrak B}^{\sigma-1}$ giving ${\mathfrak B} \in \CI_{K_n}^{i+1}$.
So we propose the following conjecture on the algorithmic evolution of the class and norm 
factors, respectively:

\begin{conjecture}\label{conjprobas}
Let $L/K$ be any cyclic $p$-extension of degree $p^\N$, $N \geq 1$, of Galois group $G$; 
we assume that $L/K$ is ramified at $r \geq 1$ places of $K$, totally ramified in $L/K$.
Then, the orders of each of the two factors (class and norm) in the $i$-sequence 
$\order (\CH_L^{i+1}/\CH_L^i)$, follow binomial laws as $i$ increases, based on the
following probabilities:

$\bullet$ Let $c \in \CH_K$; the probability, for an ideal ${\mathfrak C}$ 
of $L$, that the $p$-class of the ideal $\Norm_{L/K}({\mathfrak C})$ equals $c$, 
is $\ffrac{1}{\order \CH_K}$.

$\bullet$ Let $\gamma \in  G^{r-1}$; the probability, for $x \in K^\times$
local norm at all the non-ramified places, that $\omega_{L/K}(x) = \gamma$, 
is $\ffrac{1}{p^{N(r-1)}}$.
\end{conjecture}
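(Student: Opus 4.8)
Because this is a heuristic assertion rather than a theorem, the honest ``proof proposal'' is a program for making the random model precise and testing it against the deterministic constraints already established, rather than a derivation from axioms. The plan is to start from the two-factor decomposition of formula \ref{filtration}(ii), which writes $\order(\CH_L^{i+1}/\CH_L^i)$ as a \emph{class factor} $\order\CH_K/\order\Norm_{L/K}(\CH_L^i)$ times a \emph{norm factor} $p^{N(r-1)}/\order\omega_{L/K}(\Lbda_K^i)$, and to treat the growth of the two cokernels, as $i$ runs from $0$ to $m(\Ll)$, as independent random occupancy processes driven respectively by the two uniform distributions stated in the bullets.

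For the class factor, I would model the passage $\CI_L^i \rightsquigarrow \CI_L^{i+1}$ exactly as in the Hasse-norm algorithm of \S\ref{CheHer}: one adjoins finitely many ideals $\mathfrak B$ with $\mathfrak A = (y)\mathfrak B^{\sigma-1}$, and each carries a norm class $\Norm_{L/K}(\mathfrak B) \in \CH_K$. The first bullet is precisely the assumption that these classes are uniform in $\CH_K$ and independent across generators, so that $\Norm_{L/K}(\CH_L^i)$ is the subgroup successively generated by uniform random elements. Its order then grows by a coupon-collector / occupancy law, the chance of capturing a fresh coset at each increment being the fraction of $\CH_K$ not yet hit; this is what produces the claimed binomial behaviour of the class factor, as it decreases from the starting value $\order\CH_K$ down to $1$.

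The norm factor I would treat by the parallel argument, replacing $\CI_L^i$ by $\Lbda_K^i$ and $\Norm_{L/K}$ by the Hasse-symbol map $\omega_{L/K}$ into $\Omega_{L/K} \simeq G^{r-1}$. The second bullet makes each newly admissible $x \in \Lbda_K^{i+1}$ contribute a symbol uniform in $G^{r-1}$, so $\order\omega_{L/K}(\Lbda_K^i)$ fills $p^{N(r-1)}$ by the same occupancy mechanism, and I would model the two processes as independent on the grounds that one lives in $\CH_K$ and the other in the inertia product $G^{r-1}$, which enter the formula through disjoint arithmetic data. Multiplying the two independent factors then yields the asserted binomial law for $\order(\CH_L^{i+1}/\CH_L^i)$.

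The hard part --- and the reason this remains a conjecture --- is justifying the two uniformity hypotheses and their independence: there is no unconditional input forcing the arithmetic norm on ideals, or the family of Hasse symbols, to equidistribute, and the conditioning is far from free. Indeed any honest model must remain compatible with the deterministic constraints already proved, namely that the sequence $\order(\CH_L^{i+1}/\CH_L^i)$ is decreasing (from the injections $\CH_L^{i+1}/\CH_L^i \hookrightarrow \CH_L^i/\CH_L^{i-1}$ of \S\ref{CheHer}), bounded by $\order\CH_L^1$ by \ref{filtration}(iv), and multiplies to $\order\CH_L$ by \ref{filtration}(v). A rigorous treatment would have to control how these constraints condition the otherwise-free random model, and to align it with the equidistribution technology of Koymans--Pagano and Smith; short of that, the most one can responsibly do is verify the predicted laws against the extensive PARI statistics, which is the route the paper in fact takes.
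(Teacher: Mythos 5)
This statement is a conjecture: the paper offers no proof, only the heuristic motivation immediately preceding it (the algorithmic passage from $\CI_L^i$ to $\CI_L^{i+1}$ via Hasse's norm theorem, with the norm classes of the adjoined ideals and the Hasse symbols treated as equidistributed, in the spirit of Koymans--Pagano and Smith), backed by the PARI statistics. Your proposal correctly recognizes this and reconstructs essentially the same random model --- uniformity of the $\Norm_{L/K}$-classes in $\CH_K$ and of the $\omega_{L/K}$-symbols in $G^{r-1}$ driving the class and norm factors, constrained by the deterministic properties \eqref{filtration}(iv)--(v) and tested numerically --- so it matches the paper's approach.
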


\subsubsection{Program computing the filtrations 
\texorpdfstring{$\{\CH_{K_n}^i\}_{i \geq 1}$}{Lg}}\label{progfiltr}

The following program may be used for the calculation of the Galois structure of the 
$\CH_{K_n}$'s, $K_n \subseteq L \subset K(\mu_\ell^{})$, {\it whatever the base field 
$K$ of prime-to-$p$ degree $d$}, given, as usual, by means of a monic polynomial of 
$\Z[x]$ (the condition $p \nmid d$ simplifies the computation of a generator $\sigma_n$ 
(in ${\sf S}$) of $\Gal(K_n/K)$). Let $r(\Kk_n) := \rk_\Z(\BH_{K_n})$
(in ${\sf rKn}$).

For this, one must indicate the prime $p$ in ${\sf p}$, the number ${\sf Nn}$ of layers 
$K_n$ considered, the polynomial ${\sf PK}$ defining $K$, a prime ${\sf ell}$ congruent 
to $1$ modulo $2p^\N$, ${\sf N \geq Nn}$, and a value ${\sf mKn}$ for computing the 
${\sf h_j^i := h_j^{(S - 1)^i}}$ for ${\sf 1 \leq i \leq mKn}$ and ${\sf 1 \leq j \leq rKn}$, 
where the ${\sf h_j}$'s are the $r(\Kk_n)$ generators of the whole class group $\BH_{K_n}$ 
given by PARI (in ${\sf CKn=Kn.clgp}$), and where ${\sf S}$ is chosen of order $p^n$ in 
${\sf G=nfgaloisconj(Kn)}$ by testing the orders.\,\footnote{\, Warning: 
for some class group computations, PARI uses random primes in some analytic 
contexts, so that the generators given by ${\sf Kn.clgp}$ may vary; but the 
corresponding matrices of exponents are ``equivalent''. For this observation, 
run the programs several times.}

So $\CH_{K_n}^i = \{h \in \CH_{K_n},\  h^{(\sigma_n-1)^i} = 1)\}$, $1 \leq i \leq m(\Kk_n)$
(see \eqref{filtration}\,(iii)). PARI works with independent generators ${\sf h_j}$ 
of $\BH_{K_n}$, of orders ${\sf e_j}$ (given in ${\sf CKn[2]}$); thus, for any
data ${\sf [e_1,\ldots,e_{rKn}]}$ given by ${\sf bnfisprincipal(Kn,Y)[1]}$
for an ideal ${\sf Y}$ whose class is ${\sf h= \prod_j h_j^{e_j}}$,
the program gives, instead,
the list ${\sf E:= [\ov e_1,\ldots, \ov e_{rKn}]}$ defining the $p$-class of
${\sf Y}$ (in $\CH_{K_n}$) from ${\sf \ov h = \prod_j \ov h_j^{\ov e_j}}$,
${\sf \ov e_j = lift(Mod(e_j,p^{n_j}))}$, where ${\sf p^{n_j}}$ is the 
$p$-part of the order of ${\sf h_j}$; this does not modify the Galois structure 
of $\CH_{K_n}$ and the outputs are more readable.
So, the ideal ${\sf Y}$ is $p$-principal if and only if ${\sf E = [0, \ldots , 0]}$.
The outputs are written under the form ${\sf \ov h_j^{(\sigma-1)^i} 
= [\ov e_1,\ldots, \ov e_{rKn}]}$ instead of ${\sf \ov h_j^{(\sigma-1)^i} = 
\ov h_1^{\ov e_1} \cdots \ov h_{rKn}^{\ov e_{rKn}}}$.

The invariant $m(\Kk_n)$ is obtained (for ${\sf mKn}$ large enough) 
for the first $i$ giving zero matrices in the test of principality of the ${\sf h_j^i}$'s. 

Below, we take as example the cyclic cubic field of conductor ${\sf f=703}$, 
for which, using the structure of $\BZ$-module, $\BZ=\Z[\exp(\frac{2 i \pi}{3})]$,
we have $\CH_K \simeq \BZ/2\BZ$; taking ${\sf ell=97}$, ${\sf mKn=3}$,
the results are given for ${\sf n=1}$ and ${\sf n=2}$ and ${\sf r}$ is the number 
of prime ideals of $K$ dividing $\ell$:

\ft\begin{verbatim}
PROGRAM COMPUTING THE h_j^[(S-1)^i]:
{p=2;Nn=2;PK=x^3+x^2-234*x-729;ell=97;mKn=3;K=bnfinit(PK,1);CK0=K.clgp;
r=matsize(idealfactor(K,ell))[1];print("p=",p," Nn=",Nn," PK=",PK,
" ell=",ell," mKn=",mKn," CK0=",CK0[2]," r=",r);
for(n=1,Nn,Qn=polsubcyclo(ell,p^n);Pn=polcompositum(PK,Qn)[1];
Kn=bnfinit(Pn,1);CKn=Kn.clgp;dn=poldegree(Pn);
print("CK",n,"=",CKn[2]);rKn=matsize(CKn[2])[2];
\\Search of a generator S of Gal(Kn/K):
G=nfgaloisconj(Kn);Id=x;for(k=1,dn,Z=G[k];ks=1;while(Z!=Id,
Z=nfgaloisapply(Kn,G[k],Z);ks=ks+1);if(ks==p^n,S=G[k];break));
\\Computation of the image of CKn by (S-1)^i:
for(j=1,rKn,X=CKn[3][j];Y=X;for(i=1,mKn,YS=nfgaloisapply(Kn,S,Y);
T=idealpow(Kn,Y,-1);Y=idealmul(Kn,YS,T);B=bnfisprincipal(Kn,Y)[1];
\\computation in Ehij of the modified exponents of B:
Ehij=List;for(j=1,rKn,c=B[j];w=valuation(CKn[2][j],p);c=lift(Mod(c,p^w)); 
listput(Ehij,c,j));print("h_",j,"^[","(S-1)^",i,"]=",Ehij));print()))}
\end{verbatim}\ns
\ft\begin{verbatim}
p=2  Nn=2  f=703  PK=x^3+x^2-234*x-729  ell=97  mKn=3  CK0=[6,2]  r=1
CK1=[6,2,2,2]=[2,2,2,2]
h_1^[(S-1)^1]=[1,1,0,0]       h_2^[(S-1)^1]=[1,1,0,0]
h_3^[(S-1)^1]=[0,0,1,1]       h_4^[(S-1)^1]=[0,0,1,1]
h_1^[(S-1)^2]=[0,0,0,0]       h_2^[(S-1)^2]=[0,0,0,0]
h_3^[(S-1)^2]=[0,0,0,0]       h_4^[(S-1)^2]=[0,0,0,0]
h_1^[(S-1)^3]=[0,0,0,0]       h_2^[(S-1)^3]=[0,0,0,0]
h_3^[(S-1)^3]=[0,0,0,0]       h_4^[(S-1)^3]=[0,0,0,0]
CK2=[12,4,2,2]=[4,4,2,2]
h_1^[(S-1)^1]=[0,2,1,1]       h_2^[(S-1)^1]=[0,2,1,0]
h_3^[(S-1)^1]=[2,2,0,0]       h_4^[(S-1)^1]=[2,0,0,0]
h_1^[(S-1)^2]=[0,2,0,0]       h_2^[(S-1)^2]=[2,2,0,0]
h_3^[(S-1)^2]=[0,0,0,0]       h_4^[(S-1)^2]=[0,0,0,0]
h_1^[(S-1)^3]=[0,0,0,0]       h_2^[(S-1)^3]=[0,0,0,0]
h_3^[(S-1)^3]=[0,0,0,0]       h_4^[(S-1)^3]=[0,0,0,0]
CK3=[12,4,2,2]=[4,4,2,2]
\end{verbatim}\ns

This gives $m(\Kk_1)=2$, $\CH_{K_1}^{\sigma_1-1} = \langle h_1h_2, h_3h_4 \rangle$,
$\CH_{K_1}^{G_1} = \langle h_1h_2^{-1}, h_3h_4^{-1} \rangle \simeq \BZ/2\BZ$.
Then $m(\Kk_2)=3$, $\CH_{K_2}^{\sigma_2 - 1} = 
\langle h_2^2h_3h_4, h_2^2 h_3,h_1^2h_2^2, h_1^2 \rangle$
$= \langle h_1^2, h_2^2 , h_3,  h_4\rangle \simeq (\BZ/2\BZ)^2$, 
whence $\CH_{K_2}^{G_2} =(\CH_{K_2})^2 \simeq \BZ/2\BZ$. Then
$\CH_{K_2}^{(\sigma_2 - 1)^2} = \langle h_1^2, h_2^2 \rangle
\simeq (\CH_{K_2})^2 \simeq \BZ/2\BZ$, $\CH_{K_2}^2
= \langle  h_1^2, h_2^2, h_3,  h_4 \rangle \simeq (\BZ/2\BZ)^2$.
These computations will be pursued to obtain a partial capitulation 
in $K_1$ and a complete capitulation in $K_2$ (stability from $K_2$).

\subsection{Classes, units and capitulation kernel}\label{firstexamples1}
In this subsection, we restrict ourselves to the case $L \subseteq K(\mu_\ell^{})$, 
$\ell \equiv 1 \pmod {2 p^\N}$, with $K$ real of prime-to-$p$ degree $d$, so 
that $L/K$ is totally ramified at all the $r$ prime ideals ${\mathfrak l} \mid \ell$ of $K$. 
Chevalley--Herbrand's formula $\order\CH_L^G = \order \CH_K \times \ffrac{p^{N (r-1)}}
{\order \big (\BE_K/\BE_K \cap \Norm_{L/K}(L^\times)\big)}$ and exact sequence of 
capitulation \eqref{suite} lead to the relation:
\begin{equation}\label{general}
\order \big( \J_{L/K}(\CH_K) \, \CH_L^\ram \big) \times
\order \big(\BE_K /\Norm_{L/K}(\BE_L) \big) = \order \CH_K \times p^{N \,(r-1)}.
\end{equation}

Since $L_0 \subset \Q(\mu_\ell^{})$ is $p$-principal (indeed, 
$\CH_{L_0}^{\Gal(L_0/\Q)} = 1$) and since $p \nmid d$, the primes 
${\mathfrak L}_i \mid {\mathfrak l}_i$ of $L$, $i=1,\ldots , r$, fulfill a  
relationship of the form ${\mathfrak L}_1 \cdots {\mathfrak L}_r = (\alpha_0)_L^{}$, 
$\alpha_0^{} \in L_0^\times$, so that $\rk_p(\CH_L^\ram) \leq r-1$. 
Then $(\CH_L^\ram)^{p^\N} = \J_{L/K}(\CH_K^\ram)$,
$\Norm_{L/K}(\CH_L^\ram) = \CH_K^\ram$, where $\CH_K^\ram 
\subseteq \CH_K$ is generated by the $p$-classes of the ${\mathfrak l}_i$'s.
One verifies easily that, in $L = \bigcup_{n=0}^\N K_n$, 
$$\hbox{$\order \CH_{K_n}^\ram$, \ \ \ 
$\order \big(\BE_K /\Norm_{K_n/K}(\BE_{K_n}) \big), \ \ \ \ffrac{p^{N (r-1)}}
{\order \big (\BE_K/\BE_K \cap \Norm_{L/K}(L^\times)\big)}$} $$ 

\noindent
(see Lemma \ref{increasing} for the last one) define increasing $n$-sequences and 
that  $\order \J_{K_n/K}(\CH_{K})$ is decreasing. This suggests that for some 
$\ell$'s, with $N \gg 0$, there is capitulation of $\CH_K$ in $K_{n_0}$ 
and, from \eqref{general}, relations of the form:
$$\order \CH_{K_n}^\ram = p^{a\, n + a_0}, \ \ \ 
\order \big(\BE_K /\Norm_{K_n/K}(\BE_{K_n}) \big) = p^{b \, n + b_0},\ \, \forall \, n \geq n_0, $$
with $a + b = r-1$ and $a_0 + b_0 = v_p(\order \CH_K)$.

\subsubsection{Case $r=1$}
The case $r=1$ is particular since, whatever $n$, all the factors are 
finite in the relation \eqref{general} which becomes:
$$\order \big( \J_{L/K}(\CH_K) \, \CH_L^\ram \big) \times
\order \big(\BE_K /\Norm_{L/K}(\BE_L) \big) = \order \CH_K, $$
giving, possibly, stationary $n$-sequences from some layer $K_{n_0}$ up to $L$.
The case $r=1$ supposes that $\ell$ does not split in $K$ and we can assume, 
for instance, that $K/\Q$ is cyclic of prime-to-$p$ degree, in which case
$\CH_L^\ram = 1$, whence:
$$\order \J_{L/K}(\CH_K) \times \order \big(\BE_K /\Norm_{L/K}(\BE_L) \big)
= \order \CH_K, $$
so that complete capitulation in $L$ is equivalent to:
\begin{equation}\label{equalorders}
 \order \big(\BE_K /\Norm_{L/K}(\BE_L) \big) = \order \CH_K,
\end{equation}
with stationary $n$-sequences from some layer $K_{n_0}$.

Of course, only orders coincide in \eqref{equalorders} since structures may be very 
different; we will consider two examples of this phenomenon. We then compute
$\Nu_{K_n/K}(\CH_{K_n})$ by means of the $\Nu_{K_n/K}(h_j)$, $1 \leq j \leq r(K_n)$.

\begin{remark}
An remarkable fact, in a diophantine viewpoint, is that, when the class
of ${\mathfrak a}$ capitulates in some $K_n$, the writing of the generator
$\alpha \in K_n^\times$, of the extended ideal $({\mathfrak a})_{K_n}$, 
on the $\Q$-basis ${\sf Kn.zk}$ of the field $K_n$, needs most often 
oversized coefficients, and increasing with $n$ (several thousand digits 
and, often, PARI proves the principality without giving these coefficients).
If the reader wishes to verify this fact, it suffices to add the instruction
${\sf print(bnfisprincipal(Kn,Y))}$ giving the whole data for the ideal ${\sf Y}$
considered.
\end{remark}

\begin{example}
{\rm We consider the cubic field of conductor $31923$, with $p=2$, $\ell = 257$ ($N = 7$).

\ft\begin{verbatim}
p=2  f=31923  PK=x^3-10641*x+227008  CK0=[6,2,2,2]  ell=257  r=1
CK1=[18,6,2,2,2,2]=[2,2,2,2,2,2]
h_1^[(S-1)^1]=[0,0,0,0,0,0]   h_2^[(S-1)^1]=[0,0,0,0,0,0]  
h_3^[(S-1)^1]=[0,1,0,0,1,0]   h_4^[(S-1)^1]=[0,0,0,0,1,1]  
h_5^[(S-1)^1]=[0,0,0,0,0,0]   h_6^[(S-1)^1]=[0,0,0,0,0,0]
h_1^[(S-1)^2]=[0,0,0,0,0,0]   h_2^[(S-1)^2]=[0,0,0,0,0,0]  
h_3^[(S-1)^2]=[0,0,0,0,0,0]   h_4^[(S-1)^2]=[0,0,0,0,0,0]  
h_5^[(S-1)^2]=[0,0,0,0,0,0]   h_6^[(S-1)^2]=[0,0,0,0,0,0]
norm in K1/K of the component 1 of CK1:[0,0,0,0,0,0]
norm in K1/K of the component 2 of CK1:[0,0,0,0,0,0]
norm in K1/K of the component 3 of CK1:[0,1,0,0,1,0]
norm in K1/K of the component 4 of CK1:[0,0,0,0,1,1]
norm in K1/K of the component 5 of CK1:[0,0,0,0,0,0]
norm in K1/K of the component 6 of CK1:[0,0,0,0,0,0]
Incomplete capitulation, m(K1)=2, e(K1)=1
CK2=[36,12,2,2,2,2]=[4,4,2,2,2,2]
h_1^[(S-1)^1]=[2,2,0,0,0,0]   h_2^[(S-1)^1]=[2,2,1,1,1,0]
h_3^[(S-1)^1]=[0,2,1,1,1,0]   h_4^[(S-1)^1]=[0,2,0,0,1,1]  
h_5^[(S-1)^1]=[0,2,1,1,0,1]   h_6^[(S-1)^1]=[2,0,1,1,0,1]
h_1^[(S-1)^2]=[0,0,0,0,0,0]   h_2^[(S-1)^2]=[0,2,0,0,0,0]
h_3^[(S-1)^2]=[0,2,0,0,0,0]   h_4^[(S-1)^2]=[2,2,0,0,0,0]
h_5^[(S-1)^2]=[2,0,0,0,0,0]   h_6^[(S-1)^2]=[2,0,0,0,0,0]
norm in K2/K of the component 1 of CK2:[0,0,0,0,0,0]
norm in K2/K of the component 2 of CK2:[0,0,0,0,0,0]
norm in K2/K of the component 3 of CK2:[0,0,0,0,0,0]
norm in K2/K of the component 4 of CK2:[0,0,0,0,0,0]
norm in K2/K of the component 5 of CK2:[0,0,0,0,0,0]
norm in K2/K of the component 6 of CK2:[0,0,0,0,0,0]
Complete capitulation, m(K2)=3, e(K2)=2
\end{verbatim}\ns

The data for $K_2$ give $\CH_{K_2}^{G_2} = \langle h_1h_5h_6, \,h_2h_4 h_6 \rangle 
\simeq \BZ/4\BZ$, since the two independent generators $h_1$, $h_2$, are of order $4$.
The exact sequence \eqref{suite} reduces to the isomorphism of $\BZ$-modules 
$\CH_{K_2}^{G_2} \simeq \BE_K /\Norm_{K_2/K}(\BE_{K_2})$, which 
are of order $\order \CH_K = 16$, but are not isomorphic to $\CH_K$.
We have, from the relation \eqref{equalorders} and since $\BE_K \simeq \BZ$, 
the isomorphisms of $\BZ$-modules
$\CH_{K_2}^{G_2} \simeq \BE_K /\Norm_{K_2/K}(\BE_{K_2})\simeq  \BZ/4\BZ$
and $\CH_K \simeq \BZ/2\BZ \times \BZ/2\BZ$.}
\end{example}

\begin{example}
{\rm We consider a quadratic field with $p=3$, $\ell = 19$ inert in $K$ ($N=2$).
Let $K=\Q(\sqrt {32009})$ for which $\CH_K \simeq \Z/3\Z \times \Z/3\Z$. 
The general Program gives an incomplete capitulation in $K_1$, then a complete 
capitulation in $K_2$:

\ft\begin{verbatim}
PK=x^2-32009  CK0=[3,3]  ell=19  r=1
CK1=[9,3]
h_1^[(S-1)^1]=[3,0]        h_2^[(S-1)^1]=[3,0]
h_1^[(S-1)^2]=[0,0]        h_2^[(S-1)^2]=[0,0]
norm in K1/K of the component 1 of CK1:[3,0]
norm in K1/K of the component 2 of CK1:[0,0]
Incomplete capitulation, m(K1)=2, e(K1)=2
CK2=[9,3]
h_1^[(S-1)^1]=[0,0]        h_2^[(S-1)^1]=[3,0]
h_1^[(S-1)^2]=[0,0]        h_2^[(S-1)^2]=[0,0]
norm in K2/K of the component 1 of CK2:[0,0]
norm in K2/K of the component 2 of CK2:[0,0]
Complete capitulation, m(K2)=2, e(K2)=2
\end{verbatim}\ns

The data for $K_2$ give $\CH_{K_2}^{G_2} = \langle h_1 \rangle \simeq \Z/9\Z$. 
Since $\J_{K_2/K}(\CH_K)=1$, $\CH_{K_2}^\ram = 1$ and $\BE_K \simeq \Z$,
the relations \eqref{general} \eqref{equalorders} become, in $K_2/K$, the 
isomorphism {of cyclic groups}
$\CH_{K_2}^{G_2} \simeq \BE_K/ \Norm_{K_2/K}(\BE_{K_2})  \simeq \Z/9\Z$,
while $\CH_K \simeq  \Z/3\Z \times \Z/3\Z$.}
\end{example}

\subsubsection{Case $r > 1$}
In this case, an heuristic is that there is no obstruction
about $\CH_{K_n}^\ram$, $\BE_K /\Norm_{K_n/K}(\BE_{K_n})$ as 
$\Z_p[G_n]$-modules of standard $p$-ranks, except a bounded 
exponent which may increase as soon as the orders of the modules increase 
in \eqref{general}, regarding $N$. Under complete capitulation, one gets:
\begin{equation}\label{cas2}
\order \CH_{K_n}^\ram \times \order \big(\BE_K /\Norm_{K_n/K}(\BE_{K_n}) \big) 
= \order \CH_K \times p^{n \,(r-1)},\ \, \forall \, n \geq n_0.
\end{equation}

\begin{example}\label{1951}
{\rm Consider $K_3$ in the following example with $p=2$, $\ell = 17$ totally split 
in the cyclic cubic field $K$ of conductor $f=1951$, and complete capitulation of $\CH_K$ in $K_1$
(the others layers are computed for checking); 
we have $\CH_K \simeq \BZ/2\BZ$ and $\CH_{K_3} \simeq \BZ/8\BZ \times \BZ/4\BZ$:

\ft\begin{verbatim}
p=2  f=1951  PK=x^3+x^2-650*x-289  CK0=[2,2]  ell=17  r=3
CK1=[4,4,2,2]
h_1^[(S-1)^1]=[2,0,0,0]   h_2^[(S-1)^1]=[0,2,0,0] 
h_3^[(S-1)^1]=[0,0,0,0]   h_4^[(S-1)^1]=[0,0,0,0]
h_1^[(S-1)^2]=[0,0,0,0]   h_2^[(S-1)^2]=[0,0,0,0] 
h_3^[(S-1)^2]=[0,0,0,0]   h_4^[(S-1)^2]=[0,0,0,0]
norm in K1/K of the component 1 of CK1:[0,0,0,0]
norm in K1/K of the component 2 of CK1:[0,0,0,0]
norm in K1/K of the component 3 of CK1:[0,0,0,0]
norm in K1/K of the component 4 of CK1:[0,0,0,0]
Complete capitulation, m(K1)=2, e(K1)=2
CK2=[4,4,4,4]
h_1^[(S-1)^1]=[2,0,0,0]   h_2^[(S-1)^1]=[0,2,0,0] 
h_3^[(S-1)^1]=[2,0,2,0]   h_4^[(S-1)^1]=[0,2,0,2]
h_1^[(S-1)^2]=[0,0,0,0]   h_2^[(S-1)^2]=[0,0,0,0] 
h_3^[(S-1)^2]=[0,0,0,0]   h_4^[(S-1)^2]=[0,0,0,0]
norm in K2/K of the component 1 of CK2:[0,0,0,0]
norm in K2/K of the component 2 of CK2:[0,0,0,0]
norm in K2/K of the component 3 of CK2:[0,0,0,0]
norm in K2/K of the component 4 of CK2:[0,0,0,0]
Complete capitulation, m(K2)=2, e(K1)=2
CK3=[8,8,4,4]
h_1^[(S-1)^1]=[0,0,2,0]   h_2^[(S-1)^1]=[2,2,2,0]
h_3^[(S-1)^1]=[2,0,0,0]   h_4^[(S-1)^1]=[6,2,0,2]
h_1^[(S-1)^2]=[4,0,0,0]   h_2^[(S-1)^2]=[0,4,0,0]
h_3^[(S-1)^2]=[0,0,0,0]   h_4^[(S-1)^2]=[0,0,0,0]
norm in K3/K of the component 1 of CK3:[0,0,0,0]
norm in K3/K of the component 2 of CK3:[0,0,0,0]
norm in K3/K of the component 3 of CK3:[0,0,0,0]
norm in K3/K of the component 4 of CK3:[0,0,0,0]
Complete capitulation, m(K3)=3, e(K3)=3
\end{verbatim}\ns

We compute $\CH_{K_1}^\ram$, $\CH_{K_2}^\ram$ and $\CH_{K_3}^\ram$, 
adding the following instructions after a program running:

\ft\begin{verbatim}
For K1:
A=component(idealfactor(Kn,ell),1)[3];bnfisprincipal(Kn,A)[1]=[2,0,0,0]
\\Checking the principality of A^2 with components of a generator:
A2=idealpow(Kn,A,2);bnfisprincipal(Kn,A2)=[[0,0,0,0],[-51,0,-2,2,0,0]]
For K2:
B=component(idealfactor(Kn,ell),1)[3];bnfisprincipal(Kn,B)[1]=[0,0,2,2] 
\\Checking the principality of B^2 with components of a generator:
B2=idealpow(Kn,B,2);bnfisprincipal(Kn,B2)=[[0,0,0,0],
[-199260,-90688,102100,13880,4054,-14216,-8292,8559,-6223,-3433,3403,7557]]
For K3:
C=component(idealfactor(Kn,ell),1)[3];bnfisprincipal(Kn,C)[1]=[2,2,0,2] 
C2=idealpow(Kn,C,2);bnfisprincipal(Kn,C2)[1]=[4,4,0,0]
\\Checking the principality of C^4 with components of a generator:
C4=idealpow(Kn,C,4);bnfisprincipal(Kn,C4)=[[0,0,0,0], 
[-57074733,49681698,-55181004,32125541,42753200,-11450554,20535876,
-4037958, -4486534,-2178833,-1875179,3883122,-1527899,1665071,4332070,
2101150,1108465,-1251165, -1504106,445954,-292536,-677913,157262,-159406]]
\end{verbatim}\ns

The previous data and formula \eqref{cas2} give:
\begin{equation*}
\left\{ \begin{aligned}
\CH_{K_1} & \simeq \BZ/4\BZ \times \BZ/2\BZ,  \ \CH_{K_1}^{\sigma_1 - 1} = 
\langle h_1^2, h_2^2 \rangle \simeq \BZ/2\BZ, \\
\CH_{K_1}^{G_1} & \simeq \BZ/2\BZ \times \BZ/2\BZ, \ \CH_{K_1}^\ram 
\simeq \BZ/2\BZ, \  \BE_K/\Norm_{K_1/K}(\BE_{K_1}) \simeq \BZ/4\BZ, 
\end{aligned} \right.
\end{equation*}

\noindent
since $\BE_K$ is a free $\BZ$-module of rank $1$ and $\BE_K/\Norm_{K_1/K}(\BE_{K_1})$
of order $16$.
\begin{equation*}
\left\{ \begin{aligned}
\CH_{K_2} & \simeq \BZ/4\BZ \times \BZ/4\BZ,  \ \CH_{K_2}^{\sigma_2 - 1} = \CH_{K_2}^2
\simeq \BZ/2\BZ \times \BZ/2\BZ, \\
\CH_{K_2}^{G_2} & \simeq \BZ/2\BZ \times \BZ/2\BZ,  \  \CH_{K_2}^\ram \simeq \BZ/2\BZ, \  
\BE_K/\Norm_{K_2/K}(\BE_{K_2}) \simeq \BZ/4\BZ, 
\end{aligned} \right.
\end{equation*}
\begin{equation*}
\left\{ \begin{aligned}
\CH_{K_3} & \simeq \BZ/8\BZ \times \BZ/4\BZ,  \ \CH_{K_3}^{\sigma_3 - 1} = \CH_{K_3}^2
\simeq \BZ/4\BZ \times \BZ/2\BZ, \\
\CH_{K_3}^{G_3} & \simeq \BZ/4\BZ \times \BZ/2\BZ,  \  \CH_{K_3}^\ram \simeq \BZ/4\BZ, \  
\BE_K/\Norm_{K_3/K}(\BE_{K_3}) \simeq \BZ/4\BZ, 
\end{aligned} \right.
\end{equation*}

Chevalley--Herbrand's formula $\order \CH_{K_n}^{G_n} \!= 
\order \CH_K \times \ffrac{4^n}
{(\BE_K : \BE_K \cap \Norm_{K_n/K}(K_n^\times))}$ gives:
$$\BE_K/\BE_K \cap \Norm_{K_1/K}(K_1^\times) = 1 \ \  \&\ \  
\BE_K/\BE_K \cap \Norm_{K_n/K}(K_n^\times) \simeq \BZ/2\BZ\ \  \
\hbox{for $n \in \{2,3\}$}.$$ }
\end{example}

\subsubsection{Conclusion about orders versus structures} 
One may ask (for instance in the cubic case with $p=2$ to simplify), what happens in the 
tower $L = K(\mu_{\ell}^{})$ if $\CH_K$ has a large $p$-rank and/or a large exponent and if 
we suppose the capitulation of $\CH_K$ (necessarily in a larger layer $K_{n_0}$) ? 
The exact sequence \eqref{suite} looks like:
$$1 \to \BZ/2^{R_n}\BZ \too \CH_{K_n}^{G_n} \simeq \BZ/2^{x_n}\BZ \times \BZ/2^{y_n}\BZ \too 
\BZ/2^{E_n}\BZ \to 1, \ \  R_n, E_n \geq 0, $$
since $\CH_{K_n}^\ram \simeq \BZ/2^{R_n}\BZ$ and 
$\BE_K \cap \Norm_{K_n/K}(K_n^\times)/\Norm_{K_n/K}(\BE_{K_n}) 
\simeq \BZ/2^{E_n}\BZ$ are $\BZ$-monogenic. 
The Chevalley--Herbrand formula becomes $4^{x_n+y_n} = 4^H \times 4^{n \,\rho_n}$,
where $\order \CH_K = 4^H$ ($\rho_n = 0$ for all $n$, if $r=1$, otherwise $\rho_n$ increases 
up to some limit $\rho_N^{}$).

So, $\CH_{K_n}^{G_n}$ is at most the product of two $\BZ$-monogenic
components, possibly of large orders since $x_n+y_n = R_n+E_n = H + n \rho_n$.
In the case $r=1$ where $R_n = \rho_n = 0$, $\CH_{K_n}^{G_n}\simeq
\BE_K/\Norm_{K_n/K}(\BE_{K_n})$ (with orders $\order \CH_K$).

The philosophy of such examples is that whatever the structure of $\CH_K$,
there is no obstruction for the relations between {\it orders and structures} of invariants 
associated to the $\CH_{K_n}$'s by means of the exact sequence \eqref{suite}
and the Chevalley--Herbrand formula, invariants whose {\it algebraic structures} have 
canonical limitations, especially in terms of $p$-ranks (more precisely, 
$\BE_K/\Norm_{K_n/K}(\BE_{K_n})$ as monogenic $\Z_p[\Gal(K/\Q)]$-module
and $\CH_{K_n}^\ram$ of $p$-rank bounded by the number of ramified places
except one).

In the case of a real abelian base field $K$, this is typical of the Main Conjecture 
philosophy due to the analytic framework giving only orders and not precise 
structures (see \cite {Gras2022$^b$}).

\subsection{Decomposition of the algebraic norm 
\texorpdfstring{$\Nu_{\!L/K} \in \Z[G]$}{Lg}}
Let $G$ be the cyclic group of order $p^\N$ and let $\sigma$ be 
a generator of $G$. Put $x := \sigma - 1$; then:
$$\Nu_{\!L/K} = \sm_{i=0}^{p^\N - 1} \sigma^i = \sm_{i=0}^{p^\N-1} (x+1)^i =
\ffrac{(x+1)^{p^\N} -1}{x} = \sm_{i=1}^{p^\N} \bbinom{p^\N}{i} x^{i-1}. $$

We have the following elementary property which is perhaps known in Iwasawa's 
theory, but we have not found suitable references; see however Jaulent
\cite[IV.2\,(b)]{Jaul1986}, Washington \cite[\S\,13.3]{Wash1997} or 
Bandini--Caldarola \cite{BaCa2016,Cald2020} for classical computations
in the Iwasawa algebra $\Z_p[[T]]$:

\begin{theorem}\label{mainthm}
The algebraic norm $\Nu_{\!L/K} = \sm_{i=0}^{p^\N-1} \sigma^i \in \Z[G]$ 
is, for all $k \in [1, p^\N-1]$, of the form
$\Nu_{\!L/K} = (\sigma -1)^k \cdot A_k(\sigma -1, p)+p^{f(k)} \cdot B_k(\sigma -1, p)$, 
$A_k$, $B_k \in \Z[\sigma -1, p]$, where $f(k)  = N- s$ if $k \in [p^s, p^{s+1}-1]$ for $s \in [0, N-1]$.
\end{theorem}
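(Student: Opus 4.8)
The plan is to work entirely with the expansion already recorded above, namely $\Nu_{\!L/K} = \sum_{i=1}^{p^N}\binom{p^N}{i}\,x^{i-1}$ with $x := \sigma-1$, and to split this polynomial at degree $k$. Writing $\Nu_{\!L/K} = \sum_{i=1}^{k}\binom{p^N}{i}\,x^{i-1} + \sum_{i=k+1}^{p^N}\binom{p^N}{i}\,x^{i-1}$, the second sum is visibly divisible by $x^{k}$, so it equals $(\sigma-1)^k A_k(\sigma-1,p)$ with $A_k := \sum_{i=k+1}^{p^N}\binom{p^N}{i}\,x^{i-1-k} \in \Z[\sigma-1]$. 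Everything then reduces to showing that the low-degree part $\sum_{i=1}^{k}\binom{p^N}{i}\,x^{i-1}$ has all of its coefficients divisible by $p^{f(k)}$, for then it equals $p^{f(k)} B_k(\sigma-1,p)$ with $B_k := \sum_{i=1}^{k} p^{-f(k)}\binom{p^N}{i}\,x^{i-1} \in \Z[\sigma-1]$, and the two pieces assemble into the asserted identity.

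So the whole content lies in the single divisibility claim: $p^{f(k)} \mid \binom{p^N}{i}$ for every $i \in [1,k]$. The key is the exact valuation $v_p\!\big(\binom{p^N}{i}\big) = N - v_p(i)$, valid for $1 \le i \le p^N$. First I would derive this from the elementary identity $i\,\binom{p^N}{i} = p^N\,\binom{p^N-1}{i-1}$: taking $p$-adic valuations gives $v_p(i) + v_p\!\big(\binom{p^N}{i}\big) = N + v_p\!\big(\binom{p^N-1}{i-1}\big)$, so it suffices to see that $\binom{p^N-1}{i-1}$ is prime to $p$. This last fact follows from Lucas' theorem: since $p^N-1$ has all of its $N$ base-$p$ digits equal to $p-1$, every digit of $i-1$ is dominated by the corresponding digit of $p^N-1$, whence $\binom{p^N-1}{i-1} \equiv \prod_j \binom{p-1}{(i-1)_j} \not\equiv 0 \pmod p$.

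With the valuation formula in hand, the determination of $f(k)$ is a short optimisation. For $i \in [1,k]$ the quantity $v_p\!\big(\binom{p^N}{i}\big) = N - v_p(i)$ is minimised exactly when $v_p(i)$ is largest, i.e.\ when $i$ is the highest power of $p$ not exceeding $k$. If $k \in [p^s, p^{s+1}-1]$ then $p^s \le k < p^{s+1}$, so the maximal admissible value of $v_p(i)$ is $s$, attained at $i = p^s \le k$. Hence $\min_{1\le i \le k} v_p\!\big(\binom{p^N}{i}\big) = N-s = f(k)$, which gives both that $p^{f(k)}$ divides every coefficient $\binom{p^N}{i}$, $i \in [1,k]$, and that the exponent cannot be improved. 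The main (and essentially only) delicate point is thus the valuation formula; once it is established, the interval condition $k \in [p^s, p^{s+1}-1]$ is precisely what pins the exponent to $N-s$, and the remainder is the bookkeeping of the two sums described above.
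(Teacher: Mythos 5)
Your proposal is correct, and its overall skeleton coincides with the paper's: both expand $\Nu_{\!L/K}=\sum_{i=1}^{p^N}\binom{p^N}{i}x^{i-1}$ in $x=\sigma-1$, split off the tail $\sum_{i>k}$ as $(\sigma-1)^k A_k$, and reduce everything to showing that $p^{N-s}$ divides $\gcd\big(\binom{p^N}{1},\ldots,\binom{p^N}{k}\big)$ when $k\in[p^s,p^{s+1}-1]$. Where you genuinely diverge is in the proof of this divisibility. The paper proceeds in two lemmas: it computes $v_p\big(\binom{p^N}{p^s}\big)=N-s$ exactly via Legendre's digit formula $v_p(m!)=\frac{m-S(m)}{p-1}$, and then proves the inequality $v_p\big(\binom{p^N}{k}\big)\geq N-s$ for the remaining $k$ in the interval by writing the ratio $\binom{p^N}{k}\binom{p^N}{p^s}^{-1}$ as a product of explicit fractions and checking each factor is a $p$-adic unit (a somewhat delicate case analysis on $h=k-p^s$). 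You instead establish the single exact formula $v_p\big(\binom{p^N}{i}\big)=N-v_p(i)$ for all $i\in[1,p^N]$, from the identity $i\binom{p^N}{i}=p^N\binom{p^N-1}{i-1}$ together with the observation (via Lucas, or equivalently Kummer's carry criterion) that $\binom{p^N-1}{i-1}$ is prime to $p$ because every base-$p$ digit of $p^N-1$ equals $p-1$; the determination of $f(k)$ then becomes a one-line minimization of $N-v_p(i)$ over $i\leq k$. Your route is shorter, avoids the ratio bookkeeping entirely, and yields strictly more — the exact valuation of every coefficient rather than only the gcd, which also shows at once that the exponent $f(k)$ is optimal; the paper's argument, by contrast, is self-contained at the level of Legendre's formula and does not invoke Lucas' theorem. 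One cosmetic remark: the theorem phrases $A_k,B_k$ as elements of $\Z[\sigma-1,p]$ because the paper (and its PARI program) tracks powers of $p$ formally in the coefficients; your $A_k,B_k\in\Z[\sigma-1]$ carry the same mathematical content and this is not a gap.
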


\begin{proof}
From:
\begin{equation*}
\Nu_{\!L/K} = \bbinom{p^\N}{1} + x\bbinom{p^\N}{2}+ \cdots + x^{k-1}\bbinom{p^\N}{k} + 
x^k \Big[\bbinom{p^\N}{k+1} +x \bbinom{p^\N}{k+2} + \cdots + x^{p^\N-1-k} \bbinom{p^\N}{p^\N} \Big], 
\end{equation*}

\noindent
we deduce that:
$A_k(x, p) = \bbinom{p^\N}{k+1} + x \bbinom{p^\N}{k+2} +
\cdots +x^{p^\N-1-k} \bbinom{p^\N}{p^\N}$. 

The computation of $B_k(x, p)$ depends on the $p$-adic valuations
of the $\bbinom{p^\N}{j}$, $j \in [1,k]$. To find the maximal factor $p^{f(k)}$
dividing all the coefficients of the polynomial
$\bbinom{p^\N}{1} + x \bbinom{p^\N}{2} + \cdots + x^{k-1}\bbinom{p^\N}{k}$,   
in other words, to find the $p$-part of:
$$\gcd \Big (\bbinom{p^\N}{1}, \bbinom{p^\N}{2},\ldots , \bbinom{p^\N}{k} \Big),$$
we consider $s \in [0, N-1]$. 

Let $v$ be the $p$-adic valuation map.

\begin{lemma} 
One has $v\big (\binom{p^\N}{p^s} \big) = N-s,\, \forall s \in [0, N-1]$.
\end{lemma}

\begin{proof}
We have $\binom{p^\N}{p^s} = \ffrac{p^\N!}{p^s! \cdot (p^\N-p^s)!}$; then, using the well-known
formula: 
$$v(m!) = \ffrac{m - S(m)}{p-1}, \ \, m \geq 1, $$ 
where $S(m)$ is the sum of the digits in the
writing of $m$ in base $p$, we get: 
$$\hbox{$v(p^\N!)=\ffrac{p^\N-1}{p-1}$, \ \ $v(p^s!)=\ffrac{p^s-1}{p-1}$,\ \
$v((p^\N-p^s)!)=\ffrac{p^\N-p^s - (p-1) (N-s)}{p-1}$,}$$ 
since $p^\N-p^s$ may be written $p^s (p^{{}_{N-s}} - 1)$ with:
$$p^{{}_{N-s}} -1 =1 (p-1) +p (p-1)+ p^2(p-1) + \cdots + p^{{}_{N-s-2}}(p-1) + p^{{}_{N-s-1}}(p-1), $$
giving $N-s$ times the digit $p-1$. Whence, for all $s \in [0, N-1]$:

\vspace{0.2cm}
$v\big (\bbinom{p^\N}{p^s} \big) = 
\ffrac{1}{p-1}\big(p^\N-1 -(p^s-1) - (p^\N-p^s - (p-1) (N-s)) \big) = N-s$. 
\end{proof}

\begin{lemma} 
For $k \in [p^s+1,p^{s+1}-1]$, $s \in [0, N-1]$, we have $v\big (\binom{p^\N}{k} \big) \geq N-s$.
\end{lemma}

\begin{proof}
Consider $\binom{p^\N}{k}\binom{p^\N}{p^s}^{-1}$, $k \in [p^s+1, p^{s+1}-1]$
to check that its valuation is non-negative (the interval is empty for $p=2$, $s=0$, 
so, for $p=2$ we assume implicitly $s >0$).
We have:
\begin{equation*}
\begin{aligned}
\frac{\binom{p^\N}{k}}{\binom{p^\N}{p^s}} & = 
\frac{p^\N!}{k! \, (p^\N-k)!} \times \frac{p^s! \, (p^\N-p^s)!}{p^\N!} 
 = \frac{p^s!}{k!} \times  \frac{(p^\N-p^s)!}{(p^\N-k)!} \\
& = \frac{1}{(p^s+1)(p^s+2)\cdots (p^s + (k-p^s))} \times  \frac{(p^\N-p^s)!}{(p^\N-k)!}  \\
& = \frac{(p^\N-k+1)(p^\N-k+2) \cdots (p^\N-k + (k-p^s)) }{(p^s+1)(p^s+2)\cdots (p^s + (k-p^s))}.
\end{aligned}
\end{equation*}

Put $k=p^s + h$, $h \in [1,p^s(p-1)-1]$; then we can write:
\begin{equation*}
\begin{aligned}
\frac{\binom{p^\N}{k}}{\binom{p^\N}{p^s}} \!= &
\frac{[p^\N-(p^s+h)+1] [p^\N-(p^s+h)+2] \cdots [p^\N-(p^s+h)+h]}
{[p^s+1][p^s+2] \cdots [p^s+h]} \\
= & \frac{[p^\N-(p^s+h)+1] [p^\N-(p^s+h)+2] \cdots [p^\N-(p^s+h)+h]}
{[p^s+h] [p^s+h -1]\cdots [p^s+h-(h-1)]} \\
 = & \frac{[p^\N-(p^s+h)+1]}{[(p^s+h)-1]}
\frac{[p^\N-(p^s+h)+2]}{[(p^s+h)-2]}\cdots
\frac{[p^\N-(p^s+h)+(h-1)]}{[(p^s+h)-(h-1)]} \\
& \times \frac{p^\N-p^s}{p^s+h}.
\end{aligned}
\end{equation*}

We remark that each factor of the form 
$\ffrac{p^\N- [(p^s+h) - j]}{[(p^s+h)-j]}$ is a $p$-adic unit for
$j \in [1,h-1]$; indeed, one sees that $(p^s+h)-j \leq p^{s+1} - 2$
with $s+1 \leq N$, whence $v_p((p^s+h)-j) \leq N-1$.

Now, consider the remaining factor $\ffrac{p^\N-p^s}{p^s+h} = \ffrac{p^s (p^{N-s} - 1)}{p^s+h}
= \ffrac{p^s}{p^s+h}$, 
up to a $p$-adic unit since $s \in [0, N-1]$. As $h \leq p^s(p-1)-1$, one can put $h = \lambda p^u$, 
$p \nmid \lambda$, $u \leq s$; the case $u < s$ is obvious and gives a positive 
valuation; if $u=s$, the relation $h \leq p^s(p-1)-1$ implies $\lambda \leq p-2$, 
thus $p^s+h = p^s (1+ \lambda)$ with $1+ \lambda \leq p-1$ and $\ffrac{p^s}{p^s+h}$ 
is, in this case, a $p$-adic unit, whence the lemma.
\end{proof}

This leads to the expression of $f(k)$ on $ \bigcup_{s=0}^{N-1} 
[p^s, p^{s+1}-1] = [1, p^\N-1]$ and to the proof of the theorem.
\end{proof}

The following corollary, proving Theorems \ref{main1}, \ref{main2} and generalizations, 
is of easy use in practice; we assume, to simplify, that $L/K$ is totally ramified:

\begin{corollary}\label{maincoro}
Let $L/K$ be any totally ramified cyclic $p$-extension of degree $p^\N$, $N \geq 1$, 
of Galois group $G = \langle \sigma \rangle$. Let $m(\Ll)$ be the minimal integer 
such that $(\sigma - 1)^{m(\Ll)}$ annihilates $\CH_L$ and let $p^{e(\Ll)}$ be the 
exponent of $\CH_L$. 

{\bf a}) Then a sufficient condition of complete capitulation of $\CH_K$ in $L$ is that
$e(\Ll) \in[1, N - s(\Ll)]$ if $m(\Ll) \in [p^{s(\Ll)}, p^{s(\Ll)+1}-1]$ for $s(\Ll) \in [0, N-1]$.

A class $h \ne 1$ of $\CH_K$ capitulates in $L$ as soon as $h =: \Norm_{L/K}(h')$, 
where $h'$ is of order $p^e$ and annihilated by $(\sigma - 1)^m$ such that 
$e \in[1, N - s]$ if $m \in [p^s, p^{s+1}-1]$ for $s \in [0, N-1]$.

{\bf b}) For $t \geq 1$, put $\ov \CH_{\!\! L} := \CH_L/\CH_L^{p^t}$ 
and let $\ov m(\Ll)$, $\ov e(\Ll)$, be the corresponding
parameters for which $\ov m(\Ll) \leq m(\Ll)$ and $\ov e(\Ll) \leq e(\Ll)$; 
then $\ov \CH_{\!\! K}$ capitulates in $L$ as soon as
$\ov e(\Ll) \in[1, N - \ov s(\Ll)]$ if $\ov m(\Ll) \in [p^{{\ov s}(\Ll)}, p^{\ov s(\Ll)+1}-1]$ 
for $\ov s(\Ll) \in [0, N-1]$. 

{\bf c}) Stability of the $\ov \CH_{\!\! K_n}$'s  in the tower $L = \bigcup_{n = 0}^\N K_n$ 
holds as soon as $\order \ov \CH_{\!\! K_1} = \order \ov \CH_{\!\! K}$ and leads to
$\ov m(\Kk_n) = 1$, $\ov s(\Kk_n)=0$, $\ov e(\Kk_n) = \ov e(\Kk)$ for all $n$.
\end{corollary}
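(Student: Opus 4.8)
The engine is the factorization of Theorem \ref{mainthm} combined with the criterion of \eqref{algebraicnorm}, that $\CH_K$ capitulates in $L$ if and only if $\Nu_{\!L/K}(\CH_L)=1$. For part \textbf{a} I would apply Theorem \ref{mainthm} with the specific value $k=m(\Ll)$, for which $f(k)=N-s(\Ll)$:
\[ \Nu_{\!L/K}=(\sigma-1)^{m(\Ll)}\,A_{m(\Ll)}(\sigma-1,p)+p^{N-s(\Ll)}\,B_{m(\Ll)}(\sigma-1,p). \]
Evaluating on an arbitrary $h\in\CH_L$ and using that $\Z[G]$ is commutative, the first summand contributes $\big(h^{(\sigma-1)^{m(\Ll)}}\big)^{A_{m(\Ll)}}=1$ by the defining property of $m(\Ll)$, and the second contributes $\big(h^{p^{N-s(\Ll)}}\big)^{B_{m(\Ll)}}=1$ because the hypothesis $e(\Ll)\le N-s(\Ll)$ makes $p^{N-s(\Ll)}$ a multiple of the exponent $p^{e(\Ll)}$ of $\CH_L$. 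Hence $\Nu_{\!L/K}(\CH_L)=1$ and $\CH_K$ capitulates. For the class-by-class statement I would instead use $\J_{L/K}(h)=\J_{L/K}\!\circ\!\Norm_{L/K}(h')=\Nu_{\!L/K}(h')$ and run the same computation with $k=m$ on the single element $h'$ of order $p^e$ annihilated by $(\sigma-1)^m$, which gives $\J_{L/K}(h)=1$.

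For part \textbf{b}, the plan is to replay part \textbf{a} on the $\Z[G]$-quotient $\ov\CH_{\!\! L}=\CH_L/\CH_L^{p^t}$. First I would note that $\Norm_{L/K}$ and $\J_{L/K}$ descend to $\ov\CH_{\!\! L}$ and $\ov\CH_{\!\! K}=\CH_K/\CH_K^{p^t}$, since $\Norm_{L/K}(\CH_L^{p^t})=\CH_K^{p^t}$ (total ramification gives $\Norm_{L/K}(\CH_L)=\CH_K$) and likewise for the transfer; thus the factorization $\Nu_{\!L/K}=\J_{L/K}\circ\Norm_{L/K}$ is preserved on the quotients, and $\ov\CH_{\!\! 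K}$ capitulates if and only if $\Nu_{\!L/K}(\ov\CH_{\!\! L})=1$. The inequalities $\ov m(\Ll)\le m(\Ll)$ and $\ov e(\Ll)\le e(\Ll)$ are immediate, because any operator annihilating $\CH_L$ annihilates its quotient. Applying part \textbf{a} to $\ov\CH_{\!\! L}$ with its own parameters $\ov m(\Ll)$, $\ov e(\Ll)$, $\ov s(\Ll)$ then yields the claim.

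For part \textbf{c}, I would transport the stability mechanism of Theorem \ref{main2}(i) to the quotients. The norm, transfer, Galois action, and the whole filtration $\{\CH_{K_n}^i\}$ descend modulo $\CH_{K_n}^{p^t}$, and since each step $K_{n+1}/K_n$ is totally ramified the norm is surjective, giving surjections $\ov\CH_{\!\! K_{n+1}}\twoheadrightarrow\ov\CH_{\!\! K_n}$ and hence a non-decreasing $n$-sequence $\order\ov\CH_{\!\! K_n}$. The hypothesis $\order\ov\CH_{\!\! K_1}=\order\ov\CH_{\!\! K}$ then feeds the Chevalley--Herbrand propagation exactly as in Theorem \ref{main2}, forcing $\ov\CH_{\!\! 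K_n}=\ov\CH_{\!\! K_n}^{G_n}\simeq\ov\CH_{\!\! K}$ for all $n$. From $\ov\CH_{\!\! K_n}=\ov\CH_{\!\! K_n}^{G_n}$ one reads that $(\sigma_n-1)$ annihilates $\ov\CH_{\!\! K_n}$, i.e.\ $\ov m(\Kk_n)=1$, whence $\ov s(\Kk_n)=0$ (as $1\in[p^0,p^1-1]$), while the norm isomorphism preserves exponents, giving $\ov e(\Kk_n)=\ov e(\Kk)$.

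The step I expect to require the most care is part \textbf{c}: unlike parts \textbf{a} and \textbf{b}, which are formal once Theorem \ref{mainthm} is in hand, it does not reduce to the full-group statement, since equality of the quotient orders at the first layer does \emph{not} imply $\order\CH_{K_1}=\order\CH_K$. One must therefore verify that the Chevalley--Herbrand filtration argument underlying Theorem \ref{main2} genuinely survives the passage to $\ov\CH_{\!\! K_n}$, and in particular that the monotonicity of the relevant filtration quotients (Lemma \ref{increasing} and the remark following it) still forces equality throughout once it holds in the first layer.
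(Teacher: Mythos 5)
Parts \textbf{a}) and \textbf{b}) of your proposal are correct and coincide with the paper's own argument: Theorem \ref{mainthm} evaluated at $k=m(\Ll)$ (resp.\ at $k=\ov m(\Ll)$ on the quotient module), combined with the identity \eqref{algebraicnorm} $\Nu_{\!L/K}=\J_{L/K}\circ\Norm_{L/K}$, which, as you verify, descends to the quotients because the arithmetic norm is surjective.

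Part \textbf{c}) has a genuine gap, which you flag but do not close. The first-layer step is sound: the induced norm $\ov\CH_{\!\!K_1}\to\ov\CH_{\!\!K}$ is surjective, equality of orders makes it bijective, its kernel contains the image of $\CH_{K_1}^{\sigma_1-1}$, whence $\ov m(\Kk_1)\le 1$. The gap is the propagation to the layers $n\ge 2$, which you propose to obtain by ``feeding the Chevalley--Herbrand propagation exactly as in Theorem \ref{main2}''. That mechanism does not exist for quotients: the Chevalley--Herbrand formula, the higher formulas \eqref{filtration} and Lemma \ref{increasing} are arithmetic statements about the genuine class groups (ambiguous ideal classes, unit norm indices), and taking $G_n$-invariants does not commute with reduction mod $p^t$. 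From $1\to\CH_{K_n}^{p^t}\to\CH_{K_n}\to\ov\CH_{\!\!K_n}\to 1$, the image of $\CH_{K_n}^{G_n}$ in $(\ov\CH_{\!\!K_n})^{G_n}$ has cokernel embedded in $\Hom^1(G_n,\CH_{K_n}^{p^t})$, nonzero in general; so the filtration of $\ov\CH_{\!\!K_n}$ is not the image of that of $\CH_{K_n}$ (contrary to your claim that ``the whole filtration descends''), and no formula of type \eqref{filtration} controls its jumps or their monotonicity in $n$.

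This is also why the paper itself does not re-run the Chevalley--Herbrand argument: its proof of \textbf{c}) invokes the stability theory of \cite{Gras2022$^a$} (Theorem 3.1 and Section 6\,(b)), the same source behind Theorem \ref{main2}, where the statement is established for the quotients --- the case $t=1$ being the $p$-rank stability of Fukuda \cite{Fuku1994} and Bandini \cite{Band2007} recorded in Theorem \ref{main2}\,(ii) --- by a Fukuda-type (norm-kernel/Nakayama) argument rather than by a Chevalley--Herbrand computation on quotients. To complete \textbf{c}) you must either quote that result, as the paper does, or reproduce an argument of that kind; the verification you defer would require an ambiguous-class-number formula for $\ov\CH$, and such a formula does not exist.
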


\begin{proof}
If $h' \in \CH_L$, $\Nu_{\!L/K}(h') = \J_{L/K} (\Norm_{L/K}(h')) = \big( h'^{(\sigma-1)^k} \big)^{A} 
\times \big( h'^{p^{f(k)}} \big)^{B},\, \forall k \in [1, p^\N-1]$. Thus, $\J_{L/K}$ is non-injective 
as soon as $h$ fulfills the conditions stated in ({\bf b}) with $k=m \in [p^s, p^{s+1}-1]$,
$s \in [0, N-1]$, and $f(k) = N-s \geq e$. 
For ({\bf a}) on the triviality of $\Nu_{\!L/K}(\CH_L)$, it suffices that 
$m=m(\Ll)$, $s=s(\Ll)$ and $e=e(\Ll)$ be solution. One obtains Theorem \ref{main1}.

The case ({\bf c}) of quotients is immediate; their capitulation is, a priori, ``easier''
and means that any ideal ${\mathfrak a}$ of $K$ becomes of the form 
$({\mathfrak a})_L = (\alpha) \cdot {\mathfrak A}^{p^t}$, where ${\mathfrak A}$ 
is an ideal of $L$ and $\alpha \in L^\times$. The case $t = 1$ of stability 
property gives the stability of the $p$-ranks.
\end{proof}

In other words, if the length $m(\Ll)$ of the filtration is not too large as well as 
the exponent $p^{e(\Ll)}$ of $\CH_L$, then we obtain $\Nu_{\!L/K}(h) = 1$ for 
all $h \in \CH_L$ (or at least for some), whence complete (or partial) capitulation 
of $\CH_K$ in $L$.

Another way to interpret this result is to say that if $N$ is large enough and
if the Galois complexity of the $p$-class groups $\CH_{K_n}$
does not increase too much, then $\CH_K$ capitulates in $L$. 
For this, we may introduce the following definition:

\begin{definition} \label{smooth}
Let $L/K$ be a cyclic $p$-extension totally ramified of degree $p^\N>1$ 
and Galois group $G =: \langle \sigma \rangle$ (we do not assume that 
$L/\Q$ is Galois). Let $\CH_K$,
$\CH_L$ be the $p$-class groups of $K$, $L$, respectively. Denote
by $p^{e(\Ll)}$ the exponent of $\CH_L$ and by $m(\Ll)$ the length of the
filtration $\big\{ \CH_L^i \big\}_{i \geq 0}$ (i.e., the least integer $m(\Ll)$ such 
that $(\sigma - 1)^{m(L)}$ annihilates $\CH_L$).
We will say that $L/K$ is of {\it smooth complexity} when $e(\Ll) \leq N - s(\Ll)$ 
if $m(\Ll) \in [p^{s(\Ll)}, p^{s(\Ll)+1}-1]$ for $s(\Ll) \in [0, N-1]$.
\end{definition}

\subsubsection{The Furtw\"{a}ngler property}
This property is the strong equality: 
$$\Ker_{\CH_L}(\Norm_{L/K}) = \CH_L^{\sigma - 1}; $$

\noindent 
it is equivalent to say that the genus field of $L/K$ is $L H_K^\nr$, then
obviously equivalent to $\order \CH_L^G = \order \CH_K$, thus to the triviality 
of the norm factor in the Chevalley--Herbrand formula (e.g., case $r = 1$). 
We have discovered some 
applications in Bembom's thesis, about the Galois structure of $\CH_L$ via its filtration
and the problem of capitulation (see, e.g., \cite[\S\,2.6, Theorem 2.6.3; \S\,2.8, 
Theorem 2.8.9]{Bem2012}).
Under the Furtw\"{a}ngler property, Nakayama's Lemma gives immediately:
\begin{proposition}\label{Bembom}
Let $L/K$ be a totally ramified cyclic $p$-extension such that 
$\order \CH_L^G = \order \CH_K$. From $\CH_K = \bigoplus_j \langle h_j \rangle$ 
set $\CH'_L:= \bigoplus_{} \langle h'_j \rangle$, $h'_j \in \CH_L$ be such 
that $h_j = \Norm_{L/K}(h'_j),\, \forall i$. Then $\CH_L$ is generated 
by $\CH'_L$ as $\Z_p[\sigma - 1]$-module and any $h' \in \CH_L$ 
writes $h'=\prod_j h'_j{}^{\omega_j}$, where $\omega_j = 
\sum_{i=0}^{m(L)-1} a_{i,j} (\sigma - 1)^i$, $a_{i,j} \in \Z_p$.
\end{proposition}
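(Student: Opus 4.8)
The plan is to realise $\CH_L$ as a finitely generated module over the group ring $\Lambda := \Z_p[G]$ and to feed the hypothesis straight into Nakayama's lemma. First I would record that, since $G$ is a cyclic $p$-group, $\Lambda = \Z_p[\sigma-1]$ is a \emph{local} ring: its maximal ideal is $\mathfrak m = (p,\,\sigma-1)$ with residue field $\Lambda/\mathfrak m \cong \F_p$ (indeed $\Lambda/(p)=\F_p[G]$ is local with nilpotent augmentation ideal), and $\{1,(\sigma-1),\dots,(\sigma-1)^{p^\N-1}\}$ is a $\Z_p$-basis of $\Lambda$, because $\Lambda\cong \Z_p[x]/\big((x+1)^{p^\N}-1\big)$ with $x=\sigma-1$. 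As $\CH_L$ is finite, it is a finitely generated $\Lambda$-module, so Nakayama applies once $\CH_L/\mathfrak m\,\CH_L$ is under control.

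The key step is to compute the coinvariants. By the Furtw\"angler hypothesis, $\Ker_{\CH_L}(\Norm_{L/K}) = \CH_L^{\sigma-1} = (\sigma-1)\CH_L$, and total ramification makes $\Norm_{L/K}\colon \CH_L \to \CH_K$ surjective; hence $\Norm_{L/K}$ induces an isomorphism $\CH_L/(\sigma-1)\CH_L \cong \CH_K$ sending each $h'_j$ to $h_j$. Reducing modulo $p$ gives $\CH_L/\mathfrak m\,\CH_L \cong \CH_K/p\,\CH_K$, and since the $h_j$ generate $\CH_K = \bigoplus_j\langle h_j\rangle$, the images of the $h'_j$ span the $\F_p$-vector space $\CH_L/\mathfrak m\,\CH_L$.

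Now Nakayama's lemma yields that the $h'_j$ themselves generate $\CH_L$ as a $\Lambda$-module, so every $h'\in\CH_L$ can be written $h'=\prod_j h'_j{}^{\omega_j}$ with $\omega_j\in\Lambda=\Z_p[\sigma-1]$. Expanding each $\omega_j$ in the $\Z_p$-basis above and discarding the terms $(\sigma-1)^i$ with $i\ge m(L)$ --- which act trivially on $\CH_L$ by the very definition of $m(L)$ --- gives the asserted form $\omega_j=\sum_{i=0}^{m(L)-1} a_{i,j}(\sigma-1)^i$ with $a_{i,j}\in\Z_p$ (uniqueness of the expansion is not claimed and is not needed).

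The substance of the argument is concentrated entirely in the coinvariant isomorphism $\CH_L/(\sigma-1)\CH_L\cong\CH_K$: once the Furtw\"angler equality $(\sigma-1)\CH_L=\Ker_{\CH_L}(\Norm_{L/K})$ is in hand, everything else is formal. The only points I would state with care are the locality of $\Z_p[G]$ (so that the use of Nakayama is legitimate) and the truncation bound (so that the coefficients land in $\Z_p$ with index at most $m(L)-1$); neither presents a genuine obstacle, which is exactly why the proposition follows ``immediately.''
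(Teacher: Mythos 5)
Your proof is correct and follows exactly the route the paper intends: the paper offers no argument beyond the remark that, under the Furtw\"angler property, Nakayama's lemma gives the result immediately, and your write-up simply supplies the details of that same argument (locality of $\Z_p[G]$, the coinvariant isomorphism $\CH_L/(\sigma-1)\CH_L \simeq \CH_K$ induced by the surjective norm, Nakayama, then truncation of the exponents at $m(L)$ since $(\sigma-1)^{m(L)}$ annihilates $\CH_L$).
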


\subsubsection{Program giving the decompositions of 
\texorpdfstring{$\Nu_{\!L/K}$}{Lg}}
The following program put  $\Nu_{\!L/K}$ in the form 
$P(x,p) = x^k A(x,p) + p^{f(k)} B(x,p)$, $1 \leq k \leq p^\N-1$, 
where $x=\sigma - 1$ and $p$ are considered as indeterminate variables.
This is necessary to have universal expressions for any $\CH_L$ as
$\Z_p[G]$-module; in other words, we do not reduce modulo $p$ the 
coefficients of $A$ and $B$.
One must note that, except the cases $k=1$, $B=1$ and $k=p^\N-1$, $A=1$,
$A$ and $B$ are not invertible in the group algebra, which 
allows improvements of the standard reasoning of annihilation with 
$( (\sigma - 1)^k$, $p^{f(k)})$.
One must precise the numerical prime number $p$ in ${\sf Prime}$ 
and $N$ in ${\sf N}$:

\ft\begin{verbatim}
PROGRAM OF DECOMPOSITION OF THE ALGEBRAIC NORM
{Prime=2;N=3;P=0;for(i=1,Prime^N,C=binomial(Prime^N,i);
v=valuation(C,Prime);c=C/Prime^v;P=P+c*p^v*x^(i-1));print("P=",P);
for(k=1,Prime^N-1,B=lift(Mod(P,x^k));w=valuation(B,p);print();
print("P=x^",k,".A+p^",w,".B");print("A=",(P-B)/x^k);print("B=",B/p^w))}
\end{verbatim}\ns
\ft\begin{verbatim}
Case p=2, N=1, P=x+p$
P=x+p
A=1    B=1
\end{verbatim}\ns
\ft\begin{verbatim}
Case p=3, N=1, P=x^2+p*x+p
P=x^1.A+p^1.B              P=x^2.A+p^1.B
A=x+p     B=1              A=1     B=x+1
\end{verbatim}\ns
\ft\begin{verbatim}
Case p=2, N=2, P=x^3+p^2*x^2+3*p*x+p^2
P=x^1.A+p^2.B              P=x^2.A+p^1.B           P=x^3.A+p^1.B
A=x^2+p^2*x+3*p  B=1       A=x+p^2  B=3*x+p        A=1  B=p*x^2+3*x+p
\end{verbatim}\ns

For the next examples, we only write ${\sf P}$.

\ft\begin{verbatim}
Case p=3, N=2:
P=x^8+p^2*x^7+4*p^2*x^6+28*p*x^5+14*p^2*x^4+14*p^2*x^3+28*p*x^2+4*p^2*x+p^2
\end{verbatim}\ns
\ft\begin{verbatim}
Case p=2, N=3:
P=x^7+p^3*x^6+7*p^2*x^5+7*p^3*x^4+35*p*x^3+7*p^3*x^2+7*p^2*x+p^3
\end{verbatim}\ns
\ft\begin{verbatim}
Case p=2, N=4:
P=x^15+p^4*x^14+15*p^3*x^13+35*p^4*x^12+455*p^2*x^11+273*p^4*x^10
  +1001*p^3*x^9+715*p^4*x^8+6435*p*x^7+715*p^4*x^6+1001*p^3*x^5
  +273*p^4*x^4+455*p^2*x^3+35*p^4*x^2+15*p^3*x+p^4
\end{verbatim}\ns

Thus, as soon as $\Nu_{\!L/K} = 
(\sigma -1)^k \times A_k(\sigma -1, p) + p^{f(k)}\times B_k(\sigma -1, p)$, 
with $k \geq m(\Ll)$ and $f(k) \geq e(\Ll)$, then $\CH_K$ capitulates in $L$. 
But the reciprocal does not hold and capitulation may occur whatever 
$m(\Ll)$ and $e(\Ll)$. 
Indeed, if $m(\Ll) \in [p^{s(L)}, p^{s(L)+1}-1]$ for $s(\Ll) \in [0, N-1]$, 
with $e(\Ll) > N - s(\Ll)$ instead of $e(\Ll) \leq N - s(\Ll)$, reducing ${\sf P}$ 
modulo the ideal $\big(x^{m(\Ll)}, p^{e(\Ll)}\big)$, the capitulation of $\CH_K$ is 
equivalent to the annihilation of $\CH_L$ by an explicit polynomial of the form:
$$\sm_{j=1}^{j+h} a_j \, p^{e_j} x^{m_j},\ \ p \nmid a_j,\ e_j \in [0, e(\Ll)-1],
\ m_j \in [0, m(\Ll)-1] , $$
which may hold by accident depending on the numerical data of the filtration; we 
give such a case in Example \ref{pasnecessaire}\,(ii). We will also give examples 
where the above property of $\Nu_{L/K}$ applies, apart from the obvious case of stability.

\subsection{Illustrations using the decomposition 
of \texorpdfstring{$\Nu_{\!L/K}$}{Lg}}\label{mainexamples2}

In what follows, the cyclic $p$-extensions $L/K$ are always totally ramified.

\begin{example}\label{pasnecessaire}
{\rm Let's begin with an example where the structure of $\CH_{K_n}$ grows sufficiently 
with $n$, giving no capitulation of $\CH_K$ up to $n=3$; next, another
choice of $\ell$ leads to capitulation of $\CH_K$, but where Theorem \ref{main1} does not apply.

(i) We consider the cyclic cubic field $K$ of conductor $f=703$, $p=2$
and $\ell = 17$. Then $\CH_K \simeq \BZ/2\BZ$ and we 
get, from Program \ref{cubic} (several hours for the level $n=3$):

\ft\begin{verbatim}
p=2  Nn=3  f=703  PK=x^3+x^2-234*x-729  mKn=3  CK0=[6,2]  ell=17  r=1
CK1=[12,4]=[4,4]
h_1^[(S-1)^1]=[0,2]      h_2^[(S-1)^1]=[2,2]
h_1^[(S-1)^2]=[0,0]      h_2^[(S-1)^2]=[0,0]
h_1^[(S-1)^3]=[0,0]      h_2^[(S-1)^3]=[0,0]
norm in K1/K of the component 1 of CK1:[2,2]
norm in K1/K of the component 2 of CK1:[2,0]
CK2=[24,8]=[8,8]
h_1^[(S-1)^1]=[0,2]      h_2^[(S-1)^1]=[6,6]
h_1^[(S-1)^2]=[4,4]      h_2^[(S-1)^2]=[4,0]
h_1^[(S-1)^3]=[0,0]      h_2^[(S-1)^3]=[0,0] 
norm in K2/K of the component 1 of CK2:[4,4]
norm in K2/K of the component 2 of CK2:[4,0]
CK3=[48,16]=[16,16]
h_1^[(S-1)^1]=[8,10]     h_2^[(S-1)^1]=[10,14]
h_1^[(S-1)^2]=[4,12]     h_2^[(S-1)^2]=[12,8]
norm in K3/K of the component 1 of CK3:[8,8]
norm in K3/K of the component 2 of CK3:[8,0]
\end{verbatim}\ns

For $n=1$, we have $m(\Kk_1) = 2$, $s(\Kk_1)=1$, $n-s(\Kk_1) = 0$ and $e(\Kk_1)=2 >0$.

For $n=2$, we have $m(\Kk_2) = 3$, $s(\Kk_2)=1$, $n-s(\Kk_2) = 1$ and $e(\Kk_2)=3 >1$.

 For $n=3$, we have $m(\Kk_3) = 4$, $s(\Kk_3)=2$, $n-s(\Kk_3) = 1$ and $e(\Kk_3)=4 >1$. 

In this case the complexity (non smooth) increases due to successive exponents $2,4,8,16$. 
Nevertheless $\ov \CH_{\!\!K} = \CH_K/\CH_K^2$ capitulates in $K_1$ because 
$\ov m(\Kk_1) = 1$ since
${\sf h_1^{[(S-1)^1]}=[0,2]}$, ${\sf h_2^{[(S-1)^1]}=
[2,2]}$ ($\ov s(\Kk_1) = 0$), then $\ov e(\Kk_1)=1 \leq n-\ov s(\Kk_1)$.

(ii) Changing $\ell=17$ into $\ell = 97$ ($N=4$) gives complete capitulation in $K_2$
because of a smooth complexity, but higher than a stability in $K_2/K$ (a stability 
begins at $n_0=2$):

\ft\begin{verbatim}
p=2  Nn=2  f=703  PK=x^3+x^2-234*x-729  CK0=[6,2]  ell=97  r=1
CK1=[6,2,2,2]=[2,2,2,2]
h_1^[(S-1)^1]=[1,1,0,0]      h_2^[(S-1)^1]=[1,1,0,0]  
h_3^[(S-1)^1]=[0,0,1,1]      h_4^[(S-1)^1]=[0,0,1,1]
h_1^[(S-1)^2]=[0,0,0,0]      h_2^[(S-1)^2]=[0,0,0,0]
h_3^[(S-1)^2]=[0,0,0,0]      h_4^[(S-1)^2]=[0,0,0,0]
norm in K1/K of the component 1 of CK1:[1,1,0,0]
norm in K1/K of the component 2 of CK1:[1,1,0,0]
norm in K1/K of the component 3 of CK1:[0,0,1,1]
norm in K1/K of the component 4 of CK1:[0,0,1,1]
No capitulation, m(K1)=2, e(K1)=1
CK2=[12,4,2,2]=[4,4,2,2]
h_1^[(S-1)^1]=[0,2,1,1]      h_2^[(S-1)^1]=[0,2,1,0]
h_3^[(S-1)^1]=[2,2,0,0]      h_4^[(S-1)^1]=[2,0,0,0]
h_1^[(S-1)^2]=[0,2,0,0]      h_2^[(S-1)^2]=[2,2,0,0]
h_3^[(S-1)^2]=[0,0,0,0]      h_4^[(S-1)^2]=[0,0,0,0]
h_1^[(S-1)^3]=[0,0,0,0]      h_2^[(S-1)^3]=[0,0,0,0]
h_3^[(S-1)^3]=[0,0,0,0]      h_4^[(S-1)^3]=[0,0,0,0]
norm in K2/K of the component 1 of CK2:[0,0,0,0]
norm in K2/K of the component 2 of CK2:[0,0,0,0]
norm in K2/K of the component 3 of CK2:[0,0,0,0]
norm in K2/K of the component 4 of CK2:[0,0,0,0]
Complete capitulation, m(K2)=3, e(K2)=2
CK3=[12,4,2,2]=[4,4,2,2] (a day of computation)
\end{verbatim}\ns

For $n=2$, ${\sf P=x^3 + 2^2*x^2 + 3*2*x + 2^2}$, $m(\Kk_2) = 3$, $s(\Kk_2)=1$ and 
$e(\Kk_2) = 2 > n-s(\Kk_2) = 1$; so the property deduced from the use of $\Nu_{K_2/K}$ 
does not hold. Then, modulo the ideal $(x^3, 2^2)$, it follows that ${\sf 3*2*x}$ 
must annihilate $\CH_{K_2}$, which is confirmed by the data since 
$h_j^{2 (\sigma_2-1)} =1$, $1 \leq j \leq 4$. The stability from $K_2$ gives a smooth
complexity of the extension $L/K$ and confirm the capitulation.

So, Theorem \ref{main1} gives a sufficient condition of capitulation, not necessary, but
some information remains when the condition is not fulfilled.}
\end{example}

\begin{example}{\rm
We consider the cyclic cubic field $K$ of conductor $f=1777$, $p=2$ and 
$\ell = 17$. Then $\CH_K \simeq \BZ/4\BZ$ is of exponent $4$.

\ft\begin{verbatim}
p=2  Nn=3  f=1777  PK=x^3+x^2-592*x+724  CK0=[4,4]  ell=17  r=3
CK1=[8,8]
h_1^[(S-1)^1]=[0,0]     h_2^[(S-1)^1]=[0,0]       
norm in K1/K of the component 1 of CK1:[2,0]
norm in K1/K of the component 2 of CK1:[0,2]
No capitulation, m(K1)=1, e(K1)=3
CK2=[8,8]
h_1^[(S-1)^1]=[0,0]     h_2^[(S-1)^1]=[0,0] 
norm in K2/K of the component 1 of CK2:[4,0]
norm in K2/K of the component 2 of CK2:[0,4]
Incomplete capitulation, m(K2)=1, e(K2)=3
CK3=[8,8]
h_1^[(S-1)^1]=[0,0]     h_2^[(S-1)^1]=[0,0] 
norm in K3/K of the component 1 of CK3:[0,0]
norm in K3/K of the component 2 of CK3:[0,0]
Complete capitulation, m(K3)=1, e(K3)=3
\end{verbatim}\ns

In this case, the stability from $K_1$ implies necessarily the capitulation in $K_3$ 
(so the third computation is for checking). Moreover, for all $n \geq 1$, $\CH_{K_n}$ 
is annihilated by $\sigma - 1$ and $\CH_{K_n} = \CH_{K_n}^{G_n}$ 
as expected from Theorem \ref{main2}\,(i) and given by Program \ref{progfiltr}.
Whence $m(\Kk_n) = 1$, $s(\Kk_n)=0$, 
$e(\Kk_n) = 3$ giving $e(\Kk_n) \leq n - s(\Kk_n)$ only from $n=3$.
Note that $\CH_K/\CH_K^2$ capitulates in $K_1$ and that, considering
the quotients $\ov \CH_{\!\!K_n} := \CH_{K_n}/\CH_{K_n}^4$, then $\ov \CH_{\!\!K} =
\CH_K/\CH_K^4$ capitulates only from $K_2$.}
\end{example}

\begin{example}{\rm
(i) We consider the quadratic field $\Q(\sqrt {142})$ with $p=3$ and various
primes $\ell \equiv 1 \pmod 3$, $\ell \not\equiv 1 \pmod 9$, so that
$N=1$, $L=K_1$, and the sufficient conditions of Theorem \ref{main1} are
$e(\Kk_1) = 1 \leq 1-s(\Kk_1)$, whence $s(\Kk_1)=0$ and $m(\Kk_1) \in [1, 2]$:

\ft\begin{verbatim}
p=3  PK=x^2-142  N=1  CK0=[3]  ell=13  r=2 
CK1=[3,3]
h_1^[(S-1)^1]=[0,0]   h_2^[(S-1)^1]=[0,0]
h_1^[(S-1)^2]=[0,0]   h_2^[(S-1)^2]=[0,0]
norm in K1/K of the component 1 of CK1:[0,0]
norm in K1/K of the component 2 of CK1:[0,0]
Complete capitulation, m(K1)=1, e(K1)=1
\end{verbatim}\ns
\ft\begin{verbatim}
p=3  PK=x^2-142  N=1  CK0=[3]  ell=1123  r=2 
CK1=[21,3]=[3,3]
h_1^[(S-1)^1]=[1,2]   h_2^[(S-1)^1]=[1,2]
h_1^[(S-1)^2]=[0,0]   h_2^[(S-1)^2]=[0,0]
norm in K1/K of the component 1 of CK1:[0,0]
norm in K1/K of the component 2 of CK1:[0,0]
Complete capitulation, m(K1)=2, e(K1)=1
\end{verbatim}\ns
\ft\begin{verbatim}
p=3  PK=x^2-142  N=1  CK0=[3]  ell=208057  r=2
CK1=[3,3,3,3]
h_1^[(S-1)^1]=[2,2,0,0]   h_2^[(S-1)^1]=[1,1,0,0]
h_3^[(S-1)^1]=[0,1,1,1]   h_4^[(S-1)^1]=[2,1,2,2]
h_1^[(S-1)^2]=[0,0,0,0]   h_2^[(S-1)^2]=[0,0,0,0]
h_3^[(S-1)^2]=[0,0,0,0]   h_4^[(S-1)^2]=[0,0,0,0]
norm in K1/K of the component 1 of CK1:[0,0,0,0]
norm in K1/K of the component 2 of CK1:[0,0,0,0]
norm in K1/K of the component 3 of CK1:[0,0,0,0]
norm in K1/K of the component 4 of CK1:[0,0,0,0]
Complete capitulation, m(K1)=2, e(K1)=1
\end{verbatim}\ns

(ii) For $p=5$ and $N=1$ the conditions become 
$e(\Kk_1) = 1 \leq 1-s(\Kk_1)$, whence $s(\Kk_1)=0$, with $m(\Kk_1) \in [1, 4]$,
which offers more possibilities:

\ft\begin{verbatim}
p=5  PK=x^2-401  N=1  CK0=[5]  ell=1231  r=2
CK1=[5,5]
h_1^[(S-1)^1]=[0,0]    h_2^[(S-1)^1]=[0,0]
h_1^[(S-1)^2]=[0,0]    h_2^[(S-1)^2]=[0,0]
norm in K1/K of the component 1 of CK1:[0,0]
norm in K1/K of the component 2 of CK1:[0,0]
Complete capitulation, m(K1)=1, e(K1)=1
\end{verbatim}\ns
\ft\begin{verbatim}
p=5  PK=x^2-401  N=1  CK0=[5]  ell=1741  r=1
CK1=[5,5]
h_1^[(S-1)^1]=[3,3]    h_2^[(S-1)^1]=[2,2]
h_1^[(S-1)^2]=[0,0]    h_2^[(S-1)^2]=[0,0]
norm in K1/K of the component 1 of CK1:[0,0]
norm in K1/K of the component 2 of CK1:[0,0]
Complete capitulation, m(K1)=2, e(K1)=1
\end{verbatim}\ns
\ft\begin{verbatim}
p=5  PK=x^2-401  CK0=[5]  ell=4871  r=1
CK1=[10,10,10,2]=[5,5,5]
h_1^[(S-1)^1]=[4,0,4,0]  h_2^[(S-1)^1]=[1,4,0,0]  h_3^[(S-1)^1]=[3,4,2,0]  
h_1^[(S-1)^2]=[3,1,4,0]  h_2^[(S-1)^2]=[3,1,4,0]  h_3^[(S-1)^2]=[2,4,1,0]  
h_1^[(S-1)^3]=[0,0,0,0]  h_2^[(S-1)^3]=[0,0,0,0]  h_3^[(S-1)^3]=[0,0,0,0]  
norm in K1/K of the component 1 of CK1:[0,0,0,0]
norm in K1/K of the component 2 of CK1:[0,0,0,0]
norm in K1/K of the component 3 of CK1:[0,0,0,0]
norm in K1/K of the component 4 of CK1:[0,0,0,0]
Complete capitulation, m(K1)=3, e(K1)=1
\end{verbatim}\ns}
\end{example}

\begin{example}{\rm
We consider the cubic field of conductor $f=20887$ with $p=2$ and $\ell = 17$
totally split:

\ft\begin{verbatim}
p=2  Nn=2  f=20887  PK=x^3+x^2-6962*x-225889  CK0=[4,4,2,2]  ell=17  r=3
CK1=[8,8,2,2]
h_1^[(S-1)^1]=[0,0,0,0]   h_2^[(S-1)^1]=[0,0,0,0]
h_3^[(S-1)^1]=[0,0,0,0]   h_4^[(S-1)^1]=[0,0,0,0]
norm in K1/K of the component 1 of CK1:[2,0,0,0]
norm in K1/K of the component 2 of CK1:[0,2,0,0]
norm in K1/K of the component 3 of CK1:[0,0,0,0]
norm in K1/K of the component 4 of CK1:[0,0,0,0]
Incomplete capitulation, m(K1)=1, e(K1)=3
\end{verbatim}\ns

We note that $\Norm_{K_1/K}(h_j) \ne 1$ for $j=3,4$, otherwise
$\Norm_{K_1/K}(\CH_{K_1}) = \CH_K$ would be of $2$-rank $2$ (absurd).
Since $m(\Kk_1)=1$ (all classes are invariant), Theorem \ref{main1} applies 
non-trivially for the classes $h_3, h_4$ of order~$2$ ($m=1$, $s=0$, $e \in [1, 1]$, 
which is indeed the case).
Let's give the complete data checking the capitulation of the two classes of $K$
of order $2$; the instruction ${\sf CK0=K.clgp}$ gives:

\ft\begin{verbatim}
[64,[4,4,2,2],[[2897,2889,2081;0,1,0; 0,0,1],[2897,825,2889;0,1,0;0,0,1],
[17,16,13;0,1,0;0,0,1],[53,36,44;0,1,0; 0,0,1]]]
\end{verbatim}\ns
\noindent
it describes $\CH_K$ with $4$ representative ideals of generating classes; that 
of order $2$ are ${\sf {\mathfrak a}_3 = [17, 16, 13; 0, 1, 0; 0, 0, 1],  
{\mathfrak a}_4 = [53, 36, 44; 0, 1, 0; 0, 0, 1]}$; the following $6$ large coefficients 
on the integral basis give integers $\alpha_i \in L^\times$ with the relations 
$({\mathfrak a}_i)_L = (\alpha_i)$:

\ft\begin{verbatim}
[[0,0,0,0],[4482450896,-1173749328,81969609,69123722,7646555,39729395]]
norm in K1/K of the component 3 of CK1:[0,0,0,0]
[[0,0,0,0],[-4877380814,1968946273,-1411818,102996743,38571732,40207952]]
norm in K1/K of the component 4 of CK1:[0,0,0,0]
\end{verbatim}\ns

At the level $n=2$, the result is similar, but shows that the classes of order $4$
of $\CH_K$ never capitulate:

\ft\begin{verbatim}
CK2=[16,16,2,2]
h_1^[(S-1)^1]=[8,0,0,0] h_2^[(S-1)^1]=[0,8,0,0] 
h_3^[(S-1)^1]=[0,0,0,0] h_4^[(S-1)^1]=[0,0,0,0]
h_1^[(S-1)^2]=[0,0,0,0] h_2^[(S-1)^2]=[0,0,0,0] 
h_3^[(S-1)^2]=[0,0,0,0] h_4^[(S-1)^2]=[0,0,0,0]
norm in K2/K of the component 1 of CK2:[4,0,0,0]
norm in K2/K of the component 2 of CK2:[0,4,0,0]
norm in K2/K of the component 3 of CK2:[0,0,0,0]
norm in K2/K of the component 4 of CK2:[0,0,0,0]
Incomplete capitulation, m(K2)=2, e(K2)=4
\end{verbatim}\ns}
\end{example}

\section{Arithmetic invariants that do not capitulate}

The non capitulation of $p$-class groups $\CH_K$ in cyclic $p$-extensions $L/K$
implies necessarily, as we have seen, that the structure of $\CH_L$ does not allow the 
previous use of the algebraic norm (Theorem \ref{main1}) and the complexity is not 
smooth according to Definition \ref{smooth}; that is to say, either $m(\Kk_n) \geq p^n$ 
or else $m(\Kk_n) \in [p^s, p^{s+1}-1]$ for $s \in [0, n-1]$ but in that case, $e(\Kk_n) >n-s$. 
This may be checked by means of a wider framework as follows:

\subsection{Injective transfers and arithmetic consequences}

Let $p$ be any prime number and let $\CK$ be a family of number fields $k$
stable by taking subfields.

\begin{definition} 
Let $\{\CX_k\}_{k \in \CK}$ be a family of finite invariants of $p$-power order, indexed 
by the set $\CK$, fulfilling the following conditions for $k, k' \in \CK$, $k \subseteq k'$:

(i) For any Galois extension $k'/k$, of Galois group $G$, $\CX_{k'}$
is a $\Z_p[G]$-module.

(ii) There exist norms $\Norm_{k'/k} : \CX_{k'} \to \CX_k$ and transfers 
$\J_{k'/k} : \CX_k \to \CX_{k'}$, such that $\J_{k'/k} \circ \Norm_{k'/k} = \Nu_{\!k'/k}$.

(iii) If $G=\Gal(k'/k)$ is a cyclic $p$-group, we define the associated filtration 
$\big \{\CX_{k'}^i  \big \}_{i \geq 0}$ by $\CX_{k'}^{i+1} / \CX_{k'}^i := 
(\CX_{k'}/\CX_{k'}^i)^G,\, \forall i \geq 0$.
\end{definition}

Thus, for a cyclic $p$-extension $L/K$, $L, K \in \CK$, let $m(\Ll)$ be the length of 
the filtration; the condition $e(\Ll) \in [1, N-s(\Ll)]$ if $m(\Ll) \in [p^{s(\Ll)}, p^{s(\Ll)+1}-1]$ 
for $s(\Ll) \in [0,N-1]$, of Theorem \ref{main1}, applies in the same way, independently of 
the fact of being able to calculate the orders of the $\CX_L^i$'s by means of a suitable algorithm. 

\begin{theorem}\label{nocap}
Let $L/K$, $K, L \in \CK$, be a cyclic $p$-extension of degree $p^\N$, 
let $K_n$ be the subfield of $L$ of degree $p^n$ over $K$, $n \in [0, N]$
and to simplify, put $\CX_n := \CX_{K_n}$. We assume that, for all $n \in [0, N-1]$, 
the arithmetic norms $\CX_{n+1} \to \CX_n$ are surjective 
and that the transfer maps $\CX_n \to \CX_{n+1}$ are injective. 
Let $p^{e_n}$ be the exponent of $\CX_n$.

Then $\order \CX_{n+h}\geq \order \CX_n \cdot \order \CX_n[p^h] \geq  
\order \CX_n  \cdot p^{\min(e_n,h)}$, for all $n \in [0, N]$ and all $h \in [0, N-n]$, 
where $\CX_n[p^h] := \{x \in \CX_n,\ \, x^{p^h} = 1\}$.
\end{theorem}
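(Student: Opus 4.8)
The plan is to establish the chain of inequalities in Theorem \ref{nocap} by combining the injectivity of the transfer maps with the structure of the filtration $\{\CX_{K_n}^i\}_{i \geq 0}$, exactly in the spirit of how Corollary \ref{maincoro} uses the decomposition of the algebraic norm. The key observation is that injectivity of $\J_{n}:\CX_n \hookrightarrow \CX_{n+1}$ prevents the kind of collapse that made capitulation possible; so I expect the same polynomial identity $\Nu = (\sigma-1)^k A_k + p^{f(k)} B_k$ from Theorem \ref{mainthm} to drive the argument, but now read as a \emph{lower bound} obstruction rather than an annihilation criterion.

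First I would reduce to the single-step case $h=1$ followed by induction, but it is cleaner to go directly. Fix $n$ and $h \in [0,N-n]$, and consider the sub-extension $K_{n+h}/K_n$, cyclic of degree $p^h$ with Galois group $\langle \sigma_{n,h}\rangle$. The transfer $\J_{K_{n+h}/K_n}: \CX_n \to \CX_{n+h}$ is injective (it is a composite of the injective one-step transfers), so $\CX_n$ embeds as a subgroup of $\CX_{n+h}$. The heart of the matter is to produce a second subgroup, of order at least $\order \CX_n[p^h] \geq p^{\min(e_n,h)}$, meeting the image $\J(\CX_n)$ trivially, or more precisely to bound $\order\CX_{n+h}$ below by $\order\CX_n \cdot \order\CX_n[p^h]$ using the norm/transfer interplay. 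Here I would use that $\Norm_{K_{n+h}/K_n} \circ \J_{K_{n+h}/K_n} = \Nu_{K_{n+h}/K_n}$ acts on the subgroup $\J(\CX_n)\simeq \CX_n$; applying Theorem \ref{mainthm} with $k=1$ gives $\Nu_{K_{n+h}/K_n} = (\sigma-1)A_1 + p^{h}B_1$ on $\CX_{n+h}$ (since $f(1)=h$ here), so that elements killed by $(\sigma-1)$ see $\Nu$ act as multiplication by $p^h$ up to a unit, which controls how $p^h$-torsion can survive.

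The estimate $\order\CX_n[p^h] \geq p^{\min(e_n,h)}$ is the elementary part: since $\CX_n$ has exponent $p^{e_n}$, it contains a cyclic subgroup of order $p^{e_n}$, whose $p^{\min(e_n,h)}$-torsion has exactly $p^{\min(e_n,h)}$ elements; thus $\order \CX_n[p^h]\geq p^{\min(e_n,h)}$, giving the rightmost inequality immediately. The substantive inequality is the middle one, $\order\CX_{n+h}\geq \order\CX_n\cdot\order\CX_n[p^h]$. I would argue that inside $\CX_{n+h}$, the injective image $\J(\CX_n)$ together with the $G$-action forces enough new classes: concretely, for each $x \in \CX_n[p^h]$ one constructs a preimage structure via the filtration so that the layers $\CX_{n+h}^{i+1}/\CX_{n+h}^i$ accumulate at least the contribution of $\CX_n[p^h]$ beyond the copy of $\CX_n$ sitting in $\CX_{n+h}^1=\CX_{n+h}^G$.

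\textbf{The main obstacle} I anticipate is precisely justifying that the $p^h$-torsion of $\CX_n$ lifts to genuinely \emph{new} (i.e.\ non-transferred) classes in $\CX_{n+h}$, rather than being absorbed into $\J(\CX_n)$. The cleanest route is a counting/exact-sequence argument: injectivity of $\J$ means $\order\CX_{n+h}=\order\J(\CX_n)\cdot[\CX_{n+h}:\J(\CX_n)]=\order\CX_n\cdot[\CX_{n+h}:\J(\CX_n)]$, so everything reduces to showing $[\CX_{n+h}:\J(\CX_n)]\geq \order\CX_n[p^h]$. For this I would examine the cokernel of $\J$ and use that $\Norm$ is surjective with $\Norm\circ\J=\Nu$ equal to $p^h$ times a unit on the $(\sigma-1)$-fixed part: any $x\in\CX_n[p^h]$ whose preimage under the surjective norm were already in $\J(\CX_n)$ would force $x\in\Nu(\CX_{n+h})=\J(\CX_n)$-image relations that, combined with $x^{p^h}=1$ and $f(1)=h$, collapse $x$ to the identity—contradicting injectivity unless the index is as large as claimed. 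Pinning down this contradiction rigorously, tracking the unit $A_1$ and the exact exponent matching $f(1)=h$ with the torsion bound $p^{\min(e_n,h)}$, is where the real care is needed; the rest is bookkeeping with the filtration inclusions already recorded in \eqref{filtration}.
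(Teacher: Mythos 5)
Your overall skeleton starts correctly: injectivity of $\J$ gives $\order \CX_{n+h} = \order \CX_n \cdot [\CX_{n+h} : \J(\CX_n)]$, so everything reduces to $[\CX_{n+h}:\J(\CX_n)] \geq \order \CX_n[p^h]$, and your elementary bound $\order \CX_n[p^h] \geq p^{\min(e_n,h)}$ is fine. But the step you yourself flag as the obstacle is not established, and the contradiction you sketch for it does not exist. First, the identity is misstated: the hypothesis is $\J\circ\Norm = \Nu$ (an endomorphism of $\CX_{n+h}$), not $\Norm\circ\J = \Nu$; what is true of the other composition is $\Norm\circ\J = p^h$ on $\CX_n$, which follows from $\J\circ\Norm=\Nu$, injectivity of $\J$, and the fact that $\J(\CX_n)$ lies in $\CX_{n+h}^{G}$, $G := \Gal(K_{n+h}/K_n)$ (an implicit naturality the paper's own proof also uses). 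Second, and more seriously: if $x\in\CX_n[p^h]$ has a norm-preimage $\J(z)\in\J(\CX_n)$, the only conclusion is $x=\Norm(\J(z))=z^{p^h}\in\CX_n^{p^h}$, which is perfectly compatible with $x\neq 1$ and with injectivity --- take $\CX_n$ cyclic of order $p^{2h}$, where $\CX_n[p^h]=\CX_n^{p^h}$ exactly. So nothing ``collapses to the identity'', and no amount of care with the unit $A_1$ or the exponent $f(1)=h$ will produce your contradiction. (The appeal to Theorem \ref{mainthm} is also superfluous here: on $G$-invariant elements, $\Nu$ is multiplication by $p^h$ by its very definition.)

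The gap closes with one counting line that replaces the contradiction: since $\Norm\circ\J=p^h$, one has $\Norm(\J(\CX_n))=\CX_n^{p^h}$, so the surjective norm induces a surjection $\CX_{n+h}/\J(\CX_n) \twoheadrightarrow \CX_n/\CX_n^{p^h}$, whence $[\CX_{n+h}:\J(\CX_n)] \geq [\CX_n:\CX_n^{p^h}] = \order\CX_n[p^h]$, the last equality because in a finite abelian group the index of the $p^h$-th powers equals the order of the $p^h$-torsion. With this repair your argument becomes a correct and genuinely more elementary proof than the paper's, which instead applies $G$-cohomology to $1\to\J(\CX_n)\to\CX_{n+h}\to\CX_{n+h}/\J(\CX_n)\to1$, identifies ${\rm H}^1(G,\J(\CX_n))\simeq\CX_n[p^h]$ (trivial action), and uses the triviality of the Herbrand quotient together with $\Nu(\CX_{n+h})=\J(\Norm(\CX_{n+h}))=\J(\CX_n)$ to equate $\order {\rm H}^1(G,\CX_{n+h})$ with $\order(\CX_{n+h}^{G}/\J(\CX_n))$. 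Both routes consume exactly the three hypotheses (injectivity, surjectivity, $\J\circ\Norm=\Nu$); but as submitted, your central step rests on a false implication, so this is a genuine gap rather than a complete alternative proof.
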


\begin{proof}
Put $G_n^{n+h} := \Gal(k_{n+h}/k_n)$ and in the same way for $\Norm_n^{n+h}$, 
$\J_n^{n+h}$. From the exact sequence
$1 \to  \J_n^{n+h}\CX_n \to \CX_{n+h} \to \CX_{n+h}/ \J_n^{n+h} \CX_n \to 1$, we get:
\begin{equation*}
\begin{aligned}
1  \to  \CX_{n+h}^{G_n^{n+h}} / \J_n^{n+h} \CX_n & \to (\CX_{n+h}/ \J_n^{n+h} \CX_n)^{G_n^{n+h}} \\
& \to  {\rm H}^1(G_n^{n+h}, \J_n^{n+h} \CX_n) \to  {\rm H}^1(G_n^{n+h}, \CX_{n+h}),
\end{aligned}
\end{equation*}

\noindent
where ${\rm H}^1(G_n^{n+h}, \J_n^{n+h} \CX_n) = (\J_n^{n+h} \CX_n) [p^h] 
\simeq \CX_n[p^h]$ (injectivity of $\J_n^{n+h}$), 
$$\order {\rm H}^1(G_n^{n+h}, \CX_{n+h}) = \order {\rm H}^2(G_n^{n+h}, \CX_{n+h})
= \order (\CX_{n+h}^{G_n^{n+h}} / \J_n^{n+h} \CX_n), $$
since $\Nu_n^{n+h}\CX_{n+h} =  \J_n^{n+h} \circ \Norm_n^{n+h} \CX_{n+h} =
\J_n^{n+h} \CX_n$ (surjectivity of $\Norm_n^{n+h}$), giving an exact sequence of the form:
$$1\to A \to (\CX_{n+h}/ \J_n^{n+h} \CX_n)^{G_n^{n+h}}  
\to \CX_n[p^h] \to A',\ \hbox{with $\order A' = \order A$} . $$ 

We then obtain the inequality $\order \CX_{n+h}\geq \order \CX_n \cdot \order \CX_n[p^h]$,
whence the results.
\end{proof}

\begin{corollary} The $n$-sequence $\order \CX_n$ stabilizes from 
some $n_0 \in [0, N-1]$ if and only if $\CX_n= 1$, for all $n \in [0, N]$. 

\noindent
In an Iwasawa's theory context with $\mu = 0$ and $\lambda > 0$, 
let $p^{e_n}$ (resp. $r_n$) be the exponent (resp. the $p$-rank) 
of $\CX_n$, $n \in [0, N]$. Then $r_n$ is a constant $r$,\, $\forall n \gg 0$ 
and $e_n \to \infty$ with $n$; in particular, if $\lambda = \mu = 0$, then 
$\CX_n=1$, for all $n \geq 0$.
\end{corollary}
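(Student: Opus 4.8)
The plan is to handle the two statements in turn: the equivalence is a short consequence of Theorem \ref{nocap} together with the monotonicity already built into the hypotheses, while the Iwasawa-theoretic part needs in addition the growth formula for $\order \CX_n$.

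For the first assertion, note that injectivity of the transfers $\J_n^{n+1}\colon \CX_n \to \CX_{n+1}$ forces $a_n := v_p(\order \CX_n)$ to be a non-decreasing $n$-sequence. Assume $\order \CX_n$ stabilizes from some $n_0 \in [0,N-1]$, i.e.\ $a_n = a_{n_0}$ for all $n \geq n_0$. I would apply Theorem \ref{nocap} at level $n = n_0$ with $h = 1$ (allowed since $n_0 \leq N-1$), obtaining $\order \CX_{n_0+1} \geq \order \CX_{n_0}\cdot p^{\min(e_{n_0},1)}$. If $\CX_{n_0} \neq 1$ then $e_{n_0} \geq 1$, so $\min(e_{n_0},1)=1$ and $a_{n_0+1} \geq a_{n_0}+1$, contradicting stabilization; hence $\CX_{n_0}=1$, i.e.\ $a_{n_0}=0$. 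Since $a_n$ is non-decreasing and non-negative, $a_n = 0$ for $n \leq n_0$, while $a_n = a_{n_0}=0$ for $n \geq n_0$; thus $\CX_n = 1$ for every $n \in [0,N]$. The converse is trivial.

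For the second assertion I would invoke the Iwasawa growth formula $a_n = \mu\,p^n + \lambda\, n + \nu$ valid for $n \gg 0$; with $\mu = 0$ this is $a_n = \lambda n + \nu$, which tends to $\infty$ linearly when $\lambda > 0$. Combining this with the elementary bound $a_n \leq r_n\, e_n$ (order $\leq$ rank $\times$ exponent, for any finite abelian $p$-group) and with the boundedness of the $p$-ranks $r_n$, one gets $e_n \geq a_n/r_n \to \infty$, as claimed. When $\lambda = \mu = 0$ the sequence $a_n$ is eventually constant, and applying the first assertion inside each finite sub-tower $K_0,\dots,K_N$, for every $N$, then forces $\CX_n = 1$ for all $n$.

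The delicate point is the boundedness, and in fact the eventual constancy $r_n = r$ for $n \gg 0$, of the $p$-ranks. Theorem \ref{nocap} only produces lower bounds on $\order \CX_n$, so it cannot cap $r_n$ by itself; here I would appeal to the structure theory of finitely generated torsion $\Lambda$-modules, $\Lambda = \Z_p[[T]]$, in which $\mu = 0$ forces the Iwasawa module $X = \limproj \CX_n$ (limit along the surjective norms) to be finitely generated over $\Z_p$ of rank $\lambda$, whence the $p$-ranks $r_n = \rk_p(\CX_n)$ stabilize to a constant $r$. This is the single place where genuinely external Iwasawa input, rather than the self-contained filtration and algebraic-norm machinery of the paper, is required.
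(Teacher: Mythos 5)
Your first paragraph is sound and essentially the paper's own argument: stabilization plus Theorem \ref{nocap} with $h=1$ forces $\order\CX_{n_0}[p]=1$, hence $\CX_{n_0}=1$; the lower layers are then killed either by surjectivity of the norms (the paper's choice) or, as you do, by monotonicity of the orders coming from injective transfers -- both work. The closing claim for $\lambda=\mu=0$ is also handled as in the paper. The problem is your treatment of the $p$-ranks.

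You assert that Theorem \ref{nocap} ``only produces lower bounds on $\order \CX_n$, so it cannot cap $r_n$ by itself,'' and you then import the structure theory of finitely generated torsion $\Lambda$-modules to obtain the eventual constancy of $r_n$. That meta-claim is false, and it causes you to miss the one genuinely clever step of the proof. Write $a_n := v_p(\order \CX_n)$. Theorem \ref{nocap} with $h=1$ gives $\order \CX_{n+1} \geq \order \CX_n \cdot \order \CX_n[p] = \order \CX_n \cdot p^{r_n}$, i.e. $a_{n+1}-a_n \geq r_n$; but under $\mu=0$ the growth formula says $a_{n+1}-a_n = \lambda$ for all $n \gg 0$. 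Squeezing the lower bound against this known linear growth yields $r_n \leq \lambda$ for $n \gg 0$, and since $r_n$ is non-decreasing (injectivity of the transfers, exactly as you note for the orders), $r_n$ is eventually a constant $r$. This is how the paper argues, and it keeps the proof entirely inside the axiomatic framework. Your alternative route is not available there: the $\CX_n$ are an abstract family equipped only with surjective norms, injective transfers and the order formula; nothing in these hypotheses guarantees that $\limproj \CX_n$ is a finitely generated torsion $\Lambda$-module whose layer quotients recover the $\CX_n$ in the standard way, so the structure theorem cannot be invoked without adding assumptions the statement does not make. Once $r_n \leq \lambda$ is established, the remainder of your argument ($\lambda n + \nu \leq r\, e_n$, hence $e_n \to \infty$) coincides with the paper's.
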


\begin{proof}
The stability from $n_0$ means $\CX_{n_0}[p] = 1$, then 
$\CX_{n_0} = 1$ and $\CX_n =1$, for all $n \in [n_0, N]$; the
surjectivity of the norms implies $\CX_n =1$, for all $n \in [0, n_0]$.

If $\mu = 0$ in the formula $\order \CX_n = p^{\lambda\, n + \mu\,p^n + \nu}$, for 
$n \gg 0$, the relation $\order \CX_{n+1}\geq \order \CX_n \cdot \CX_n[p]$ 
implies $r_n \leq \lambda$; since $r_n$ is increasing (injectivity of the transfers), 
$r_n = r,\, \forall n \gg 0$; since $\order \CX_n \leq p^{r\,e_n}$, one gets
$\lambda n + \nu \leq r e_n$ 
proving that $e_n \to \infty$ with $n$. If $\lambda = \mu = 0$, then $\order \CX_n$ is 
constant for $n \gg 0$, whence $\CX_n=1,\, \forall n \geq 0$.
\end{proof}

We remark that if $r_n$ is unbounded, necessarily $\mu >0$. But all 
of these results are based on strong assumptions avoiding any capitulation. So,
Theorem \ref{main1} does not apply since the complexity of the $\CX_n$'s 
crucially increases with $n$. We shall give two examples of such families.

\subsection{\texorpdfstring{$p$}{Lg}-class groups of imaginary quadratic field}
If $K$ is an imaginary quadratic field and $L = L_0 K$, $L_0/\Q$ real cyclic 
of degree $p^\N$, we know that there is never capitulation of $\CH_K \ne 1$.
Let's give two numerical examples, and use Program \ref{quadratic} given further:

\begin{example}{\rm
Consider $K=\Q(\sqrt{-199})$, $p=3$, $\ell = 19$, inert in $K$:

\ft\begin{verbatim}
p=3  Nn=2  PK=x^2+199  CK0=[9]  ell=19  r=1
CK1=[513]=[27]
h_1^[(S-1)^1]=[9]       h_1^[(S-1)^2]=[0] 
norm in K1/K of the component 1 of CK1:[3]
No capitulation, m(K1)=2, e(K1)=3
\end{verbatim}\ns
\ft\begin{verbatim}
CK2=[749493,19,19]=[81]
h_1^[(S-1)^1]=[45,0,0]  h_1^[(S-1)^2]=[0,0,0]   
norm in K2/K of the component 1 of CK2:[9]
No capitulation, m(K2)=2, e(K2)=4
\end{verbatim}\ns

(i) Case $n=1$. We have $\CH_{K_1} \simeq \Z/3^3\Z$, $m(\Kk_1)=2$ 
($s(\Kk_1) = 1$) and  one obtains  $e(\Kk_1)=3 > n - s(\Kk_1) = 0$.
We get the equality $\order \CH_{K_1} = \order \CH_K \cdot \order \CH_K[3]$.

(ii) Case $n=2$. 
Then $\CH_{K_2} \simeq \Z/3^4\Z$, $m(\Kk_2) = 2$ ($s(\Kk_2) = 1$) 
and one obtains $e(\Kk_2)=4 > n - s(\Kk_2) = 1$.
We have $\order \CH_{K_2} = \order \CH_{K_1} \cdot \order \CH_{K_1}[3] =
\order \CH_K \cdot \order \CH_K[3^2]$.}
\end{example}

\begin{example}{\rm
We consider $K=\Q(\sqrt{-199})$, $\ell = 37$, inert in $K$.

\ft\begin{verbatim}
p=3  Nn=2  PK=x^2+199  CK0=[9]  ell=37  r=1 
CK1=[54,6,3]=[27,3,3]
h_1^[(S-1)^1]=[21,1,0] h_2^[(S-1)^1]=[18,0,1] h_3^[(S-1)^1]=[18,0,0]
h_1^[(S-1)^2]=[0,0,1]  h_2^[(S-1)^2]=[18,0,0] h_3^[(S-1)^2]=[0,0,0]
h_1^[(S-1)^3]=[18,0,0] h_2^[(S-1)^3]=[0,0,0]  h_3^[(S-1)^3]=[0,0,0]
h_1^[(S-1)^4]=[0,0,0]  h_2^[(S-1)^4]=[0,0,0]  h_3^[(S-1)^4]=[0,0,0]
norm in K1/K of the component 1 of CK1:[12,0,1]
norm in K1/K of the component 2 of CK1:[18,0,0]
norm in K1/K of the component 3 of CK1:[0,0,0]
No capitulation, m(K1)=4, e(K1)=3 
CK2=[42442542,18,9]=[81,9,9]
No capitulation, m(K2)=4, e(K2)=4
\end{verbatim}\ns

(i) Case $n=1$. 
In this case, $\CH_{K_1} \simeq \Z/3^3\Z \times \Z/3\Z \times \Z/3\Z$; the above 
data shows that $m(\Kk_1) = 4$ for $e(\Kk_1)=3$, and a more complex structure, 
since $s(\Kk_1)=1$, but $e(\Kk_1) = 3 > n - s(\Kk_1) = 0$. Here,
$\order \CH_{K_1} > \order \CH_K \cdot \order \CH_K[3]$.

(ii) Case $n=2$. 
Then $\CH_{K_2} \simeq \Z/3^4\Z \times \Z/3^2\Z \times \Z/3^2\Z$,
$m(\Kk_2) \geq 4$, $s(\Kk_2) \geq 1$ and $e(\Kk_2)=4 > n - s(\Kk_2)$.
One has $\order \CH_{K_2} = \order \CH_{K_1} \cdot \order \CH_{K_1}[3]$.}
\end{example}

\subsection{Torsion groups of abelian \texorpdfstring{$p$}{Lg}-ramified 
\texorpdfstring{$p$}{Lg}-extensions}

We will evoke the case of the torsion group $\CT_K$ of the Galois group of the 
maximal abelian $p$-ramified pro-$p$-extension of a number field $K$; then,
under Leopoldt's conjecture, {\it the transfer map is always injective, whatever 
the extensions of number fields $L/K$ considered} (see, e.g., for more information 
and explicit results, our Crelle's papers (1982), Nguyen Quang Do \cite{NQD1986} (1986), 
Jaulent \cite{Jaul1986} (1986), Movahhedi \cite{Mova1988} (1988), all written in french, 
collected in our book \cite[Chapter IV]{Gras2005}). 

This has some consequences because of the formula:
\begin{equation}\label{orderT}
\order \CT_K = \order \wt \CH_K \cdot  \order \CR_K \cdot \order \CW_K,
\end{equation}

\noindent
where $\CW_K$ is a canonical invariant built on the groups of (local and global) 
roots of unity of $p$-power order of $K$, $\CR_K$ is the normalized $p$-adic 
regulator and $\wt \CH_K$ a sub-group of $\CH_K$ (see, for instance 
\cite[Theorem IV.2.1]{Gras2005}, \cite[Diagram \S\,3 and \S\,5]{Gras2018}, 
\cite[Section 2]{Gras2021$^a$}). For the Bertrandias--Payan module $\CT_K^\bp$ 
of $K$ (isomorphic to $\CT_K/\CW_K$, from \cite{BerPay} about 
the embedding problem), the transfers $\J_{L/K}$ are injective, except few 
special cases discussed in Gras--Jaulent--Nguyen Quang Do \cite{GJN2016}.

Thus, as we have seen, in any cyclic $p$-extension $L/K$ of Galois group 
$G$, the complexity of the invariants $\CT_{K_n} \ne 1$ is never smooth and is
increasing with $n$.

\subsubsection{Quadratic fields}
We use our program \cite[Corollary 2.2, Program I, \S\,3.2]{Gras2019$^a$}
computing the group structure of the $\CT_{K_n}$'s for quadratic fields 
$K = \Q(\sqrt {s m})$, $s=\pm1$, $p=2$, $K_n \subseteq L \subset K(\mu_\ell^{})$ 
(so the number $r_2+1$ of independent $\Z_p$-extensions is $1$ for $s=1$ and 
$2^n+1$ for $s=-1$). One must chose an arbitrary constant $E$, ``assuming'' 
$E > e_n + 1$, to be controlled a posteriori; but taking $E$ very large does 
not essentially modify the computer calculation time:

\ft\begin{verbatim}
MAIN PROGRAM COMPUTING THE STRUCTURE OF TKn (quadratic fields).
Set s=1 (resp. s=-1) for real (resp. imaginary) quadratic fields:
{s=1;p=2;ell=257;Nn=4;E=16;for(m=2,150,if(core(m)!=m,next);
PK=x^2-s*m;print();print("p=",p," ell=",ell," PK=",PK);
for(n=0,Nn,r2=1;if(s==-1,r2=r2+2^n);Qn=polsubcyclo(ell,p^n);
Pn=polcompositum(PK,Qn)[1];Kn=bnfinit(Pn,1);Knmod=bnrinit(Kn,p^E);
CKnmod=Knmod.cyc;TKn=List;d=matsize(CKnmod)[2];for(j=1,d-r2,c=CKnmod[d-j+1];
w=valuation(c,p);if(w>0,listinsert(TKn,p^w,1)));print("TK",n,"=",TKn)))}
\end{verbatim}\ns

In a very simple context ($K = \Q(\sqrt m)$, $p=2$, $L \subset K(\mu_{257}^{})$), the 
complexity of the torsion groups $\CT_{K_n}$ is growing dramatically 
(for the $p^k$-ranks as well as the exponents) as shown by the following excerpts:

\ft\begin{verbatim}
p=2  ell=257  PK=x^2-2
TK0=[]
TK1=[8,8]
TK2=[16,16,4,4,2,2]
TK3=[32,32,8,8,4,4,4,2,2,2,2]
TK4=[64,64,16,16,4,4,4,4,4,4,4,4,4,4,4,2,2,2,2]
\end{verbatim}\ns
\ft\begin{verbatim}
p=2  ell=257  PK=x^2-73
TK0=[2]
TK1=[64,8,2,2]
TK2=[128,16,8,4,4,2,2,2]
TK3=[256,32,16,16,8,8,8,4,2,2,2,2,2,2,2,2]
TK4=[512,64,32,32,16,16,16,8,2,2,2,2,2,2,2,2,2,2,2,2,2,2,2,2,2,2,2,2,2,2,2,2]
\end{verbatim}\ns
\ft\begin{verbatim}
p=2  ell=257  PK=x^2-105
TK0=[2,2]
TK1=[16,8,2,2]
TK2=[32,16,8,4,2,2,2,2]
TK3=[64,32,8,8,8,8,8,2,2,2,2,2,2,2,2,2]
TK4=[128,64,16,16,16,16,16,2,2,2,2,2,2,2,2,2,2,2,2,2,2,2,2,2,2,2,2,2,2,2,2,2]
\end{verbatim}\ns
\ft\begin{verbatim}
p=2  ell=257  PK=x^2-113
TK0=[4]
TK1=[128,16,4]
TK2=[256,32,8,4,4,4,2]
TK3=[512,64,16,8,8,8,2,2,2,2,2,2,2,2,2]
TK4=[1024,128,32,16,16,16,2,2,2,2,2,2,2,2,2,2,2,2,2,2,2,2,2,2,2,2,2,2,2,2,2]
\end{verbatim}\ns

In the context $L \subset K(\mu_\ell^{})$, the analogue of the Chevalley--Herbrand 
formula is $\order \CT_{K_n}^{G_n} = \order \CT_K\! \cdot p^{r\, n}$,
where $r$ is the number of primes ${\mathfrak l} \mid \ell$
in $K$ \cite[Theorem IV.3.3, Exercise 3.3.1]{Gras2005}; unfortunately, 
we do not know formulas, similar to that of \eqref{filtration}, 
for the orders of the $\CT_{K_n}^{i+1}/\CT_{K_n}^i$ for $i \geq 1$, hence 
$m(\Kk_n)$ is unknown. 

Consider, for instance, the above case of $m = 113$, for $K_4$, 
$\order \CT_{K_4} = 2^{59}$, $r=2$, $\Nu_{\!{K_4}/K}(\CT_{K_4})\simeq \Z/4\Z$.
Since $\order(\CT_{K_4}^{i+1}/\CT_{K_4}^i) \leq \order \CT_{K_4}^{G_4} 
= 2^{10}$, this gives $m(\Kk_4) \geq 6$ ($s(\Kk_4) \geq 2$). The conditions 
$e(\Kk_4) =10 \leq 4-s(\Kk_4)$ can not be satisfied.

Taking imaginary quadratic fields ($s=-1$) does not modify the behavior of 
the $\CT_{K_n}$'s since, for all $n$, $\J_{K_n/K}$ is still injective and 
$\Norm_{K_n/K}$ surjective; but in the imaginary quadratic case, the 
normalized regulator is trivial and $\CT_{K_n}/\CW_{K_n}$ is isomorphic to
a subgroup of $\CH_{K_n}$, for which we know the non-smooth complexity.
For instance, for $K=\Q(\sqrt{-2})$, since $2$ ramifies in $K$ and totally splits 
in $K_4/K$, we have $\CW_{K_n} \simeq (\Z/2\Z)^{2^n-1}$, $n \in [0, 4]$
($(\Z/2\Z)^{15}$ for $n=4$), and the following $\CH_{K_n}$:

\ft\begin{verbatim}
p=2  ell=257  PK=x^2+2
CK0=[] 
CK1=[24]=[8]
CK2=[240,4,2]=[16,4,2]
CK3=[19680,8,2,2,2,2,2]=[32,8,2,2,2,2,2]
CK4=[11375040,272,2,2,2,2,2,2,2,2,2,2,2,2,2]=[64,16,2,2,2,2,2,2,2,2,2,2,2,2,2]
\end{verbatim}\ns

\noindent
that we may compare with the structure of $\CT_{K_n}$ for  $K=\Q(\sqrt{-2})$:

\ft\begin{verbatim}
p=2  ell=257  PK=x^2+2
TK0=[]                                    WK0=[]
TK1=[16,2]                                WK1=[2]
TK2=[16,4,4,4,4]                          WK2=[2,2,2]
TK3=[32,8,4,4,4,4,4,4,4]                  WK3=[2,2,2,2,2,2,2,]
TK4=[64,16,4,4,4,4,4,4,4,4,4,4,4,4,4,4,4] WK4=[2,2,2,2,2,2,2,2,2,2,2,2,2,2,2]
\end{verbatim}\ns

\noindent
showing that in the formula \eqref{orderT}, 
$\order \CR_{\!K_4} = 2^2 \cdot [\wt K_4 \cap H_{K_4}^\nr : K_4]$.

Similar analysis may be done with the following fields $K$:
\ft\begin{verbatim}
p=2  ell=257  PK=x^2+3
TK0=[]
TK1=[16]
TK2=[32,8,4]
TK3=[64,16,4,4,4,4,4]
TK4=[128,32,4,4,4,4,4,4,4,4,4,4,4,4,4]
\end{verbatim}\ns
\ft\begin{verbatim}
p=2  ell=257  PK=x^2+7
TK0=[2]
TK1=[8,2,2]
TK2=[16,4,2,2,2,2,2]
TK3=[32,8,2,2,2,2,2,2,2,2,2,2,2,2,2]
TK4=[64,16,2,2,2,2,2,2,2,2,2,2,2,2,2,2,2,2,2,2,2,2,2,2,2,2,2,2,2,2,2]
\end{verbatim}\ns

This leads to the interesting problem of estimating the maximal unramified
sub-extension of the compositum $\wt {K_n}$ of the $\Z_p$-extensions of the $K_n$'s.

\subsubsection{Cyclic cubic fields}
For cyclic cubic fields, $p=2$, we obtain analogous computations
by means of the following program:

\ft\begin{verbatim}
MAIN PROGRAM COMPUTING THE STRUCTURE OF TKn (cyclic cubic fields):
{p=2;ell=257;Nn=3;E=16;bf=7;Bf=10^3;
for(f=bf,Bf,h=valuation(f,3);if(h!=0 & h!=2,next);F=f/3^h;
if(core(F)!=F,next);F=factor(F);Div=component(F,1);d=matsize(F)[1];
for(j=1,d,D=Div[j];if(Mod(D,3)!=1,break));for(b=1,sqrt(4*f/27),
if(h==2 & Mod(b,3)==0,next);A=4*f-27*b^2;if(issquare(A,&a)==1,
if(h==0,if(Mod(a,3)==1,a=-a);PK=x^3+x^2+(1-f)/3*x+(f*(a-3)+1)/27);
if(h==2,if(Mod(a,9)==3,a=-a);PK=x^3-f/3*x-f*a/27);print("p=",p," f=",f,
" PK=",PK," ell=",ell);for(n=0,Nn,Qn=polsubcyclo(ell,p^n);
Pn=polcompositum(PK,Qn)[1];Kn=bnfinit(Pn,1);Knmod=bnrinit(Kn,p^E);
CKnmod=Knmod.cyc;TKn=List;d=matsize(CKnmod)[2];for(j=1,d-1,c=CKnmod[d-j+1];
w=valuation(c,p);if(w>0,listinsert(TKn,p^w,1)));print("TK",n,"=",TKn)))))}
\end{verbatim}\ns
\ft\begin{verbatim}
p=2  ell=257  f=31  PK=x^3+x^2-10*x-8
TK0=[2,2]
TK1=[8,2,2,2,2]
TK2=[16,4,2,2,2,2,2,2,2,2,2]
TK3=[32,8,2,2,2,2,2,2,2,2,2,2,2,2,2,2,2,2,2,2,2,2,2]
\end{verbatim}\ns
\ft\begin{verbatim}
p=2  ell=257  f=43  PK=x^3+x^2-14*x+8 
TK0=[2,2]
TK1=[16,16,8,2,2]
TK2=[32,32,16,4,4,4,2,2,2,2,2]
TK3=[64,64,32,8,8,8,2,2,2,2,2,2,2,2,2,2,2,2,2,2,2,2,2]
\end{verbatim}\ns
\ft\begin{verbatim}
p=2  ell=257  f=171  PK=x^3-57*x-152 
TK0=[8,8]
TK1=[16,16,8,2,2]
TK2=[32,32,16,4,2,2,2,2,2,2,2]
TK3=[64,64,32,8,2,2,2,2,2,2,2,2,2,2,2,2,2,2,2,2,2,2,2]
\end{verbatim}\ns
\ft\begin{verbatim}
p=2  ell=257  f=277  PK=x^3+x^2-92*x+236
TK0=[4,4]
TK1=[8,4,4,4,4]
TK2=[16,8,8,4,4,4,4,4,2,2,2]
TK3=[32,16,16,8,8,8,8,8,2,2,2,2,2,2,2,2,2,2,2,2,2,2,2]
\end{verbatim}\ns

To conclude about the spectacular increasing of the $p$-ranks, recall 
that the $p$-rank of the Tate--Chafarevich group
${\rm III}^2_L := {\rm Ker} \Big [{\rm H}^2(\wt {\mathcal G}_L,\F_p) \too
\bigoplus_{v \mid p}{\rm H}^2(\wt {\mathcal G}_{L_v},\F_p) \Big]$
($\wt {\mathcal G}_L=$  Galois group of the maximal $p$-ramified
pro-$p$-extension of $L$), is that of $\CT_L$.

\subsection{Remarks on \texorpdfstring{$\rk_p(\CX_n)$}{Lg}-ranks}

In the framework of Theorem \ref{nocap}, about the family $\{\CX_n\}_{n \geq 0}$
in a tower $L/K$, 
let $\CY_n:= \CX_n/\CX_n^p$, and assume that the transfer maps 
$\CY_n \to \CY_{n+1}$ are also injective; so, $\order \CY_{n+1} 
\geq \order \CY_n\! \cdot\! \order \CY_n[p]$ $= (\order \CY_n)^2$, whence 
$\rk_p(\CX_{n+1}) \geq 2\,\rk_p(\CX_n)$, for all $n \in [0, N-1]$. 
This ``doubling'' of the $p$-ranks does not seem exceptional; for instance,
for $\CX = \CT$, $K = \Q(\sqrt{105})$, $p=2$, $\ell = 257$, computed above:

\ft\begin{verbatim}
p=2  ell=257  PK=x^2-105  r=2
TK0=[2,2]
TK1=[16,8,2,2]
TK2=[32,16,8,4,2,2,2,2]
TK3=[64,32,8,8,8,8,8,2,2,2,2,2,2,2,2,2]
TK4=[128,64,16,16,16,16,16,2,2,2,2,2,2,2,2,2,2,2,2,2,2,2,2,2,2,2,2,2,2,2,2,2]
\end{verbatim}\ns

\noindent
we obtain precisely, for $r_n := \rk_p(\CT_n)$, $r_0=2$, $r_1=4$, 
$r_2=8$, $r_3=16$, $r_4=32$, and in the other similar examples, some 
irregularities appear.

The case $\CX = \CH$ and $K$ imaginary quadratic may give similar results:

\ft\begin{verbatim}
p=2  PK=x^2+5  CK0=[2]  ell=257  r=1
CK1=[6,6]=[2,2]
CK2=[174,6,2,2]=[2,2,2,2]
CK3=[19662,6,6,6,2,2,2,2]=[2,2,2,2,2,2,2,2]
CK4=[5353549698,42,6,6,2,2,2,2,2,2,2,2,2,2,2,2]=[2,2,2,2,2,2,2,2,2,2,2,2,2,2,2,2]
\end{verbatim}\ns

\noindent
for which $r_n = 2^n,\, \forall n \leq 4$. 

Note that, even if the maps $\CX_n \to \CX_{n+1}$ are injective, 
$\CY_n \to \CY_{n+1}$ may be non injective, meaning that some $x_n \in \CX_n$ 
become $p$th powers in $\CX_{n+1}$, which ``explains'' that the exponents $p^{e_n}$
increase with $n$ in many of the numerical examples and that the rule
$\rk_{n+1} \geq 2 \rk_n$ is not always fulfilled. One may hope that any pair 
$(p^{e_n}, r_n)$, compatible with Galois action, does exist.

This study suggests that classical lower bounds, given by genus theory in $L/K$ 
in the form of the genus exact sequence (e.g., Angl\`es--Jaulent \cite[Th\'eor\`eme 2.2.9]
{AnJau2000}, \cite[Th\'eor\`eme III.2.7]{Jaul1986}, Gras \cite[Corollary IV.4.5.1]{Gras2005}, 
Maire \cite[Th\-eorem 2.2]{Maire2018}, then Kl\"uners--Wang \cite{KluWan2022}, Liu 
\cite[Theorem 6.5]{Liu2022}), may be much largely exceeded and that the upper 
bounds may be reached.

\section{Capitulation in \texorpdfstring{$\Z_p$}{Lg}-extensions}

The problem of capitulations in a $\Z_p$-extension $\wt K = \bigcup_{n \geq 0} K_n$ 
of $K$ may be considered as a similar case of the previous tame context, taking
``$\ell = p$''. This problem has a long history from Iwasawa pioneering works showing, 
for instance, that the capitulation kernels $\Ker(\J_{\wt K/K_n})$ have a bounded order 
as $n \to \infty$ (Iwasawa \cite[Theorem 10, \S\,5]{Iwas1973}). One may refer for 
instance to Grandet--Jaulent \cite{GrJa1985}, Bandini--Caldarola 
\cite{BaCa2016,Cald2020} for classical context of $p$-class 
groups, to Kolster--Movahhedi \cite{KoMo2000}, Validire \cite{Vali2008} 
for wild kernels, and Jaulent \cite{Jaul2016,Jaul2019$^a$} for logarithmic class groups.

Let $\X_{\wt K} := \ds \limproj \CH_{K_n}$ (for the arithmetic norms) and
let $\ds \CH_{\wt K} := \limind \CH_{K_n}$ (for the transfer maps);
$\X_{\wt K}$ is isomorphic to the Galois group of the maximal unramified 
abelian pro-$p$-extension of $\wt K$ and $\CH_{\wt K}$ is the 
$p$-class group of $\wt K$.

\subsection{Known results under the assumption \texorpdfstring{$\mu = 0$}{Lg}}
If $\mu=0$, in the writing $\order \CH_{K_n} = p^{\lambda n + \mu p^n + \nu}$ 
for $n \gg 0$, the following properties are proved in Grandet--Jaulent
\cite[Th\'eor\`eme, p. 214]{GrJa1985}:

 $\bullet$  $\X_{\wt K} \simeq \T \plus \Z_p^\lambda$,  
where $\T$ is a finite $p$-group, 

 $\bullet$  $\Norm_{\wt K/K_n}\! : \X_{\wt K} \to \CH_{K_n}$
induces the isomorphisms $\T \simeq \Ker(\J_{\wt K/K_n}),\, \forall n \gg 0$,

 $\bullet$  $\CH_{K_n} \simeq \Ker(\J_{\wt K/K_n}) \bigoplus \J_{\wt K/K_n}(\CH_{K_n})
\simeq  \Ker(\J_{\wt K/K_n}) \bigoplus_{i=1}^\lambda \Z/p^{n+\alpha_i}\Z,\, \forall n \gg 0$,
with some relative integers $\alpha_i$.

In Validire \cite[Th\'eor\`eme 3.2.5]{Vali2008} is proved analogous results for even groups of the 
${\bf K}$-theory of rings of integers of number fields, after similar results as that
of Kolster--Movahhedi \cite{KoMo2000}.

From now on, we take the base field $K_{n_0}$, $n_0$ large enough,
in such a way that $\wt K/K_{n_0}$ is totally ramified and such that all the above 
properties are fulfilled from $n_0$. By abuse of notation, we write $K$ instead of
$K_{n_0}$ and $K_n$ now denotes $K_{n_0+n}$. So, the $\Z_p$-extension
$\wt K/K$ has Iwasawa invariants ($\lambda \geq 0$, $\mu = 0$, 
$\nu+\lambda n_0 \geq 0$) that we still denote ($\lambda$, $\nu$).
Thus, $\order \CH_K = p^\nu$, $\order \CH_{K_n} = p^{\lambda n + \nu}$ 
and, for all $n \geq 0$:
\begin{equation}\label{iso}
\left\{\begin{aligned}
\CH_{K_n} & \simeq \Ker(\J_{\wt K/K_n}) \oplus \oplus_{i=1}^\lambda \Z/p^{n+\alpha_i}\Z, \ \,
\alpha_i \geq 0,\ \Ker(\J_{\wt K/K_n}) \simeq \T\  \\
\CH_K &\simeq \Ker(\J_{\wt K/K}) \oplus \oplus_{i=1}^\lambda \Z/p^{\alpha_i}\Z,\ \, \alpha_i \geq 0. 
\end{aligned}\right.
\end{equation}

\begin{proposition}\label{cap}
Under the above choice of the base field $K$ in the $\Z_p$-exten\-sion $\wt K$ and 
assuming $\mu = 0$, the capitulation of $\CH_K$ in $\wt K$ is equivalent 
to the isomorphism $\CH_{K_n} \simeq \CH_K \oplus \J_{\wt K/K_n}(\CH_{K_n})
\simeq \CH_K \oplus \big( \Z/p^n\Z\big)^\lambda,\, \forall n \geq 0$.
\end{proposition}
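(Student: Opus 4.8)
The plan is to reduce the statement to bookkeeping on the decomposition \eqref{iso}, tracking the non-negative exponents $\alpha_i$. First I would pin down the meaning of capitulation of $\CH_K$ in $\wt K$: since $\CH_{\wt K} = \limind \CH_{K_n}$ is the direct limit taken along the transfer maps, capitulation says exactly that $\J_{\wt K/K} \colon \CH_K \to \CH_{\wt K}$ is trivial, i.e. $\Ker(\J_{\wt K/K}) = \CH_K$, equivalently $\J_{\wt K/K}(\CH_K)=1$. Reading off the second line of \eqref{iso} then gives $\J_{\wt K/K}(\CH_K) \simeq \bigoplus_{i=1}^\lambda \Z/p^{\alpha_i}\Z$, so that capitulation is \emph{equivalent} to $\alpha_i = 0$ for every $i$ (the condition being vacuous when $\lambda = 0$, in which case $\CH_K \simeq \T$ capitulates automatically).

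Granting capitulation, i.e. all $\alpha_i = 0$, I would substitute back into the first line of \eqref{iso}: the image becomes $\J_{\wt K/K_n}(\CH_{K_n}) \simeq \bigoplus_{i=1}^\lambda \Z/p^{n}\Z = (\Z/p^n\Z)^\lambda$, while $\CH_{K_n} \simeq \Ker(\J_{\wt K/K_n}) \oplus (\Z/p^n\Z)^\lambda$. The Grandet--Jaulent isomorphisms $\Ker(\J_{\wt K/K_n}) \simeq \T \simeq \Ker(\J_{\wt K/K}) = \CH_K$, valid for every $n \geq 0$ after the re-basing to $K = K_{n_0}$, then yield $\CH_{K_n} \simeq \CH_K \oplus (\Z/p^n\Z)^\lambda \simeq \CH_K \oplus \J_{\wt K/K_n}(\CH_{K_n})$, which is the desired isomorphism.

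For the converse I would argue purely on orders: the general shape of \eqref{iso} gives $\order \J_{\wt K/K_n}(\CH_{K_n}) = p^{\lambda n + \sum_i \alpha_i}$, whereas the hypothesised isomorphism forces this quantity to be $p^{\lambda n}$; hence $\sum_i \alpha_i = 0$, and since each $\alpha_i \geq 0$ this means $\alpha_i = 0$ for all $i$, i.e. capitulation. I do not expect a serious obstacle here, as the entire content sits in \eqref{iso} together with the Grandet--Jaulent structure theorem. The only points demanding care are that the kernel isomorphism $\Ker(\J_{\wt K/K_n}) \simeq \T$ holds \emph{uniformly} in $n \geq 0$ (precisely the reason for passing to the base field $K_{n_0}$), and that the direct-sum splitting in \eqref{iso} is compatible with the transfer maps, so that $\J_{\wt K/K_n}(\CH_{K_n})$ is genuinely the displayed summand rather than merely abstractly isomorphic to it.
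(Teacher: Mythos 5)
Your proof is correct and follows essentially the same route as the paper: both directions are bookkeeping on the Grandet--Jaulent decomposition \eqref{iso}, reducing capitulation of $\CH_K$ in $\wt K$ to the vanishing of all the exponents $\alpha_i$. The only, immaterial, difference is in the converse, where you count orders of the image $\J_{\wt K/K_n}(\CH_{K_n})$ (using both isomorphisms of the hypothesis, legitimate since the statement asserts both), whereas the paper compares the cyclic structures of the two decompositions for $n$ large enough and thus gets by with the abstract isomorphism $\CH_{K_n} \simeq \CH_K \oplus \big(\Z/p^n\Z\big)^\lambda$ alone.
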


\begin{proof}
If $\CH_K$ capitulates in $\wt K$, then $\Ker(\J_{\wt K/K}) = \CH_K$, whence
$\alpha_i=0$, for all  $i \in [1, \lambda]$, from \eqref{iso}, and
$\CH_{K_n} \simeq \CH_K \oplus \big( \Z/p^n\Z\big)^\lambda$ for all $n \geq 0$,
since each capitulation kernel $\Ker(\J_{\wt K/K_n})$ is isomorphic to $\T$, thus 
isomorphic to $\Ker(\J_{\wt K/K}) = \CH_K$. 

Reciprocally, assume that $\CH_{K_n} \simeq \CH_K \oplus 
\big( \Z/p^n\Z\big)^\lambda,\, \forall n \geq 0$; then, from \eqref{iso},
$\CH_{K_n} = \Ker(\J_{\wt K/K_n}) \oplus \plus_{i=1}^\lambda \Z/p^{n+\alpha_i}\Z 
\simeq \CH_K \oplus \big( \Z/p^n\Z\big)^\lambda$. 
Comparing the structures for $n$ large enough gives $\alpha_i = 0$, for all 
$i \in [1, \lambda]$ and $\Ker(\J_{\wt K/K_n}) \simeq \CH_K$, for all $n \geq 0$,
whence the capitulation of $\CH_K$ in $\wt K$. 
\end{proof}

\subsection{Case of the cyclotomic \texorpdfstring{$\Z_p$}{Lg}-extension 
of \texorpdfstring{$K$}{Lg}}

Assume that $K$ is totally real and let $K^\cyc  = \bigcup_{n \geq 0} K_n$ 
be the cyclotomic $\Z_p$-extension of $K$, assuming the previous choice
of the base field $K$ in $K^\cyc$ ($K$ is still real with same cyclotomic 
$\Z_p$-extension). One may also assume that Greenberg's conjecture \cite{Gree1976} 
($\lambda = \mu = 0$ for $K^\cyc$) is equivalent to the stability of the 
$\order \CH_{K_n}$'s from $K$, giving capitulations of all the class groups 
in $K^\cyc$ from $n=0$, then $\CH_{K_n} = \CH_{K_n}^{G_n}
\ds \mathop{\simeq}^{\hbox{\tiny$\Norm_{K_n/K}$}} \CH_K$, for all $n \geq 0$; 
thus, $m(\Kk_n)=1$ ($s(\Kk_n)=0$) with $e(\Kk_n) = e(\Kk)$, which is exactly 
the limit case of application of Theorem \ref{main1}, for $n \geq e(\Kk)$. 

In Kraft--Schoof--Pagani \cite{KrSch1995,Paga2022} such properties of stability are 
used to check the conjecture by means of analytic formulas. 

In Jaulent \cite{Jaul2016,Jaul2019$^a$,Jaul2019$^b$}, it is proved that Greenberg's conjecture 
is equivalent to the capitulation of the logarithmic class group $\CH_K^\lgm$ in 
$K^\cyc$;\,\footnote{This invariant, also denoted $\wt {C\!\ell}_K$ or $\wt \CT_K$, was
defined in \cite{Jaul1994} and is, in the class field theory viewpoint, isomorphic 
to $\Gal(H_K^\lc/K^\cyc)$, where $H_K^\lc$ is the maximal abelian locally cyclotomic 
pro-$p$-extension of $K$. In the sequel we will denote $\CH_K^\lgm$ this group since it 
behaves more like a class group rather than a torsion group $\CT_{\!K}$ which never capitulates.}
this may be effective if, by chance, a capitulation occurs in the firsts layers; indeed, this 
criterion is probably the only one giving an algorithmic test (using Belabas--Jaulent \cite{BeJa2016},
Diazy Diaz--Jaulent--Pauli--Pohst--Soriano-Gafiuk \cite{DJPPS2005}) from the base field. We will 
see, Section~\ref{clog} that $\CH^\lgm_K$ may capitulate in real towers $L/K$.

\subsubsection{Analysis of the Chevalley--Herbrand formula in $K^\cyc/K$}

\begin{hypothesis}\label{hypothesis}
Taking in the sequel, as totally real base field $K$, a suitable layer $K_{n_0}$ in $K^\cyc$, 
we may assume the following properties of $K^\cyc/K$:

(i) $p$ is totally ramified in $K^\cyc/K$;

(ii) $\order \CH_{K_n} = p^{\lambda n + \mu p^n + \nu}$ for all $n \geq 0$ with new 
non-negative invariants of the form ($\lambda$, $\mu p^{n_0}$, $\nu+\lambda n_0$) 
from that of $K$, still denoted ($\lambda$, $\mu$, $\nu$). Thus, $\order \CH_K = p^{\mu+\nu}$.
\end{hypothesis}

Then, as for the ``tame casee'', formulas \eqref{filtration} hold with 
$r = \order \{{\mathfrak p}, \  {\mathfrak p} \!\mid\! p \  \hbox{in $K$}\}$
and the filtration still depends on the class and norm factors. However, 
the norm factors can be interpreted as divisor of the normalized $p$-adic 
regulator of $K$, as follows from class field theory (under Leopoldt's conjecture): 

\begin{definitions}\label{diagram0}
(i) Consider $H_{K_n}^\gen$, the genus field of $K_n$ (i.e., the subfield of the
$p$-Hilbert class field $H_{K_n}^\nr$, abelian over $K$ and maximal, whence the 
subfield of $\Gal(H_{K_n}^\nr/K)$ fixed by the image of $\CH_{K_n}^{\sigma_n - 1}$), 
and $K^\cyc H_{K_n}^\gen$ which is abelian $p$-ramified over $K$; then 
$K^\cyc H_{K_n}^\gen \subseteq H_K^\pr$, the maximal $p$-ramified abelian 
pro-$p$-extension of $K$. So $\CT_K := \Gal(H_K^\pr/K^\cyc)$ is finite under 
Leopoldt's conjecture.

We define $H_{K^\cyc}^\gen := \bigcup_n K^\cyc H_{K_n}^\gen$ and put 
$\CG_K := \Gal(H_{K^\cyc}^\gen/K^\cyc)$.

We denote by $K_{n_1}$, $n_1 \geq 0$, the minimal layer such that 
$K^\cyc H_{K_{n_1}}^\gen = H_{K^\cyc}^\gen$ (even with the above 
Hypothesis \ref{hypothesis}, $K_{n_1}$ may be distinct from $K$).

(ii) Let $H_K^\bp$ be the Bertrandias--Payan field fixed by 
$\CW_K \simeq \big(\oplus_{v \mid p} \mu_{K_v}^{} \big) \big/\mu_K^{}$,
where $K_v$ is the $v$-completion of $K$ and $\mu_k^{}$  the group of 
$p$th-roots of unity of the field $k$ (local or global); if $U_v$ is the group of
principal units of $K_v$, then $\mu_{K_v}^{} = \tor_{\Z_p}^{}(U_v)$.

(iii) Let $\iota \CE_K$ be the image of $\CE_K\! = \BE_K \otimes \Z_p$ in 
$U_K\! := \prod_{v \mid p}U_v$ and let $I_v(H_K^\pr/K^\cyc)$
be the inertia groups of $v$ in $H_K^\pr/K^\cyc$; then
$I_v(H_K^\pr/K^\cyc) \simeq \tor_{\Z_p}^{}(U_v /\iota \CE_K \cap U_v )$ 
and  the subgroup of $\CT_K$ generated by these inertia groups fixes $H_{K^\cyc}^\gen$.

(iv) Let $\CR_K^\nr := \Gal(H_{K^\cyc}^\gen / K^\cyc H_K^\nr)\  \&\ 
\CR_K^\ram := \Gal(H_K^\bp/H_{K^\cyc}^\gen)$, where $\CR_K := 
\Gal(H_K^\bp / K^\cyc H_K^\nr)$ is the normalized $p$-adic regulator
that we define in \cite[\S\,5]{Gras2018}.

These definitions may be summarized by the following diagram 
\cite[\S\,2]{Gras2021$^a$}:
\unitlength=0.94cm
$$\vbox{\hbox{\hspace{-2.7cm} 
\begin{picture}(11.5,1.6) 
\bezier{550}(0.8,0.8)(7.0,1.8)(13.2,0.8)
\put(7.0,1.4){\ft $\CT_K$ \ns}
\bezier{350}(4.4,0.2)(7.9,-0.3)(11.1,0.2)
\put(8.0,-0.35){\ft $ \CR_K$ \ns}
\put(5.8,0.65){\ft $ \CR_K^\nr$ \ns}
\put(9.0,0.65){\ft $ \CR_K^\ram$ \ns}
\put(2.4,0.2){\ft $ \CH_K$ \ns}
\put(5.1,0.50){\line(1,0){2.0}}
\put(7.15,0.4){\ft $ H_{K^\cyc}^\gen$ \ns}
\put(7.8,0.50){\line(1,0){3.0}}
\put(11.5,0.50){\line(1,0){1.5}}
\put(10.85,0.4){\ft $ H_K^\bp$ \ns}
\put(12.0,0.2){\ft $ \CW_K$ \ns}
\put(1.2,0.50){\line(1,0){2.5}}
\put(3.8,0.4){\ft $ K^\cyc H_K^\nr$ \ns}
\put(0.6,0.4){\ft $ K^\cyc$ \ns}
\put(13.1,0.4){\ft $ H_K^\pr$ \ns}
\bezier{350}(0.9,0.2)(4.0,-0.7)(7.1,0.2)
\put(3.7,-0.52){\ft $\CG_K$ \ns}
\end{picture}   }} $$
\vspace{0.1cm}
\unitlength=1.0cm

\noindent
where $\order \CG_K = \order \CH_{K_n}^{G_n}$ for all $n \geq n_1$.
\end{definitions} 

Recall, under the above Hypothesis \ref{hypothesis}, some results that we 
have given in \cite{Gras2019$^b$}, generalizing some particular results of 
Taya \cite{Taya1996,Taya1999,Taya2000}, using $p$-adic $L$-functions:

\begin{proposition}\label{genus}
For all $n \geq 0$, the norm factor $\ffrac{p^{n \cdot (r -1)}}{\omega_{K_n/K}(\BE_K)}$ 
divides $\order \CR_K^\nr$ and there is equality for  all $n \geq n_1$. Whence
$\order \CH_{K_n}^{G_n} = 
 \order \CH_K \cdot \ffrac{p^{n \cdot (r -1)}}{\omega_{K_n/K}(\BE_K)}$ divides
$\order \CG_K = \order \CH_K \cdot \order \CR_K^\nr$, for all $n \geq 0$ with
equality for all $n \geq n_1$.
\end{proposition}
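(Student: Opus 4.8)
The plan is to deduce everything — both the divisibility and the equality from $n_1$ on — from the tower
$$K^\cyc \subseteq K^\cyc H_K^\nr \subseteq H_{K^\cyc}^\gen,$$
reading $\order \CH_{K_n}^{G_n}$ off the genus field of $K_n$ via class field theory.

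First I would record the genus-field interpretation of $\order\CH_{K_n}^{G_n}$. By Definition \ref{diagram0}(i) the Artin map identifies $\Gal(H_{K_n}^\gen/K_n)$ with $\CH_{K_n}/\CH_{K_n}^{\sigma_n-1}$; since $\sigma_n-1$ is an endomorphism of the finite group $\CH_{K_n}$ with kernel $\CH_{K_n}^{G_n}$ and image $\CH_{K_n}^{\sigma_n-1}$, its kernel and cokernel have equal order, so $\order\Gal(H_{K_n}^\gen/K_n) = \order\CH_{K_n}^{G_n}$. Then I would push this into $K^\cyc$: because $p$ is totally ramified in $K^\cyc/K$ (Hypothesis \ref{hypothesis}(i)) while $H_{K_n}^\gen/K_n$ is unramified everywhere, $H_{K_n}^\gen \cap K^\cyc = K_n$, so restriction gives $\Gal(K^\cyc H_{K_n}^\gen/K^\cyc)\simeq\Gal(H_{K_n}^\gen/K_n)$, again of order $\order\CH_{K_n}^{G_n}$.

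Next, since $K^\cyc H_{K_n}^\gen\subseteq H_{K^\cyc}^\gen$ by construction, restriction makes $\CG_K$ surject onto $\Gal(K^\cyc H_{K_n}^\gen/K^\cyc)$, so $\order\CH_{K_n}^{G_n}$ divides $\order\CG_K$; and for $n\geq n_1$ the stabilization $K^\cyc H_{K_{n_1}}^\gen = H_{K^\cyc}^\gen$ turns this surjection into an isomorphism, giving $\order\CH_{K_n}^{G_n}=\order\CG_K$. To evaluate $\order\CG_K$ I would run the same total-ramification argument at the bottom: $H_K^\nr\cap K^\cyc=K$ yields $\Gal(K^\cyc H_K^\nr/K^\cyc)\simeq\CH_K$, whereas $\Gal(H_{K^\cyc}^\gen/K^\cyc H_K^\nr)=\CR_K^\nr$ by Definition \ref{diagram0}(iv); multiplying degrees in the displayed tower gives $\order\CG_K=\order\CH_K\cdot\order\CR_K^\nr$. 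Finally, the Chevalley--Herbrand formula (the case $i=0$ of \eqref{filtration} transposed to $K^\cyc/K$, with $r=\order\{\mathfrak p\mid p\}$) equates the norm factor $p^{\,n(r-1)}/\omega_{K_n/K}(\BE_K)$ with $\order\CH_{K_n}^{G_n}/\order\CH_K$; dividing the divisibility just obtained (resp. the equality for $n\geq n_1$) by $\order\CH_K$ turns ``divides $\order\CG_K$'' into ``divides $\order\CR_K^\nr$'', with equality precisely from $n_1$ on.

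The hard part will be the identification $\order\Gal(K^\cyc H_{K_n}^\gen/K^\cyc)=\order\CH_{K_n}^{G_n}$, which is where the two avatars of the genus group meet: the coinvariant quotient $\CH_{K_n}/\CH_{K_n}^{\sigma_n-1}$ supplied by the genus field must be matched, through $\order M^{G_n}=\order M_{G_n}$ for the finite cyclic module $M=\CH_{K_n}$, with the invariant group $\CH_{K_n}^{G_n}$ of Chevalley--Herbrand, and the passage to $K^\cyc$ leans on Hypothesis \ref{hypothesis}(i) via the disjointness $H_{K_n}^\gen\cap K^\cyc=K_n$. The remaining ingredients — multiplicativity of degrees and the definition-level facts about $\CR_K^\nr$ and $n_1$ (so that $\CG_K$ is finite, under Leopoldt) — are routine once this link is secured.
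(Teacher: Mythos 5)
Your proof is correct. Note that the paper itself contains no proof of Proposition \ref{genus}: it is recalled from \cite{Gras2019$^b$} (where the statement generalizes results of Taya). Your argument is, however, exactly the one the paper's Definitions \ref{diagram0} are designed to support: the identification $\order \CH_{K_n}^{G_n} = [K^\cyc H_{K_n}^\gen : K^\cyc]$ (genus theory, the equality $\order M^{G_n} = \order M_{G_n}$ for finite modules, and $H_{K_n}^\gen \cap K^\cyc = K_n$ from Hypothesis \ref{hypothesis}\,(i)), the comparison with $\CG_K$ inside $H_{K^\cyc}^\gen$, and the evaluation $\order \CG_K = \order \CH_K \cdot \order \CR_K^\nr$ from the tower $K^\cyc \subseteq K^\cyc H_K^\nr \subseteq H_{K^\cyc}^\gen$; indeed the key intermediate fact $\order \CG_K = \order \CH_{K_n}^{G_n}$ for $n \geq n_1$ is asserted by the paper immediately after the diagram, and your write-up supplies its justification. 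One step you should make explicit: to pass from equality at $n = n_1$ to equality for \emph{all} $n \geq n_1$, you need the family $\{K^\cyc H_{K_n}^\gen\}_n$ to be increasing in $n$. This is easy but not automatic from what you wrote: $H_{K_n}^\gen K_{n+1}$ is unramified over $K_{n+1}$ (base change of an unramified extension) and abelian over $K$ (compositum of abelian extensions of $K$), hence $H_{K_n}^\gen K_{n+1} \subseteq H_{K_{n+1}}^\gen$, so that $K^\cyc H_{K_{n_1}}^\gen \subseteq K^\cyc H_{K_n}^\gen$ for all $n \geq n_1$, which is what forces the surjection $\CG_K \twoheadrightarrow \Gal(K^\cyc H_{K_n}^\gen/K^\cyc)$ to be an isomorphism in that range.
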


Recall that $m(\Kk_n)$ is the length of the filtration for $K_n$, with 
$m(\Kk_n) = 1$ if $\CH_{K_n} \ne 1$ ($m(\Kk_n) = 0$ if $\CH_{K_n} = 1$):

\begin{proposition}\label{inequalities}
Let $v_p$ be the $p$-adic valuation. Under Hypothesis \ref{hypothesis}, we have:
$$m(\Kk_n) \leq \lambda \cdot n + \mu \cdot p^n + \nu \leq 
v_p(\order \CH_K \cdot \order \CR_K^\nr) \cdot m(\Kk_n), \ \, \forall n \geq 0. $$ 
If $\CG_K=1$, then Greenberg's conjecture holds with $\CH_{K_n}=1$
for all $n$. If $\CG_K \ne 1$, thenGreenberg's conjecture holds if and only if
$m(\Kk_n)$ is bounded regarding $n$.
\end{proposition}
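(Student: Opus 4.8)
The plan is to read off the double inequality directly from the filtration properties \eqref{filtration} combined with Proposition \ref{genus}, and then to obtain the Greenberg equivalence by a squeezing argument. Throughout I set $c := v_p(\order \CH_K \cdot \order \CR_K^\nr) = v_p(\order \CG_K)$ and use that Hypothesis \ref{hypothesis}(ii) gives $v_p(\order \CH_{K_n}) = \lambda \, n + \mu \, p^n + \nu$, so that the two outer terms of the asserted inequality are really $m(\Kk_n)$ and $c \cdot m(\Kk_n)$ flanking $v_p(\order \CH_{K_n})$.

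First I would establish the left inequality $m(\Kk_n) \leq v_p(\order \CH_{K_n})$. By definition of $m(\Kk_n)$ the chain $1 = \CH_{K_n}^0 \subsetneq \CH_{K_n}^1 \subsetneq \cdots \subsetneq \CH_{K_n}^{m(\Kk_n)} = \CH_{K_n}$ is strictly increasing, so each of the $m(\Kk_n)$ successive quotients $\CH_{K_n}^{i+1}/\CH_{K_n}^i$ is a nontrivial finite $p$-group and hence has order at least $p$. Multiplying as in \eqref{filtration}(v) yields $\order \CH_{K_n} \geq p^{m(\Kk_n)}$, which is the claim after applying $v_p$; the degenerate case $\CH_{K_n} = 1$ (where $m(\Kk_n) = 0$) is trivial. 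For the right inequality I would use \eqref{filtration}(v) in the other direction, $\order \CH_{K_n} \leq (\order \CH_{K_n}^{G_n})^{m(\Kk_n)}$, which holds because $\CH_{K_n}^{G_n} = \CH_{K_n}^1$ and every quotient is bounded by $\order \CH_{K_n}^1$ via \eqref{filtration}(iv). Proposition \ref{genus} shows $\order \CH_{K_n}^{G_n}$ divides $\order \CG_K$, so $v_p(\order \CH_{K_n}^{G_n}) \leq c$ for every $n$, and applying $v_p$ gives $\lambda \, n + \mu \, p^n + \nu \leq c \cdot m(\Kk_n)$, assembling the full statement.

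The Greenberg part then follows formally. If $\CG_K = 1$, then $\order \CH_{K_n}^{G_n}$ divides $\order \CG_K = 1$ by Proposition \ref{genus}, so $\CH_{K_n}^{G_n} = 1$ for all $n$; since a nontrivial finite $p$-group acted on by a $p$-group has nontrivial fixed points, this forces $\CH_{K_n} = 1$ for all $n$, and Greenberg's conjecture holds vacuously. If $\CG_K \neq 1$, then $c \geq 1$ and the double inequality reads $m(\Kk_n) \leq \lambda \, n + \mu \, p^n + \nu \leq c \cdot m(\Kk_n)$. If $\lambda = \mu = 0$, the middle term is the constant $\nu$, so $m(\Kk_n) \leq \nu$ is bounded; conversely, if $m(\Kk_n) \leq M$ for all $n$, then $\lambda \, n + \mu \, p^n + \nu \leq c\,M$ stays bounded in $n$, which is possible only when $\lambda = \mu = 0$. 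Hence Greenberg's conjecture is equivalent to the boundedness of $m(\Kk_n)$.

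I do not expect a serious obstacle, since both inequalities are immediate consequences of already-proved facts; the only point needing care is the $\CG_K = 1$ case, where one must invoke nontriviality of fixed points to pass from $\CH_{K_n}^{G_n} = 1$ to $\CH_{K_n} = 1$ rather than merely comparing orders. I would also flag that Proposition \ref{genus} supplies the divisibility $\order \CH_{K_n}^{G_n} \mid \order \CG_K$ for \emph{all} $n$, not only $n \geq n_1$, so no separate treatment of the small layers is required.
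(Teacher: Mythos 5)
Your proof is correct, but there is nothing in the paper to compare it against: Proposition \ref{inequalities} is stated as a \emph{recall} from \cite{Gras2019$^b$} and the paper gives no proof of it, so your argument supplies a self-contained derivation from the paper's own toolkit. All the steps check out. The left inequality rests on the strict growth of the filtration, which is valid because $(\CH_{K_n}/\CH_{K_n}^i)^{G_n}\ne 1$ whenever $\CH_{K_n}^i \ne \CH_{K_n}$ (a $p$-group acting on a nontrivial finite $p$-group has nontrivial fixed points), giving $\order \CH_{K_n} \geq p^{m(\Kk_n)}$. The right inequality correctly combines \eqref{filtration}\,(iv)--(v), i.e. $\order \CH_{K_n} \leq (\order \CH_{K_n}^{G_n})^{m(\Kk_n)}$, with the divisibility $\order \CH_{K_n}^{G_n} \mid \order \CG_K = \order \CH_K \cdot \order \CR_K^\nr$, which Proposition \ref{genus} does assert for all $n \geq 0$ (not only $n \geq n_1$), as you rightly flag. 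The case $\CG_K = 1$ is handled by the same fixed-point principle, and your squeeze argument for the equivalence with Greenberg's conjecture (understood, as in Theorem \ref{main3}, as $\lambda = \mu = 0$ for $K^\cyc$) is exactly the intended use of the double inequality: boundedness of $m(\Kk_n)$ forces $\lambda n + \mu p^n + \nu$ to be bounded, hence $\lambda = \mu = 0$, and conversely. Your proof is also consistent with how the paper deploys these facts in the proof of Theorem \ref{main3}, so it can stand as the missing proof of the proposition.
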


From these recalls and Hypothesis \ref{hypothesis}, we can deduce:

\begin{theorem}\label{main3}
Greenberg's conjecture is equivalent to $m(\Kk_n) = 1$ (resp. $=0$) for all $n \geq 0$,
if $\CH_K \ne 1$ (resp. $=1$), and $\CR_K^\nr = 1$, whence Greenberg's conjecture 
is equivalent to $\CH_{K_n} = \CH_{K_n}^{G_n} 
\ds \mathop{\simeq}^{\hbox{\tiny$\Norm_{K_n/K}$}} \CH_K$ for all $n \geq 0$. 
\end{theorem}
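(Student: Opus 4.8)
The plan is to route everything through the orders $\order\CH_{K_n}$. Under Hypothesis~\ref{hypothesis}\,(ii) one has $\order\CH_{K_n}=p^{\lambda n+\mu p^n+\nu}$ for all $n\ge 0$ with $\order\CH_K=p^{\mu+\nu}$, and the function $n\mapsto p^{\lambda n+\mu p^n+\nu}$ is constant precisely when $\lambda=\mu=0$. Hence Greenberg's conjecture is equivalent to the stability of orders $\order\CH_{K_n}=\order\CH_K$ for all $n$, and it suffices to prove the two implications between this stability and the pair of conditions ``$m(\Kk_n)=1$ (resp. $0$) for all $n$, and $\CR_K^\nr=1$''.

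For the forward implication, assume Greenberg, so $\order\CH_{K_n}=\order\CH_K$ for all $n$; in particular $\order\CH_{K_1}=\order\CH_K$, and Theorem~\ref{main2}\,(i) gives the stability $\CH_{K_n}=\CH_{K_n}^{G_n}\ds \mathop{\simeq}^{\hbox{\tiny$\Norm_{K_n/K}$}}\CH_K$ for all $n$. The equality $\CH_{K_n}=\CH_{K_n}^{G_n}$ means that $\sigma_n-1$ annihilates $\CH_{K_n}$, i.e. $m(\Kk_n)=1$ when $\CH_K\ne 1$ (and $m(\Kk_n)=0$ when $\CH_K=1$). To extract $\CR_K^\nr=1$ I would read Proposition~\ref{genus} at a layer $n\ge n_1$, where $\order\CH_{K_n}^{G_n}=\order\CG_K=\order\CH_K\cdot\order\CR_K^\nr$, while stability forces $\order\CH_{K_n}^{G_n}=\order\CH_K$; comparing the two gives $\order\CR_K^\nr=1$, whence $\CR_K^\nr=1$.

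For the reverse implication, assume $m(\Kk_n)=1$ for all $n$ (the case $\CH_K=1$ being immediate) and $\CR_K^\nr=1$. Then $\CH_{K_n}=\CH_{K_n}^{G_n}$ (property \eqref{filtration}\,(iii)), and Proposition~\ref{genus} shows that $\order\CH_{K_n}^{G_n}=\order\CH_K\cdot\frac{p^{n(r-1)}}{\omega_{K_n/K}(\BE_K)}$ divides $\order\CG_K=\order\CH_K\cdot\order\CR_K^\nr=\order\CH_K$. Since the norm factor is a power of $p$ with non-negative exponent, hence $\ge 1$, one also has $\order\CH_{K_n}^{G_n}\ge\order\CH_K$; the squeeze yields $\order\CH_{K_n}=\order\CH_{K_n}^{G_n}=\order\CH_K$ for all $n$, which is the stability of orders, i.e. Greenberg. (Alternatively, $m(\Kk_n)=1$ is bounded, so Proposition~\ref{inequalities} delivers Greenberg at once.) Finally, the stated ``whence'' is automatic: Greenberg gives stability of orders, hence $\CH_{K_n}=\CH_{K_n}^{G_n}\simeq\CH_K$ by Theorem~\ref{main2}, and conversely this isomorphism forces $\order\CH_{K_n}=\order\CH_K$, hence Greenberg.

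The only delicate point is the bookkeeping in the forward extraction of $\CR_K^\nr=1$: the equality $\order\CH_{K_n}^{G_n}=\order\CG_K$ of Proposition~\ref{genus} holds only from the layer $K_{n_1}$, so it must be invoked at some $n\ge n_1$, where the stability coming from Theorem~\ref{main2} (valid for every $n$) is still in force. Everything else reduces to the two-sided divisibility $\order\CH_K\le\order\CH_{K_n}^{G_n}\mid\order\CH_K$ and to the elementary observation that $p^{\lambda n+\mu p^n+\nu}$ is constant iff $\lambda=\mu=0$.
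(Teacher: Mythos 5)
Your proof is correct and follows essentially the same route as the paper: the forward direction (stability from Theorem \ref{main2} giving $m(\Kk_n)\in\{0,1\}$, then Proposition \ref{genus} at a layer $n\ge n_1$ to force $\CR_K^\nr=1$) is identical to the paper's argument. For the reverse direction the paper invokes Proposition \ref{inequalities} — exactly your parenthetical alternative — while your primary squeeze via the Chevalley--Herbrand formula and Proposition \ref{genus} is an equivalent repackaging of the same facts, so no genuinely different method is involved.
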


\begin{proof}
If $\lambda = \mu =0$, the stability holds from $n = 0$ and $\CH_{K_n} = \CH_{K_n}^{G_n} 
\ds \mathop{\simeq}^{\hbox{\tiny$\Norm_{K_n/K}$}} \CH_K$, for all $n \geq 0$
(whence $m(\Kk_n) \in \{0, 1\}$, depending on $\order \CH_K=1$ or not). 
Since $\order \CH_{K_n}^{G_n} = \order \CG_K = \order \CH_K,\, 
\forall n \geq n_1$, it follows that $\CR_K^\nr = 1$ (Proposition \ref{genus}).
Reciprocally, if $\CR_K^\nr = 1$ and $m(\Kk_n) \in \{0, 1\}$, then 
Proposition \ref{inequalities} implies $\lambda = \mu = 0$ if $\CH_K \ne 1$ 
(or $\lambda = \mu = \nu = 0$ if $\CH_K = 1$).
\end{proof}

\subsubsection{Particular case $\mu = 0$}
We can wonder, due to the Propositions \ref{cap} and \ref{inequalities}, if Greenberg's conjecture 
may be equivalent to $\Nu_{\!K_n/K}(\CH_{K_n}) = 1$, for all $n \geq e(\Kk)$, obtained with the 
stronger particular conditions $m(\Kk_n) \in \{0,1\}$ (i.e., $s(\Kk_n)=0$) and $e(\Kk_n) = e(\Kk)$
for all $n \geq 0$. 

\noindent
Indeed, Greenberg's conjecture implies the capitulation of $\CH_K$ and
$m(\Kk_n) = 1$ for all $n \geq 0$ (Theorem \ref{main3}); if so, one obtains
$\CH_{K_n} = \CH_{K_n}^{G_n} \ds \mathop{\simeq}^{\hbox{\tiny$\Norm_{K_n/K}$}} \CH_K$, 
for all $n \geq 0$; in some sense, the maximal smoothness complexity.

In the practice these phenomenon (if any) holds from an unknown level in $K^\cyc$.

This possibility may be suggested by the following example of the cyclic cubic field of 
conductor $f=2689$, of $2$-class group $\BZ/2\BZ$ and its cyclotomic $\Z_2$-extension, 
giving $\CH_{K_1} \simeq \BZ/4\BZ$, $\CH_{K_2} \simeq \BZ/8\BZ$, and $\CH_{K_3} 
\simeq \BZ/8\BZ$:

\ft\begin{verbatim}
p=2  f=2689  PK=x^3+x^2-896*x+5876  CK0=[2,2] 
\end{verbatim}\ns
\ft\begin{verbatim}
CK1=[28,4]=[4,4]
h_1^[(S-1)^1]=[0,0]    h_2^[(S-1)^1]=[0,0]
norm in K1/K of the component 1 of CK1:[2,0]
norm in K1/K of the component 2 of CK1:[0,2]
No capitulation,  m(K1)=1, e(K1)=2
CK2=[56,8]=[8,8]
h_1^[(S-1)^1]=[0,0]    h_2^[(S-1)^1]=[0,0]
norm in K2/K of the component 1 of CK2:[4,0]
norm in K2/K of the component 2 of CK2:[0,4]
No capitulation, m(K2)=1, e(K2)=3
CK3=[56,8]=[8,8]
h_1^[(S-1)^1]=[0,0]    h_2^[(S-1)^1]=[0,0]
norm in K3/K of the component 1 of CK3:[0,0]
norm in K3/K of the component 2 of CK3:[0,0]
Complete capitulation, m(K3)=1, e(K3)=3
\end{verbatim}\ns

At any layer, $m(\Kk_n)=1$ and capitulations in $K^\cyc$ hold for all $n$. 
Stability occurs from $K_2$ giving a checking of Greenberg's conjecture.

\begin{remark} 
It is interesting to check other examples given by Fukuda \cite{Fuku1994} for real 
quadratic fields and $p=3$ with a simplified program and suitable polynomials 
defining the layer $K_1$; for all, $\CH_K \simeq \Z/9\Z$ and 
$\Norm_{K_1/K}(\CH_{K_1}) = \CH_K^3$:

\ft\begin{verbatim}
{p=3;Lm=List([3137,3719,4409,6809,7226,9998]);for(i=1,6,PK=x^2-Lm[i];
K=bnfinit(PK,1);CK0=K.clgp;P1=polcompositum(PK,polsubcyclo(p^2,p))[1];
K1=bnfinit(P1,1);CK1=K1.clgp;print();G=nfgaloisconj(K1);Id=x;for(k=1,2*p,
Z=G[k];ks=1;while(Z!=Id,Z=nfgaloisapply(K1,G[k],Z);ks=ks+1);if(ks==p,
S=G[k];break));A0=CK1[3][1];A=1;for(t=1,p,As=nfgaloisapply(K1,S,A);
A=idealmul(K1,A0,As));C=bnfisprincipal(K1,A)[1];print("PK=",PK,
" CK0=",CK0[2]," CK1=",CK1[2]," Norm of CK1=",C))}
PK=x^2-3137  CK1=[9]  Norm of CK1=[3]    PK=x^2-6809  CK1=[9]  Norm of CK1=[3]
PK=x^2-3719  CK1=[9]  Norm of CK1=[3]    PK=x^2-7226  CK1=[18] Norm of CK1=[3]
PK=x^2-4409  CK1=[9]  Norm of CK1=[3]    PK=x^2-9998  CK1=[9]  Norm of CK1=[3]
\end{verbatim}\ns
\noindent
The stability from $K$ implies Greenberg's conjecture and capitulation
of $\CH_K$ in $K_2$, with an incomplete capitulation in $K_1$ and
$m(\Kk_n)=1$ for all $n$.
\end{remark}

\section{Isotopic components and capitulation}
Consider a real cyclic field $K$ of prime-to-$p$ degree $d$ and 
$L = L_0 K$ with $L_0/\Q$ real cyclic of degree $p^\N$, $N \geq 1$. 
Then $L/\Q$ is cyclic of degree $d p^\N$ with Galois group
$\Gamma = g \oplus G$ where $g = \Gal(L/L_0)$ and $G = \Gal(L/K)$.
The field $L$ is associated to an irreducible rational character $\chi$,
sum of irreducible $p$-adic characters $\varphi$ of ``order'' 
the order $d p^\N$ of any $\psi \mid \varphi$ of degree $1$.

This non semi-simple context is problematic for the definition of isotopic 
$p$-adic components of the form $\CH_{L,\varphi}$ and $\CH_{K_n,\varphi_n}$ 
for the subfields $K_n$ of $L$ with corresponding rational and $p$-adic characters 
$\chi_n$ and $\varphi_n \mid \chi_n$; this is extensively developed in the english
translation \cite{Gras2021$^b$} of our original paper (1978)  in french
\url{https://doi.org/10.5802/pmb.a-10}. 
So we just recall the definitions and explain how the phenomenon of capitulation 
gives rise to difficulties about the classical algebraic definition of the literature, 
compared to the arithmetic one that we have introduced to state the Main 
Conjecture in the non semi-simple case.

Indeed, classical works deal with an algebraic definition of the $\varphi$-components 
of $p$-class groups, which presents an inconsistency regarding analytic formulas; 
this definition is, for $\Gamma$ cyclic of order $d p^\N$ and for all $\varphi \mid \chi$:
$$\CH^\alg_{L,\varphi} := \CH_L \ \hbox {$\bigotimes^{}_{\Z_p[\Gamma]}$} \ 
\Z_p[\mu_{d p^\N}^{}], $$

\noindent
with the $\Z_p[\mu_{d p^\N}^{}]$-action $\tau \in \Gamma \mapsto \psi (\tau)$, with
$\psi \mid \varphi$ of order $d p^\N$ (see Solomon \cite[II, \S\,1]{Solo1990} or 
Greither \cite[Definition, p. 451]{Grei1992}). Let $P_\chi$ be the $d p^\N$th cyclotomic 
polynomial and let $\sigma_\chi$ be a generator of $\Gal(L/\Q)$; then,
let $P_\varphi \mid P_\chi$ be the corresponding local cyclotomic polynomial
associated to the above action $\tau \in \Gamma \mapsto \psi (\tau)$, with
$\psi \mid \varphi$. We have defined the notions of $\chi$ and $\varphi$-objects,
then proved in \cite[\S\,3.2.4, Theorem 3.7, Definition 3.11]{Gras2021$^b$}, the 
following interpretations:
\begin{equation*}
 \hspace{0.55cm}\left \{\begin{aligned}
\CH^\alg_{L,\chi} =  & \ \{x \in \CH_L, \ P_\chi (\sigma_\chi) \cdot x = 1 \} =
 \{x \in \CH_L, \ \Nu_{\!L/k}(x) = 1,\, \forall \, k \varsubsetneqq L \} , \\
\CH^\alg_{L,\varphi} = &\ \{x \in \CH_L, \ P_\varphi (\sigma_\chi) \cdot x = 1\}, 
\end{aligned}\right.
\end{equation*}

\noindent
which gives rise to our corresponding arithmetic definitions:
\begin{equation*}
 \hspace{-0.55cm}\left \{\begin{aligned}
\CH^\ar_{L,\chi} := &\  \{x \in \CH_L,\ \,\Norm_{L/k}(x) = 1,\, \forall \, k \varsubsetneqq L \}, \\
\CH^\ar_{L,\varphi} := &\  \{x \in \CH_L, \ \, P_\varphi (\sigma) \cdot x = 1\ \ \ \& \ \ \
\Norm_{L/k}(x) = 1,\, \forall \, k \varsubsetneqq L \} 
\end{aligned}\right.
\end{equation*}

\noindent
(in other words, $\CH^\ar_{L,\varphi} = \CH^\ar_{L,\chi} \cap \CH^\alg_{L,\varphi}$).

We then have, since $L/K$ is totally ramified (\cite[Theorem 3.15]{Gras2021$^b$}):
\begin{equation}\label{isotopic}
\order \CH^\ar_{L} = \prd_{\chi \in {\bf R}_L} \order \CH^\ar_{L,\chi} ,
\end{equation}
where ${\bf R}_L$ is the set of irreducible rational 
characters of $L$. More precisely, $\chi$ is of the form $\chi_0^{} \, \chi_n^{}$ 
for a rational characters $\chi_0^{}$ of $K$, of order a divisor od $d$, and 
the rational characters $\chi_n^{}$ 
of $L_0$ of order $p^n$, $n \in [1, N]$; then $\CH^\ar_{L,\chi} = 
\CH^\ar_{K_{\chi_0^{}} L_{0,\chi_n^{}},\chi}$,
where $K_{\chi_0^{}} \subseteq K$ (resp. $L_{0,\chi_n^{}} \subseteq L_0$) correspond 
to $\chi_0^{}$ (resp. $\chi_n^{}$).

These definitions and results lead to an unexpected semi-simplicity, especially in 
accordance with analytic formulas, which enforces the Main Conjecture in that~case
\cite[Theorem 4.5]{Gras2021$^b$}; it writes, for all $\chi \in {\bf R}_L$:
\begin{equation}\label{isotopicphi}
\CH^\ar_{L,\chi} = \plus_{\varphi \mid \chi} \CH^\ar_{L,\varphi},.
\end{equation}

The $\Z_p[\Gamma]$-modules of the form $\CH^\ar_{L,\chi}$ (resp. 
$\CH^\ar_{L,\varphi}$), annihilated by all the arithmetic norms $\Norm_{L/k}$,
are called arithmetic $\chi$-objects (resp. $\varphi$-objects).

We have $\CH^\ar_{L,\varphi} = \CH_{L,\varphi}^\alg$ as soon as the $\J_{L/k}$'s 
are injective for all $k \varsubsetneqq L$, but as we have seen, this does
not hold in general when $K \subseteq k \varsubsetneqq L$ since
there is often partial capitulation. One can even say that the classic 
admitted definition is non canonical and imperfect for real 
$p$-class groups (see Examples below).

\begin{remark}\label{remaconj}
Let $\chi$ be the rational character associated to $L$.
Our Main Conjecture \cite{Gras1977} (not yet proven in the non semi-simple 
case contrary to some claims) requires that the equality of orders of 
$\chi$-objects (see \cite[Theorem 7.5\,(i)]{Gras2021$^b$}):
$\order \CH^\ar_{L,\chi^{}} = \order (\CE_{L} /\CE^0_{L} \cdot \CF_{\!L})$,  
be valid for the $\varphi$-components,
for all $\varphi \mid \chi$; in these formula, $\CE^0_{L}$ is the 
subgroup of $\CE_{L}$ generated by the units of the strict subfields of $L$ 
and $\CF_{\!L}$ is the group of classical Leopoldt's cyclotomic units;
the fact that $\CE_{L} /\CE^0_{L} \cdot \CF_{\!L}$ be a $\chi$-object is
obvious since $\Norm_{L/K}(\CE_L) \subseteq \CE_L^0$ by definition
(see \cite[Examples 3.12, 3.13]{Gras2021$^b$}).
\end{remark}

In the case of cubic fields with $p=2$ or in the case of real quadratic fields,
$\chi = \varphi$, so that the Main Conjecture is trivial, but not the definition
of arithmetic $\varphi$-objects regarding the algebraic ones.
Let's give numerical examples showing the consequences of capitulation
for the non-arithmetic definitions:

\begin{example}\label{Ex1}
{\rm  Consider $K = \Q(\sqrt{4409})$, $p=3$, $\ell = 19$ split in $K$ and 
$K_2 \subset K(\mu_\ell^{})$.
We will see that $\CH_{K_n} \simeq \Z/9\Z,\, \forall n \leq 2$ (stability);
Program \ref{quadratic} gives:

\ft\begin{verbatim}
p=3  PK=x^2-4409  CK0=[9]  ell=19  r=2
CK1=[9]                            CK2=[9] 
norm in K1/K of CK1:[3]            norm in K2/K of CK2:[0]
Incomplete capitulation in K1      Complete capitulation in K2
\end{verbatim}\ns

The capitulation is complete in $K_2$ as expected from Theorem \ref{main2}
(stability from~$K$ giving  $\CH_{K_n} = \CH_{K_n}^{G_n} 
\ds \mathop{\simeq}^{\hbox{\tiny$\Norm_{K_n/K}$}} \CH_K$, 
for $n \in \{1, 2 \}$).

We use obvious notations for the characters defining the fields $K_n$.
Since arithmetic norms are isomorphisms, the above computations prove that:
\begin{equation*}
\hspace{-1.3cm}\left\{\begin{aligned}
\Nu_{\!K_1/K}(\CH_{K_1}) &= (\CH_{K_1})^{1+\sigma_1+\sigma_1^2}
= (\CH_{K_1})^3  \simeq \Z/3\Z, \\
\Nu_{\!K_2/K_1}(\CH_{K_2}) &= (\CH_{K_2})^{1+\sigma_2^3+\sigma_2^6}
= (\CH_{K_2})^3  \simeq \Z/3\Z, 
\end{aligned}\right.
\end{equation*}

\noindent
whence:
\begin{equation*}
 \hspace{-0.6cm}\left\{\begin{aligned}
\CH^\ar_{\chi^{}_2} & = \{x \in \CH_{K_2},\ \Norm_{K_2/K_1}(x)=1 \} = 1, \\
\CH^\alg_{\chi^{}_2} & = \{x \in \CH_{K_2},\  \Nu_{\!K_2/K_1}(x)=1\} = 
\CH_{K_2}[3] \simeq \Z/3\Z. \\
\CH^\ar_{\chi^{}_1} & = \{x \in \CH_{K_1},\ \Norm_{K_1/K}(x)=1\} =1, \\
\CH^\alg_{\chi^{}_1} & = \{x \in \CH_{K_1},\  \Nu_{\!K_1/K}(x)=1\} = \CH_{K_1}[3] \simeq \Z/3\Z.
\end{aligned}\right.
\end{equation*}

Formula \eqref{isotopic} (or \eqref{isotopicphi} since $\chi = \varphi$), gives the product 
of orders of the $\chi$-components $\CH^\ar_{\chi_n^{}}$ (since $\order \CH^\ar_{\chi_0^{}} 
= \order \CH_{K^{}} = 9$):
\begin{equation*}
\hspace{-2.55cm}\left\{\begin{aligned}
\order \CH_{K_2} &= \order \CH^\ar_{\chi_0^{}} \cdot \order \CH^\ar_{\chi_1^{}} 
\cdot \order \CH^\ar_{\chi_2^{}} = 9 \cdot 1 \cdot 1 = 3^2, \\
\order \CH_{K_1} &= \order \CH^\ar_{\chi_0^{}} \cdot \order \CH^\ar_{\chi_1^{}}
= 9 \cdot 1 = 3^2,
\end{aligned}\right.
\end{equation*}

These formulas are not fulfilled in the algebraic sense, because:
\begin{equation*}
\hspace{-2.8cm}\left\{\begin{aligned}
&\order \CH^\alg_{\chi^{}_0} \cdot \order \CH^\alg_{\chi^{}_1} \cdot 
\order \CH^\alg_{\chi^{}_2} = 9 \cdot 3 \cdot 3 =3^4, \\
&\order \CH^\alg_{\chi^{}_0} \cdot \order \CH^\alg_{\chi^{}_1} = 9 \cdot 3 = 3^3.
\end{aligned}\right.
\end{equation*}}
\end{example}

\begin{example}\label{Ex2}
{\rm This example, for the cyclic cubic field of conductor $ f=1951$ with $p=2$, 
$\ell = 17$ split in $K$, is analogous except that capitulation takes place in $K_1$
without any stability (see complete data in Example \ref{1951}):

\ft\begin{verbatim}
p=2  f=1951  PK=x^3+x^2-650*x-289  CK0=[2,2]  ell=17 
CK1=[4,4,2,2]   
h_1^[(S-1)^1]=[2,0,0,0]   h_2^[(S-1)^1]=[0,2,0,0]
h_3^[(S-1)^1]=[0,0,0,0]   h_4^[(S-1)^1]=[0,0,0,0]
CK2=[4,4,4,4]    
h_1^[(S-1)^1]=[2,0,0,0]   h_2^[(S-1)^1]=[0,2,0,0] 
h_3^[(S-1)^1]=[2,0,2,0]   h_4^[(S-1)^1]=[0,2,0,2]
h_1^[(S-1)^2]=[0,0,0,0]   h_2^[(S-1)^2]=[0,0,0,0] 
h_3^[(S-1)^2]=[0,0,0,0]   h_4^[(S-1)^2]=[0,0,0,0]
CK3=[8,8,4,4]
h_1^[(S-1)^1]=[0,0,2,0]   h_2^[(S-1)^1]=[2,2,2,0]
h_3^[(S-1)^1]=[2,0,0,0]   h_4^[(S-1)^1]=[6,2,0,2]
h_1^[(S-1)^2]=[4,0,0,0]   h_2^[(S-1)^2]=[0,4,0,0]
h_3^[(S-1)^2]=[0,0,0,0]   h_4^[(S-1)^2]=[0,0,0,0]
\end{verbatim}\ns

Numerical data of the form ${\sf h_j^{[(S-1)^i]} = [e_1,e_2,e_3, e_4]}$ give, 
with $\BZ = \Z_2[\exp(\frac{2 i \pi}{3})]$:
\begin{equation*}
\hspace{-2.2cm} \left\{
\begin{aligned}
\Nu_{\!K_3/K_2}(\CH_{K_3}) &= \CH_{K_3}^{1+\sigma_3^4} = \CH_{K_3}^2 
\simeq \BZ/4\BZ \times \BZ/2\BZ, \\
\Nu_{\!K_2/K_1}(\CH_{K_2}) &=  \CH_{K_2}^{1+\sigma_2^2} =
\CH_{K_2}^2 \simeq \BZ/2\BZ \times \BZ/2\BZ, \\
\Nu_{\!K_1/K}(\CH_{K_1}) & =  \CH_{K_1}^{1+\sigma_1}= \CH_{K_1}^4 = 1.
\end{aligned}\right.
\end{equation*}

Whence:
\begin{equation*}
\left\{\begin{aligned}
\CH^\ar_{\chi^{}_3} & = \{x \in \CH_{K_3},\ \Norm_{K_3/K_2}(x)=1 \} 
\simeq \BZ/2\BZ, \\
\CH^\alg_{\chi^{}_3} & = \{x \in \CH_{K_3},\ \Nu_{\!K_3/K_2}(x)=1\} = \CH_{K_3}[2] 
\simeq \BZ/2\BZ \times \BZ/2\BZ,\\
\CH^\ar_{\chi^{}_2} & = \{x \in \CH_{K_2},\ \Norm_{K_2/K_1}(x)=1 \} 
\simeq \BZ/2\BZ, \\
\CH^\alg_{\chi^{}_2} & = \{x \in \CH_{K_2},\ \Nu_{\!K_2/K_1}(x)=1\} = \CH_{K_2}[2] 
\simeq \BZ/2\BZ \times \BZ/2\BZ,\\
\CH^\ar_{\chi^{}_1} & = \{x \in \CH_{K_1},\ \Norm_{K_1/K}(x)=1\} 
\simeq \BZ/2\BZ \times \BZ/2\BZ \ {\rm or}\ \BZ/4\BZ, \\
\CH^\alg_{\chi^{}_1} & = \{x \in \CH_{K_1},\  \Nu_{\!K_1/K}(x)=1\} = \CH_{K_1}
\simeq \BZ/4\BZ\! \times\! \BZ/2\BZ.
\end{aligned}\right.
\end{equation*}

Which gives, noting that $\order (\BZ/2^k\BZ) = 4^k$:
\begin{equation*}
\hspace{-0.1cm}\left\{\begin{aligned}
&\order \CH_{K_3} = \order \CH^\ar_{\chi_0^{}} \cdot \order \CH^\ar_{\chi_1^{}} 
\cdot \order \CH^\ar_{\chi_2^{}} \cdot \order \CH^\ar_{\chi_3^{}} 
= 2^2 \cdot 4^2 \cdot 2^2 \cdot 2^2 = 2^{10}, \\
&\order \CH_{K_2} = \order \CH^\ar_{\chi_0^{}} \cdot \order \CH^\ar_{\chi_1^{}} 
\cdot \order \CH^\ar_{\chi_2^{}} = 2^2 \cdot 4^2 \cdot 2^2 = 2^8,  \\
&\order \CH_{K_1} = \order \CH^\ar_{\chi_0^{}} \cdot \order \CH^\ar_{\chi_1^{}} 
= 2^2 \cdot 2^4 = 2^6,
\end{aligned}\right.
\end{equation*}

\noindent
contrary to:
\begin{equation*}
\hspace{-1.1cm}\left\{\begin{aligned}
&\order \CH^\alg_{\chi_0^{}} \cdot \order \CH^\alg_{\chi_1^{}} 
\cdot \order \CH^\alg_{\chi_2^{}} \cdot \order \CH^\alg_{\chi_3^{}} 
= 2^2 \cdot 2^6 \cdot 2^4 \cdot 2^4  = 2^{16}, \\
&\order \CH^\alg_{\chi_0^{}} \cdot \order \CH^\alg_{\chi_1^{}} 
\cdot \order \CH^\alg_{\chi_2^{}}  = 2^2 \cdot 2^6 \cdot 2^4 = 2^{12}, \\
&\order \CH^\alg_{\chi_0^{}} \cdot \order \CH^\alg_{\chi_1^{}}  = 2^2 \cdot 2^6 = 2^8.
\end{aligned}\right.
\end{equation*}

We will illustrate, in $K_1$, the analytic equality, discussed in Remark \ref{remaconj}: 
$$\order \CH^\ar_{\chi_1^{}} = \order (\CE_{K_1}/\CE^0_{K_1} \!\cdot \CF_{\!K_1}). $$
 
Let $k := \Q(\sqrt{17})$, $\Gal(K_1/k) =: \{1, \tau, \tau^2\}$ and $G_1= \{1, \sigma\}$.
Since $17$ splits in $K$ and $1951$ splits in $k$, the generating cyclotomic 
unit $\eta$ of $K_1$ (of conductor $17 \cdot 1951$) is of norm~$1$, both in 
$K_1/k$ and $K_1/K$, so it generates $\CF_{\!\chi_1^{}}$ that we may write 
$\langle \eta^\tau, \eta^{\tau^2} \rangle_\Z$ since
$\eta^\sigma = \eta^{-1}$ and $\eta = \eta^{-\tau-\tau^2}$. 
Computing in $\Q(\mu_{17 \cdot 1951})/K_1$
gives (taking logarithms for convenience):
\begin{equation*}
\hspace{-0.75cm}\left\{\begin{aligned}
\log(\eta^\tau) &= + 32.072728696925313868267792411432213485, \\
\log(\eta^{\tau^2}) &=-27.025540940115152089120603990746892715, \\
\log(\eta) &= -5.0471877568101617791471884206853207885.
\end{aligned}\right.
\end{equation*}

The group $\CE_{K_1}$ given by  PARI is of the form
$\CE_{K_1} = \CE_k \oplus \CE_K  \oplus  \langle e_4, e_5 \rangle_\Z$, 
where $\Norm_{K_1/K}(e_4) = \Norm_{K_1/K}(e_5) = 1$,
$\Norm_{K_1/k}(e_4) = \Norm_{K_1/k}(e_5) = 1$,  
$\CE_{\chi_1^{}} = \langle e_4, e_5 \rangle_\Z$, with:

\begin{equation*}
\hspace{-0.8cm}\left\{\begin{aligned}
\log(e_4) &= -8.0181821742313284670669481028585732345, \\
\log(e_5) &= + 6.7563852350287880222801509976867231766,
\end{aligned}\right.
\end{equation*}

\noindent
yielding immediately $\eta^\tau = e_4^{-4}$,  $\eta^{\tau^2} = e_5^{-4}$.
Thus, in this example:
$$\CE_{K_1}/\CE^0_{K_1}\! \!\cdot \!\CF_{\!K_1} = \CE_{\chi_1^{}}/ \CF_{\!\chi_1^{}} =
\langle  e_4, e_5 \rangle_\Z/\langle \eta^\tau, \eta^{\tau^2} \rangle_\Z \simeq \BZ/4\BZ,$$ 
of order $16$. 
So, we have $\order \CH^\ar_{\chi_1^{}} = 
\order (\CE_{\chi_1^{}}/\CF_{\!\chi_1^{}})$ (possibly with different structures),
which relativizes the interest of algebraic definitions, regarding analytic formulas, 
since $\CH^\alg_{\chi^{}_1} 
= \CH_{K_1} \simeq \BZ/4\BZ\! \times\! \BZ/2\BZ$ in that example.}
\end{example}

\begin{remark}
More generally, Galois cohomology groups are based on algebraic definitions of the 
norms, so that results strongly depend on capitulation phenomena. For instance, 
let $L/K$ be a cyclic $p$-extension of Galois group $G$ such that all the $r$ 
prime ideals of $K$ ramified in $L/K$, are totally ramified; thus,
$$\Hom^1(G,\CH_L) = \Ker_{\CH_L}(\Nu_{\!L/K})/\CH_L^{\sigma - 1} \ \ \& \ \ 
\Hom^2(G,\CH_L) = \CH_L^G/ \Nu_{\!L/K}(\CH_L), $$ 
are of same order $\ffrac{\order \CH_L^G}{\order \J_{L/K}(\CH_K)} = \ffrac{\order \CH_K}
{\order \J_{L/K}(\CH_K)} \times \ffrac{p^{N\,(r-1)}}{\order \omega_{L/K}(\BE_K)}$.
So, if $\Ker(\J_{L/K})=1$, the order is $\ffrac{p^{N\,(r-1)}}{\order \omega_{L/K}(\BE_K)}$;
if $\CH_K$ capitulates, the order becomes $\order\CH_K \times \ffrac{p^{N\,(r-1)}}
{\order \omega_{L/K}(\BE_K)}$ and any intermediate situation does exist. 
\end{remark}

\section{Tables for cubic fields and \texorpdfstring{$p=2$}{Lg}}\label{cubic}

We consider various totally ramified cyclic $p$-extensions $L/K$, where
$K$ is a cyclic cubic field and $L = K(\mu_\ell^{})$, $\ell \equiv 1 \pmod {2 p^\N}$. 
The program eliminates the cases of stability $\order \CH_{K_1}
 = \order \CH_K$ since capitulation holds in a suitable layer if $e(\Kk) \leq N$.
Then ${\sf vHK}$ defines the minimal $p$-adic valuation of the $\order \CH_K$'s 
to be considered (it may be chosen at will) then $r \in \{1,3\}$ is the number of 
prime ideals above $\ell$ in $K$.

The submodules $\Nu_{K_n/K}(\CH_{K_n}) 
= \J_{K_n/K}(\CH_K)$ are computed for $n\leq 2$ and given under the 
form ${\sf h_j^{[(S-1)^i]} = [e_1,\ldots, e_{rKn}]}$, where ${\sf rKn}$ is
the $\Z$-rank of $\BH_{K_n}$. 

\subsection{Case \texorpdfstring{$\ell = 17$}{Lg}}\label{cubic17}
We give an excerpt of the various cases obtained (all these examples show the 
randomness of the structures and of the capitulations, complete or incomplete).
We indicate if $\CH_K$ capitulates in $K_3$ (not computed) which holds as soon 
as $\order \CH_{K_2} = \order \CH_{K_1}$ (stability from $K_1$) and $e(\Kk) \leq 2$
(Theorem \ref{main2}\,(i)):

\ft\begin{verbatim}
MAIN PROGRAM FOR CYCLIC CUBIC FIELDS
{p=2;Nn=2;bf=7;Bf=10^4;vHK=2;ell=17;mKn=2;
for(f=bf,Bf,h=valuation(f,3);if(h!=0 & h!=2,next);F=f/3^h;
if(core(F)!=F,next);F=factor(F);Div=component(F,1);d=matsize(F)[1];
for(j=1,d,D=Div[j];if(Mod(D,3)!=1,break));for(b=1,sqrt(4*f/27),
if(h==2 & Mod(b,3)==0,next);A=4*f-27*b^2;if(issquare(A,&a)==1,
if(h==0,if(Mod(a,3)==1,a=-a);PK=x^3+x^2+(1-f)/3*x+(f*(a-3)+1)/27);
if(h==2,if(Mod(a,9)==3,a=-a);PK=x^3-f/3*x-f*a/27);
K=bnfinit(PK,1);r=matsize(idealfactor(K,ell))[1];
\\Testing the order of the p-class group of K compared to vHK:
HK=K.no;if(valuation(HK,p)<vHK,next);CK0=K.clgp;
for(n=1,Nn,Qn=polsubcyclo(ell,p^n);Pn=polcompositum(PK,Qn)[1];
Kn=bnfinit(Pn,1);HKn=Kn.no;dn=poldegree(Pn);
\\Test for elimination of the stability from K:
if(n==1 & valuation(HKn,p)==valuation(HK,p),break);
if(n==1,print("f=",f," PK=",PK," CK0=",CK0[2]," ell=",ell," r=",r));
CKn=Kn.clgp;print("CK",n,"=",CKn[2]);rKn=matsize(CKn[2])[2];
\\Search of a generator S of Gal(Kn/K):
G=nfgaloisconj(Kn);Id=x;for(k=1,dn,Z=G[k];ks=1;while(Z!=Id,
Z=nfgaloisapply(Kn,G[k],Z);ks=ks+1);if(ks==p^n,S=G[k];break));
\\Computation of the filtration:
for(j=1,rKn,X=CKn[3][j];Y=X;for(i=1,mKn,YS=nfgaloisapply(Kn,S,Y);
T=idealpow(Kn,Y,-1);Y=idealmul(Kn,YS,T);B=bnfisprincipal(Kn,Y)[1];
Ehij=List;for(j=1,rKn,c=B[j];w=valuation(CKn[2][j],p);c=lift(Mod(c,p^w)); 
listput(Ehij,c,j));print("h_",j,"^[","(S-1)^",i,"]=",Ehij)));
\\Computation of the algebraic norms of the rKn generators h_j:
for(j=1,rKn,A0=CKn[3][j];A=1;for(t=1,p^n,As=nfgaloisapply(Kn,S,A);
A=idealmul(Kn,A0,As));B=bnfisprincipal(Kn,A)[1];
\\Reduction modulo suitable p-powers of the exponents: 
Enu=List;for(j=1,rKn,c=B[j];w=valuation(CKn[2][j],p);
c=lift(Mod(c,p^w));listput(Enu,c,j));
print("norm in K",n,"/K of the component ",j," of CK",n,":",Enu))))))}
\end{verbatim}\ns
\ft\begin{verbatim}
p=2  f=607  PK=x^3+x^2-202*x-1169  CK0=[2,2]  ell=17  r=1
CK1=[2,2,2,2]
h_1^[(S-1)^1]=[1,0,0,1]   h_2^[(S-1)^1]=[0,1,1,1] 
h_3^[(S-1)^1]=[1,1,1,0]   h_4^[(S-1)^1]=[1,0,0,1]
h_1^[(S-1)^2]=[0,0,0,0]   h_2^[(S-1)^2]=[0,0,0,0] 
h_3^[(S-1)^2]=[0,0,0,0]   h_4^[(S-1)^2]=[0,0,0,0]
norm in K1/K of the component 1 of CK1:[1,0,0,1]
norm in K1/K of the component 2 of CK1:[0,1,1,1]
norm in K1/K of the component 3 of CK1:[1,1,1,0]
norm in K1/K of the component 4 of CK1:[1,0,0,1]
No capitulation, m(K1)=2, e(K1)=1
CK2=[2,2,2,2]
h_1^[(S-1)^1]=[1,0,1,0]   h_2^[(S-1)^1]=[0,1,0,1] 
h_3^[(S-1)^1]=[1,0,1,0]   h_4^[(S-1)^1]=[0,1,0,1]
h_1^[(S-1)^2]=[0,0,0,0]   h_2^[(S-1)^2]=[0,0,0,0] 
h_3^[(S-1)^2]=[0,0,0,0]   h_4^[(S-1)^2]=[0,0,0,0]
norm in K2/K of the component 1 of CK2:[0,0,0,0]
norm in K2/K of the component 2 of CK2:[0,0,0,0]
norm in K2/K of the component 3 of CK2:[0,0,0,0]
norm in K2/K of the component 4 of CK2:[0,0,0,0]
Complete capitulation, m(K2)=2, e(K2)=1
\end{verbatim}\ns
\ft\begin{verbatim}
p=2  f=1009  PK=x^3+x^2-336*x-1719  CK0=[2,2]  ell=17  r=1
CK1=[28,4]=[4,4]
h_1^[(S-1)^1]=[0,2]   h_2^[(S-1)^1]=[2,2]
h_1^[(S-1)^2]=[0,0]   h_2^[(S-1)^2]=[0,0]
norm in K1/K of the component 1 of CK1:[2,2]
norm in K1/K of the component 2 of CK1:[2,0]
No capitulation, m(K1)=2, e(K1)=2
CK2=[28,4]=[4,4]
h_1^[(S-1)^1]=[0,2]   h_2^[(S-1)^1]=[2,2]
h_1^[(S-1)^2]=[0,0]   h_2^[(S-1)^2]=[0,0]
norm in K2/K of the component 1 of CK2:[0,0]
norm in K2/K of the component 2 of CK2:[0,0]
Complete capitulation, m(K2)=2, e(K2)=2
\end{verbatim}\ns
\ft\begin{verbatim}
p=2  f=1789  PK=x^3+x^2-596*x-5632  CK0=[2,2]  ell=17  r=1
CK1=[24,8]=[8,8]
h_1^[(S-1)^1]=[2,0]   h_2^[(S-1)^1]=[0,2]
h_1^[(S-1)^2]=[4,0]   h_2^[(S-1)^2]=[0,4]
norm in K1/K of the component 1 of CK1:[4,0]
norm in K1/K of the component 2 of CK1:[0,4]
No capitulation, m(K1)=3, e(K1)=3
CK2=[312,8]=[8,8]
h_1^[(S-1)^1]=[2,0]   h_2^[(S-1)^1]=[0,2]
h_1^[(S-1)^2]=[4,0]   h_2^[(S-1)^2]=[0,4]
norm in K2/K of the component 1 of CK2:[0,0]
norm in K2/K of the component 2 of CK2:[0,0]
Complete capitulation, m(K2)=3, e(K2)=3
\end{verbatim}\ns
\ft\begin{verbatim}
p=2  f=2077  PK=x^3+x^2-692*x-7231  CK0=[6,2]  ell=17  r=1
CK1=[6,2,2,2]=[2,2,2,2]
h_1^[(S-1)^1]=[1,1,1,0] h_2^[(S-1)^1]=[0,0,1,1] 
h_3^[(S-1)^1]=[1,1,0,1] h_4^[(S-1)^1]=[1,1,0,1]
h_1^[(S-1)^2]=[0,0,0,0] h_2^[(S-1)^2]=[0,0,0,0] 
h_3^[(S-1)^2]=[0,0,0,0] h_4^[(S-1)^2]=[0,0,0,0]
norm in K1/K of the component 1 of CK1:[1,1,1,0]
norm in K1/K of the component 2 of CK1:[0,0,1,1]
norm in K1/K of the component 3 of CK1:[1,1,0,1]
norm in K1/K of the component 4 of CK1:[1,1,0,1]
No capitulation, m(K1)=2, e(K1)=1
CK2=[6,2,2,2,2,2]=[2,2,2,2,2,2]
h_1^[(S-1)^1]=[1,0,0,0,1,0] h_2^[(S-1)^1]=[0,1,0,0,0,1] 
h_3^[(S-1)^1]=[1,1,0,1,0,1] h_4^[(S-1)^1]=[1,0,0,1,0,0] 
h_5^[(S-1)^1]=[1,0,0,1,0,0] h_6^[(S-1)^1]=[0,0,1,1,0,1]
h_1^[(S-1)^2]=[0,0,0,1,1,0] h_2^[(S-1)^2]=[0,1,1,1,0,0] 
h_3^[(S-1)^2]=[0,1,1,0,1,0] h_4^[(S-1)^2]=[0,0,0,1,1,0] 
h_5^[(S-1)^2]=[0,0,0,1,1,0] h_6^[(S-1)^2]=[0,1,1,1,0,0]
norm in K2/K of the component 1 of CK2:[0,0,0,0,0,0]
norm in K2/K of the component 2 of CK2:[0,0,0,0,0,0]
norm in K2/K of the component 3 of CK2:[0,0,0,0,0,0]
norm in K2/K of the component 4 of CK2:[0,0,0,0,0,0]
norm in K2/K of the component 5 of CK2:[0,0,0,0,0,0]
norm in K2/K of the component 6 of CK2:[0,0,0,0,0,0]
Complete capitulation, m(K2)=3, e(K2)=1
\end{verbatim}\ns
\ft\begin{verbatim}
p=2  f=2817  PK=x^3-939*x+6886  CK0=[12,4]  ell=17  r=1
CK1=[84,4]=[4,4]
h_1^[(S-1)^1]=[0,0]   h_2^[(S-1)^1]=[0,0]
h_1^[(S-1)^2]=[0,0]   h_2^[(S-1)^2]=[0,0]
norm in K1/K of the component 1 of CK1:[2,0]
norm in K1/K of the component 2 of CK1:[0,2]
Incomplete capitulation, m(K1)=1, e(K1)=2
CK2=[84,4]=[4,4]
h_1^[(S-1)^1]=[0,0]   h_2^[(S-1)^1]=[0,0]
h_1^[(S-1)^2]=[0,0]   h_2^[(S-1)^2]=[0,0]
norm in K2/K of the component 1 of CK2:[0,0]
norm in K2/K of the component 2 of CK2:[0,0]
Complete capitulation, m(K2)=1, e(K2)=2
\end{verbatim}\ns
\ft\begin{verbatim}
p=2  f=3357  PK=x^3-1119*x+9325  CK0=[6,2]  ell=17  r=3
CK1=[6,2,2,2]=[2,2,2,2]
h_1^[(S-1)^1]=[0,0,0,0]   h_2^[(S-1)^1]=[0,0,0,0]
h_3^[(S-1)^1]=[0,0,0,0]   h_4^[(S-1)^1]=[0,0,0,0]
norm in K1/K of the component 1 of CK1:[0,0,0,0]
norm in K1/K of the component 2 of CK1:[0,0,0,0]
norm in K1/K of the component 3 of CK1:[0,0,0,0]
norm in K1/K of the component 4 of CK1:[0,0,0,0]
Complete capitulation, m(K1)=1, e(K1)=1
CK2=[12,4,2,2]=[4,4,2,2]
h_1^[(S-1)^1]=[2,0,0,0] h_2^[(S-1)^1]=[0,0,0,0] 
h_3^[(S-1)^1]=[2,0,0,0] h_4^[(S-1)^1]=[0,2,0,0]
h_1^[(S-1)^2]=[0,0,0,0] h_2^[(S-1)^2]=[0,0,0,0] 
h_3^[(S-1)^2]=[0,0,0,0] h_4^[(S-1)^2]=[0,0,0,0]
norm in K2/K of the component 1 of CK2:[0,0,0,0]
norm in K2/K of the component 2 of CK2:[0,0,0,0]
norm in K2/K of the component 3 of CK2:[0,0,0,0]
norm in K2/K of the component 4 of CK2:[0,0,0,0]
Complete capitulation, m(K2)=2, e(K2)=2
\end{verbatim}\ns
\ft\begin{verbatim}
p=2  f=5479  PK=x^3+x^2-1826*x+13799  CK0=[2,2]  ell=17  r=1
CK1=[2,2,2,2]
h_1^[(S-1)^1]=[1,0,0,1] h_2^[(S-1)^1]=[1,1,1,0] 
h_3^[(S-1)^1]=[0,1,1,1] h_4^[(S-1)^1]=[1,0,0,1]
h_1^[(S-1)^2]=[0,0,0,0] h_2^[(S-1)^2]=[0,0,0,0] 
h_3^[(S-1)^2]=[0,0,0,0] h_4^[(S-1)^2]=[0,0,0,0]
norm in K1/K of the component 1 of CK1:[1,0,0,1]
norm in K1/K of the component 2 of CK1:[1,1,1,0]
norm in K1/K of the component 3 of CK1:[0,1,1,1]
norm in K1/K of the component 4 of CK1:[1,0,0,1]
No capitulation, m(K1)=2, e(K1)=1
CK2=[4,4,4,4]
h_1^[(S-1)^1]=[3,1,3,1] h_2^[(S-1)^1]=[3,3,0,3] 
h_3^[(S-1)^1]=[0,2,2,2] h_4^[(S-1)^1]=[0,2,3,0]
h_1^[(S-1)^2]=[0,2,2,0] h_2^[(S-1)^2]=[2,2,2,0] 
h_3^[(S-1)^2]=[2,2,2,2] h_4^[(S-1)^2]=[2,0,2,0]
norm in K2/K of the component 1 of CK2:[0,0,2,0]
norm in K2/K of the component 2 of CK2:[2,2,2,2]
norm in K2/K of the component 3 of CK2:[0,0,0,0]
norm in K2/K of the component 4 of CK2:[2,2,0,2]
No capitulation, m(K2)=4, e(K2)=2
\end{verbatim}\ns
\ft\begin{verbatim}
p=2  f=6247  PK=x^3+x^2-2082*x-35631  CK0=[4,4]  ell=17  r=1
CK1=[24,8,2,2]=[8,8,2,2]
h_1^[(S-1)^1]=[0,2,0,1] h_2^[(S-1)^1]=[6,6,1,0] 
h_3^[(S-1)^1]=[0,4,0,0] h_4^[(S-1)^1]=[4,0,0,0]
h_1^[(S-1)^2]=[0,4,0,0] h_2^[(S-1)^2]=[4,4,0,0] 
h_3^[(S-1)^2]=[0,0,0,0] h_4^[(S-1)^2]=[0,0,0,0]
norm in K1/K of the component 1 of CK1:[2,2,0,1]
norm in K1/K of the component 2 of CK1:[6,0,1,0]
norm in K1/K of the component 3 of CK1:[0,4,0,0]
norm in K1/K of the component 4 of CK1:[4,0,0,0]
No capitulation, m(K1)=4, e(K1)=3
CK2=[24,8,2,2]=[8,8,2,2]
h_1^[(S-1)^1]=[0,6,1,1] h_2^[(S-1)^1]=[2,6,0,1] 
h_3^[(S-1)^1]=[4,4,0,0] h_4^[(S-1)^1]=[0,4,0,0]
h_1^[(S-1)^2]=[0,4,0,0] h_2^[(S-1)^2]=[4,4,0,0] 
h_3^[(S-1)^2]=[0,0,0,0] h_4^[(S-1)^2]=[0,0,0,0]
norm in K2/K of the component 1 of CK2:[4,4,0,0]
norm in K2/K of the component 2 of CK2:[4,0,0,0]
norm in K2/K of the component 3 of CK2:[0,0,0,0]
norm in K2/K of the component 4 of CK2:[0,0,0,0]
Incomplete capitulation, m(K2)=3, e(K2)=3
Complete capitulation in K3 (stability from K1)
\end{verbatim}\ns
\ft\begin{verbatim}
p=2  f=9247  PK=x^3+x^2-3082*x-27056  CK0=[12,4]  ell=17  r=3
CK1=[24,8]=[8,8]
h_1^[(S-1)^1]=[0,0]   h_2^[(S-1)^1]=[0,0]
norm in K1/K of the component 1 of CK1:[2,0]
norm in K1/K of the component 2 of CK1:[0,2]
No capitulation, m(K1)=1, e(K1)=3
CK2=[48,16]=[16,16]
h_1^[(S-1)^1]=[0,0]   h_2^[(S-1)^1]=[0,0]
norm in K2/K of the component 1 of CK2:[4,0]
norm in K2/K of the component 2 of CK2:[0,4]
No capitulation, m(K2)=1, e(K2)=4
\end{verbatim}\ns
\ft\begin{verbatim}
p=2  f=20887  PK=x^3+x^2-6962*x-225889  CK0=[4,4,2,2]  ell=17  r=3
CK1=[8,8,2,2]
h_1^[(S-1)^1]=[0,0,0,0]   h_2^[(S-1)^1]=[0,0,0,0]
h_3^[(S-1)^1]=[0,0,0,0]   h_4^[(S-1)^1]=[0,0,0,0]
norm in K1/K of the component 1 of CK1:[2,0,0,0]
norm in K1/K of the component 2 of CK1:[0,2,0,0]
norm in K1/K of the component 3 of CK1:[0,0,0,0]
norm in K1/K of the component 4 of CK1:[0,0,0,0]
Incomplete capitulation, m(K1)=1, e(K1)=3
CK2=[16,16,2,2]
h_1^[(S-1)^1]=[8,0,0,0] h_2^[(S-1)^1]=[0,8,0,0] 
h_3^[(S-1)^1]=[0,0,0,0] h_4^[(S-1)^1]=[0,0,0,0]
h_1^[(S-1)^2]=[0,0,0,0] h_2^[(S-1)^2]=[0,0,0,0] 
h_3^[(S-1)^2]=[0,0,0,0] h_4^[(S-1)^2]=[0,0,0,0]
norm in K2/K of the component 1 of CK2:[4,0,0,0]
norm in K2/K of the component 2 of CK2:[0,4,0,0]
norm in K2/K of the component 3 of CK2:[0,0,0,0]
norm in K2/K of the component 4 of CK2:[0,0,0,0]
Incomplete capitulation, m(K2)=2, e(K2)=4
\end{verbatim}\ns
\ft\begin{verbatim}
p=2  f=48769  PK=x^3+x^2-16256*x-7225  CK0=[24,8]  ell=17  r=3
CK1=[48,16]=[16,16]
h_1^[(S-1)^1]=[0,0]   h_2^[(S-1)^1]=[0,0]
norm in K1/K of the component 1 of CK1:[2,0]
norm in K1/K of the component 2 of CK1:[0,2]
No capitulation,  m(K1)=1, e(K1)=4
CK2=[48,16]=[16,16]
h_1^[(S-1)^1]=[0,0]   h_2^[(S-1)^1]=[0,0]
norm in K2/K of the component 1 of CK2:[4,0]
norm in K2/K of the component 2 of CK2:[0,4]
Incomplete capitulation, m(K2)=1, e(K2)=4
\end{verbatim}\ns
\ft\begin{verbatim}
p=2  f=55609  PK=x^3+x^2-18536*x-823837  CK0=[4,4,2,2]  ell=17  r=3
CK1=[56,8,2,2]=[8,8,2,2]
h_1^[(S-1)^1]=[4,0,0,0] h_2^[(S-1)^1]=[0,0,0,0] 
h_3^[(S-1)^1]=[4,0,0,0] h_4^[(S-1)^1]=[0,4,0,0]
h_1^[(S-1)^2]=[0,0,0,0] h_2^[(S-1)^2]=[0,0,0,0] 
h_3^[(S-1)^2]=[0,0,0,0] h_4^[(S-1)^2]=[0,0,0,0]
norm in K1/K of the component 1 of CK1:[6,0,0,0]
norm in K1/K of the component 2 of CK1:[0,2,0,0]
norm in K1/K of the component 3 of CK1:[4,0,0,0]
norm in K1/K of the component 4 of CK1:[0,4,0,0]
Incomplete capitulation, m(K1)=2, e(K1)=3
CK2=[56,8,2,2]=[8,8,2,2]
h_1^[(S-1)^1]=[0,0,0,0] h_2^[(S-1)^1]=[0,0,0,0] 
h_3^[(S-1)^1]=[0,4,0,0] h_4^[(S-1)^1]=[4,4,0,0]
h_1^[(S-1)^2]=[0,0,0,0] h_2^[(S-1)^2]=[0,0,0,0] 
h_3^[(S-1)^2]=[0,0,0,0] h_4^[(S-1)^2]=[0,0,0,0]
norm in K2/K of the component 1 of CK2:[4,0,0,0]
norm in K2/K of the component 2 of CK2:[0,4,0,0]
norm in K2/K of the component 3 of CK2:[0,0,0,0]
norm in K2/K of the component 4 of CK2:[0,0,0,0]
Complete capitulation, m(K2)=2, e(K2)=3
\end{verbatim}\ns

\subsection{Case  \texorpdfstring{$\ell=17$}{Lg} and
\texorpdfstring{$f \in \{9283, 7687, 44857\}$}{Lg}}
Let's give some comments on interesting examples given by the program:

\ft\begin{verbatim}
p=2  f=9283  PK=x^3+x^2-3094*x-5501  CK0=[2,2]  ell=17  r=1
CK1=[48,16]=[16,16]
h_1^[(S-1)^1]=[6,0]  h_2^[(S-1)^1]=[0,6]
h_1^[(S-1)^2]=[4,0]  h_2^[(S-1)^2]=[0,4]
h_1^[(S-1)^3]=[8,0]  h_2^[(S-1)^3]=[0,8]
h_1^[(S-1)^4]=[0,0]  h_2^[(S-1)^4]=[0,0]
norm in K1/K of the component 1 of CK1:[8,0]
norm in K1/K of the component 2 of CK1:[0,8]
No capitulation, m(K1)=4, e(K1)=4
CK2=[48,16]=[16,16]
h_1^[(S-1)^1]=[6,0]  h_2^[(S-1)^1]=[0,6]
h_1^[(S-1)^2]=[4,0]  h_2^[(S-1)^2]=[0,4]
h_1^[(S-1)^3]=[8,0]  h_2^[(S-1)^3]=[0,8]
h_1^[(S-1)^4]=[0,0]  h_2^[(S-1)^4]=[0,0]
norm in K2/K of the component 1 of CK2:[0,0]
norm in K2/K of the component 2 of CK2:[0,0]
Complete capitulation, m(K2)=4, e(K2)=4
\end{verbatim}\ns

There is complete capitulation, even if conditions of Theorem \ref{main1}
are not fulfilled for the $K_n/K$'s (for $n=2$, $m(\Kk_2)=4$, $s(\Kk_2)=2$, 
$e(\Kk_2)=4$, $n-s(\Kk_2)=0$). 
Moreover, the exponent of $\CH_{K_1}$ is $2^4$ giving a larger complexity in $K_1/K$, 
but in $K_n$, $n \geq 2$, the exponent is still $2^4$ (no increasing of the complexity). 
Some other examples are:

\ft\begin{verbatim}
p=2  f=7687  PK=x^3+x^2-2562*x-48969  CK0=[2,2,2,2]  ell=17  r=1
CK1=[4,4,2,2]
h_1^[(S-1)^1]=[0,2,0,0] h_2^[(S-1)^1]=[0,0,0,0] 
h_3^[(S-1)^1]=[2,0,0,0] h_4^[(S-1)^1]=[0,2,0,0]
h_1^[(S-1)^2]=[0,0,0,0] h_2^[(S-1)^2]=[0,0,0,0] 
h_3^[(S-1)^2]=[0,0,0,0] h_4^[(S-1)^2]=[0,0,0,0]
norm in K1/K of the component 1 of CK1:[2,2,0,0]
norm in K1/K of the component 2 of CK1:[0,2,0,0]
norm in K1/K of the component 3 of CK1:[2,0,0,0]
norm in K1/K of the component 4 of CK1:[0,2,0,0]
Incomplete capitulation, m(K1)=2, e(K1)=2
CK2=[4,4,2,2]
h_1^[(S-1)^1]=[2,2,0,0] h_2^[(S-1)^1]=[2,0,0,0] 
h_3^[(S-1)^1]=[2,2,0,0] h_4^[(S-1)^1]=[2,0,0,0]
h_1^[(S-1)^2]=[0,0,0,0] h_2^[(S-1)^2]=[0,0,0,0] 
h_3^[(S-1)^2]=[0,0,0,0] h_4^[(S-1)^2]=[0,0,0,0]
norm in K2/K of the component 1 of CK2:[0,0,0,0]
norm in K2/K of the component 2 of CK2:[0,0,0,0]
norm in K2/K of the component 3 of CK2:[0,0,0,0]
norm in K2/K of the component 4 of CK2:[0,0,0,0]
Complete capitulation, m(K2)=2, e(K2)=2
\end{verbatim}\ns
\ft\begin{verbatim}
p=2  f=44857  PK=x^3+x^2-14952*x-704421  CK0=[6,2,2,2]  ell=17  r=3
CK1=[12,12,2,2]=[4,4,2,2]
h_1^[(S-1)^1]=[0,0,0,0] h_2^[(S-1)^1]=[0,0,0,0] 
h_3^[(S-1)^1]=[2,0,0,0] h_4^[(S-1)^1]=[2,2,0,0]
h_1^[(S-1)^2]=[0,0,0,0] h_2^[(S-1)^2]=[0,0,0,0] 
h_3^[(S-1)^2]=[0,0,0,0] h_4^[(S-1)^2]=[0,0,0,0]
norm in K1/K of the component 1 of CK1:[2,0,0,0]
norm in K1/K of the component 2 of CK1:[0,2,0,0]
norm in K1/K of the component 3 of CK1:[2,0,0,0]
norm in K1/K of the component 4 of CK1:[2,2,0,0]
Incomplete capitulation, m(K1)=2, e(K1)=2
CK2=[12,12,2,2]=[4,4,2,2]
h_1^[(S-1)^1]=[0,0,0,0] h_2^[(S-1)^1]=[2,2,0,0] 
h_3^[(S-1)^1]=[2,2,0,0] h_4^[(S-1)^1]=[0,2,0,0]
h_1^[(S-1)^2]=[0,0,0,0] h_2^[(S-1)^2]=[0,0,0,0] 
h_3^[(S-1)^2]=[0,0,0,0] h_4^[(S-1)^2]=[0,0,0,0]
norm in K2/K of the component 1 of CK2:[0,0,0,0]
norm in K2/K of the component 2 of CK2:[0,0,0,0]
norm in K2/K of the component 3 of CK2:[0,0,0,0]
norm in K2/K of the component 4 of CK2:[0,0,0,0]
Complete capitulation, m(K2)=2, e(K2)=2
\end{verbatim}\ns

\noindent
these examples suggest that the order of magnitude of the $p$-ranks of the
$\CH_K$'s is not an obstruction to a capitulation in such cyclic $p$-extensions 
$L \subset K(\mu_\ell^{})$. In the above cases, the capitulation is obtained by 
means of a stability in larger layers.

\subsection{Case \texorpdfstring{$\ell = 97$}{Lg}}
Similarly, we give a table for $\ell = 97$ allowing capitulations up to $K_4$. 
One finds much more cases of capitulation (not given in the table since they 
are very numerous); it seems clearly that a larger value of $N$ may intervene in 
the phenomenon of capitulation:

\ft\begin{verbatim}
p=2  f=349  PK=x^3+x^2-116*x-517  CK0=[2,2]  ell=97  r=1
CK1=[4,4]
h_1^[(S-1)^1]=[2,2]   h_2^[(S-1)^1]=[2,0]
h_1^[(S-1)^2]=[0,0]   h_2^[(S-1)^2]=[0,0]
norm in K1/K of the component 1 of CK1:[0,2]
norm in K1/K of the component 2 of CK1:[2,2]
Incomplete capitulation, m(K1)=2, e(K1)=2
CK2=[4,4]
h_1^[(S-1)^1]=[2,2]   h_2^[(S-1)^1]=[2,0]
h_1^[(S-1)^2]=[0,0]   h_2^[(S-1)^2]=[0,0]
norm in K2/K of the component 1 of CK2:[0,0]
norm in K2/K of the component 2 of CK2:[0,0]
Complete capitulation, m(K2)=2, e(K2)=2
\end{verbatim}\ns
\ft\begin{verbatim}
p=2  f=607  PK=x^3+x^2-202*x-1169  CK0=[2,2]  ell=97  r=1
CK1=[8,8]
h_1^[(S-1)^1]=[6,4]   h_2^[(S-1)^1]=[4,2]
h_1^[(S-1)^2]=[4,0]   h_2^[(S-1)^2]=[0,4]
norm in K1/K of the component 1 of CK1:[0,4]
norm in K1/K of the component 2 of CK1:[4,4]
Incomplete capitulation, m(K1)=3, e(K1)=3
CK2=[104,8]=[8,8]
h_1^[(S-1)^1]=[6,4]   h_2^[(S-1)^1]=[4,2]
h_1^[(S-1)^2]=[4,0]   h_2^[(S-1)^2]=[0,4]
norm in K2/K of the component 1 of CK2:[0,0]
norm in K2/K of the component 2 of CK2:[0,0]
Complete capitulation, m(K2)=3, e(K2)=3
\end{verbatim}\ns
\ft\begin{verbatim}
p=2  f=1957  PK=x^3+x^2-652*x+6016  CK0=[6,2]  ell=97  r=3
CK1=[12,4]=[4,4]
h_1^[(S-1)^1]=[0,0]   h_2^[(S-1)^1]=[0,0]
h_1^[(S-1)^2]=[0,0]   h_2^[(S-1)^2]=[0,0]
norm in K1/K of the component 1 of CK1:[2,0]
norm in K1/K of the component 2 of CK1:[0,2]
No capitulation, m(K1)=1, e(K1)=2
CK2=[24,8]=[8,8]
h_1^[(S-1)^1]=[0,0]   h_2^[(S-1)^1]=[0,0]
h_1^[(S-1)^2]=[0,0]   h_2^[(S-1)^2]=[0,0]
norm in K2/K of the component 1 of CK2:[4,0]
norm in K2/K of the component 2 of CK2:[0,4]
No capitulation, m(K2)=1, e(K2)=3
\end{verbatim}\ns
\ft\begin{verbatim}
p=2  f=4639  PK=x^3+x^2-1546*x+6529  CK0=[2,2]  ell=97  r=1
CK1=[4,4]
h_1^[(S-1)^1]=[2,0]   h_2^[(S-1)^1]=[0,2]
h_1^[(S-1)^2]=[0,0]   h_2^[(S-1)^2]=[0,0]
norm in K1/K of the component 1 of CK1:[0,0]
norm in K1/K of the component 2 of CK1:[0,0]
Complete capitulation, m(K1)=2, e(K1)=2
CK2=[4,4]
h_1^[(S-1)^1]=[2,0]   h_2^[(S-1)^1]=[0,2]
h_1^[(S-1)^2]=[0,0]   h_2^[(S-1)^2]=[0,0]
norm in K2/K of the component 1 of CK2:[0,0]
norm in K2/K of the component 2 of CK2:[0,0]
Complete capitulation, m(K2)=2, e(K2)=2
\end{verbatim}\ns
\ft\begin{verbatim}
p=2  f=9391  PK=x^3+x^2-3130*x-24347  CK0=[2,2]  ell=97  r=3
CK1=[4,4,2,2]
h_1^[(S-1)^1]=[2,0,0,0]   h_2^[(S-1)^1]=[0,2,0,0]
h_3^[(S-1)^1]=[2,0,0,0]   h_4^[(S-1)^1]=[2,2,0,0]
h_1^[(S-1)^2]=[0,0,0,0]   h_2^[(S-1)^2]=[0,0,0,0]
h_3^[(S-1)^2]=[0,0,0,0]   h_4^[(S-1)^2]=[0,0,0,0]
norm in K1/K of the component 1 of CK1:[0,0,0,0]
norm in K1/K of the component 2 of CK1:[0,0,0,0]
norm in K1/K of the component 3 of CK1:[2,0,0,0]
norm in K1/K of the component 4 of CK1:[2,2,0,0]
Incomplete capitulation, m(K1)=2, e(K1)=2
CK2=[4,4,4,4,2,2]
h_1^[(S-1)^1]=[0,0,0,2,0,1]   h_2^[(S-1)^1]=[0,0,2,0,1,0] 
h_3^[(S-1)^1]=[2,0,2,0,1,0]   h_4^[(S-1)^1]=[0,0,2,0,0,1]
h_5^[(S-1)^1]=[0,2,2,0,0,0]   h_6^[(S-1)^1]=[2,0,0,2,0,0]
h_1^[(S-1)^2]=[2,0,0,2,0,0]   h_2^[(S-1)^2]=[0,2,2,0,0,0]
h_3^[(S-1)^2]=[0,2,2,0,0,0]   h_4^[(S-1)^2]=[2,0,0,2,0,0]
h_5^[(S-1)^2]=[0,0,0,0,0,0]   h_6^[(S-1)^2]=[0,0,0,0,0,0]
norm in K2/K of the component 1 of CK2:[0,0,0,0,0,0]
norm in K2/K of the component 2 of CK2:[0,0,0,0,0,0]
norm in K2/K of the component 3 of CK2:[0,0,0,0,0,0]
norm in K2/K of the component 4 of CK2:[0,0,0,0,0,0]
norm in K2/K of the component 5 of CK2:[0,0,0,0,0,0]
norm in K2/K of the component 6 of CK2:[0,0,0,0,0,0]
Complete capitulation, m(K2)=3, e(K2)=2
\end{verbatim}\ns
\ft\begin{verbatim}
p=2  f=20419  PK=x^3+x^2-6806*x-3025  CK0=[42,2]  ell=97  r=3
CK1=[84,4,2,2]=[4,4,2,2]
h_1^[(S-1)^1]=[0,0,0,0]   h_2^[(S-1)^1]=[0,0,0,0]
h_3^[(S-1)^1]=[2,0,0,0]   h_4^[(S-1)^1]=[0,2,0,0]
h_1^[(S-1)^2]=[0,0,0,0]   h_2^[(S-1)^2]=[0,0,0,0]
h_3^[(S-1)^2]=[0,0,0,0]   h_4^[(S-1)^2]=[0,0,0,0]
norm in K1/K of the component 1 of CK1:[2,0,0,0]
norm in K1/K of the component 2 of CK1:[0,2,0,0]
norm in K1/K of the component 3 of CK1:[2,0,0,0]
norm in K1/K of the component 4 of CK1:[0,2,0,0]
No capitulation, m(K1)=2, e(K1)=2
CK2=[84,4,4,4,2,2]=[4,4,4,4,2,2]
h_1^[(S-1)^1]=[0,0,2,0,1,0]   h_2^[(S-1)^1]=[0,2,0,2,1,0]
h_3^[(S-1)^1]=[2,2,2,2,0,0]   h_4^[(S-1)^1]=[0,2,2,2,1,1]
h_5^[(S-1)^1]=[2,0,0,2,0,0]   h_6^[(S-1)^1]=[0,0,2,2,0,0]
h_1^[(S-1)^2]=[2,0,0,2,0,0]   h_2^[(S-1)^2]=[2,0,0,2,0,0]
h_3^[(S-1)^2]=[0,0,0,0,0,0]   h_4^[(S-1)^2]=[2,0,2,0,0,0]
h_5^[(S-1)^2]=[0,0,0,0,0,0]   h_6^[(S-1)^2]=[0,0,0,0,0,0]
norm in K2/K of the component 1 of CK2:[0,0,0,0,0,0]
norm in K2/K of the component 2 of CK2:[0,0,0,0,0,0]
norm in K2/K of the component 3 of CK2:[0,0,0,0,0,0]
norm in K2/K of the component 4 of CK2:[0,0,0,0,0,0]
norm in K2/K of the component 5 of CK2:[0,0,0,0,0,0]
norm in K2/K of the component 6 of CK2:[0,0,0,0,0,0]
Complete capitulation, m(K2)=3, e(K2)=2
\end{verbatim}\ns
\ft\begin{verbatim}
p=2  f=24589  PK=x^3+x^2-8196*x-33696  CK0=[6,2]  ell=97  r=3
CK1=[6,2,2,2]=[2,2,2,2]
h_1^[(S-1)^1]=[1,0,0,1]   h_2^[(S-1)^1]=[0,1,1,0]
h_3^[(S-1)^1]=[0,1,1,0]   h_4^[(S-1)^1]=[1,0,0,1]
h_1^[(S-1)^2]=[0,0,0,0]   h_2^[(S-1)^2]=[0,0,0,0]
h_3^[(S-1)^2]=[0,0,0,0]   h_4^[(S-1)^2]=[0,0,0,0]
norm in K1/K of the component 1 of CK1:[1,0,0,1]
norm in K1/K of the component 2 of CK1:[0,1,1,0]
norm in K1/K of the component 3 of CK1:[0,1,1,0]
norm in K1/K of the component 4 of CK1:[1,0,0,1]
No capitulation, m(K1)=2, e(K1)=1
CK2=[6,2,2,2,2,2]=[2,2,2,2,2,2]
h_1^[(S-1)^1]=[0,0,0,0,0,0]   h_2^[(S-1)^1]=[0,0,0,0,0,0]
h_3^[(S-1)^1]=[1,1,0,1,0,0]   h_4^[(S-1)^1]=[1,1,0,0,0,0]
h_5^[(S-1)^1]=[0,1,0,1,1,1]   h_6^[(S-1)^1]=[1,1,0,1,1,1]
h_1^[(S-1)^2]=[0,0,0,0,0,0]   h_2^[(S-1)^2]=[0,0,0,0,0,0]
h_3^[(S-1)^2]=[1,1,0,0,0,0]   h_4^[(S-1)^2]=[0,0,0,0,0,0]
h_5^[(S-1)^2]=[0,1,0,0,0,0]   h_6^[(S-1)^2]=[0,1,0,0,0,0]
norm in K2/K of the component 1 of CK2:[0,0,0,0,0,0]
norm in K2/K of the component 2 of CK2:[0,0,0,0,0,0]
norm in K2/K of the component 3 of CK2:[0,0,0,0,0,0]
norm in K2/K of the component 4 of CK2:[0,0,0,0,0,0]
norm in K2/K of the component 5 of CK2:[0,0,0,0,0,0]
norm in K2/K of the component 6 of CK2:[0,0,0,0,0,0]
Complete capitulation, m(K2)=3, e(K2)=1
\end{verbatim}\ns
\ft\begin{verbatim}
p=2  f=25171  PK=x^3+x^2-8390*x+273152  CK0=[14,2]  ell=97  r=3
CK1=[84,4,2,2]=[4,4,2,2]
h_1^[(S-1)^1]=[2,0,0,0]   h_2^[(S-1)^1]=[0,2,0,0]
h_3^[(S-1)^1]=[0,0,0,0]   h_4^[(S-1)^1]=[0,0,0,0]
h_1^[(S-1)^2]=[0,0,0,0]   h_2^[(S-1)^2]=[0,0,0,0]
h_3^[(S-1)^2]=[0,0,0,0]   h_4^[(S-1)^2]=[0,0,0,0]
norm in K1/K of the component 1 of CK1:[0,0,0,0]
norm in K1/K of the component 2 of CK1:[0,0,0,0]
norm in K1/K of the component 3 of CK1:[0,0,0,0]
norm in K1/K of the component 4 of CK1:[0,0,0,0]
Complete capitulation, m(K1)=2, e(K1)=2
CK2=[84,4,4,4]=[4,4,4,4]
h_1^[(S-1)^1]=[0,0,2,0]   h_2^[(S-1)^1]=[0,0,2,0]
h_3^[(S-1)^1]=[0,0,2,0]   h_4^[(S-1)^1]=[2,0,2,2]
h_1^[(S-1)^2]=[0,0,0,0]   h_2^[(S-1)^2]=[0,0,0,0]
h_3^[(S-1)^2]=[0,0,0,0]   h_4^[(S-1)^2]=[0,0,0,0]
norm in K2/K of the component 1 of CK2:[0,0,0,0]
norm in K2/K of the component 2 of CK2:[0,0,0,0]
norm in K2/K of the component 3 of CK2:[0,0,0,0]
norm in K2/K of the component 4 of CK2:[0,0,0,0]
Complete capitulation, m(K2)=2, e(K2)=2
\end{verbatim}\ns

\subsection{Capitulations for \texorpdfstring{$K$}{Lg} fixed and 
\texorpdfstring{$\ell$}{Lg} varying}
In this subsection, we fix a cyclic cubic field (given via its polynomial ${\sf PK}$)
and consider primes $\ell \equiv 1 \pmod {2 p^\N}$ with ${\sf N \geq Nell}$, and 
${\sf \ell \leq Bell}$; one must give ${\sf p}$, ${\sf Nell}$, ${\sf Bell}$:

\ft\begin{verbatim}
{p=2;Nell=3;Bell=10^3;Nn=2;f=20887;PK=x^3+x^2-6962*x-225889;K=bnfinit(PK,1);
CK0=K.clgp;forprime(ell=1,Bell,N=valuation(ell-1,p)-1;if(N<Nell,next);
r=matsize(idealfactor(K,ell))[1];for(n=1,Nn,Qn=polsubcyclo(ell,p^n);
Pn=polcompositum(PK,Qn)[1];Kn=bnfinit(Pn,1);HKn=Kn.no;dn=poldegree(Pn);
if(n==1,print();print("f=",f," PK=",PK," CK0=",CK0[2]," ell=",ell," N=",N," r=",r));
CKn=Kn.clgp;print("CK",n,"=",CKn[2]);rKn=matsize(CKn[2])[2];
G=nfgaloisconj(Kn);Id=x;for(k=1,dn,Z=G[k];ks=1;while(Z!=Id,
Z=nfgaloisapply(Kn,G[k],Z);ks=ks+1);if(ks==p^n,S=G[k];break));
for(j=1,rKn,A0=CKn[3][j];A=1;for(t=1,p^n,As=nfgaloisapply(Kn,S,A);
A=idealmul(Kn,A0,As));B=bnfisprincipal(Kn,A)[1];
Enu=List;for(j=1,rKn,c=B[j];w=valuation(CKn[2][j],p);c=lift(Mod(c,p^w)); 
listput(Enu,c,j));print("norm in K",n,"/K of the component ",j,
" of CK",n,":",Enu))))}
\end{verbatim}\ns

\subsubsection{Cubic field of conductor $f=1777$} Then $P_K=x^3+x^2-592*x+724$
and $\CH_K \simeq \BZ/4\BZ$; a complete capitulation in $K_2$ holds for the 
following primes $\ell \equiv 1 \pmod 8$ (taking ${\sf Nell=2}$):

\noindent
$\ell \in\! \{$$41$, $89$, $97$, $137$, $233$, $281$, $313$, $337$, $353$, $401$, $409$, 
$433$, $449$, $457$, $521$, $569$, $577$, $593$, $601$, $617$, $673$, $761$, 
$769$, $809$, $857$, $881$, $929$, $937$, $953$, $977$, $1009$, $1049$, $1097$, 
$1129$, $1153$, $1193$, $1201$, $1217$, $1249$, $1361$, $1409$, $1433$, $1489$, 
$1553$, $1601$, $1609$, $1657$, $1721$, $1777$, $1801$, $2017$, $2089$,\,$\ldots$$\}$;

\noindent
exceptions are $\ell \in \!\{$$17$, $73$, $113$, $193$, $241$, $257$, $641$, $1033$, 
$1289$, $1297$, $1321$, $1481$, $1697$, $1753$, $1873$, $1889$, $1913$, 
$1993$, $2081$,\,$\ldots$$\}$.

\subsubsection{Cubic field of conductor $f=20887$} \label{20887}
We get the more complex structure 
$\CH_K \simeq \BZ/4\BZ \times \BZ/2\BZ$; a complete capitulation in $K_1$ does not hold
since $e(\Kk) = 2$, but computations in $K_3$ are out of reach. However, taking ${\sf Nell=3}$,
the results allow to distinguish between incomplete capitulation in $K_2$ and possible 
capitulation in $K_n$, $n \geq 3$; we obtain the following matrices showing always an 
incomplete capitulation and some stabilities, up to $\ell = 449$ (the mention 
${\sf Im(Jn)=[a,...,z]}$ denotes the structure of the image $\J_{K_n/K}(\CH_K)$, to be compared 
with ${\sf CK0=[4,4,2,2]}$):

\ft\begin{verbatim}
p=2  f=20887  PK=x^3+x^2-6962*x-225889 
\end{verbatim}\ns
\ft\begin{verbatim}
ell=17  CK0=[4,4,2,2]  CK1=[8,8,2,2]  CK2=[16,16,2,2]  N=3  r=3
norm in K1/K of the component 1 of CK1:[2,0,0,0]
norm in K1/K of the component 2 of CK1:[0,2,0,0]
norm in K1/K of the component 3 of CK1:[0,0,0,0]
norm in K1/K of the component 4 of CK1:[0,0,0,0]
Im(J1)=[4,4]
norm in K2/K of the component 1 of CK2:[4,0,0,0]
norm in K2/K of the component 2 of CK2:[0,4,0,0]
norm in K2/K of the component 3 of CK2:[0,0,0,0]
norm in K2/K of the component 4 of CK2:[0,0,0,0]
Im(J2)=[4,4]
\end{verbatim}\ns
\ft\begin{verbatim}
ell=97  CK0=[4,4,2,2]  CK1=[8,8,2,2]  CK2=[8,8,2,2]  N=4  r=3
norm in K2/K of the component 1 of CK2:[4,0,0,0]
norm in K2/K of the component 2 of CK2:[0,4,0,0]
norm in K2/K of the component 3 of CK2:[0,0,0,0]
norm in K2/K of the component 4 of CK2:[0,0,0,0]
Im(J2)=[2,2], Capitulation in K3
\end{verbatim}\ns
\ft\begin{verbatim}
ell=353  CK0=[4,4,2,2]  CK1=[4,4,4,4,2,2]  CK2=[8,8,4,4,2,2]  N=4  r=3
norm in K1/K of the component 1 of CK1:[2,2,0,0,0,0]
norm in K1/K of the component 2 of CK1:[2,0,0,0,0,0]
norm in K1/K of the component 3 of CK1:[3,1,2,2,1,1]
norm in K1/K of the component 4 of CK1:[1,2,0,2,1,0]
norm in K1/K of the component 5 of CK1:[2,2,0,0,0,0]
norm in K1/K of the component 6 of CK1:[0,0,0,0,0,0]
Im(J1)=[4,4,2,2], no capitulation (J1 injective)
norm in K2/K of the component 1 of CK2:[0,0,2,2,0,0]
norm in K2/K of the component 2 of CK2:[0,0,0,2,0,0]
norm in K2/K of the component 3 of CK2:[0,0,0,0,0,0]
norm in K2/K of the component 4 of CK2:[0,0,0,0,0,0]
norm in K2/K of the component 5 of CK2:[0,0,0,0,0,0]
norm in K2/K of the component 6 of CK2:[0,0,0,0,0,0]
Im(J2)=[2,2]
\end{verbatim}\ns
\ft\begin{verbatim}
ell=433  CK0=[4,4,2,2]  CK1=[8,8,2,2,2,2]  CK2=[8,8,4,4,2,2]  N=3  r=1
norm in K2/K of the component 1 of CK2:[0,4,2,0,0,0]
norm in K2/K of the component 2 of CK2:[0,4,0,2,0,0]
norm in K2/K of the component 3 of CK2:[0,0,0,0,0,0]
norm in K2/K of the component 4 of CK2:[0,0,0,0,0,0]
norm in K2/K of the component 5 of CK2:[0,0,0,0,0,0]
norm in K2/K of the component 6 of CK2:[0,0,0,0,0,0]
Im(J2)=[2,2]
\end{verbatim}\ns
\ft\begin{verbatim}
ell=769  CK0=[4,4,2,2]  CK1=[4,4,2,2,2,2]  CK2=[4,4,4,4,4,4]  N=7  r=1
norm in K1/K of the component 1 of CK1:[2,0,0,0,0,0]
norm in K1/K of the component 2 of CK1:[0,2,1,1,1,0]
norm in K1/K of the component 3 of CK1:[0,0,0,1,0,0]
norm in K1/K of the component 4 of CK1:[0,0,0,0,0,0]
norm in K1/K of the component 5 of CK1:[0,0,0,1,0,0]
norm in K1/K of the component 6 of CK1:[0,0,1,0,1,0]
Im(J1)=[2,2,2,2]
norm in K2/K of the component 1 of CK2:[0,0,0,0,0,0]
norm in K2/K of the component 2 of CK2:[0,0,0,0,0,0]
norm in K2/K of the component 3 of CK2:[0,0,0,0,2,0]
norm in K2/K of the component 4 of CK2:[0,2,0,0,2,2]
norm in K2/K of the component 5 of CK2:[0,0,0,0,0,0]
norm in K2/K of the component 6 of CK2:[0,0,0,0,0,0]
Im(J2)=[2,2]
\end{verbatim}\ns
\ft\begin{verbatim}
ell=977  CK0=[4,4,2,2]  CK1=[8,8,4,4]  CK2=[16,16,4,4]  N=3  r=3
norm in K2/K of the component 1 of CK2:[4,8,0,0]
norm in K2/K of the component 2 of CK2:[8,4,0,0]
norm in K2/K of the component 3 of CK2:[0,8,0,0]
norm in K2/K of the component 4 of CK2:[8,0,0,0]
Im(J2)=[4,4]
\end{verbatim}\ns
\ft\begin{verbatim}
ell=1009  CK0=[4,4,2,2]  CK1=[8,8,4,4]  CK2=[8,8,4,4]  N=3  r=1
norm in K2/K of the component 1 of CK2:[4,0,0,0]
norm in K2/K of the component 2 of CK2:[0,0,0,0]
norm in K2/K of the component 3 of CK2:[4,0,0,0]
norm in K2/K of the component 4 of CK2:[4,4,0,0]
Im(J2)=[2,2], Capitulation in K3
\end{verbatim}\ns

\subsubsection{Remarks about the towers $K(\mu_\ell^{})/K$, $K$
of conductor $f=20887$}
In the above values of $\ell$, there is always partial capitulation when $r=1$ but 
never stability from $K$; in other words, if a stability from $K_{n_0}$ does exist, then 
$n_0 \geq 1$ (see Remark~\ref{nzero}).

This may be explained as follows and many generalizations are possible. 
For simplicity, put $\Norm_{K_1/K} =: \Norm$,
$\J_{K_1/K} =: \J$, $\CH_{K_1} =: \CH_1$, $\BE_{K_1} =: \BE_1$,
$\CH_K =: \CH$, $\BE_K =: \BE$.

\noindent
Stability from $K$ would imply $\CH_1^{G_1} = \CH_1 \ds \mathop
{\simeq}^{\hbox{\tiny$\Norm$}} \CH \simeq \BZ/4\BZ \times \BZ/2\BZ$ 
(Theorem \ref{main2}), Chevalley--Herbrand formula would imply 
$\BE / \BE \cap \Norm(K_1^\times) \simeq 1$ or $\BZ/2\BZ$ 
depending on $r=1$ or $r=3$, and $\CH_1^{G_1} = \CH_1$ 
would imply $\Norm(\CH_1^{G_1}) = \CH$. 

Let's examine the consequences according to the value of $r$:

(i) Case $r=3$. Since $\BE/ \BE^2 \simeq \BZ/2\BZ$, the condition 
$\BE/ \BE \cap \Norm(K_1^\times) \simeq \BZ/2\BZ$ implies 
$\BE \cap \Norm(K_1^\times) = \BE^2 = \Norm(\BE_1)$, whence 
$\CH_1^{G_1} = \J(\CH) \cdot \CH_1^\ram$, from exact sequence \ref{suite},
thus $\Norm(\CH_1^{G_1}) = \CH^2 \cdot \CH^\ram = \CH$; we deduce
from this, $\CH =  \CH^2 \cdot \CH^\ram = (\CH^2 \cdot \CH^\ram)^2
\cdot \CH^\ram =  \CH^4 \cdot \CH^\ram =  \CH^\ram$ of $2$-rank $\leq 2$ (absurd).
The case $\ell=353$ gives an example of injective transfer.

(ii) Case $r=1$. The norm factor is trivial, $\CH_1^\ram=1$ and
 exact sequence \ref{suite} becomes $1 \to \J(\CH) \to
\CH_1^{G_1} \to \BE/\Norm(\BE_1) \to 1$ with $\CH_1^{G_1} \ds 
\mathop{\simeq}^{\hbox{\tiny$\Norm$}} \CH$ and $\BE/\Norm(\BE_1)$ 
isomorphic to $1$, $\BZ/2\BZ$ or $\BZ/4\BZ$; but
Theorem \ref{nocap} implies $\J$ non injective (otherwise we get
$\order \CH_1 \geq \order \CH\cdot \order \CH[2] > \order \CH$), 
whence a partial capitulation in $K_1$ (so, $\J(\CH)$ is isomorphic 
to $\BZ/4\BZ$, $\BZ/2\BZ \times \BZ/2\BZ$ or $\BZ/2\BZ$); but 
$\Norm : \CH_1 = \CH_1^{G_1} \to \CH$ being an isomorphism, 
we have $\Norm(\J(\CH)) = \CH^2 \simeq \BZ/2\BZ$, then $\CH/\CH^2 
\simeq \BZ/2\BZ  \times \BZ/2\BZ \simeq  \BE/\Norm(\BE_1)$ (absurd).

This gives examples of values of $n_0=1$ for $\ell = 97$ and $1009$.

\section{Tables for Kummer fields and \texorpdfstring{$p=2$}{Lg}}
The purpose is to consider fields of the form $K=\Q(\sqrt[q]{R})$, $q \geq 3$,
with $L \subset K(\mu_\ell^{})$, $\ell \equiv 1 \pmod {2 p^\N}$;
although these fields are not totally real, it is known that capitulation may exist in 
compositum $L = KL_0$, with suitable abelian $p$-extensions $L_0/\Q$
(conjectured in \cite{Gras1997} (1997), proved by Bosca \cite{Bosc2009}
(2009)).

\subsection{Pure cubic fields, \texorpdfstring{$\ell = 17$}{Lg}}
We consider the set of pure cubic fields $K=\Q(\sqrt[3]{R})$; so
$L/\Q$ is not Galois, but, by chance, the instruction
${\sf G=nfgaloisconj(Kn)}$ of PARI computes the group of automorphisms,
whence $\Gal(K_n/K)$ in our case; this simplifies the search of ${\sf S}$
of order $p^n$. 

Taking ${\sf p=2}$, $\ell = 17$, ${\sf Nn=3}$ and restricting to fields $K$ such 
that $\order \CH_K \geq 4$, we obtain many capitulations (the program eliminates
the cases of stability from $K$):

\ft\begin{verbatim}
MAIN PROGRAM FOR PURE CUBIC FIELDS:
{p=2;Nn=3;vHK=2;ell=17;mKn=2;for(R=2,10^4,PK=x^3-R;
if(polisirreducible(PK)==0,next);
K=bnfinit(PK,1);r=matsize(idealfactor(K,ell))[1];
\\Testing the order of the p-class group of K compared to vHK:
HK=K.no;if(valuation(HK,p)<vHK,next);CK0=K.clgp;
for(n=1,Nn,Qn=polsubcyclo(ell,p^n);Pn=polcompositum(PK,Qn)[1];
Kn=bnfinit(Pn,1);HKn=Kn.no;
\\Test for elimination of the stability from K:
if(n==1 & valuation(HKn,p)==valuation(HK,p),break);
if(n==1,print();print("PK=",PK," CK0=",CK0[2]," ell=",ell," r=",r));
CKn=Kn.clgp;print("CK",n,"=",CKn[2]);rKn=matsize(CKn[2])[2];
G=nfgaloisconj(Kn);Id=x;for(k=1,p^n,Z=G[k];ks=1;while(Z!=Id,
Z=nfgaloisapply(Kn,G[k],Z);ks=ks+1);if(ks==p^n,S=G[k];break));
for(j=1,rKn,X=CKn[3][j];Y=X;for(i=1,mKn,YS=nfgaloisapply(Kn,S,Y);
T=idealpow(Kn,Y,-1);Y=idealmul(Kn,YS,T);B=bnfisprincipal(Kn,Y)[1];
Ehij=List;for(j=1,rKn,c=B[j];w=valuation(CKn[2][j],p);c=lift(Mod(c,p^w)); 
listput(Ehij,c,j));print("h_",j,"^[","(S-1)^",i,"]=",Ehij)));
for(j=1,rKn,A0=CKn[3][j];A=1;for(t=1,p^n,As=nfgaloisapply(Kn,S,A);
A=idealmul(Kn,A0,As));B=bnfisprincipal(Kn,A)[1];
Enu=List;for(j=1,rKn,c=B[j];w=valuation(CKn[2][j],p);c=lift(Mod(c,p^w)); 
listput(Enu,c,j));print("norm in K",n,"/K of the component ",j,
" of CK",n,":",Enu))))}
\end{verbatim}\ns
\ft\begin{verbatim}
p=2  PK=x^3-43  CK0=[12]  ell=17  r=2
CK1=[12,6]=[4,2]
h_1^[(S-1)^1]=[0,1]   h_2^[(S-1)^1]=[0,0]
h_1^[(S-1)^2]=[0,0]   h_2^[(S-1)^2]=[0,0]
norm in K1/K of the component 1 of CK1:[2,1]
norm in K1/K of the component 2 of CK1:[0,0]
Incomplete capitulation, m(K1)=2, e(K1)=2
CK2=[12,12]=[4,4]
h_1^[(S-1)^1]=[0,1]   h_2^[(S-1)^1]=[0,0]
h_1^[(S-1)^2]=[0,0]   h_2^[(S-1)^2]=[0,0]
norm in K2/K of the component 1 of CK2:[0,2]
norm in K2/K of the component 2 of CK2:[0,0]
Incomplete capitulation, m(K2)=2, e(K2)=2
CK3=[12,12]=[4,4]
h_1^[(S-1)^1]=[0,3]   h_2^[(S-1)^1]=[0,0]
h_1^[(S-1)^2]=[0,0]   h_2^[(S-1)^2]=[0,0]
norm in K3/K of the component 1 of CK3:[0,0]
norm in K3/K of the component 2 of CK3:[0,0]
Complete capitulation (stability from K2), m(K3)=2, e(K3)=2 
\end{verbatim}\ns
\ft\begin{verbatim}
p=2  PK=x^3-113  CK0=[2,2]  ell=17  r=2
CK1=[6,2,2]=[2,2,2]
h_1^[(S-1)^1]=[0,1,0]  h_2^[(S-1)^1]=[0,0,0]  h_3^[(S-1)^1]=[0,0,0]
h_1^[(S-1)^2]=[0,0,0]  h_2^[(S-1)^2]=[0,0,0]  h_3^[(S-1)^2]=[0,0,0]
norm in K1/K of the component 1 of CK1:[0,1,0]
norm in K1/K of the component 2 of CK1:[0,0,0]
norm in K1/K of the component 3 of CK1:[0,0,0]
Incomplete capitulation, m(K1)=2, e(K1)=1
CK2=[6,2,2,2,2]=[2,2,2,2,2]
h_1^[(S-1)^1]=[0,0,0,1,0] h_2^[(S-1)^1]=[0,1,1,1,0] 
h_3^[(S-1)^1]=[0,1,1,1,0] h_4^[(S-1)^1]=[1,1,0,0,0] 
h_5^[(S-1)^1]=[0,0,0,0,0]
h_1^[(S-1)^2]=[1,1,0,0,0] h_2^[(S-1)^2]=[1,1,0,0,0] 
h_3^[(S-1)^2]=[1,1,0,0,0] h_4^[(S-1)^2]=[0,1,1,0,0] 
h_5^[(S-1)^2]=[0,0,0,0,0]
norm in K2/K of the component 1 of CK2:[0,1,1,0,0]
norm in K2/K of the component 2 of CK2:[0,1,1,0,0]
norm in K2/K of the component 3 of CK2:[0,1,1,0,0]
norm in K2/K of the component 4 of CK2:[0,0,0,0,0]
norm in K2/K of the component 5 of CK2:[0,0,0,0,0]
Incomplete capitulation, m(K2)=3, e(K2)=1
CK3=[12,2,2,2,2]=[4,2,2,2,2]
h_1^[(S-1)^1]=[2,1,0,0,1] h_2^[(S-1)^1]=[0,0,1,1,0] 
h_3^[(S-1)^1]=[0,1,0,0,0] h_4^[(S-1)^1]=[2,0,1,1,1] 
h_5^[(S-1)^1]=[0,1,0,0,1]
h_1^[(S-1)^2]=[0,1,1,1,1] h_2^[(S-1)^2]=[2,1,1,1,1] 
h_3^[(S-1)^2]=[0,0,1,1,0] h_4^[(S-1)^2]=[2,0,1,1,0] 
h_5^[(S-1)^2]=[0,1,1,1,1]
norm in K3/K of the component 1 of CK3:[0,0,0,0,0]
norm in K3/K of the component 2 of CK3:[0,0,0,0,0]
norm in K3/K of the component 3 of CK3:[0,0,0,0,0]
norm in K3/K of the component 4 of CK3:[0,0,0,0,0]
norm in K3/K of the component 5 of CK3:[0,0,0,0,0]
Complete capitulation, m(K3)=4, e(K3)=2
\end{verbatim}\ns
\ft\begin{verbatim}
p=2  PK=x^3-122  CK0=[12]  ell=17  r=2
CK1=[12,4]=[4,4]
h_1^[(S-1)^1]=[0,1]      h_2^[(S-1)^1]=[0,2]
h_1^[(S-1)^2]=[0,2]      h_2^[(S-1)^2]=[0,0]
norm in K1/K of the component 1 of CK1:[2,1]
norm in K1/K of the component 2 of CK1:[0,0]
Incomplete capitulation, m(K1)=3, e(K1)=2
CK2=[12,4]=[4,4]
h_1^[(S-1)^1]=[0,3]   h_2^[(S-1)^1]=[0,2]
h_1^[(S-1)^2]=[0,2]   h_2^[(S-1)^2]=[0,0]
norm in K2/K of the component 1 of CK2:[0,2]
norm in K2/K of the component 2 of CK2:[0,0]
Incomplete capitulation, m(K2)=3, e(K2)=2
CK3=[12,4]=[4,4]
h_1^[(S-1)^1]=[0,0]      h_2^[(S-1)^1]=[3,2]
h_1^[(S-1)^2]=[0,0]      h_2^[(S-1)^2]=[2,0]
norm in K3/K of the component 1 of CK3:[0,0]
norm in K3/K of the component 2 of CK3:[0,0]
Complete capitulation (stability from K1), m(K3)=3, e(K3)=2
\end{verbatim}\ns
\ft\begin{verbatim}
p=2  PK=x^3-141  CK0=[4,2]  ell=17  r=2
CK1=[8,2]
h_1^[(S-1)^1]=[0,0]      h_2^[(S-1)^1]=[0,0]
norm in K1/K of the component 1 of CK1:[2,0]
norm in K1/K of the component 2 of CK1:[0,0]
Incomplete capitulation, m(K1)=1, e(K1)=3
CK2=[16,2]
h_1^[(S-1)^1]=[0,0]      h_2^[(S-1)^1]=[0,0]
norm in K2/K of the component 1 of CK2:[4,0]
norm in K2/K of the component 2 of CK2:[0,0]
Incomplete capitulation, m(K2)=1, e(K2)=4
CK3=[288,18]=[32,2]
h_1^[(S-1)^1]=[0,0]      h_2^[(S-1)^1]=[0,0]
norm in K3/K of the component 1 of CK3:[8,0]
norm in K3/K of the component 2 of CK3:[0,0]
Incomplete capitulation, m(K3)=1, e(K3)=5
\end{verbatim}\ns
\ft\begin{verbatim}
p=2  PK=x^3-174  CK0=[6,2]  ell=17  r=2
CK1=[12,6,2]=[4,2,2]
h_1^[(S-1)^1]=[2,1,0]  h_2^[(S-1)^1]=[0,0,0]  h_3^[(S-1)^1]=[0,0,0]
h_1^[(S-1)^2]=[0,0,0]  h_2^[(S-1)^2]=[0,0,0]  h_3^[(S-1)^2]=[0,0,0]
norm in K1/K of the component 1 of CK1:[0,1,0]
norm in K1/K of the component 2 of CK1:[0,0,0]
norm in K1/K of the component 3 of CK1:[0,0,0]
Incomplete capitulation, m(K1)=2, e(K1)=2
CK2=[2040,12,2,2]=[8,4,2,2]
h_1^[(S-1)^1]=[4,0,1,1]  h_2^[(S-1)^1]=[0,0,0,1]  
h_3^[(S-1)^1]=[0,2,0,1]  h_4^[(S-1)^1]=[0,2,0,0]
h_1^[(S-1)^2]=[0,0,0,1]  h_2^[(S-1)^2]=[0,2,0,0]  
h_3^[(S-1)^2]=[0,2,0,0]  h_4^[(S-1)^2]=[0,0,0,0]
norm in K2/K of the component 1 of CK2:[4,2,0,0]
norm in K2/K of the component 2 of CK2:[0,0,0,0]
norm in K2/K of the component 3 of CK2:[0,0,0,0]
norm in K2/K of the component 4 of CK2:[0,0,0,0]
Incomplete capitulation, m(K2)=4, e(K2)=3
CK3=[4080,12,2,2]=[16,4,2,2]
\end{verbatim}\ns

Unfortunately, the computations for $n=3$, in this last example, take too much time.
We are going to examine separately this field.

\subsection{Case of \texorpdfstring{$K\!=\!\Q(\sqrt[3]{174})$}{Lg}, 
\texorpdfstring{$\ell \equiv 1 \!\!\pmod {16}$}{Lg}}

Varying $\ell$, we find many capitulations in the layer $K_2$:

\ft\begin{verbatim}
p=2  PK=x^3-174  CK0=[6,2]  ell=193  r=1
CK1=[12,6]=[4,2]
h_1^[(S-1)^1]=[2,0]   h_2^[(S-1)^1]=[2,0]
h_1^[(S-1)^2]=[0,0]   h_2^[(S-1)^2]=[0,0]
norm in K1/K of the component 1 of CK1:[0,0]
norm in K1/K of the component 2 of CK1:[2,0]
Incomplete capitulation, m(K1)=2, e(K1)=2
CK2=[12,6]=[4,2]
h_1^[(S-1)^1]=[0,0]   h_2^[(S-1)^1]=[2,0]
h_1^[(S-1)^2]=[0,0]   h_2^[(S-1)^2]=[0,0]
norm in K2/K of the component 1 of CK2:[0,0]
norm in K2/K of the component 2 of CK2:[0,0]
Complete capitulation, m(K2)=2, e(K2)=2
\end{verbatim}\ns
\ft\begin{verbatim}
p=2  PK=x^3-174  CK0=[6,2]  ell=353  r=2
CK1=[48,6]=[16,2]
h_1^[(S-1)^1]=[6,0]   h_2^[(S-1)^1]=[8,0]
h_1^[(S-1)^2]=[4,0]   h_2^[(S-1)^2]=[0,0]
norm in K1/K of the component 1 of CK1:[8,0]
norm in K1/K of the component 2 of CK1:[8,0]
Incomplete capitulation, m(K1)=4, e(K1)=4
CK2=[48,6,3,3]=[16,2]
h_1^[(S-1)^1]=[6,0,0,0]  h_2^[(S-1)^1]=[8,0,0,0]  
h_1^[(S-1)^2]=[4,0,0,0]  h_2^[(S-1)^2]=[0,0,0,0]  
norm in K2/K of the component 1 of CK2:[0,0,0,0]
norm in K2/K of the component 2 of CK2:[0,0,0,0]
Complete capitulation, m(K2)=4, e(K2)=4
\end{verbatim}\ns
\ft\begin{verbatim}
p=2  PK=x^3-174  CK0=[6,2]  ell=577  r=1
CK1=[84,6,2]=[4,2,2]
h_1^[(S-1)^1]=[0,1,1]  h_2^[(S-1)^1]=[2,1,1]  h_3^[(S-1)^1]=[2,1,1]
h_1^[(S-1)^2]=[0,0,0]  h_2^[(S-1)^2]=[0,0,0]  h_3^[(S-1)^2]=[0,0,0]
norm in K1/K of the component 1 of CK1:[2,1,1]
norm in K1/K of the component 2 of CK1:[2,1,1]
norm in K1/K of the component 3 of CK1:[2,1,1]
Incomplete capitulation, m(K1)=2, e(K1)=2
CK2=[168,6,6,3]=[8,2,2]
h_1^[(S-1)^1]=[2,0,0,0] h_2^[(S-1)^1]=[4,0,0,0] h_3^[(S-1)^1]=[4,1,0,0]
h_1^[(S-1)^2]=[4,0,0,0] h_2^[(S-1)^2]=[0,0,0,0] h_3^[(S-1)^2]=[4,0,0,0]
h_1^[(S-1)^3]=[0,0,0,0] h_2^[(S-1)^3]=[0,0,0,0] h_3^[(S-1)^3]=[0,0,0,0]
norm in K2/K of the component 1 of CK2:[0,0,0,0]
norm in K2/K of the component 2 of CK2:[0,0,0,0]
norm in K2/K of the component 3 of CK2:[0,0,0,0]
Complete capitulation, m(K2)=3, e(K2)=3
\end{verbatim}\ns

The last case $\ell = 577$ shows that the complexity of the  
$\CH_{K_n}$'s is increasing, but nevertheless leads to 
complete capitulation in $K_2$; so conditions of Theorem \ref{main1} are not 
necessary for capitulation. Indeed, we obtain the following information on the structure 
of the $\CH_{K_n}$'s for $K=\Q(\sqrt[3]{174})$ and $\ell = 577$:

In $K_1$, $m(\Kk_1)=2$, $s(\Kk_1)=1$ but $e(\Kk_1)=2$. In $K_2$, $m(\Kk_2) = 3$, 
$s(\Kk_1)=1$, $e(\Kk_2)=3$; thus $\Nu_{\!K_2/K}$ acts like $4(\sigma-1)^2+6(\sigma-1)+4$. 
The above data show that this reduces to the annihilation by $A=6(\sigma-1)+4$; indeed, 
$h_1^A=h_1^{12}h_1^4=1$, $h_j^A=1$ for the other generators.

\subsection{Pure quintic fields, \texorpdfstring{$L \subset K(\mu_{17}^{})$}{Lg}}
Replacing, in the program, the polynomial ${\sf PK=x^3-R}$ by ${\sf PK=x^5-R}$ 
(or with any odd degree), we get analogous results:

\ft\begin{verbatim}
p=2  PK=x^5-13  CK0=[4]  ell=17  r=2
CK1=[40]=[8]
h_1^[(S-1)^1]=[0]  h_1^[(S-1)^2]=[0]
norm in K1/K of the component 1 of CK1:[2]
No capitulation, m(K1)=1, e(K1)=3
CK2=[40]=[8]
h_1^[(S-1)^1]=[0]  h_1^[(S-1)^2]=[0]
norm in K2/K of the component 1 of CK2:[4]
Incomplete capitulation, m(K2)=1, e(K2)=3
Complete capitulation in K3 (stability from K1)
\end{verbatim}\ns
\ft\begin{verbatim}
p=2  PK=x^5-122  CK0=[10,2]  ell=17  r=2
CK1=[10,2,2]=[2,2,2]
h_1^[(S-1)^1]=[0,0,0] h_2^[(S-1)^1]=[0,0,0] h_3^[(S-1)^1]=[1,1,0]
h_1^[(S-1)^2]=[0,0,0] h_2^[(S-1)^2]=[0,0,0] h_3^[(S-1)^2]=[0,0,0]
norm in K1/K of the component 1 of CK1:[0,0,0]
norm in K1/K of the component 2 of CK1:[0,0,0]
norm in K1/K of the component 3 of CK1:[1,1,0]
Incomplete capitulation, m(K1)=2, e(K1)=1
CK2=[20,2,2]=[4,2,2]
h_1^[(S-1)^1]=[2,0,1] h_2^[(S-1)^1]=[2,0,1] h_3^[(S-1)^1]=[2,0,0]
h_1^[(S-1)^2]=[2,0,0] h_2^[(S-1)^2]=[2,0,0] h_3^[(S-1)^2]=[0,0,0]
norm in K2/K of the component 1 of CK2:[0,0,0]
norm in K2/K of the component 2 of CK2:[0,0,0]
norm in K2/K of the component 3 of CK2:[0,0,0]
Complete capitulation, m(K2)=3, e(K2)=2
\end{verbatim}\ns

\section{Tables for real quadratic fields and \texorpdfstring{$p=3$}{Lg}}\label{quadratic}

We consider cyclic $p$-extensions $L/K$, where $K=\Q(\sqrt m)$,
$L \subset  K(\mu_{\ell}^{})$, $\ell \equiv 1 \pmod {2p^\N}$. Results are given 
for $p=3$, $\ell = 19$ ($N=2$), $109$ ($N=3$) and $163$ ($N=4$),
except cases of stability in $K_1/K$. The images 
$\J_{K_n/K}(\CH_K)$ are computed for $n=1$ and $n=2$; 
$r \in \{1, 2\}$ is the number of prime ideals above $\ell$ in $K$:

\ft\begin{verbatim}
MAIN PROGRAM FOR REAL QUADRATIC FIELDS
{p=3;Nn=2;bm=2;Bm=10^8;vHK=1;ell=109;mKn=2;
for(m=bm,Bm,if(core(m)!=m,next);PK=x^2-m;K=bnfinit(PK,1);
\\Testing the order of the p-class group of K compared to vHK:
HK=K.no;if(valuation(HK,p)<vHK,next);CK0=K.clgp;r=(kronecker(m,ell)+3)/2;
for(n=1,Nn,Qn=polsubcyclo(ell,p^n);Pn=polcompositum(PK,Qn)[1];
Kn=bnfinit(Pn,1);HKn=Kn.no;dn=poldegree(Pn);
\\Test for elimination of the stability from K:
if(n==1 & valuation(HKn,p)==valuation(HK,p),break);
if(n==1,print();print("PK=",PK," CK0=",CK0[2]," ell=",ell," r=",r));
CKn=Kn.clgp;print("CK",n,"=",CKn[2]);rKn=matsize(CKn[2])[2];
G=nfgaloisconj(Kn);Id=x;for(k=1,dn,Z=G[k];ks=1;while(Z!=Id,
Z=nfgaloisapply(Kn,G[k],Z);ks=ks+1);if(ks==p^n,S=G[k];break));
for(j=1,rKn,X=CKn[3][j];Y=X;for(i=1,mKn,YS=nfgaloisapply(Kn,S,Y);
T=idealpow(Kn,Y,-1);Y=idealmul(Kn,YS,T);B=bnfisprincipal(Kn,Y)[1];
Ehij=List;for(j=1,rKn,c=B[j];w=valuation(CKn[2][j],p);c=lift(Mod(c,p^w)); 
listput(Ehij,c,j));print("h_",j,"^[","(S-1)^",i,"]=",Ehij)));
for(j=1,rKn,A0=CKn[3][j];A=1;for(t=1,p^n,As=nfgaloisapply(Kn,S,A);
A=idealmul(Kn,A0,As));B=bnfisprincipal(Kn,A)[1];
Enu=List;for(j=1,rKn,c=B[j];w=valuation(CKn[2][j],p);c=lift(Mod(c,p^w)); 
listput(Enu,c,j));print("norm in K",n,"/K of the component ",j,
" of CK",n,":",Enu))))}
\end{verbatim}\ns

\subsection{Examples with \texorpdfstring{$\CH_K \simeq \Z/3\Z$}{Lg}}

This simplest case gives both capitulations or non-capitulations in $K_2$:

\ft\begin{verbatim}
PK=x^2-142  CK0=[3]  ell=19  r=2
CK1=[9]
h_1^[(S-1)^1]=[0]        h_1^[(S-1)^2]=[0]
norm in K1/K of the component 1 of CK1:[3]
No capitulation, m(K1)=1, e(K1)=2
CK2=[9]
h_1^[(S-1)^1]=[0]        h_1^[(S-1)^2]=[0]
norm in K2/K of the component 1 of CK2:[0]
Complete capitulation, m(K2)=1, e(K2)=2
\end{verbatim}\ns
\ft\begin{verbatim}
PK=x^2-359  CK0=[3]  ell=19  r=2
CK1=[9]
h_1^[(S-1)^1]=[0]        h_1^[(S-1)^2]=[0]
norm in K1/K of the component 1 of CK1:[3]
No capitulation, m(K1)=1, e(K1)=2
CK2=[27]
h_1^[(S-1)^1]=[0]        h_1^[(S-1)^2]=[0]
norm in K2/K of the component 1 of CK2:[9]
No capitulation, m(K2)=1, e(K2)=3
\end{verbatim}\ns
\ft\begin{verbatim}
p=3  PK=x^2-142  CK0=[3]  ell=109  r=1
CK1=[18,2]=[9]
h_1^[(S-1)^1]=[3,0]        h_1^[(S-1)^2]=[0,0]       
norm in K1/K of the component 1 of CK1:[3,0]
No capitulation, m(K1)=2, e(K1)=2
CK2=[54,2]=[27]
h_1^[(S-1)^1]=[24,0]      h_1^[(S-1)^2]=[9,0]        
norm in K2/K of the component 1 of CK2:[9,0]
No capitulation, m(K2)=3, e(K2)=3
\end{verbatim}\ns
\ft\begin{verbatim}
p=3  PK=x^2-223  CK0=[3]  ell=109  r=2
CK1=[9]
h_1^[(S-1)^1]=[6]         h_1^[(S-1)^2]=[0]
norm in K1/K of the component 1 of CK1:[3]
No capitulation, m(K1)=2, e(K1)=2
CK2=[9]
h_1^[(S-1)^1]=[3]         h_1^[(S-1)^2]=[0]
norm in K2/K of the component 1 of CK2:[0]
Complete capitulation, m(K2)=2, e(K2)=2
\end{verbatim}\ns
\ft\begin{verbatim}
p=3  PK=x^2-254  CK0=[3]  ell=109  r=2
CK1=[3,3]
h_1^[(S-1)^1]=[2,2]         h_2^[(S-1)^1]=[1,1]
h_1^[(S-1)^2]=[0,0]         h_2^[(S-1)^2]=[0,0]
norm in K1/K of the component 1 of CK1:[0,0]
norm in K1/K of the component 2 of CK1:[0,0]
Complete capitulation, m(K1)=2, e(K1)=1
CK2=[3,3]
h_1^[(S-1)^1]=[1,1]         h_2^[(S-1)^1]=[2,2]
h_1^[(S-1)^2]=[0,0]         h_2^[(S-1)^2]=[0,0]
norm in K2/K of the component 1 of CK2:[0,0]
norm in K2/K of the component 2 of CK2:[0,0]
Complete capitulation, m(K2)=2, e(K2)=1
\end{verbatim}\ns
\ft\begin{verbatim}
p=3  PK=x^2-79  CK0=[3]  ell=163  r=1
CK1=[18,2]=[9]
h_1^[(S-1)^1]=[3,0]         h_1^[(S-1)^2]=[0,0]         
norm in K1/K of the component 1 of CK1:[3,0]
norm in K1/K of the component 2 of CK1:[0,0]
No capitulation, m(K1)=2, e(K1)=2
CK2=[18,2]=[9]
h_1^[(S-1)^1]=[3,0]         h_1^[(S-1)^2]=[0,0]         
norm in K2/K of the component 1 of CK2:[0,0]
norm in K2/K of the component 2 of CK2:[0,0]
Complete capitulation, m(K2)=2, e(K2)=2
\end{verbatim}\ns
\ft\begin{verbatim}
p=3  PK=x^2-254  CK0=[3]  ell=163  r=2 
CK1=[18,2]=[9]
h_1^[(S-1)^1]=[0,0]        h_1^[(S-1)^2]=[0,0]        
norm in K1/K of the component 1 of CK1:[3,0]
norm in K1/K of the component 2 of CK1:[0,0]
No capitulation, m(K1)=1, e(K1)=2
CK2=[18,2]=[9]
h_1^[(S-1)^1]=[0,0]        h_1^[(S-1)^2]=[0,0]        
norm in K2/K of the component 1 of CK2:[0,0]
norm in K2/K of the component 2 of CK2:[0,0]
Complete capitulation, m(K2)=1, e(K2)=2
\end{verbatim}\ns

\subsection{Examples with \texorpdfstring{$\CH_K \simeq \Z/3\Z \times \Z/3\Z$}{Lg}}
Due to a very large calculation time for degrees $[K_2 : \Q] = 18$, we have 
only some results showing that, as for the case of cubic fields and $p=2$ (degrees 
$[K_2 : \Q] = 12$), capitulation may occur in $K_2$:

\ft\begin{verbatim}
p=3  PK=x^2-23659  CK0=[6,3]  ell=19  r=2
CK1=[18,3]=[9,3]
h_1^[(S-1)^1]=[0,0]        h_2^[(S-1)^1]=[3,0]
h_1^[(S-1)^2]=[0,0]        h_2^[(S-1)^2]=[0,0]
norm in K1/K of the component 1 of CK1:[3,0]
norm in K1/K of the component 2 of CK1:[0,0]
Incomplete capitulation, m(K1)=2 ,e(K1)=2
CK2=[18,3]=[9,3]
h_1^[(S-1)^1]=[0,0]        h_2^[(S-1)^1]=[3,0]
h_1^[(S-1)^2]=[0,0]        h_2^[(S-1)^2]=[0,0]
norm in K2/K of the component 1 of CK2:[0,0]
norm in K2/K of the component 2 of CK2:[0,0]
Complete capitulation, m(K2)=2, e(K2)=2
\end{verbatim}\ns
\ft\begin{verbatim}
p=3  PK=x^2-23659  CK0=[6,3]  ell=37  r=2
CK1=[18,3,3]=[9,3,3]
h_1^[(S-1)^1]=[0,0,0]  h_2^[(S-1)^1]=[6,0,1]  h_3^[(S-1)^1]=[6,0,0]
h_1^[(S-1)^2]=[0,0,0]  h_2^[(S-1)^2]=[6,0,0]  h_3^[(S-1)^2]=[0,0,0]
norm in K1/K of the component 1 of CK1:[0,0,0]
norm in K1/K of the component 2 of CK1:[3,0,0]
norm in K1/K of the component 3 of CK1:[3,0,0]
Incomplete capitulation, m(K1)=3 ,e(K1)=2
CK2=[18,3,3]=[9,3,3]
h_1^[(S-1)^1]=[0,0,0]  h_2^[(S-1)^1]=[3,1,1]  h_3^[(S-1)^1]=[0,2,2]
h_1^[(S-1)^2]=[0,0,0]  h_2^[(S-1)^2]=[3,0,0]  h_3^[(S-1)^2]=[6,0,0]
norm in K2/K of the component 1 of CK2:[0,0,0]
norm in K2/K of the component 2 of CK2:[0,0,0]
norm in K2/K of the component 3 of CK2:[0,0,0]
Complete capitulation, m(K2)=3, e(K2)=2
\end{verbatim}\ns

\section{Tables for the logarithmic class group}\label{clog} 

Questions of capitulation, in various $p$-extensions, of other arithmetic invariants, 
are at the origin of many papers (see, e.g., in a chronological order \cite{Maire1996, 
KhPr2000, KoMo2000, JauMi2006, Brig2007, Vali2008, Jaul2016, GJN2016, 
Jaul2019$^a$, Jaul2019$^b$, Jaul2022, Gras2022$^a$} and their references); 
they are related to generalized $p$-class groups with conditions of ramification 
and decomposition, to wild kernels of ${\bf K}$-theory, to torsion groups in 
$p$-ramification theory, to Tate--Chafarevich groups, Bertrandias--Payan 
modules, logarithmic class groups. 

The same techniques, using the algebraic norm, may be applied; the results 
essentially depend on the properties of the associated filtration (that we do not 
know for most of the above invariants), whence on the variation of the complexity 
in the $p$-extension $L/K$ considered.

We shall focus on transfers of the logarithmic class groups $\CH_K^\lgm$ in some
totally ramified cyclic $p$-extensions. Recall that $H_K^\lc$ is the maximal abelian locally 
cyclotomic pro-$p$-extension of $K$ and $K^\cyc$ its cyclotomic $\Z_p$-extension.
The tame places totally split in $H_K^\pr/K^\cyc$, so that $H_K^\lc$ 
is the subfield of $H_K^\pr$ fixed by the decomposition groups of the $p$-places.
In Jaulent \cite{Jaul2019$^a$} one finds the following diagram of the main invariants,
showing in particular that $\CH_K$ and $\CH^\lgm_K$ are isomorphic to quotients 
of $\CT_K$ and that $\Gal(H_K^\bp/K^\cyc H_K^\nr)$, $\Gal(H_K^\bp/ H_K^\lc)$ 
are suitable regulators of units (compare with the diagram in 
Definitions \ref{diagram0}):
\begin{center}
\vspace{0.6cm}
\setlength{\unitlength}{1.5pt}
\begin{picture}(102,126)(0,0)
\thinlines 
\put(50,130){\ft $H_K^\pr$}
\put(54,120){\ft $\CW_K$}
\put(50,110){\ft $H_K^\bp$}
\put(46,90){\ft $H_K^\lc H_K^\nr$}
\put(6,60){\ft $H_K^\lc$}
\put(87,60){\ft $K^\cyc H_K^\nr$}
\put(46,30){\ft $K^\cyc H_K^{\rm spl}$}
\put(48.5,8){\ft $K^\cyc$}
\bezier{200}(13,55)(18,30)(46,12)
\bezier{200}(93,55)(88,30)(60,12)
\bezier{300}(60,112)(110,108)(112,60)
\bezier{300}(60,10)(110,12)(112,60)
\put(113,60){\ft $\CT^\bp_K$}
\bezier{300}(48,132)(-8,130)(-10,70)
\bezier{300}(48,10)(-8,12)(-10,70)
\put(-18,70){\ft $\CT_K$}
\bezier{300}(94,65)(86,95)(60,110)
\put(84,89){\ft $\CR_K$}
\bezier{300}(12,66)(16,95)(48,110)
\put(12,89){\ft $\wt\CR_K$}
\put(53,127){\line(0,-1){10}}
\put(53,107){\line(0,-1){10}}
\put(53,28){\line(0,-1){15}}
\put(49,87){\line(-3,-2){32}}
\put(58,87){\line(3,-2){33}}
\put(50,36){\line(-3,2){35}}
\put(60,36){\line(3,2){33}}
\put(71,40){\ft $\CH_K^{[p]}$}
\put(20,40){\ft $\CH^\lgm_K{}^{[p]}$}
\put(10,29){\ft $\CH^\lgm_K$}
\put(82,29){\ft $\CH_K$}
\put(54,20){\ft $\CH'_K$}
\end{picture}
\vspace{-0.2cm}
\end{center}

\noindent
In this diagram, $\CH^\lgm_K{}^{[p]}$ (resp. $\CH_K^{[p]}$) is the subgroup, 
of the logarithmic class group $\CH^\lgm_K$ (resp. of the $p$-class group 
$\CH_K$), generated by the classes of the primes dividing $p$. 
So $\CH'_K$ is the quotient 
$\CH_K/\CH_K^{[p]}$ where $H_K^{\rm spl}$ is the splitting field of $p$ in 
$H_K^\nr$, hence the subfield fixed by the image of $\CH_K^{[p]}$, noting
that in our case, $H_K^\nr \cap K^\cyc = K$.

For $L \subset K(\mu_\ell^{})$, $\ell \equiv 1\!\! \pmod {2p^\N}$, 
Theorem \ref{main1} applies to $\CH^\lgm_K$ and $\CH^\lgm_L$, computable 
using Diaz y Diaz--Jaulent--Pauli--Pohst--Soriano-Gafiuk \cite{DJPPS2005} or, 
with the instruction ${\sf bnflog(K,p)}$, Belabas--Jaulent \cite{BeJa2016}. Indeed, 
since $L/K$ is tamely and totally ramified at~$\ell$, then $L^\cyc = L K^\cyc$, 
thus $H_K^\lc$ is linearly disjoint from $L^\cyc$ and the norm 
$\Norm_{L/K} : \CH^\lgm_L = \Gal(H_L^\lc/L^\cyc) \to \CH^\lgm_K 
= \Gal(H_K^\lc/K^\cyc)$ is surjective.

\begin{remark}\label{tricky}
For logarithmic class groups in totally tamely ramified cyclic $p$-extensions 
(in the classical sense), the theory of stability does exist, essentially because the arithmetic
norms are {\it surjective} allowing the criterion of capitulation with $\Nu_{L/K} = \J_{L/K} 
\circ \Norm_{L/K}$, but in numerical applications, our extensions $L/K$ may be partially 
locally cyclotomic, say in $K_{n_0}/K$, which gives some logarithmic non-ramification 
(see \cite[Th\'eor\`eme 1.4]{Jaul1994}).
For instance, if $K$ is a cyclic cubic field, $p=2$ and $\ell = 17$, then $p$
splits in $K_1=K(\sqrt {17})$ and is inert in $L/K_1$; for $K$ quadratic real,
$p=3$, $\ell = 109$, $p$ is totally inert in $L/K$ leading to the classic reasoning.
\end{remark}

In the following program we restrict ourselves to {\it totally logarithmically ramified 
cyclic $p$-extensions}.
Since the invariants $m(\Kk_n)$ are unknown, we can only prove capitulation
from stabilities at some layer, but very probably, many capitulation hold in the 
forthcoming computations with no stability, especially if $N$ is large; we give  
excerpts of the results.

\subsection{Examples with real quadratic fields and \texorpdfstring{$p=3$}{Lg}}
We give examples with $K = \Q(\sqrt m)$ given, $p=3$. There are 
many stabilities, allowing to conclude the capitulation.
Recall that PARI gives the data $\big[\CH^\lgm_{K_n}, \CH^\lgm_{K_n}{}^{\![p]}, 
\CH'_{K_n} \big]$ in this order. One may vary ${\sf p, Nn, Nell, m}$:

\ft\begin{verbatim}
{p=3;Nn=2;Nell=2;m=67;PK=x^2-m;K=bnfinit(PK,1);ClogK0=bnflog(K,p);
forprime(ell=1,10^3,N=valuation(ell-1,p);if(N<Nell,next);
a=znorder(Mod(p,ell));if(valuation(a,p)!=N,next);
r=(kronecker(m,ell)+3)/2;for(n=1,Nn,Qn=polsubcyclo(ell,p^n);
Pn=polcompositum(PK,Qn)[1];Kn=bnfinit(Pn,1);if(n==1,print();
print("PK=",PK," ClogK0=",ClogK0," ell=",ell," N=",N," r=",r));
ClogKn= bnflog(Kn,p);print("ClogK",n,"=",ClogKn)))}
\end{verbatim}\ns
\ft\begin{verbatim}
PK=x^2-67  ClogK0=[[3],[3],[]] 
ell=19  N=2  r=1              ell=397  N=2  r=2
ClogK1=[[9],[9],[]]           ClogK1=[[9,3],[9],[3]]
ClogK2=[[9],[9],[]]           ClogK2=[[27,9],[27],[9]]

ell=37  N=2  r=2              ell=487  N=5  r=2
ClogK1=[[9],[9],[]]           ClogK1=[[9],[9],[]]
ClogK2=[[27],[27],[]]         ClogK2=[[27],[27],[]]

ell=109  N=3  r=1             ell=631  N=2  r=2
ClogK1=[[9],[9],[]]           ClogK1=[[9,3],[9],[3]]
ClogK2=[[9],[9],[]]           ClogK2=[[9,9],[9],[9]]

ell=163  N=4  r=1             ell=811  N=4  r=2
ClogK1=[[9],[9],[]]           ClogK1=[[9],[9],[]]
ClogK2=[[9],[9],[]]           ClogK2=[[27],[27],[]]
\end{verbatim}\ns

Fixing $\ell = 109$ and varying $K = \Q(\sqrt m)$, we get for $r=1$:

\ft\begin{verbatim}
PK=x^2-67  ClogK0=[[3],[3],[]]  r=1       PK=x^2-473  ClogK0=[[3],[],[3]]  r=1
ClogK1=[[9],[9],[]]                       ClogK1=[[3],[],[3]]  
ClogK2=[[9],[9],[]]                       ClogK2=[[3],[],[3]]
Complete capitulation in K2               Complete capitulation in K1
\end{verbatim}\ns
\ft\begin{verbatim}
PK=x^2-321 ClogK0=[[3],[],[3]] r=1        PK=x^2-610 ClogK0=[[3],[3],[]] r=1
ClogK1=[[9],[],[9]]                       ClogK1=[[9],[9],[]] 
ClogK2=[[9],[],[9]]                       ClogK2=[[27],[27],[]]
Complete capitulation in K2               No conclusion
\end{verbatim}\ns

The last example ($m=610$) is the only one of non stability in the interval considered.
Its $3$-class group is trivial, then $\CH'_K = 1$ and $\CH^\lgm_{K_n} = \CH^\lgm_{K_n}{}^{\!\![3]}$.

The following list deals with the case $r=2$. If $\CH^\lgm_{K_n}{}^{\![p]}=1$,
we can conclude, using the general Program \ref{quadratic} for $p$-class groups.

\ft\begin{verbatim}
PK=x^2-106  ClogK0=[[3],[3],[]]  r=2      PK=x^2-659  ClogK0=[[3],[],[3]]   r=2
ClogK1=[[3,3],[3],[3]]                    ClogK1=[[9],[],[9]]  
ClogK2=[[9,3],[9],[3]]                    ClogK2=[[9],[],[9]]
No conclusion                             Complete capitulation in K2
\end{verbatim}\ns
\ft\begin{verbatim}
PK=x^2-238  ClogK0=[[3],[3],[]]  r=2      PK=x^2-679  ClogK0=[[3],[3],[]]   r=2
ClogK1=[[9,3],[9],[3]]                    ClogK1=[[9],[9],[]] 
ClogK2=[[27,9],[27],[9]]                  ClogK2=[[27],[27],[]]
No conclusion                             No conclusion
\end{verbatim}\ns
\ft\begin{verbatim}
PK=x^2-253  ClogK0=[[3],[3],[]]  r=2      PK=x^2-727  ClogK0=[[3],[3],[]]   r=2
ClogK1=[[9],[9],[]]                       ClogK1=[[9],[9],[]]   
ClogK2=[[27],[27],[]]                     ClogK2=[[27],[27],[]]
No conclusion                             No conclusion
\end{verbatim}\ns
\ft\begin{verbatim}
PK=x^2-254  ClogK0=[[3],[],[3]]  r=2      PK=x^2-785  ClogK0=[[3],[],[3]]   r=2
ClogK1=[[3,3],[],[3,3]]                   ClogK1=[[9],[],[9]] 
ClogK2=[[3,3],[],[3,3]]                   ClogK2=[[27],[],[27]]
Complete capitulation in K2               No conclusion
\end{verbatim}\ns
\ft\begin{verbatim}
PK=x^2-326  ClogK0=[[3],[],[3]]  r=2      PK=x^2-790 ClogK0=[[3],[3],[]] r=2
ClogK1=[[3],[],[3]]                       ClogK1=[[3,3],[3],[3]] 
ClogK2=[[3],[],[3]]                       ClogK2=[[9,3],[3],[9]]
Complete capitulation in K1               No conclusion
\end{verbatim}\ns

\subsection{Examples with cyclic cubic fields and \texorpdfstring{$p=2$}{Lg}}
With an analogous program, $\ell = 17$ ($N=3$), one obtains the following 
results, taking into account Remark \ref{tricky}:

\ft\begin{verbatim}
PK=x^3+x^2-54*x-169                       PK=x^3+x^2-336*x-1719  
ClogK0=[[2,2],[],[2,2]]  r=3              ClogK0=[[2,2],[],[2,2]]  r=1
ClogK1=[[2,2],[],[2,2]]                   ClogK1=[[4,4],[],[4,4]]
ClogK2=[[2,2],[],[2,2]]                   ClogK2=[[4,4],[],[4,4]]
ClogK3=[[2,2],[],[2,2]]                   Complete capitulation in K2
Complete capitulation in K1                          
\end{verbatim}\ns
\ft\begin{verbatim}
PK=x^3+x^2-182*x-81                       PK=x^3+x^2-340*x+416
ClogK0=[[2,2],[],[2,2]]  r=1              ClogK0=[[2,2],[2,2],[]]  r=1
ClogK1=[[2,2],[],[2,2]]                   ClogK1=[[2,2],[2,2],[]]
ClogK2=[[2,2],[],[2,2]]                   ClogK2=[[2,2,2,2],[2,2,2,2],[]]
ClogK3=[[2,2],[],[2,2]]                   No conclusion (fake stability in K1/K)
Complete capitulation in K1                          
\end{verbatim}\ns
\ft\begin{verbatim}
PK=x^3-201*x+1072                         PK=x^3+x^2-386*x+1760
ClogK0=[[2,2],[2,2],[]]  r=1              ClogK0=[[4,4],[4,4],[]]  r=1
ClogK1=[[2,2],[2,2],[]]                   ClogK1=[[4,4],[4,4],[]]
ClogK2=[[4,4],[4,4],[]]                   ClogK2=[[8,8],[8,8],[]]
ClogK3=[[8,8],[8,8],[]]                   No conclusion (fake stability in K1/K)
No conclusion (fake stability in K1/K)                          
\end{verbatim}\ns
\ft\begin{verbatim}
PK=x^3+x^2-202*x-1169                     PK=x^3+x^2-486*x+2864
ClogK0=[[2,2],[],[2,2]]  r=1              ClogK0=[[2,2],[2,2],[]]  r=3
ClogK1=[[2,2,2,2],[],[2,2,2,2]]           ClogK1=[[2,2],[2,2],[]]
ClogK2=[[2,2,2,2],[],[2,2,2,2]]           ClogK2=[[4,4,2,2],[4,4],[2,2]]
ClogK3=[[2,2,2,2],[],[2,2,2,2]]           No conclusion (fake stability in K1/K)
Complete capitulation in K2                          
\end{verbatim}\ns
\ft\begin{verbatim}
PK=x^3+x^2-234*x-729                      PK=x^3+x^2-650*x-289
ClogK0=[[2,2],[],[2,2]]  r=1              ClogK0=[[2,2],[],[2,2]]  r=3
ClogK1=[[4,4],[],[4,4]]                   ClogK1=[[4,4,2,2],[],[4,4,2,2]]
ClogK2=[[8,8],[],[8,8]]                   ClogK2=[[4,4,4,4],[],[4,4,4,4]]
ClogK3=[[16,16],[],[16,16]]               No conclusion
No conclusion, probably no capitulation                          
PK=x^3-291*x-1358                         PK=x^3+x^2-692*x-7231
ClogK0=[[2,2],[2,2],[]]  r=1              ClogK0=[[2,2],[],[2,2]]  r=1
ClogK1=[[4,4],[4,4],[]]                   ClogK1=[[2,2,2,2],[],[2,2,2,2]]
ClogK2=[[4,4],[4,4],[]]                   ClogK2=[[2,2,2,2,2,2],[],[2,2,2,2,2,2]]
Complete capitulation in K2               No conclusion
\end{verbatim}\ns

To verify the capitulations, as for the $p$-class groups, it would be interesting 
to have available the logarithmic instructions replacing ${\sf K.clgp}$, once the 
field ${\sf K}$ is given as usual with ${\sf K=bnfinit(PK)}$ and the logarithmic 
class group by ${\sf bnflog(K,p)}$, then an instruction replacing 
${\sf bnfisprincipal(K,A)}$ for an ideal ${\sf A}$ in the logarithmic sense.

\subsection{Capitulation of \texorpdfstring{$\CH_K^\lgm$}{Lg} in 
\texorpdfstring{$K^\cyc$}{Lg} for \texorpdfstring{$K$}{Lg} quadratic}

The program eliminates cases of stability from $K$. The capitulation is obtained 
in $K_3/K$ for almost cases; we give an excerpt of the results  for $n \leq 3$ with 
$m$ up to $1250$. When the exponents increase, no conclusion is possible up to $K_3$
(e.g., successive exponents $4,8,16,32$):

\ft\begin{verbatim}
{p=2;Nn=3;bm=2;Bm=10^4;for(m=bm,Bm,if(core(m)!=m,next);if(Mod(m,8)==2,next);
PK=x^2-m;K=bnfinit(PK,1);r=matsize(idealfactor(K,2))[1];ClogK0=bnflog(K,p);
if(ClogK0==[[],[],[]],next);for(n=1,Nn,Qn=x;for(i=1,n,Qn=Qn^2-2);
Pn=polcompositum(PK,Qn)[1];Kn=bnfinit(Pn,1);if(n==1,print();
print("PK=",PK," ClogK0=",ClogK0," r=",r));ClogKn= bnflog(Kn,p);
if(n==1 & ClogKn==ClogK0,break);print("ClogK",n,"=",ClogKn)))}

PK=x^2-113  ClogK0=[[2],[2],[]]  r=2      PK=x^2-799  ClogK0=[[4],[],[4]]  r=1
ClogK1=[[2],[],[2]]                       ClogK1=[[4,2],[],[4,2]]
ClogK2=[[2],[],[2]]                       ClogK2=[[8,2,2],[],[8,2,2]]
ClogK3=[[2],[],[2]]                       ClogK3=[[8,2,2],[],[8,2,2]]
\end{verbatim}\ns
\ft\begin{verbatim}
PK=x^2-119  ClogK0=[[2],[],[2]]  r=1      PK=x^2-805  ClogK0=[[2],[],[2]]  r=1
ClogK1=[[2,2],[],[2,2]]                   ClogK1=[[2,2],[],[2,2]]
ClogK2=[[2,2],[],[2,2]]                   ClogK2=[[8,4],[],[8,4]]
ClogK3=[[2,2],[],[2,2]]                   ClogK3=[[8,4],[],[8,4]]
\end{verbatim}\ns
\ft\begin{verbatim}
PK=x^2-161  ClogK0=[[4],[4],[]]  r=2      PK=x^2-1023  ClogK0=[[4],[],[4]]  r=1
ClogK1=[[4],[2],[2]]                      ClogK1=[[8],[],[8]]
ClogK2=[[4],[],[4]]                       ClogK2=[[16],[],[16]]
ClogK3=[[4],[],[4]]                       ClogK3=[[32],[],[32]]
\end{verbatim}\ns
\ft\begin{verbatim}
PK=x^2-221  ClogK0=[[2],[],[2]]  r=1      PK=x^2-1067  ClogK0=[[2],[],[2]]  r=1
ClogK1=[[4],[],[4]]                       ClogK1=[[8],[],[8]]
ClogK2=[[4],[],[4]]                       ClogK2=[[16],[],[16]]
ClogK3=[[4],[],[4]]                       ClogK3=[[32],[],[32]]
\end{verbatim}\ns
\ft\begin{verbatim}
PK=x^2-255  ClogK0=[[2],[],[2]]  r=1      PK=x^2-1217  ClogK0=[[8],[8],[]]  r=2
ClogK1=[[2,2],[],[2,2]]                   ClogK1=[[16,2],[8],[4]]
ClogK2=[[4,2],[],[4,2]]                   ClogK2=[[16,2],[4],[4,2]]
ClogK3=[[4,2],[],[4,2]]                   ClogK3=[[16,2],[2],[8,2]]
\end{verbatim}\ns
\ft\begin{verbatim}
PK=x^2-323  ClogK0=[[2],[],[2]]  r=1      PK=x^2-1221  ClogK0=[[4],[],[4]]  r=1
ClogK1=[[8],[],[8]]                       ClogK1=[[8],[],[8]]
ClogK2=[[8],[],[8]]                       ClogK2=[[16],[],[16]]
ClogK3=[[8],[],[8]]                       ClogK3=[[16],[],[16]]
\end{verbatim}\ns
\ft\begin{verbatim}
PK=x^2-357  ClogK0=[[2],[],[2]]  r=1      PK=x^2-1243  ClogK0=[[2],[],[2]]  r=1
ClogK1=[[2,2],[],[2,2]]                   ClogK1=[[2,2],[],[2,2]]
ClogK2=[[4,2,2],[],[4,2,2]]               ClogK2=[[8,4],[],[8,4]]
ClogK3=[[4,2,2],[],[4,2,2]]               ClogK3=[[8,8],[],[8,8]]
\end{verbatim}\ns
\ft\begin{verbatim}
PK=x^2-627  ClogK0=[[2,2],[],[2,2]]  r=1  PK=x^2-1249  ClogK0=[[4],[4],[]]  r=2
ClogK1=[[4,4],[],[4,4]]                   ClogK1=[[8],[4],[2]]
ClogK2=[[8,8],[],[8,8]]                   ClogK2=[[8],[2],[4]]
ClogK3=[[16,8],[],[16,8]]                 ClogK3=[[8],[],[8]]
No conclusion up to K3
\end{verbatim}\ns

\subsection{Conclusion about \texorpdfstring{$\CH^\lgm_K$}{Lg}}
As a conclusion, one can say, from the above examples, that the logarithmic 
class group of a real field $K$ may capitulate in the simplest cyclic
$p$-extensions $L/K$, $L \subset K(\mu_\ell^{})$, as for $p$-class groups; 
this was not so obvious, but in Jaulent \cite{Jaul2019$^b$} is proved
the existence (as for $p$-class groups with Bosca techniques \cite{Bosc2009}) 
of abelian extensions $L_0/\Q$ such that $L =L_0K$ is a capitulation field for 
$\CH^\lgm_K$ (some more general conditions of signature may be assumed for $K$).

Clearly, for imaginary quadratic fields, the fact that, probably, $\CH^\lgm_K$ never capitulates 
in $L$ seems plausible, because of a systematic non-smooth increasing complexity ($p$-rank 
and/or exponent) as shown by the following excerpt:

\ft\begin{verbatim}
PK=x^2+14  ell=109   r=1             PK=x^2+74  ell=109   r=2
ClogK0=[[3],[3],[]]                  ClogK0=[[9],[9],[]] 
ClogK1=[[9],[9],[]]                  ClogK1=[[27,3],[27],[3]]    
ClogK2=[[27],[27],[]]                ClogK2=[[81,9],[81],[9]]
\end{verbatim}\ns
\ft\begin{verbatim}
PK=x^2+41  ell=109   r=1             PK=x^2+107  ell=109   r=1
ClogK0=[[27],[27],[]]                ClogK0=[[9],[9],[]] 
ClogK1=[[81],[81],[]]                ClogK1=[[27,9,3],[27],[9,3]]    
ClogK2=[[243],[243],[]]              ClogK2=[[81,27,9],[81],[27,9]]
\end{verbatim}\ns

The case of capitulation of $\CH^\lgm_K$ in the cyclotomic $\Z_p$-extension of a 
totally real number field (equivalent to Greenberg's conjecture) for which no proof does exist,  
is made very plausible due to a general principle of capitulation of $\CH^\lgm_K$.

\section{Conclusions and prospects}
{\bf a)} We have conjectured in Conjecture \ref{conjcap} that, varying $\ell \equiv 1 
\pmod {2p^\N}$, $N$ large enough, there are infinitely many cases of stability 
from a suitable layer in $K(\mu_\ell^{})$, yielding capitulation of $\CH_K$
(Theorem \ref{main2}\,(i)), which is stronger than the more general capitulation 
conjecture for infinitely many $\ell$'s; this would be coherent with Greenberg's 
conjecture, equivalent to the stability in $K^\cyc$. In other words, our conjecture
may be seen as a ``tame version'' of Greenberg's one, it being understood that 
the towers are finite, so that capitulation needs large $N$'s.

Furthermore, the particular criterion of Theorem \ref{main1}, using the 
algebraic norm $\Nu_{L/K}$ by means of the invariants $m(\Ll)$ and $e(\Ll)$, 
yields capitulation without there necessarily being stability; it shows the link 
between capitulation and complexity (in the meaning of Definition \ref{smooth}), 
of the filtration of $\CH_L$, likely to be governed by natural density 
results (Conjecture \ref{conjprobas}). It is reasonable to think that, restricting 
to primes $\ell \equiv 1 \pmod {2 p^\N}$ with $N \to \infty$, $N-s(\Ll)$ 
becomes larger than $e(\Ll)$ taking into account that $s(\Ll) = 
\Big[\ffrac{\log(m(\Ll))}{\log(p)} \Big]$ is logarithmic regarding $m(\Ll)$ which 
basically depends on the algorithm defining $\CH_L^{i+1}$ from $\CH_L^i$.
This would say that, in huge towers, stabilization occurs at some layer with 
an increasing probability regarding $N$, or, at least, a smooth complexity. 

But in our computations, we were limited to testing with few values of $\ell$ 
(among infinitely many !) and only for the levels $n \leq 3$. Similarly, we were limited to 
small primes $p$ because of the degrees $[K_n : \Q] = [K : \Q]\,p^n$ for PARI calculations.

When capitulation is, on the contrary, structurally impossible (e.g., case of minus 
$p$-class groups of CM-fields or case of torsion groups $\CT_K$ of 
$p$-ramification theory), the complexity of the corresponding invariants necessarily 
increases in any totally ramified cyclic $p$-tower.

{\bf b)}  Because capitulation of $p$-class groups, in a totally ramified cyclic 
$p$-extension, is in connection with its class group complexity, one may 
wonder if this has some repercussion (or not) on the very numerous heuristics on 
repartition of $p$-class groups $\CH_k$ when $\Gal(k/\Q)$ is of order divisible by $p$ 
(see, e.g., A. Bartel--Johnston--Lenstra \cite{BarLen2020,BarJoLe2022} dealing 
with some difficulties about the classical heuristics of Cohen--Lenstra--Martinet--Malle 
\cite{CohLen1983, CohMart1990, CohMart1994,AdMal2015}, and giving attempts 
to modify them), or on the numerous probabilistic works and $\epsilon$-conjectures, like 
Ellenberg--Venkatesh \cite[Conjecture 1,\,\S\,1.2]{EllVen2007}, then \cite{EPW2017, 
PTBW2020, Wan2020, Pier2022, MatWood2023} and \cite{FLN2022} replacing the
class groups of abelian extensions fields $K/k$ by their genus groups ${\mathfrak g}_{K/k}$
of order essentially given by a ``norm factor'' $\ffrac{\prod_v e_v(K/k)}
{(\BE_k : \BE_k \cap {\mathcal N}_{K/k})}$, where ${\mathcal N}_{K/k}$ is the group
of local norms in $K/k$ \cite[Theorem IV.4.2]{Gras2005}.
Then one may cite from \cite[Section 5.1]{Pier2022}: 

\smallskip \noindent
``{\it the truth of the $\ell$-torsion Conjecture} [= $p$-torsion $\epsilon$-Conjecture] 
{\it is implied by the truth of the Cohen--Lenstra--Martinet heuristics on the distribution 
of class groups}.

\smallskip\noindent
showing the similarities of these conjectures, all within a complex analytic frame. 
Thus the only remaining question is: by means of which parameters of $k$, 
$p$-class group heuristics may be defined (structure of $\Gal(k/\Q)$, 
of the prescribed groups $\CH_k$, signature $(r_1(k), r_2(k))$, discriminant $D_k$,
complex analytic formulas ?); all this been insufficient in our opinion since behavior
of $p$-class groups in a $p$-tower is more $p$-adic in nature than
archimedean, as suggested in \cite{Gras2017$^b$}, and experiments with 
extensions $K(\mu_\ell^{})/K$, $K$ fixed, show very sensitive results depending on 
the chosen ramification $\ell$ and not of the orders of magnitude of the parameters.

In other words, is capitulation a governing principle for complexity, or, on the contrary, 
is complexity a governing principle for capitulation~? This is a difficult question all the 
more that numerical examples show that if the tower $L/K$, of degree $d p^N$ with $N$ 
large enough, complexity may become as smooth as possible (for 
instance, stability from some layer, Greenberg's conjecture in $K^\cyc$, etc.), while the 
discriminants become oversized in the towers.

The huge discriminant values are going in the right direction for {\it the $p$-ranks 
$\epsilon$-conjectures}, as proved in Kl\"uners--Wang in some $p$-extensions 
\cite[Theorems 2.1, 2.3]{KluWan2022} and us in \cite{Gras2020, Gras2022} for 
arbitrary towers of $p$-cyclic extensions. 
The much more difficult case of the $p$-torsion $\epsilon$-conjecture $\order \CH_k 
\ll_{\epsilon,p,[k : \Q]} D_k^\epsilon$ remains plausible even if this depends 
on the $p$-adic complexity given, in cyclic $p$-extensions, by the length of the 
filtration and the exponent of the class group.

{\bf c)}  The remarkable circumstance of capitulations in these simplest tamely ramified
cyclic $p$-extensions $L/K$, is certainly a basic principle for many arithmetic properties, 
as the following ones:

\quad (i) The real abelian Main Conjecture, whose proof becomes trivial in the 
semi-simple case as soon as $\ell$ is inert in $K/\Q$ and $\CH_K$ capitulates 
in $L$, because of the general relations $(\CE_{K,\varphi} : \CF_{K,\varphi}) = 
\big (\Norm_{L/K} (\CE_{L,\varphi}) : \CF_{K,\varphi}\big) \times 
\ffrac{\order \CH_{K,\varphi}}{\order \J_{L/K}(\CH_{K,\varphi})}$, implying 
$(\CE_{K,\varphi} : \CF_{K,\varphi}) \geq \order \CH_{K,\varphi}$ 
for all $\varphi \in \Phi_K$ under capitulation \cite[Theorem 4.6]{Gras2022$^b$}.

\quad (ii) In the Iwasawa theory context, we refer for instance to 
Assim--Mazigh--Oukhaba \cite{AsMaOu2017,Mazigh2016,Mazigh2017} about 
generalized Main Conjectures associated to Euler systems built over Stark units 
replacing Leopoldt's cyclotomic units, hoping that capitulation phenomena may give 
new insights in these theories using auxiliary cyclic $p$-extensions $L/K$ in which 
filtration and exact sequence \eqref{suite} are identical.

\quad (iii) Capitulations prevent to get efficient algebraic definitions of $p$-adic isotopic 
components $\CX_\varphi^\alg$ of arithmetic invariants $\CX$ in the non semi-simple case; 
which suggests to replace algebraic norms by arithmetic ones in the definitions and 
thus to use instead the arithmetic $\varphi$-objects $\CX_\varphi^\ar$.


\begin{thebibliography}{xx}

\bibitem[1]{AdMal2015} M. Adam, G. Malle, \textit{A class group heuristic based on the 
distribution of $1$-eigenspaces in matrix groups}, J. Number Theory 
{\bf 149} (2015), 225--235.\\
\url{https://doi.org/10.1016/j.jnt.2014.10.018}

\bibitem[2]{AnJau2000} B. Angl\`es, J-F. Jaulent, \textit{Th\'eorie des genres des  corps
globaux}, Manuscripta Math. {\bf 101}(4) (2000), 513--532.%
\url{https://doi.org/10.1007/s002290050229}

\bibitem[3]{AsMaOu2017} J. Assim, Y. Mazigh, H. Oukhaba, \textit{Th\'eorie
d'Iwasawa des unit\'es de Stark et groupe de classes}, Int. J. Number Theory 
{\bf 13}(5) (2017), 1165--1190.\\
\url{https://doi.org/10.1142/S1793042117500634}

\bibitem[4]{AZT2016} A. Azizi, A. Zekhnini, M. Taous, \textit{On the strongly ambiguous classes 
of some bi-quadratic number fields}, Mathematica Bohemica {\bf 141}(3) (2016), 363--384. \\
\url{http://eudml.org/doc/286792}

\bibitem[5]{AZT2017} A. Azizi, A. Zekhnini, M. Taous, 
\textit{Capitulation in the absolutely abelian extensions of some number fields II},
Acta Mathematica Vietnamica {\bf 42} (2017), 81--97.\\
\url{https://doi.org/10.1007/s40306-016-0194-8}

\bibitem[6]{AZTM2016} A. Azizi, A. Zekhnini, M. Taous, D.C. Mayer,
\textit{Principalization of $2$-class groups of type $(2, 2, 2)$ of biquadratic fields 
$\Q(\sqrt{p_1 p_2 q},\sqrt{-1})$}, Int. J. Number Theory {\bf 11}(4) (2015), 1177--1215.%
\url{https://doi.org/10.1142/S1793042115500645}

\bibitem[7]{BaCa2016} A. Bandini, F. Caldarola, \textit{Stabilization for Iwasawa 
modules in $\Z_p$-exten\-sions}, Rend. Sem. Mat. Univ. Padova {\bf 136} (2016), 137--155.%
\url{https://doi.org/10.4171/RSMUP/136-10}

\bibitem[8]{Band2007} A. Bandini, \textit{A note on $p$-ranks of class groups
in $\Z_p$-extensions}, JP Journal of Algebra, Number Theory and Applications
{\bf 9}(1) (2007), 95--103.%
\url{http://hdl.handle.net/11568/925088}

\bibitem[9]{BarJoLe2022} A. Bartel, H. Johnston, H.W. Lenstra Jr.,
\textit{Arakelov class groups of random number fields}, arXiv (2022).%
\url{https://arxiv.org/abs/2005.11533}

\bibitem[10]{BarLen2020} A. Bartel, H.W. Lenstra Jr.,
\textit{On class groups of random number fields},
Proc. London Math. Soc. {\bf 121}(3) (2020), 927--953.%
\url{https://doi.org/10.1112/plms.12343}

\bibitem[11]{BeJa2016} K. Belabas, J-F. Jaulent, \textit{The logarithmic class group 
package in PARI/GP}, Publ. Math. Fac. Sc. Besan\c con (Alg\`ebre 
et th\'eorie des nombres), PUFC (2016), 5--18.\\
\url{https://doi.org/10.5802/pmb.o-1}

\bibitem[12]{Bem2012} T. Bembom, \textit{The Capitulation Problem in Class Field Theory}
(Dissertation), Georg--August--Universit\"{a}t--G\"{o}ttingen (2012).%
\url{http://dx.doi.org/10.53846/goediss-2521}

\bibitem[13]{BerPay} F. Bertrandias, J-J. Payan, \textit{$\Gamma$-extensions et 
invariants cyclotomiques}, Ann. Sci. Ec. Norm. Sup. 4e s\'erie, {\bf 5}(4) (1972), 517--543.
\url{https://doi.org/10.24033/asens.1236}

\bibitem[14]{Bisw2016} S. Biswas, \textit{Capitulation, unit groups, and the cohomology 
of $S$-id\`ele classes}, arXiv (2016). \\
\url{https://arxiv.org/abs/1610.00733}

\bibitem[15]{Bond1981} R. Bond, \textit{Some results on the Capitulation 
Problem, J. Number Theory} {\bf 13}(2) (1981), 246--254.%
\url{https://doi.org/10.1016/0022-314X(81)90007-X}

\bibitem[16]{Bosc2009} S. Bosca, \textit{Principalization of ideals in Abelian extensions 
of number fields, Int. J. Number Theory} {\bf 5}(3) (2009), 527--539.%
\url{https://doi.org/10.1142/S1793042109002213}

\bibitem[17]{Brig2007} C. Brighi, \textit{Capitulation des classes logarithmiques 
et {\'e}tude de certaines tours de corps de nombres}, Th\`ese, Universit{\'e} Paul 
Verlaine--Metz (2007).\\
\url{https://hal.univ-lorraine.fr/tel-01749002}

\bibitem[18]{Cald2020} F. Caldarola, \textit{On the maximal finite Iwasawa submodule
in $\Z_p$-extensions and capitulation of ideals}, Rendiconti Sem. Mat. Univ. Politecnico
Torino {\bf 78}(1) (2020), 27--42.\\
\url{http://www.seminariomatematico.polito.it/rendiconti/78-1/Caldarola.pdf}

\bibitem[19]{Chev1933} C. Chevalley, \textit{Sur la th\'eorie du corps de classes 
dans les corps finis et les corps locaux}, Th\`ese 155, Jour. of the Faculty 
of Sciences Tokyo {\bf 2} (1933), 365--476.\\
\url{http://archive.numdam.org/item/THESE_1934__155__365_0/}

\bibitem[20]{CohLen1983} H. Cohen, H. W. Lenstra Jr., \textit{Heuristics on class groups 
of number fields}. In: Number theory, Noordwijkerhout 1983. Springer, Berlin 1984, 33--62.%
\url{https://doi.org/10.1007/BFb0099440}

\bibitem[21]{CohMart1990} H. Cohen, J. Martinet, \textit{\'Etude heuristique des groupes de classes 
des corps de nombres}, J. reine angew. Math. {\bf 404} (1990), 39--76.%
\url{https://doi.org/10.1515/crll.1990.404.39}

\bibitem[22]{CohMart1994} H. Cohen, J. Martinet,  \textit{Heuristics on class groups: some good 
primes are not too good}, Math. Comp. {\bf 63} (1994), 329--334.%
\url{https://doi.org/10.2307/2153578}  

\bibitem[23]{DJPPS2005} F. Diaz y Diaz, J-F. Jaulent, S. Pauli, M. Pohst, 
F. Soriano--Gafiuk, \textit{A New Algorithm for the Computation of Logarithmic 
$\ell$-Class groups of Number Fields}, Experiment. Math. {\bf 14}(1) (2005), 65--74.%
\url{https://doi.org/10.1080/10586458.2005.10128902}

\bibitem[24]{EllVen2007} J.S. Ellenberg, A. Venkatesh,  \textit{Reflection principles 
and bounds for class group torsion}, Int. Math. Res. Not. {\bf 2007}(1)
(2007). \url{https://doi.org/10.1093/imrn/rnm002}

\bibitem[25]{EPW2017}  J.S. Ellenberg, L.B. Pierce, M.M. Wood,
\textit{On $\ell$-torsion in class groups of number fields},
Algebra \& Number Theory {\bf 11}(8) (2017), 1739--1778.\\
\url{https://doi.org/10.2140/ant.2017.11.1739}

\bibitem[26] {FLN2022} C. Frei, D. Loughran, R. Newton, \textit{Distribution of 
genus numbers of abelian number fields}, arXiv (2022).%
\url{https://doi.org/10.48550/arXiv.2209.10659}

\bibitem[27]{Fuku1994} T. Fukuda, \textit{Remarks on $\Z_p$-extensions of number fields}, 
Proc. Japan Acad. Ser. A {\bf 70}(8) (1994), 264--266.%
\url{https://doi.org/10.3792/pjaa.70.264}

\bibitem[28]{Gerth1986} F. Gerth, III, \textit{Densities for certain $\ell$-ranks in 
cyclic fields of degree $\ell^n$}, Compositio Mathematica, {\bf 60}(3) (1986), 295--322.%
\url{http://www.numdam.org/item/?id=CM_1986__60_3_295_0}

\bibitem[29]{Gonz2006} C.D. Gonz\'alez-Avil\'es, \textit{Capitulation, ambiguous 
classes and the cohomology of the units}, J. reine angew. Math. {\bf 613} (2007), 75--97.%
\url{https://doi.org/10.1515/CRELLE.2007.092}

\bibitem[30]{GrJa1985} M. Grandet, J-F. Jaulent, \textit{Sur la capitulation dans une 
$\Z_\ell$-extension}, J. reine angew. Math. 362 (1985), 213--217.%
\url{http://eudml.org/doc/152777}

\bibitem[31]{GJN2016} G. Gras, J-F. Jaulent, T. Nguyen Quang Do, \textit{Sur le 
module de Bertrandias--Payan dans une $p$-extension -- Noyau de capitulation}, pp. 25--44.
\textit{Sur la capitu\-lation pour le module de Bertrandias--Payan}, pp. 45--58.
\textit{Descente galoisienne et capitulation entre modules de Bertrandias--Payan}, pp. 59--79. 
Publ. Math. Fac. Sc. Besan\c con (Alg\`ebre et th\'eorie des nombres), PUFC (2016).%
\url{https://pmb.centre-mersenne.org/volume/PMB_2016___/}

\bibitem[32]{Gras1973} G. Gras, \textit{Sur les $\ell$-classes d'id\'eaux dans les extensions 
cycliques relatives de degr\'e premier $\ell$, I, II}, Ann. Inst. Fourier (1973), {\bf 23}(3), 1--44, 
{\bf 23}(4), 1--48. \\
\url{http://doi.org/10.5802/aif.471} \url{http://doi.org/10.5802/aif.480}

\bibitem[33] {Gras1977} G. Gras, \textit{\'Etude d'invariants relatifs aux groupes des 
classes des corps ab\'eliens}, Journ\'ees Arith\-m\'etiques de Caen (1976),
Ast\'erisque {\bf 41--42} (1977), 19 pp. \\
\url{http://www.numdam.org/item/AST_1977__41-42__35_0/}

\bibitem[34]{Gras1978} G. Gras, \textit{Nombre de $\varphi$-classes invariantes. 
Application aux classes des corps ab\'eliens}, Bulletin Soc. Math. France {\bf 106} (1978), 
337--364.%
\url{https://doi.org/10.24033/bsmf.1876}

\bibitem[35]{Gras1997} G. Gras, \textit{Principalisation d'id\'eaux par extensions 
absolument ab\'e\-liennes}, J. Number Theory {\bf 62}(2) (1997), 403--421.%
\url{https://doi.org/10.1006/jnth.1997.2068}

\bibitem[36]{Gras2005} G. Gras, \textit{Class Field Theory: from theory to practice}, 
corr. 2nd ed. Springer Monographs in Mathematics, Springer, xiii+507 pages (2005).  

\bibitem[37]{Gras2017$^a$} G. Gras,  \textit{Invariant generalized ideal 
classes -- Structure theorems for $p$-class groups in $p$-extensions},
Proc. Indian Acad. Sci. (Math. Sci.) {\bf 127}(1) (2017), 1--34.\\
\url{https://doi.org/10.1007/s12044-016-0324-1}

\bibitem[38]{Gras2017$^b$}  G. Gras, \textit{Heuristics and conjectures in the direction
of a $p$-adic Brauer--Siegel theorem}, Math. Comp. {\bf 88}(318) (2018/19), 1929--1965. 
\url{https://doi.org/10.1090/mcom/3395}

\bibitem[39]{Gras2018}  G. Gras, \textit{The $p$-adic Kummer--Leopoldt Constant: Normalized 
$p$-adic Regulator}, Int. J. of Number Theory, {\bf 14}(2) (2018), 329--337.%
\url{https://doi.org/10.1142/S1793042118500203}

\bibitem[40]{Gras2019$^a$}  G. Gras, \textit{On $p$-rationality of number fields. 
Applications--PARI/GP programs}, Publ. Math. Fac. Sc. Besan\c con
(Alg\`ebre et th\'eorie des nombres) {\bf 2}, PUFC (2019), 29--51.\\
\url{https://doi.org/10.5802/pmb.35}

\bibitem[41]{Gras2019$^b$}  G. Gras, \textit{Normes d'id\'eaux dans la tour cyclotomique 
et conjecture de Greenberg},  Ann. math. du Qu\'ebec {\bf 43} (2019), 249--280.%
\url{https://doi.org/10.1007/s40316-018-0108-3}

\bibitem[42]{Gras2020} G. Gras,  \textit{Genus theory and $\varepsilon$-conjectures on $p$-class
groups}, {\it Jour. Number Theory} {\bf 207} (2020), 423--459.
\url{https://doi.org/10.1016/j.jnt.2019.07.008}

\bibitem[43]{Gras2021$^a$} G. Gras, \textit{Algorithmic complexity of Greenberg's 
conjecture}, Arch. Math. {\bf 117} (2021), 277--289.%
\url{https://doi.org/10.1007/s00013-021-01618-9}
 
\bibitem[44]{Gras2021$^b$} G. Gras, \textit{Application of the notion of $\varphi$-object 
to the study of $p$-class groups and $p$-ramified torsion groups of abelian extensions},
arXiv (2021).\url{https://arxiv.org/abs/2112.02865}

\bibitem[45]{Gras2022} G. Gras, \textit{The $p$-rank $\varepsilon$-conjecture
on class groups is true for towers of $p$-extensions of a number field},
J. Ramanujan Math. Soc. {\bf 37}(2) (2022), 129--138. \\
\url{http://www.mathjournals.org/jrms/2022-037-002/}

\bibitem[46]{Gras2022$^a$} G. Gras, \textit{On the $\lambda$-stability of $p$-class groups 
along cyclic $p$-towers of a number field}, Int. J. of Number Theory {\bf 18}(10) (2022), 2241--2263. \\
\url{https://doi.org/10.1142/S1793042122501147}

\bibitem[47]{Gras2022$^b$} G. Gras, \textit{The Chevalley--Herbrand formula and the real 
abelian conjecture. New criterion using capitulation of the class group}, arXiv (2022).%
\url{https://arxiv.org/abs/2207.13911}

\bibitem[48]{Gree1976}  R. Greenberg, \textit{On the Iwasawa invariants of totally real 
number fields}, Amer. J. Math. {\bf 98}(1) (1976),  263--284.%
\url{https://doi.org/10.2307/2373625}

\bibitem[49]{Grei1992} C. Greither, \textit{Class groups of abelian fields, and the main 
conjecture}, Ann. Inst. Fourier {\bf 42}(3) (1992), 449--499.%
\url{https://doi.org/10.5802/aif.1299}

\bibitem[50]{GrWe2000} K.W. Gruenberg, A. Weiss, \textit{Capitulation and transfer kernels}, 
J. Th\'eorie Nombres Bordeaux {\bf 12}(1) (2000), 219--226.%
\url{http://www.numdam.org/item/JTNB_2000__12_1_219_0/}

\bibitem[51]{GrWe2003} K.W. Gruenberg, A. Weiss, \textit{Capitulation and Transfer Triples}, 
Proc. London Math. Soc. {\bf 87}(2) (2003), 273--290.%
\url{https://doi.org/10.1112/S0024611503014199}

\bibitem[52]{HeSc1982} F-P. Heider, B. Schmithals, \textit{Zur Kapitulation der Idealklassen in 
unverz\-weigten primzyklischen Erweiterungen}, J. Reine Angew. Math. {\bf 336} (1982), 1--25.\\
\url{https://doi.org/10.1515/crll.1982.336.1}

\bibitem[53]{Iwas1973} K. Iwasawa, \textit{On $\Z_\ell$-extensions of algebraic number fields}, 
Ann. Math. {\bf 98} (1973), 243--326. \\
\url{https://doi.org/10.2307/1970784}

\bibitem[54]{Iwas1989} K. Iwasawa, \textit{A note on capitulation problem for number fields II},
Proc. Japan Acad. Ser. A Math. Sci. {\bf 65}(6) (1989), 183--186.%
\url{https://doi.org/10.3792/pjaa.65.183}

\bibitem[55]{Jaul1986} J-F. Jaulent, \textit{L'arithm\'etique des $\ell$-extensions}
(Th\`ese d'\'etat), Publ. Math. Fac. Sc. Besan\c con (Alg\`ebre et 
th\'eorie des nombres) {\bf 1}(1), PUFC (1986), 1--357.\\
\url{https://doi.org/10.5802/pmb.a-42}

\bibitem[56]{Jaul1988} J-F. Jaulent, \textit{L'\'etat actuel du probl\`eme de la
capitulation}, S\'em. Th\'eor. Nombres Bordeaux, expos\'e 17 (1987/1988), 1--33.%
\url{https://www.jstor.org/stable/44166467}

\bibitem[57]{Jaul1994} J-F. Jaulent, \textit{Classes logarithmiques des corps de nombres}, 
J. Th\'eorie Nombres Bordeaux {\bf 6}(2) (1994), 301--325.%
\url{https://jtnb.centre-mersenne.org/article/JTNB_1994__6_2_301_0.pdf}

\bibitem[58]{Jaul2016} J-F. Jaulent, \textit{Classes logarithmiques et capitulation}, 
Funct. Approx. Comment. Math. {\bf 54}(2) (2016), 227--239.%
\url{https://doi.org/10.7169/facm/2016.54.2.6}

\bibitem[59]{Jaul2019$^a$} J-F. Jaulent, \textit{Note sur la conjecture de Greenberg},  
J. Ramanujan Math. Soc. {\bf 34}(1) (2019) 59--80.%
\url{http://www.mathjournals.org/jrms/2019-034-001/} 

\bibitem[60]{Jaul2019$^b$} J-F. Jaulent, \textit{Principalisation ab\'elienne des groupes 
de classes logarithmiques}, Funct. Approx. Comment. Math. {\bf 61}(2) (2019), 257--275.\\
\url{https://doi.org/10.7169/facm/1765}

\bibitem[61]{Jaul2022} J-F. Jaulent, \textit{Capitulation ab\'elienne des groupes de 
classes de rayons}, J. Number Theory {\bf 231} (2022), 316--332.%
\url{https://doi.org/10.1016/j.jnt.2021.05.003}

\bibitem[62]{JauMi2006} J-F. Jaulent, \textit{A. Michel, Approche logarithmique des 
noyaux \'etales des corps de nombres}, J. Number Theory {\bf 120} (2006), 72--91.%
\url{https://doi.org/10.1016/j.jnt.2005.11.011}

\bibitem[63]{KhPr2000} C. Khare, D. Prasad, \textit{On the Steinitz module and 
capitulation of ideals}, Nagoya Math. J. {\bf 160} (2000), 1--15.%
\url{https://doi.org/10.1017/S0027763000007686}

\bibitem[64]{Kisil1970} H. Kisilevsky, \textit{Some results related to Hilbert's theorem 94}, 
J. Number Theory {\bf 2}(2) (1970), 199--206.%
\url{https://doi.org/10.1016/0022-314X(70)90020-X}

\bibitem[65]{KluWan2022} J. Kl\"uners, J. Wang, \textit{$\ell$-torsion bounds of the class
group of number fields with an $\ell$-group as Galois group}, OWP-2020-11.%
\url{https://doi.org/10.14760/OWP-2020-11}\\
Proc. Amer. Math. Soc. {\bf 150} (2022), 2793--2805.%
\url{https://doi.org/10.1090/proc/15882}

\bibitem[66]{KoMo2000} M. Kolster, A. Movahhedi, \textit{Galois co-descent for \'etale 
wild kernels and capitulation}, Annales Institut Fourier {\bf 50}(1) (2000), 35--65.%
\url{https://doi.org/10.5802/aif.1746}

\bibitem[67]{KoPa2022} P. Koymans, C. Pagano, \textit{On the distribution of $Cl(K)[\ell^\infty]$
for degree $\ell$ cyclic fields}, J. Eur. Math. Soc. {\bf 24}(4) (2022), 1189--1283.%
\url{https://doi.org/10.4171/JEMS/1112}

\bibitem[68]{KrSch1995} J.S. Kraft, R. Schoof, \textit{Computing Iwasawa modules of real 
quadratic number fields}, Compositio Math. {\bf 97}(1-2) (1995), 135--155.%
\url{http://eudml.org/doc/90370}

\noindent
Erratum. Compositio Math., {\bf 103}(2) (1996), p. 241.\\
\url{http://www.numdam.org/item?id=CM_1996__103_2_241_0}

\bibitem[69]{Kuri1999} M. Kurihara, \textit{On the ideal class groups of the maximal real subfields
of number fields with all roots of unity}, J. Eur. Math. Soc. {\bf 1}(1) (1999), 35--49. \\
\url{https://doi.org/10.1007/PL00011159}

\bibitem[70]{Lemm2013} F. Lemmermeyer, \textit{The ambiguous class number formula revisited}, 
Journal of Ramanujan Math. Soc. {\bf 28}(4) (2013), 415--421.%
\url{http://www.mathjournals.org/jrms/2013-028-004/}

\bibitem[71]{LiYu2020} J. Li, C.F. Yu, \textit{The Chevalley--Gras formula over global fields},
J. Th\'eor. Nombres Bordeaux {\bf 32}(2) (2020), 525--543.%
\url{https://doi.org/10.5802/jtnb.1133}

\bibitem[72]{LOXZ2022} J. Li, Y. Ouyang, Y. Xu, S. Zhang, \textit{$\ell$-Class 
groups of fields in Kummer towers}, Publ. Mat. {\bf 66}(1) (2022), 235--267.%
\url{https://doi.org/10.5565/PUBLMAT6612210}

\bibitem[73]{Liu2022} Y. Liu, \textit{On the $p$-rank of class groups of $p$-extensions}, 
arXiv (2022).\\
\url{https://arxiv.org/abs/2212.09888}

\bibitem[74]{Maire1996} C. Maire, \textit{$T$-$S$ capitulation}, Publ. Math. Fac. Sc. 
Besan\c con (Alg\`ebre et th\'eorie des nombres) {\bf 2}, PUFC (1996), 32 pp.%
\url{https://doi.org/10.5802/pmb.a-79}

\bibitem[75]{Maire1998} C. Maire, \textit{Une remarque sur la capitulation du groupe 
des classes au sens restreint}, Publ. Math. Fac. Sc. Besan\c con
(Alg\`ebre et th\'eorie des nombres) {\bf 2}, PUFC (1998), 1--10.\\
\url{https://doi.org/10.5802/pmb.a-87}

\bibitem[76]{Maire2018} C. Maire, \textit{Genus theory and governing fields},
New York J. Math. {\bf 24} (2018), 1056--1067.\\
\url{http://nyjm.albany.edu/j/2018/24-50.html}

\bibitem[77]{Mart2011} K. Martin, \textit{Non-unique factorization and principalization 
in number fields}, Proceedings of the American Mathematical Society
{\bf 139}(9) (2011), 3025--3038.\\
\url{https://doi.org/10.1090/S0002-9939-2011-11053-0}

\bibitem[78]{MatWood2023} M. Matchett Wood, \textit{Probability theory for random 
groups arising in number theory}, arXiv (2023).%
\url{https://doi.org/10.48550/arXiv.2301.09687}

\bibitem[79]{Mayer2014} D.C. Mayer, \textit{Principalization algorithm via class group 
structure}, J. Th\'eorie Nombres Bordeaux {\bf 26}(2) (2014), 415--464.%
\url{https://doi.org/10.5802/jtnb.874}

\bibitem[80]{Mazigh2016} Y. Mazigh, \textit{Iwasawa theory of Rubin--Stark units and 
class groups}, manus\-cripta math. {\bf 153} (2017), 403--430.%
\url{https://doi.org/10.1007/s00229-016-0889-0}

\bibitem[81]{Mazigh2017} Y. Mazigh, \textit{Unit{\'e}s de Stark et th{\'e}orie d'Iwasawa}
(Th\`ese), Universit{\'e} Bourgogne Franche--Comt{\'e}, Besan\c con.%
\url{https://theses.hal.science/tel-01795150}

\bibitem[82]{Miya1989} K. Miyake, \textit{Algebraic investigations of Hilbert's Theorem $94$, 
the principal ideal theorem and the capitulation problem}, Expo. Math. {\bf 7} (1989), 289--346.%
\url{https://doi.org/10.5802/pmb.a-63 }

\bibitem[83]{MiYa2021} Y. Mizusawa, K. Yamamoto, \textit{On $p$-class groups of 
relative cyclic $p$-exten\-sions}, Arch. Math. {\bf 117}(3) (2021), 253--260.%
\url{https://doi.org/10.1007/s00013-021-01619-8}

\bibitem[84]{Mort1982} P. Morton, \textit{Density results for the $2$-class groups 
of imaginary quadratic fields}, J. reine angew. Math. {\bf 332} (1982), 156--187.%
\url{https://doi.org/10.1515/crll.1982.332.156}

\bibitem[85]{Mova1988} A. Movahhedi, \textit{Sur les $p$-extensions des corps $p$-rationnels}, 
Th\`ese, Univ. Paris VII, 1988. \\
\url{https://www.theses.fr/1988PA077126}

\bibitem[86]{NQD1986} T. Nguyen Quang Do, \textit{Sur la $\Z_p$-torsion de certains 
modules galoisiens}, Ann. Inst. Fourier, {\bf 36}(2) (1986), 27--46. 
\url{https://doi.org/10.5802/aif.1045} 

\bibitem[87]{Paga2022} L. Pagani, \textit{Greenberg's conjecture for real quadratic 
fields and the cyclotomic $\Z_2$-extension}, Math. Comp. {\bf 91} (2022) 1437--1467.%
\url{https://doi.org/10.1090/mcom/3712}

\bibitem[88]{PARI} The PARI Group, \textit{PARI/GP, version} \texttt{2.5.3} (2013), 
Univ. Bordeaux.\\ 
\url{http://pari.math.u-bordeaux.fr/download.html}

\bibitem[89]{Pier2022} L.B. Pierce, \textit{Counting problems: class groups, primes, and 
number fields}, Proceedings of the ICM 2022, arXiv (2022).%
\url{https://doi.org/10.48550/arXiv.2206.08351}

\bibitem[90]{PTBW2020} L.B. Pierce, C.L. Turnage--Butterbaugh, M.M. Wood,
\textit{An effective Chebotarev density theorem for families of number fields, with an 
application to $\ell$-torsion in class groups}, Invent. Math. {\bf 219}(2) (2020), 701--778.%
\url{https://doi.org/10.1007/s00222-019-00915-z}

\bibitem[91]{Schm1985} B. Schmithals, \textit{Kapitulation der Idealklassen 
und Einheitenstruktur in Zahl\-k\"{o}r\-pern}, J. reine angew. math. {\bf 358} (1985), 43--60.%
\url{https://doi.org/10.1515/crll.1985.358.43}

\bibitem[92]{Smith2022} A. Smith, \textit{The distribution of $\ell^\infty$-Selmer groups 
in degree $\ell$ twist families}, arXiv (2022).\\
\url{https://doi.org/10.48550/arXiv.2207.05674}

\bibitem[93]{Solo1990} D. Solomon, \textit{On the class groups of imaginary abelian fields},
Ann. Inst. Fourier {\bf 40}(3) (1990), 467--492.%
\url{https://doi.org/10.5802/aif.1221}

\bibitem[94]{Suzu1991} H. Suzuki, \textit{A generalization of Hilbert's Theorem $94$}, 
Nagoya Math. J. {\bf 121} (1991), 161--169. \\
\url{https://doi.org/10.1017/S0027763000003445}

\bibitem[95]{Taya1996} H. Taya, \textit{On cyclotomic $\Z_p$-extensions of real 
quadratic fields}, Acta Arithmetica {\bf 74}(2) (1996), 107--119.%
\url{http://matwbn.icm.edu.pl/ksiazki/aa/aa74/aa7422.pdf}

\bibitem[96]{Taya1999} H. Taya, \textit{On $p$-adic z\^eta functions and 
$\Z_p$-extensions of certain totally real number fields}, Tohoku Math. J. {\bf 51}(1) 
(1999), 21--33.%
\url{https://doi.org/10.2748/tmj/1178224850}

\bibitem[97]{Taya2000} H. Taya, \textit{Iwasawa invariants and class numbers 
of quadratic fields for the prime $3$}, Proc. Amer. Math. Soc. {\bf 128}(5), 1285--1292.%
\url{https://doi.org/10.1090/S0002-9939-99-05177-1}

\bibitem[98]{Tera1971} F. Terada, \textit{A principal ideal theorem in the genus field},
Tohoku Math. J. (2) {\bf 23}(4) (1971), 697--718.%
\url{https://doi.org/10.2748/tmj/1178242555}

\bibitem[99]{Thie2000} C. Thi\'ebaud, \textit{Sur la capitulation dans les corps de genres 
d'une extension ab\'e\-lienne d'un corps quadratique imaginaire}, J. Number Theory 
{\bf 85}(1) (2000), 92--107.\\
\url{https://doi.org/10.1006/jnth.2000.2550}

\bibitem[100]{Vali2008} R. Validire, \textit{Capitulation des noyaux sauvages \'etales},
Th\`ese, Universit\'e de Limoges (24 Juin 2008).%
\url{https://tel.archives-ouvertes.fr/tel-00343427/document}

\bibitem[101]{Wan2020} J. Wang, \textit{Pointwise bound for $\ell$-torsion in class groups: 
Elementary abelian extensions}, J. reine angew. Math. {\bf 2021}(773), 129--151.%
\url{https://doi.org/10.1515/crelle-2020-0034}

\bibitem[102]{Wash1997} L.C. Washington, \textit{Introduction to Cyclotomic Fields}, 
GTM 83, Springer enlarged second edition 1997. 

\end{thebibliography}
\end{document}